\numberwithin{equation}{section}
 \newtheorem{assumption}{Assumption}[section]
\newtheorem{lemma}{Lemma}[section]
\newtheorem{theorem}{Theorem}[section]
\newtheorem{remark}{Remark}[section]
\newcommand{\E}        {{ {\mathbb E}}}
\renewcommand{\P}      {{ {\mathbb P}}}
\newcommand{\R}        {{\mathbb R}}
\newcommand{\N}        {{ {\mathbb N}}}
\newcommand{\Z}        {{\mathbb Z}}
\renewcommand\AA{{\mathcal A}}
\newcommand\LL{{\mathcal L}}
\renewcommand{\S}  {\bar{S}}
\newcommand{\I}  {\bar{I}}
\newcommand{\rR} {\bar{R}}
\newcommand\afrak  {\mathfrak{a}}
\newcommand{\eps}  {{\varepsilon}}
\newcommand{\ep}{\epsilon}
\newcommand{\Cov}{\text{\rm Cov}}
\newcommand{\wt}{\widetilde}
\newcommand{\qandq}{\quad\mbox{and}\quad}
\newcommand{\qforq}{\quad\mbox{for}\quad}
\newcommand{\qasq}{\quad\mbox{as}\quad}
\newcommand{\qinq}{\quad\mbox{in}\quad}
\newcommand{\non}{\nonumber}
\newcommand{\RA}{\Rightarrow}
\newcommand{\ttl}{\Large Multi-patch epidemic models with general exposed \\[5pt] and infectious periods}
\begin{document}

\allowdisplaybreaks

\title[]{\ttl}

\author[Guodong \ Pang]{Guodong Pang$^*$}
\address{$^*$Department of Computational Applied Mathematics and Operations Research,
George R. Brown College of Engineering,
Rice University,
Houston, TX 77005}
\email{gdpang@rice.edu}

\author[{\'E}tienne \ Pardoux]{{\'E}tienne Pardoux$^\dag$}
\address{$^\dag$Aix--Marseille Universit{\'e}, CNRS, Centrale Marseille, I2M, UMR \ 7373 \ 13453\ Marseille, France}
\email{etienne.pardoux@univ.amu.fr}

\date{\today}

\begin{abstract} 
We study  multi-patch epidemic models where individuals may migrate from one patch to another in either of the susceptible, exposed/latent, infectious and recovered states. 
We assume that infections occur both locally with a rate that depends on the patch as well as
``from distance" from all the other patches. 
The migration processes among the patches in either of the four states are assumed to be Markovian, and independent of the exposed and infectious periods.   These periods have general distributions, and are not affected by the possible migrations of the individuals. 
The infection ``from distance'' aspect introduces a new formulation of the infection process, which, together with the migration processes, brings technical challenges in proving the functional limit theorems. Generalizing the methods in \cite{PP-2020}, we establish a functional law of large number (FLLN) and a function central limit theorem (FCLT) for the susceptible, exposed/latent, infectious and recovered processes. 
In the FLLN, the limit is determined by a set of Volterra integral equations. In the special case of deterministic exposed and infectious periods, the limit becomes a system of ODEs with delays. In the FCLT, the limit is given by a set of stochastic Volterra integral equations driven by a sum of independent Brownian motions and  continuous Gaussian processes with an explicit covariance structure.
\end{abstract}

\keywords{multi-patch SIR/SEIR model, general infectious (and/or exposing, latent) periods, migration, FLLN, FCLT, 
 multi-dimensional (stochastic) Volterra integral equations, Poisson random measure}

\maketitle

\section{Introduction}

Multi-patch epidemic models have been used to study various infectious diseases, for example, nosocomial infection \cite{magal2013two}, vector-borne diseases \cite{iggidr2016dynamics}, HIV/AIDS transmission \cite{huang1992stability}, SARS epidemic \cite{magal2018final}, and so on.   Patches  refer to different locations, for example, a densely populated city and a less populated rural area, and thus such models capture geographic heterogeneity. 
 It also helps to study the effect of migrations or lock-down measures among different population groups or locations. In the Covid-19 pandemic, it has been observed that the infectivity in different regions may vary and is impacted by various social-distance and lock-down measures \cite{prague2020,shah2020modelling}. 

Compartment ordinary differential equation (ODE)  models are often used to study the dynamics of such multi-patch epidemic models. It is well known that the ODE dynamics arises from the Markovian assumptions in the stochastic multi-patch epidemic model, that is, the infection process is Poisson, the infectious (and/or exposed/latent) periods are exponentially distributed and the migration processes are also Markovian \cite{andersson2012stochastic,britton2018stochastic, magal2016final,magal2018final,bichara2018multi,iggidr2016dynamics}. 

In this paper, we study multi-patch  SEIR models, in which  each individual may experience successively Susceptible (S), Exposed (E), Infectious (I) and Recovered (R) periods, and   the exposed/latent and  infectious  periods have a general joint distribution (possibly correlated), while the migration processes are Markovian. 
SEIR models are widely used to study the propagation of various epidemics in that they capture both exposed/latent and infectious stages, see, e.g., \cite{andersson2012stochastic,britton2018stochastic}.  
The infection in the multi-patch model is assumed to be both local,  and from distance. That is, the infection rate in a given patch depends on the susceptible population in that patch, and on the infectious population in all the patches.  Individuals may migrate from one patch to another in each of the Susceptible, Exposed (Latent), Infected and Recovered stages. The reason for infection at distance is twofold. 
First, if  there were no migration among a subset of the patches (i.e., the migration rates among them are zero),
we could consider some of the patches as substructures of the population, like age classes, which infect each other. Second, some of the movements of the population should not be considered as migrations, but visits from one patch to another, during a week--end or holiday, with a return at home at the end of a short period. Such movements may produce infection of a susceptible individual in patch $i$ by an
infectious individual from patch $i'\not=i$. Those displacements would be very complicated to model as such. We think that infection at distance is a reasonable way to model infections due to such movements.  Mathematically, this requires a much more sophisticated model for the instantaneous infection rate, see \eqref{eqn-Upsilon}. The formulation is new in the literature, even in the Markovian setting. Moreover, it brings new challenges in the proofs of the limit theorems (see Lemmas \ref{le:lowbd}--\ref{le:uniq} and Lemmas \ref{estimhatUps}--\ref{lem-estimate-SIR}), compared to those of the one-patch non-Markovian models in \cite{PP-2020}.


We describe the evolution dynamics by tracking the time epochs of becoming exposed and/or infectious and the location of an individual at these event times. Specifically, 
in the multi-patch SEIR model, each individual tracks the time epochs of becoming exposed, infected and recovered, and is associated with two Markov chains that are used to track their movement starting when the individual becomes  exposed and infectious, respectively.  For the initially exposed and/or infected individuals, we also assume that their remaining exposing and/or infectious periods have general distributions, which may be different from those of the newly exposed/infected individuals. For these initially exposed/infectious individuals, we also track their movement among the patches using Markov chains while being exposed/infectious. 
Although the idea of tracking time epochs of each individual is analogous to the one-patch SIR/SEIR models in \cite{PP-2020}, the migrations among different patches makes the system dynamics much more challenging to describe (see the equations \eqref{S-rep-SEIR}--\eqref{R-rep-SEIR}) and analyze, despite the independence of the Markovian migration processes from the exposed/infectious durations.  The formulation and proofs constitute non-trivial generalizations of the one-patch SEIR model. 

Given the representations with these time epochs and location processes, we show a functional law of large numbers (FLLN) and a functional central limit theorem (FCLT).   More precisely, we divide by $N$ (which is the total population size)  the equations for the evolution of the numbers of individuals in each patch and compartment, thus obtaining equations for the proportions of individuals in each patch and compartment. Those proportions are shown to converge in probability, locally uniformly in $t$, towards the solution of a system of integral equations. This is our FLLN. Next, if we multiply by $\sqrt{N}$ the difference between the proportions in the $N$ stochastic model and the limiting proportion, that renormalized difference is shown to converge in distribution to the solution of a set of linear integral equations driven by Gaussian noise. This is the FCLT.  Needless to say, the deterministic system of equations obtained for the FLLN limit is much simpler than the $N$ stochastic model. It can be used for predictions of an epidemic which affects a population of reasonably large size, at least away form the very early and very final stages, when the number of infected individuals is not of the order of $N$. Note that the FCLT tells us that the error made on the proportions by using the deterministic model is of the order of $N^{-1/2}$.

The FLLN limits are determined by a set of Volterra integral equations.
When the infectious (and exposed/latent) periods are deterministic, we can write the fluid integral equations as a set of ODEs with delay (Remark \ref{rem-FLLN-det-SEIR}). The limit processes in the FCLT are determined by a set of stochastic Volterra integral equations, driven by a sum of independent Brownian motions and continuous Gaussian processes with a certain covariance structure. When the infectious   (and exposed/latent) periods are deterministic, the limits become stochastic differential equations with linear drifts and delay (Remark \ref{rem-FCLT-det-SEIR}).   In both FLLN and FCLT limits, 
the effects of migrations  are exhibited through the transition probability functions and transition rates of the migration Markov processes. 
We discuss how the results simplify in the SIR model as a special case, and 
 also how the approach and results can be extended to study multi-patch SIS and SIRS models (Section \ref{sec-special}). 

In the proofs of these results,  we employ  Poisson random measures (PRMs) that
are constructed as the sums of the Dirac masses at the time epochs of becoming exposed and infectious, the infectious and exposing periods and  the Markov chain 
starting from the location of each individual at those epochs. 
While to the Markov migration process have naturally associated martingales, which are easily proved to be tight in the appropriate path space, the non--Markovian nature of the epidemic process does not produce obvious martingales. However, as in our previous work \cite{PP-2020}, we are able to find various martingales attached to various and non--standard filtrations, which are constructed from our representation of the epidemic by integrals with respect to PRMs.   We use the martingale properties and convergence theorems as critical tools in the proofs.  
For the single-patch SIR and SEIR models with general infectious and exposing periods,  an approach using PRMs that are constructed  at the time epochs of becoming infectious (and/or exposed),  was developed in \cite{PP-2020}. The approach is further developed in this paper for multi-patch SIR and SEIR models, to track the locations of each individual at each event epoch.
Incorporating infection from distance in addition to local infections in the model  also brings in new technical challenges in the proofs of both the FLLN and FCLT. 

This paper contributes to the limited literature on stochastic epidemic models with general exposed/infectious periods. 
We refer the readers to the overview in Chapter 3.4 of \cite{britton2018stochastic} on the common approaches to study non-Markovian epidemic models and
the limit theorems for the final sizes of the epidemic; see also the recent method using piecewise Markov deterministic processes in \cite{clancy2014sir}  and \cite{gomez2017sir} for the SIR model.  
FLLNs and FCLTs are proved for some age and density dependent population models in  \cite{wang1975limit, wang1977central,wang1977gaussian}, which includes the SIR model with the infection rate depending on the number of infectious individuals, and general infectious period as a special case.  
Reinert \cite{reinert1995asymptotic} proves a FLLN for the empirical measure of the SIR epidemic dynamics using Stein's method, while no FCLT has been proved with that approach. 
In \cite{PP-2020}, both FLLN and FCLT were established for the epidemic dynamics in the classical models (SIR, SIS, SEIR, SIRS) where the PRM representations of the dynamics plays a fundamental role in the proofs. 
The FCLT limit for the SIR model in \cite{PP-2020} is similar to that in  \cite{wang1975limit, wang1977central,wang1977gaussian}, however, the proof approaches are completely different; in addition, the distribution function of the infectious periods is assumed to be continuously differentiable in  \cite{wang1975limit, wang1977central,wang1977gaussian} while no condition is imposed in \cite{PP-2020}. 
We  highlight that the distribution functions of the exposed and infectious periods in this paper are general without requiring any conditions. 
The integral equations for the SEIR model in \cite{PP-2020} are also used to estimate the state of the Covid-19 epidemic in \cite{FPP2021}.
For SIR and SEIR models with varying infectivity, where each individual is associated with an i.i.d. random infectivity, which is a function of the time elapsed since infection, FLLN and FCLT have recently been established in \cite{FPP2020b,PP2020-FCLT-VI}. 
Although Volterra integral equations were used to describe the proportion of infectious population in the SIS, SIR or SEIR model without proving an FLLN (see \cite{brauer1975nonlinear,BCF-2019,cooke1976epidemic,diekmann1977limiting,hethcote1995sis,van2000simple}), 
as far as we know no Volterra integral equations have been proposed so far for multi-patch epidemic models with general  infectious (and/or exposed) periods. 
Our work shows both FLLN and FCLT for non-Markovian multi-patch models, and identify (stochastic) multidimensional Volterra integral equations as their limits. 

 It is also worth mentioning the multi-type epidemic models where the population splits up into multiple groups of individuals and each group may infect any other group in addition to itself (no migration), see Chapters 6.1 and 6.2 in \cite{andersson2012stochastic} and \cite{ball1986unified,ball1993final}. The special case of our model  with zero migration rates  covers that situation. 
 In those models, proportionate mixing taking into account control measures like social distance or lockdowns can also be incorporated. 



%

\subsection{Notation}
Throughout the paper, $\N$ denotes the set of natural numbers, and $\R^k (\R^k_+)$ denotes the space of $k$-dimensional vectors
with  real (nonnegative) coordinates, with $\R (\R_+)$ for $k=1$.  For $x,y \in\R$, denote $x\wedge y = \min\{x,y\}$ and $x\vee y = \max\{x,y\}$. 
Let $D=D([0,T], \R)$ denote the space of $\R$--valued c{\`a}dl{\`a}g functions defined on $[0,T]$. Throughout the paper, convergence in $D$ means convergence in the  Skorohod $J_1$ topology, see chapter 3 of \cite{billingsley1999convergence}. 
 Also, $D^k$ stands for the $k$-fold product equipped with the product topology.  Let $C$ be the subset of $D$ consisting of continuous functions.   Let $C^1$ consist of differentiable functions whose derivative is continuous. 
 For any function $x\in D$, we use the notation $\|x\|_T= \sup_{t\in [0,T]} |x(t)|$. For two functions $x,y \in D$, we use $x\circ y(t) = x(y(t))$ denote their composition.
 All random variables and processes are defined on a common complete probability space $(\Omega, \mathcal{F}, \P)$. The notation $\RA$ means convergence in distribution. We use ${\bf1}_{\{\cdot\}}$ for indicator function, and occasionally we shall write {\bf1}\{.\} in case the first notation is not readable enough. 
 We use small-$o$ notation for real-valued functions $f$ and non-zero $g$: $f(x)=o(g(x))$ if $\limsup_{x\to\infty} |f(x)/g(x)| =0$. We use $\hat\imath$ to denote the unit imaginary number. 
 We write $F(t) = \int_0^t F(ds)$ for a cumulative distribution function (c.d.f.) $F$ on $\R_+$.  
For any measure $\mu$ on $\R$ and $f$ a measurable and $\mu$--integrable function, the integral 
 $\int_a^bf(t)\mu(dt)$ will mean $\int_{(a,b]}f(t)\mu(dt)$.

\bigskip


\section{Model description} \label{sec-model}

We consider a multi-patch epidemic model, where individuals in each patch experience the Susceptible--Exposed (Latent)--Infectious--Recovered (SEIR) process. The patches may refer to populations in different locations, for example, a densely populated city and a less populated rural area. As explained in the introduction, susceptible
individuals in each patch are infected both locally, by infectious individuals located in the same patch, and at distance, by infectious individuals from other patches. 
The rate of infection is different in each patch (because of the differences in the 
density of population or in the type of available public transportations), while the law of the infectious period is the same (due to the same illness). 

Let $N$ be the total population size and $L$ be the number of patches. The set of patches will be denoted 
$\LL=\{1,\ldots,L\}$. (We use indices $i,i',\ell,\ell'$ for elements in $\LL$, and occasionally $i'', \ell''$.) 
For each patch $i\in\LL$, let $S^N_i(t), E^N_i(t), I^N_i(t), R^N_i(t)$ count the numbers of individuals that are susceptible, exposed (latent), infectious and recovered 
in patch $i$ at time $t$, respectively. 
We have the balance equation: 
\begin{equation}\label{eq:balance}
N \;=\; \sum_{i=1}^L (S^N_i(t) + E^N_i(t) + I^N_i(t)  + R^N_i(t) )\,, \quad t \ge 0\,. 
\end{equation}
Assume that $S^N_i(0)>0, \sum_{i=1}^L(E^N_i(0)+I^N_i(0))>0$ and $R^N_i(0)=0$ for each $i\in \mathcal{L}$. It is straightforward to allow $R^N_i(0)$ to be nonzero, however the initially recovered individuals can be removed from the population under consideration from the beginning.

\subsection{The infection process} 

Let  $\lambda_i$, a positive constant, be the infection rate of patch $i\in \mathcal{L}$.  
Define the following processes, for some $0\le\gamma\le1$,
\begin{align} \label{eqn-Upsilon}
\Upsilon^N_i(t)&\;=\;\frac{S^N_i(t)\sum_{\ell=1}^L\kappa_{i\ell}I^N_\ell(t)}{N^{1-\gamma}(S^N_i(t)+ E^N_i(t)+I^N_i(t)+R^N_i(t))^\gamma}\,,\quad i \in \mathcal{L}\,, 
\end{align}
where $\kappa_{ii}=1$  and $\kappa_{i\ell} \ge0$ for $i\neq \ell$ represent the infectivity from distance. Let $\bar\kappa_i :=\sum_{\ell=1}^L \kappa_{i\ell} $ and $\bar\kappa:= \max_{i\in \mathcal{L}} \bar\kappa_i$. 
The rate of new infections in patch $i$ at time $t$ is $\lambda_i\Upsilon^N_i(t)$. Let us explain the role of the parameter $\gamma$.

In the homogeneous population model, where $L=1$, \eqref{eq:balance} tells us that in the unique patch,
$S^N(t) + E^N(t) + I^N(t)  + R^N(t)=N$, hence  $\Upsilon^N(t)$ is the same, irrespective of the value of 
$\gamma$. The rationale of this form of the infection rate is as follows. Each infectious individual meets others at rate
$\beta$.  Since we assume that the individual who is met is chosen uniformly at random in the whole population, he/she is susceptible with probability $S^N(t)/N$. In that case, the encounter results in a new infection with probability $p$. If we let $\lambda=\beta\times p$, we find the above formula $\lambda_i\Upsilon^N_i(t)$ for the rate of new infections in case $L=1$. Now, consider the case $L>1$. We do not factorize $\lambda$ into $\beta\times p$ anymore, or equivalently do as if $p=1$.

In order to make the role of the parameter $\gamma$ transparent, let us define $B^N_i(t) =S_i^N(t) + E_i^N(t) + I_i^N(t)  + R_i^N(t)$, the total population in the patch $i$, and rewrite
\[\lambda_i\Upsilon^N_i(t)=\lambda_i\left(\frac{B^N_i(t)}{N}\right)^{1-\gamma}\frac{S^N_i(t)}{B^N_i(t)}\sum_{\ell=1}^L\kappa_{i\ell}I^N_\ell(t)\,.\]
 In the case $\gamma=1$, the rate of encounters of individuals in patch $i$ by a given infectious is given as $\lambda_i$ for an infectious of the same patch, and equal to $\lambda_i\kappa_{ii'}$ for an infectious from patch $i'$, whatever the total population in patch $i$ at time $t$ may be. This factor gets multiplied by the probability that a randomly chosen individual in patch $i$ be susceptible, which equals $S^N_i(t)/B^N_i(t)$. In the case $\gamma=0$, the same rate is proportional to $B^N_i(t)$, the total population of patch $i$ at time $t$. In the intermediate cases, the rate lies between those two extremes. The case $\gamma=1$ seems to be used in most 
 spatial epidemics models. The values of $\lambda_i$'s can correct for the different densities of population of the various patches, resulting in more or less encounters. Indeed, we believe that the rate of encounters by any individual is very different in a densely populated area, from what it is in a desert. However, especially in the stochastic model, the population size in each patch may fluctuate significantly, which we believe is a good motivation for using a model with $\gamma<1$. Our model is probably new in the cases $0<\gamma<1$. It is one possible way of interpolating between the two extreme cases $\gamma=1$ and $\gamma=0$.

We shall prove the FLLN for any value of $\gamma\in[0,1]$, and the FCLT  only for $\gamma\in[0,1)$ in the general case, and for all $\gamma\in[0,1]$ in the case that infections are only local, i.e.,
$\kappa_{i\ell} =0$ for $i\neq \ell$. The reason for this restriction is that in the case $\gamma=1$ and 
$\sum_{\ell\not=i}\kappa_{i\ell}>0$, we are not able to establish the estimate \eqref{eq:esimhatUps} in Lemma \ref{estimhatUps} below.

Note that $\Upsilon^N_i(t)\le \left(\frac{S^N_i(t)}{N}\right)^{1-\gamma}\sum_{\ell=1}^L \kappa_{i\ell}I^N_\ell(t)$, so that in the case $\gamma<1$, $\Upsilon^N_i(t)=0$ whenever $S^N_i(t)+I^N_i(t)+R^N_i(t)=0$. By convention, we shall assume that the same holds in case $\gamma=1$, i.e.,  $\frac{0}{0}=0$. $\lambda_i\Upsilon^N_i(t)$ is the rate of new infections in patch $i$ at time $t$. It is of course $0$ if patch $i$ is empty.

Let $A^N_i(t)$ be the cumulative counting process of individuals in patch $i$ that get infected on the time interval $(0,t]$. 
Then we can give a representation of the process $A^N_i(t)$ via the standard Poisson random measure (PRM) $Q_{i}$ on $\R^2_+$ (with mean measure $dsd\afrak$), the various  $\{Q_i,\ i\in\LL\}$ being mutually independent, 
\begin{align}\label{An-rep-1}
A^N_i(t)\;=\;\int_0^t\int_0^\infty{\bf1}_{\afrak\le \lambda_i\Upsilon^N_i(s^-)}Q_{i}(ds,d\afrak)\,,\quad t\ge 0\,. 
\end{align}
(We write $dA^N_i(s)$ to denote $\int_0^\infty{\bf1}_{\afrak\le \lambda_i\Upsilon^N_i(s^-)}Q_{i}(ds,d\afrak)$ so that $A^N_i(t) = \int_0^t d A^N_i(s)$.) 
Equivalently, we could write 
\begin{align} \label{An-rep-2}
A^N_i(t) \;=\; P_{A,i} \left( \lambda_i \int_0^t  \Upsilon^N_i(s) ds \right) \,,\quad t\ge 0\,,
\end{align}
where $P_{A,i}$ is a unit-rate Poisson process, and independent from each other for $i\in \mathcal{L}$. But the first description will be more useful for us.
We let $\{\tau^N_{j,i},\ j\ge1\}$ denote the successive jump times of the process
$A^N_i$, for $i\in \mathcal{L}$. (Note that all the analysis and results can be easily extended to a deterministic time-dependent rate function $\lambda_i(t)$. For example, in the expression above, we have an integral  $ \int_0^t \lambda_i(s) \Upsilon^N_i(s) ds$ instead.)

\subsection{ On the exposed and infectious periods} 

The $E^N_i(0)$ initially exposed individuals  experience the exposed and  infectious periods before recovery. 
Let $\{\eta^0_{k,i}: k=1,\dots, E^N_{i}(0)\}$ be the remaining exposed periods of the initially exposed individuals in patch $i$.
After the exposed period, let $\{\zeta_{-k,i}:k=1,\dots,E^N_{i}(0)\}$ be the durations of their infectious periods. 
The $I^N_i(0)$ initially  infectious
 individuals  experience a remaining infectious period before recovery, and   let $\zeta^0_{k,i}$, $k=1,\dots, I^N_i(0)$, denote their remaining infectious periods. 
The $A^N_i(t)$ newly infected individuals in patch $i$ experience the exposed and infectious periods. Let $\{\eta_{j,i}:j \in \N\}$ and $\{\zeta_{j,i}:j \in \N\}$ be the associated exposing and infectious periods. 

Assume that $\{\zeta^0_{k,i}\}$, $\{(\eta^0_{k,i},\zeta_{-k,i}) \}$ and $\{(\eta_{j,i}, \zeta_{j,i})\}$ are all i.i.d. sequences of  random variables having distribution functions $F_0$, $H_0(du,dv)$ and $H(du,dv)$, respectively, and they are also mutually independent. Note that $\zeta_{j,i}$ is defined for $j\in\Z$ and $i\in \mathcal{L}$ (those with $j<0$ code the infectious periods of the initially exposed individuals, while those with $j>0$ code the infectious periods of the newly exposed individuals). Let $G_0$ and $ F$ be the marginals of $H_0$ for $\eta^0_{k,i}$ and $ \zeta_{-k,i}$, and $G$ and $ F$ be the marginals of $H$ for $\eta_{k,i}$ and $ \zeta_{k,i}$, respectively.  (It is reasonable to assume that the marginal distributions of $\zeta_{-k,i}$ and $\zeta_{k,i}$ are the same.) Also let $F_0(\cdot|u)$ and $F(\cdot|u)$ be the conditional c.d.f.'s of $\zeta_{-k,i}$ and $\zeta_{k,i}$, given that
$\eta^0_{k,i}=u$ and $\eta_{k,i}=u$, respectively.   
Let $G^c_0=1-G_0$, $G^c=1-G$, $F_0^c = 1- F_0$ and $F^c=1-F$.

\subsection{ The migration processes}
Individuals may migrate from patch $\ell$ to $\ell'$ in any of the four epidemic stages, with rates $\nu_{S,\ell,\ell'}$, $\nu_{E,\ell, \ell'}$, $\nu_{I,\ell,\ell'}$ and $\nu_{R,\ell,\ell'}$ for the susceptible, exposed, infectious and recovered ones, respectively, for $\ell,\ell'\in\mathcal{L}$. For each individual, the times between migrations in each of the stages are exponentially distributed. 

In order to track the location/patch of the $j$--th individual who got exposed in patch $\ell$ at time $\tau^N_{j,\ell}$, 
 we first use the Markov process $X^j_\ell$, taking values in $\mathcal{L}$,  associated to the rates $\nu_{E,\cdot,\cdot}$. It takes effect from the time $\tau^N_{j,\ell}$ of becoming exposed, until the time 
 $\tau^N_{j,\ell}+\eta_{j,\ell}$ when this individual becomes infectious. Given that this individual has migrated to patch $\ell'$ at the end of the exposed period (she/he may have done several migrations to other patches during the exposed period), that is $X^j_\ell(\eta_{j, \ell}) = \ell'$, we then use another Markov process $Y^{j,\ell}_{\ell'}$ to track the location/patch of the individual during the infectious period $\zeta_{j, \ell}$, starting from $\ell'$ and associated to the rates $\nu_{I,\cdot,\cdot}$.
This process  $Y^{j,\ell}_{\ell'}$  only takes effect from the time of becoming infectious  $\tau^N_{j,\ell}+\eta_{j,\ell}$, until the time of recovery $\tau^N_{j,\ell}+\eta_{j,\ell}+\zeta_{j,\ell}$. 
 Suppose that the individual has migrated to patch $\ell''$ at the end of the infectious period, that is, $Y^{j,\ell}_{\ell'}(\zeta_{j,\ell})=\ell''$. The individual will then belong to the compartment of recovered individuals, and will migrate among patches according to the rates $\nu_{R,\cdot,\cdot}$. 
 Similarly we use $X^{0,k}_\ell$ and $Y^{-k,\ell}_{\ell'}$ for the initially exposed individuals $k=1,\dots, E^N_\ell(0)$ that have been exposed at time 0 in patch $\ell$. 
 $X^{0,k}_\ell$  takes effect from time 0 to $\eta^0_{k,\ell}$. 
They are again associated with the rates $\nu_{E,\cdot,\cdot}$  and $\nu_{I,\cdot,\cdot}$, respectively.
 In addition, we also use $Y^{0,k}_{\ell}$  for the initially infectious individuals $k=1,\dots, I^N_\ell(0)$ that have been infectious at time 0 in patch $\ell$. It is again associated with the rates $\nu_{I,\cdot,\cdot}$. 
  $Y^{0,k}_\ell$  takes effect from time 0 to $\zeta^0_{k,\ell}$.  We assume that for each $j$, $X^j_\ell$ and $Y^{j,\ell}_{\ell'}$ are independent for $\ell, \ell'\in \LL$, and for each $k$, $X^{0,k}_\ell$ and $ Y^{-k,\ell}_{\ell'}$ are independent for $\ell, \ell'\in \LL$. We also  assume that all these Markov processes $\{X^j_\ell, Y^{j,\ell}_{\ell'}\}_{j,\ell,\ell'}$, $\{X^{0,k}_\ell, Y^{-k,\ell}_{\ell'}\}_{k,\ell,\ell'}$  and  $\{Y^{0,k}_{\ell}\}_{k,\ell}$  are mutually independent.

Let  $p_{\ell,\ell'}(t) = P(X^j_\ell(t)=\ell')$ and 
$q_{\ell',\ell''}(t) = \P(Y^{j,\ell}_{\ell'}(t)=\ell'')$ for $\ell,\ell', \ell''\in \mathcal{L}$, $j \ge 1$ and $t\ge 0$. 
For each $\ell$,
the processes $\{X^{0,k}_\ell\}_k$ have the same transition function $(p_{\ell,\ell'}(\cdot))_{\ell'\in \LL}$ as  $\{X^{j}_\ell\}_j$, and
 the process  $\{Y^{-k,\ell}_{\ell'}\}_k$ has the same transition function $(q_{\ell',\ell''}(\cdot))_{\ell',\ell''\in \LL}$ as  $\{Y^{j,\ell}_{\ell'}\}_j$, and the process $\{Y^{0,k}_\ell\}_k$ also has the transition function $(q_{\ell,\ell'}(\cdot))_{\ell,\ell'\in \LL}$.

\subsection{ Epidemic evolution dynamics}

The multi-patch SEIR epidemic evolution dynamics can be described as follows: 
\begin{align} \label{S-rep-SEIR}
S^N_i(t)&\;=\;S^N_i(0)-A^N_i(t) + 
\sum_{\ell =1, \ell \neq i}^L \left( P_{S,\ell,i}\left(\nu_{S,\ell,i}\int_0^tS^N_\ell(s)ds\right) - P_{S,i,\ell}\left(\nu_{S,i,\ell} \int_0^tS^N_i(s)ds)\right)\right)\,,\\  
 \label{E-rep-SEIR}
E^N_i(t)&\;=\;\sum_{\ell=1}^L \sum_{k=1}^{E^N_\ell (0)} {\bf1}_{t<\eta_{k,\ell}^0}{\bf1}_{X^{0,k}_\ell(t)=i}
+\sum_{\ell=1}^L \sum_{j=1}^{A^N_\ell(t)} {\bf 1}_{\tau^N_{j,\ell} + \eta_{j,\ell} >t} {\bf1}_{X^j_\ell(t-\tau^N_{j,\ell})=i}\,\,\,, 
\\
I^N_i(t) &\;=\; \sum_{\ell=1}^L \sum_{k=1}^{I^N_\ell (0)} {\bf1}_{t<\zeta_{k,\ell}^0}{\bf1}_{Y^{0,k}_\ell(t)=i}   +\sum_{\ell=1}^L \sum_{k=1}^{E^N_\ell (0)} {\bf1}_{\eta_{k,\ell}^0 \le t} \left( \sum_{\ell'=1}^L {\bf1}_{X^{0,k}_\ell(\eta_{k,\ell}^0)=\ell'}  {\bf1}_{\eta_{k,\ell}^0 + \zeta_{-k,\ell}>t } {\bf1}_{Y^{-k,\ell}_{\ell'}(t- \eta_{k,\ell}^0)=i} \right) \non \\
& \quad +  \sum_{\ell=1}^L \sum_{j=1}^{A^N_\ell(t)} {\bf 1}_{\tau^N_{j,\ell} + \eta_{j,\ell} \le t} \left(\sum_{\ell'=1}^L  {\bf1}_{X^j_\ell( \eta_{j,\ell})= \ell'}  {\bf 1}_{\tau^N_{j,\ell} + \eta_{j,\ell}  + \zeta_{j,\ell}  > t} {\bf1}_{Y^{j,\ell}_{\ell'}(t-\tau^N_{j,\ell} - \eta_{j,\ell} )=i}\right)\,,  \label{I-rep-SEIR}\\
R^N_i(t) &\;=\;  \sum_{\ell=1}^L \sum_{k=1}^{I^N_\ell (0)} {\bf1}_{\zeta_{k,\ell}^0 \le t}{\bf1}_{Y^{0,k}_\ell(\zeta_{k,\ell}^0)=i}  +\sum_{\ell=1}^L \sum_{k=1}^{E^N_\ell (0)} \left( \sum_{\ell'=1}^L {\bf1}_{X^{0,k}_\ell(\eta_{k,\ell}^0)=\ell'}  {\bf1}_{\eta_{k,\ell}^0 + \zeta_{-k,\ell} \le t} {\bf1}_{Y^{-k,\ell}_{\ell'}( \zeta_{-k,\ell})=i}\right) \non \\
& \quad +  \sum_{\ell=1}^L \sum_{j=1}^{A^N_\ell(t)}  \left( \sum_{\ell'=1}^L {\bf1}_{X^j_\ell( \eta_{j,\ell})=\ell'}  {\bf 1}_{\tau^N_{j,\ell} + \eta_{j,\ell}  + \zeta_{j,\ell}  \le t} {\bf1}_{Y^{j,\ell}_{\ell'}( \zeta_{j,\ell} )=i}  \right) \non \\
& \quad + 
\sum_{\ell=1, \,\ell\neq i}^L \bigg( P_{R,\ell,i}\left(\nu_{R,\ell,i} \int_0^tR^N_\ell(s)ds\right) - P_{R,i,\ell}\left(\nu_{R,i,\ell} \int_0^tR^N_i(s)ds\right) \bigg) \,, \label{R-rep-SEIR}
\end{align}
where $P_{S,i,\ell}, P_{R,i,\ell}$ , $i,\ell\in \mathcal{L}$, are mutually independent unit-rate Poisson processes, which are globally independent of the $Q_{i}$'s. 
 The dynamics of $S^N_i(t)$ is straightforward since it is simply equal to the number of initially susceptible individuals minus the number of exposed ones in patch $i$ and then take into account the migrations. 
For the dynamics of $E^N_i(t)$, the first term represents the number of initially exposed individuals from patch $\ell$ that remain exposed and are in patch $i$ at time $t$, and the second term represents the number of newly exposed individuals from patch $\ell$ that remain exposed and are in patch $i$ at time $t$. 
In the expression for $I^N_i(t)$,  
the first term counts the number of initially infectious individuals from all the patches that remain infectious and are in patch $i$ at time $t$, and
the second term counts the numbers of initially exposed individuals from all the patches that have become infectious and are in patch $i$ at time $t$ (for tracking purposes, the location at the epochs of becoming infectious  is recorded). Also note that we use  the Markov process $Y^{0,k}_\ell$ to indicate that these are for the initially exposed individuals. The third term counts the number of newly exposed individuals at all patches that have become infectious and are in patch $i$ at time $t$, and we also track the patch in which each individual has become infectious. 
In the expression for $R^N_i(t)$, the first term represents the 
number of initially infectious individuals from patch $\ell$ that have recovered by time $t$ and were in patch $i$ at the time of recovery,  the second term represents the number of initially exposed individuals from patch $\ell$ that have recovered by time $t$, and were  in patch $i$ at the time of recovery, while becoming infectious in patch $\ell'$, 
 the third term represents the number of newly exposed individuals from patch $\ell$ that have recovered by time $t$, and were in patch $i$ at the time of recovery while becoming infectious in patch $\ell'$. 

It is not easy to take the limit as $N\to\infty$ in the formulas of $E^N_i$ and $I^N_i$ above.  We now derive the following representations, which will be very helpful in the proofs of our results. 
\begin{lemma} \label{lem-I1-2-rep}
We have
\begin{align} \label{eqn-E-rep-2-SEIR}
E^N_i(t)&\;=\; E^N_i(0) - \sum_{\ell=1}^L  \sum_{k=1}^{E^N_\ell (0)} {\bf1}_{\eta_{k,\ell}^0 \le t}{\bf1}_{X^{0,k}_\ell(\eta_{k,\ell}^0)=i}  + A^N_i(t) - \sum_{\ell=1}^L\sum_{j=1}^{A^N_\ell(t)} {\bf 1}_{\tau^N_{j,\ell} + \eta_{j,\ell} \le t} {\bf1}_{X^j_\ell(\eta_{j,\ell})=i} \non \\
& \quad + \sum_{\ell \neq i} P_{E,\ell,i}\left(\nu_{E,\ell,i} \int_0^t E^N_\ell (s)ds\right)  -\sum_{\ell\neq i}P_{E,i,\ell}\left(\nu_{E,i,\ell} \int_0^tE^N_i(s)ds\right)\,,
\end{align}
\begin{align} \label{I-rep-2-SEIR}
I^N_i(t) &\;=\;   I^N_i (0) -  \sum_{\ell=1}^L \sum_{k=1}^{I^N_\ell (0)} {\bf1}_{\zeta_{k,\ell}^0 \le t}{\bf1}_{Y^{0,k}_\ell(\zeta_{k,\ell}^0)=i} \non \\
& \quad + \sum_{\ell=1}^L  \sum_{k=1}^{E^N_\ell (0)} {\bf1}_{\eta_{k,\ell}^0 \le t}{\bf1}_{X^{0,k}_\ell(\eta_{k,\ell}^0)=i}  -\sum_{\ell=1}^L \sum_{k=1}^{E^N_\ell (0)}\left( \sum_{\ell'=1}^L {\bf1}_{X^{0,k}_\ell(\eta_{k,\ell}^0)=\ell'}  {\bf1}_{\eta_{k,\ell}^0 + \zeta_{-k,\ell} \le t} {\bf1}_{Y^{-k,\ell}_{\ell'}( \zeta_{-k,\ell})=i}\right)  \non \\
& \quad  + \sum_{\ell=1}^L\sum_{j=1}^{A^N_\ell(t)} {\bf 1}_{\tau^N_{j,\ell} + \eta_{j,\ell} \le t} {\bf1}_{X^j_\ell(\eta_{j,\ell})=i}  -   \sum_{\ell=1}^L \sum_{j=1}^{A^N_\ell(t)}  \left( \sum_{\ell'=1}^L {\bf1}_{X^j_\ell( \eta_{j,\ell})=\ell'}  {\bf 1}_{\tau^N_{j,\ell} + \eta_{j,\ell}  + \zeta_{j,\ell}  \le t} {\bf1}_{Y^{j,\ell}_{\ell'}( \zeta_{j,\ell} )=i}  \right) \non  \\
& \quad + \sum_{\ell \neq i} P_{I,\ell,i}\left(\nu_{I,\ell,i} \int_0^t I^N_\ell (s)ds\right)  -\sum_{\ell\neq i}P_{I,i,\ell}\left(\nu_{I,i,\ell} \int_0^t I^N_i(s)ds\right)\,. 
\end{align}
where $P_{E,i,\ell}$ and $P_{I,i,\ell}$, $i,\ell\in \mathcal{L}$, are all unit-rate Poisson processes, mutually independent, and also independent of $P_{A,i}$, $P_{S,i,\ell}$ and  $P_{R,i,\ell}$. 
\end{lemma}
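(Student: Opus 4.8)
The plan is to show that the occupancy representations \eqref{E-rep-SEIR}--\eqref{I-rep-SEIR} and the flow representations \eqref{eqn-E-rep-2-SEIR}--\eqref{I-rep-2-SEIR} describe the same integer-valued pure-jump process, by rewriting, for each individual, the occupancy indicator in \eqref{E-rep-SEIR} (resp.\ \eqref{I-rep-SEIR}) as a telescoping sum of the jumps of its migration chain over its exposed (resp.\ infectious) lifetime, and then superposing over individuals. The only genuinely analytic ingredient is a random time--change argument identifying the aggregate migration flux as a time--changed unit--rate Poisson process.

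First I would establish a per--individual occupation identity. For the $j$--th individual infected in patch $\ell$, with onset $\tau^N_{j,\ell}$, exposed period $\eta_{j,\ell}$ and chain $X^j_\ell$ started at $X^j_\ell(0)=\ell$, decomposing ${\bf1}_{X^j_\ell(\cdot)=i}$ into the jumps of $X^j_\ell$ on the exposed lifetime and distinguishing the cases $t<\tau^N_{j,\ell}$, $\tau^N_{j,\ell}\le t<\tau^N_{j,\ell}+\eta_{j,\ell}$ and $t\ge\tau^N_{j,\ell}+\eta_{j,\ell}$ gives
\begin{align*}
{\bf1}_{\tau^N_{j,\ell}+\eta_{j,\ell}>t}{\bf1}_{X^j_\ell(t-\tau^N_{j,\ell})=i}
&={\bf1}_{\tau^N_{j,\ell}\le t}{\bf1}_{\ell=i}-{\bf1}_{\tau^N_{j,\ell}+\eta_{j,\ell}\le t}{\bf1}_{X^j_\ell(\eta_{j,\ell})=i}\\
&\quad +\Big(\text{net }\to i\text{ migrations of }X^j_\ell\text{ on }(\tau^N_{j,\ell},(\tau^N_{j,\ell}+\eta_{j,\ell})\wedge t]\Big),
\end{align*}
and the analogous identity holds for each initially exposed individual through $X^{0,k}_\ell$, $\eta^0_{k,\ell}$ started at time $0$. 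The verification is a short case check: in the middle case the net migration count telescopes to ${\bf1}_{X^j_\ell(t-\tau^N_{j,\ell})=i}-{\bf1}_{\ell=i}$, and in the last case to ${\bf1}_{X^j_\ell(\eta_{j,\ell})=i}-{\bf1}_{\ell=i}$, so both sides agree.

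Next I would sum these identities over all exposed individuals (the two sums of \eqref{E-rep-SEIR}). The entry terms collapse, since only the summands with $\ell=i$ survive: $\sum_\ell\sum_{k\le E^N_\ell(0)}{\bf1}_{\ell=i}=E^N_i(0)$ and $\sum_\ell\sum_{j\le A^N_\ell(t)}{\bf1}_{\ell=i}=A^N_i(t)$; the exit--in--patch--$i$ terms reproduce exactly the two negative sums of \eqref{eqn-E-rep-2-SEIR}. It then remains to represent the total net migration flux into and out of patch $i$ among the currently exposed population, and this is where the Markovian structure is used essentially. The process counting $\ell\to i$ migrations among exposed individuals is a counting process whose stochastic intensity at time $s$ is $\nu_{E,\ell,i}E^N_\ell(s)$, because each of the $E^N_\ell(s)$ exposed individuals in patch $\ell$ carries an independent exponential clock of rate $\nu_{E,\ell,i}$, and these clocks are independent across individuals, directions and compartments. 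By the multivariate random time--change theorem for counting processes with absolutely continuous compensators and no common jumps, each such flux equals $P_{E,\ell,i}\big(\nu_{E,\ell,i}\int_0^t E^N_\ell(s)\,ds\big)$ for a unit--rate Poisson process $P_{E,\ell,i}$; moreover these processes, together with the $P_{E,i,\ell}$, the analogous $P_{I,\cdot,\cdot}$, and the previously introduced $P_{A,i}$, $P_{S,i,\ell}$, $P_{R,i,\ell}$, are mutually independent because they are driven by disjoint, independent families of exponential clocks and PRMs. Collecting the entry, exit and migration terms yields \eqref{eqn-E-rep-2-SEIR}.

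Finally, \eqref{I-rep-2-SEIR} follows by the same scheme applied to the infectious compartment, using the infectious--period location chains $Y^{0,k}_\ell$, $Y^{-k,\ell}_{\ell'}$ and $Y^{j,\ell}_{\ell'}$. Entries into ``infectious in patch $i$'' occur at the instants of becoming infectious, in the patch where the exposed period ends (the positive indicator sums $\sum{\bf1}_{X(\eta)=i}$ in \eqref{I-rep-2-SEIR}) and at time $0$ for the $I^N_i(0)$ initially infectious; exits occur at recovery while in patch $i$ (the negative indicator sums); and the net infectious migration flux is represented exactly as above through $P_{I,\ell,i}$ and $P_{I,i,\ell}$ with compensators $\nu_{I,\ell,i}\int_0^t I^N_\ell(s)\,ds$ and $\nu_{I,i,\ell}\int_0^t I^N_i(s)\,ds$. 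I expect the main obstacle to be the rigorous justification of this migration representation---verifying the compensator identification and, in particular, the asserted mutual independence of the newly introduced Poisson processes---since everything else reduces to bookkeeping built on the per--individual telescoping identity.
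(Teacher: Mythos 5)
Your proposal is correct and follows essentially the same route as the paper: the paper likewise decomposes each occupancy indicator into an entry term, an exit term, and a net-displacement term (recorded there via the frozen-location counts $V^{N,0}_{i,\ell}(t)$ and $V^{N}_{i,\ell}(t)$ of where each individual sits at time $t\wedge\eta$, which is exactly what your per-individual telescoping of migration jumps computes), and then identifies the aggregate migration balance with the difference of time-changed unit-rate Poisson processes. The only difference is emphasis: the paper asserts that last identification with ``it is clear that,'' whereas you justify it explicitly via the intensity computation and the multivariate random time-change theorem --- a useful elaboration, not a different proof.
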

Before turning to the proof, let us comment on these formulas. 
In the expression of $E^N_i(t)$, the first and third term count the number of initially exposed individuals in patch $i$, and the number of those whose became exposed in patch $i$ on the time interval $[0,t]$. The second and fourth terms subtract the numbers of initially and newly exposed individuals in any patch who have become infectious before time $t$ in patch $i$. Finally the last two term count the numbers of migrations of exposed individuals to and from patch $i$.
The expression of $I^N_i(t)$ is similar, except that the second term subtracts the number of initially infectious individuals in any patch who have recovered before time $t$ in patch $i$, and the fourth and sixth terms subtract the numbers of initially and newly exposed individuals in any patch who have recovered before time $t$ in patch $i$, where the patch in which they became infectious is also tracked.

\begin{proof}
 In the representation of $E^N_i(t)$, we observe that 
\begin{align*}
\sum_{k=1}^{E^N_i (0)} {\bf1}_{t<\eta_{k,i}^0}{\bf1}_{X^{0,k}_i(t)=i}
&= E^N_i(0) - \sum_{k=1}^{E^N_i (0)} {\bf1}_{\eta_{k,i}^0 \le t }{\bf1}_{X^{0,k}_i(\eta_{k,i}^0)=i}  - \sum_{\ell \neq i} V^{N,0}_{i, \ell}(t)
\end{align*}
and for $\ell \neq i$, 
\begin{align*}
\sum_{k=1}^{E^N_{\ell} (0)} {\bf1}_{t<\eta_{k,\ell}^0}{\bf1}_{X^{0,k}_{\ell}(t)=i}
=  V^{N,0}_{\ell, i}(t) -  \sum_{k=1}^{E^N_\ell (0)} {\bf1}_{\eta_{k,\ell}^0 \le t}{\bf1}_{X^{0,k}_\ell(\eta_{k,\ell}^0)=i}
\end{align*}
where  
$ V^{N,0}_{i,\ell}(t) = \sum_{k=1}^{E^N_i(0)} {\bf 1}_{X^{0,k}_i(t\wedge \eta^0_{k,i})=\ell} $ is the number of initially exposed  individuals from patch $i$ that are in patch $\ell$
at the time  $t\wedge \eta^0_{k, i}$ for $k=1,\dots, E^N_i(0)$. 
We also observe that 
\begin{align*}
\sum_{j=1}^{A^N_i(t)} {\bf 1}_{\tau^N_{j,i} + \eta_{j,i} >t}  {\bf1}_{X^j_i(t-\tau^N_{j,i})=i} 
&= A^N_i(t) - \sum_{j=1}^{A^N_i(t)} {\bf 1}_{\tau^N_{j,i} + \eta_{j,i} \le t} {\bf1}_{X^j_i( \eta_{j,i})=i} - \sum_{\ell \neq i} V^N_{i,\ell} (t) 
\end{align*}
and for $\ell \neq i$,
\begin{align*}
 \sum_{j=1}^{A^N_\ell(t)} {\bf 1}_{\tau^N_{j,\ell} + \eta_{j,\ell} >t} {\bf1}_{X^j_\ell(t-\tau^N_{j,\ell})=i}
& =  V^N_{\ell,i}(t) -  \sum_{j=1}^{A^N_\ell(t)} {\bf 1}_{\tau^N_{j,\ell} + \eta_{j,\ell} \le t} {\bf1}_{X^j_\ell(\eta_{j,\ell})=i}
\end{align*}
where $V^N_{i,\ell}(t)  = \sum_{j=1}^{A^N_i(t)} {\bf 1}_{X^j_i((t-\tau^N_{j,i})\wedge \eta_{j,i}=\ell}  $ denotes the number of individuals who became exposed at time $ \tau^N_{j, \ell}\in(0,t)$ in patch $i$, and are in patch $\ell$ at time $(t-\tau^N_{j,i}) \wedge \eta_{j,i}$ for $j =1,\dots, A^N_i(t)$. 


It is clear that 
\begin{align*}
& \sum_{\ell \neq i} V^N_{\ell,i} (t) -\sum_{\ell \neq i} V^N_{i,\ell} (t) + \sum_{\ell \neq i} V^{N,0}_{\ell,i} (t) -\sum_{\ell \neq i} V^{N,0}_{i,\ell} (t)  \\
&= \sum_{\ell\neq i}  P_{E,\ell,i}\left(\nu_{E,\ell,i} \int_0^t E^N_\ell (s)ds\right)  -\sum_{\ell\neq i}P_{E,i,\ell}\left(\nu_{E,i,\ell} \int_0^tE^N_i(s)ds\right) \,.
\end{align*}
Thus, using the above identities, we obtain the expression in \eqref{eqn-E-rep-2-SEIR}. A similar argument gives the expression in \eqref{I-rep-2-SEIR}. 
\end{proof} 

\subsection{Using PRMs in order to represent some terms of the model}
We end this section of presentation of our model with the description of the representations of some of the key components in the dynamics of the above model 
via PRMs. Those will play an important role in the proofs below. The infection process $A^N_\ell$ has the representation \eqref{An-rep-1}, which makes use of the PRM $Q_{\ell}$. 
Define a PRM 
$\check{Q}_{\ell}(ds,d\afrak, du, d\theta)$ on $\R_+^3\times\mathcal{L}$, which is the sum of the Dirac masses at the points  
$(\tau_{j,\ell}^N,\mathfrak{A}^N_{j,\ell}, \eta_{j,\ell}, X_\ell^j(\eta_{j,\ell}))$ with mean measure $ds\times d\afrak\times G(du)\times \mu^X_\ell(u,d\theta)$, where
for each $u>0$, $\mu^X_\ell(u,\{\ell'\})=p_{\ell,\ell'}(u)$, and an infection occurs at time $\tau^N_{j,\ell}$ if and only if $\mathfrak{A}^N_{j,\ell}\le\lambda_\ell\Upsilon^N(\tau^N_{j,\ell})$. 
We can then write for $ \ell, \ell' \in \mathcal{L}$, 
\begin{align} \label{PRM-rep-E-SEIR}
& \sum_{j=1}^{A^N_\ell(t)}  {\bf 1}_{\tau^N_{j,\ell} + \eta_{j,\ell} \le t} {\bf1}_{X^j_\ell( \eta_{j,\ell})=\ell'}\; =\; \int_0^t \int_0^\infty \int_0^{t-s}\int_{\{\ell'\}} {\bf 1}_{\afrak \le \lambda_\ell \Upsilon^N_\ell(s^-)} \check{Q}_{\ell}(ds,d\afrak, du, d\theta)\,. 
\end{align}
We denote the corresponding compensated PRM $\overline{Q}_{\ell}(ds,d\afrak, du, d\theta) = \check{Q}_{\ell}(ds,d\afrak, du, d\theta)- ds\times d\afrak\times G(du)\times \mu^X_\ell(u,d\theta)$ for $\ell,\ell'\in \mathcal{L}$.

Define another PRM 
$\breve{Q}_{\ell}(ds,d\afrak,du, d \theta, dv, d\vartheta)$ 
on $\R_+^4\times \mathcal{L}^2 $, which is the sum of the Dirac masses at the points  
$(\tau_{j,\ell}^N,\mathfrak{A}^N_{j,\ell}, \eta_{j,\ell},    \zeta_{j,\ell},  X^j_\ell(\eta_{j,\ell}),  Y_{\ell'}^j(\zeta_{j,\ell}))$ with mean measure $ds\times d\afrak\times H(du, dv)\times \mu^X_\ell(u, d\theta) \times \mu^Y_\theta(v,d\vartheta)$, where
for each $u>0$, $\mu^X_\ell(u,\{\ell'\})=p_{\ell,\ell'}(u)$, and for each $v>0$, $\mu^Y_\ell(v,\{\ell'\})=q_{\ell,\ell'}(v)$,
 and again an infection occurs at time $\tau^N_{j,\ell}$ if and only if $\mathfrak{A}^N_{j,\ell}\le\lambda_\ell\Upsilon^N((\tau^N_{j,\ell})^-)$. 
We can then write for $ \ell,i \in\LL$, 
\begin{align} \label{PRM-rep-I-SEIR}
& \sum_{j=1}^{A^N_\ell(t)} {\bf 1}_{\tau^N_{j,\ell} + \eta_{j,\ell} \le t}  \left( \sum_{\ell'=1}^L {\bf1}_{X^j_\ell( \eta_{j,\ell})=\ell'}  {\bf 1}_{\tau^N_{j,\ell} + \eta_{j,\ell}  + \zeta_{j,\ell}  \le t} {\bf1}_{Y^{j,\ell}_{\ell'}( \zeta_{j,\ell} )=i}  \right) \non \\
&\;=\; \int_0^t \int_0^\infty \int_0^{t-s} \int_0^{t-s-u}  \int_{\LL}   \int_{\{i\}}  {\bf 1}_{\afrak \le \lambda_\ell \Upsilon^N_\ell(s^-)}  \breve{Q}_{i}(ds,d\afrak, du, dv, d\theta,  d \vartheta)\,. 
\end{align}
We denote the corresponding compensated PRM 
$\wt{Q}_{\ell}(ds,d\afrak,du, dv, d \theta, d\vartheta) = \breve{Q}_{\ell}(ds,d\afrak,du, dv, \\ d \theta,  d\vartheta) 
-  ds\times d\afrak\times H(du,dv)\times \mu^X_\ell(u, d\theta)  \times \mu^Y_\theta(v,d\vartheta)$ for $\ell\in \mathcal{L}$.

\section{Functional limit theorems} \label{sec-theorems}

\subsection{FLLN}

For any process $Z^N$, let $\bar{Z}^N:=N^{-1}Z^N$.   

\begin{assumption} \label{AS-FLLN-SEIR}
There exist constants $0<\S_i(0) \le 1, 0 \le \bar{E}_i(0)<1, 0 \le \I_i(0)<1$ with $\sum_{i=1}^L [\bar{E}_i(0)+\bar{I}_i(0)]>0$ such that  $\sum_{i=1}^L (\S_i(0)+ \bar{E}_i(0)+ \I_i(0)) =1$ and  $(\bar{S}^N_i(0), \bar{E}^N_i(0),  \bar{I}^N_i(0),\,  i\in \mathcal{L})   \to (\bar{S}_i(0), \bar{E}_i(0), \bar{I}_i(0),\, i\in \mathcal{L})$ in probability in $\R^{3L}$ as $N\to\infty$. 

\end{assumption}

The following FLLN is proved in Section \ref{sec-proof-LLN-SEIR}. 

\begin{theorem} \label{thm-FLLN-SEIR} 
Under Assumption  \ref{AS-FLLN-SEIR},  and assume that $F_0$ and $G_0$ are continuous,  
 \begin{equation} \label{eqn-FLLN-conv-SEIR}
 (\S^N_i, \bar{E}^N_i, \I^N_i,\rR^N_i,\, i\in \mathcal{L}) \;\to\; (\S_i,\ \bar{E}_i, \bar{I}_i,\rR_i,\, i\in \mathcal{L}) \qinq D^{4L} \qasq N \to \infty\, , 
 \end{equation}
in probability, locally uniformly  on $[0,T]$, where $(\S_i(t), \bar{E}_i(t), \I_i(t),\rR_i(t),\, i\in \mathcal{L})\in C^{4L}$ is the unique  solution of the following system of  deterministic integral equations: 
\begin{align} \label{barS-SEIR}
\bar{S}_i(t) \;= \; \S_i(0) - \lambda_i \int_0^t \bar\Upsilon_i(s) ds  +\sum_{\ell=1,\ell\neq i}^L \int_0^t (\nu_{S,\ell,i}\S_\ell(s)- \nu_{S,i,\ell}\S_i(s)) ds\,,
\end{align}
\begin{align} \label{barE-SEIR}
\bar{E}_i(t) &\;=\; \bar{E}_i(0)  -   \sum_{\ell=1}^L \bar{E}_\ell(0) \int_0^t p_{\ell,i}(u) d G_0(u)    +\lambda_i \int_0^t \bar\Upsilon_i(s) ds \non\\
& \quad
  - \sum_{\ell=1}^L \lambda_\ell \int_0^t  \int_0^{t-s} p_{\ell,i}(u) d G(u) \bar\Upsilon_\ell(s)ds    +\sum_{\ell=1,\ell\neq i}^L \int_0^t (\nu_{E,\ell,i}\bar{E}_\ell(s)- \nu_{E,i,\ell}\bar{E}_i(s)) ds\,, 
\end{align}
\begin{align}\label{barI-SEIR}
\bar{I}_i(t) &\;=\;  \bar{I}_i(0) -  \sum_{\ell=1}^L \bar{I}_\ell(0) \int_0^t  q_{\ell,i}(s) d F_0(s) +   \sum_{\ell=1}^L \bar{E}_\ell(0) \bigg(\int_0^t p_{\ell,i}(u) d G_0(u)    -  \Phi^0_{\ell,i}(t)\bigg)  \non\\
& \quad + \sum_{\ell=1}^L \lambda_\ell  \bigg( \int_0^t  \int_0^{t-s} p_{\ell,i}(u) d G(u) \bar\Upsilon_\ell(s)ds  -  \int_0^t  \Phi_{\ell,i}(t-s)\bar\Upsilon_\ell(s)  ds \bigg)  \non \\
& \quad +\sum_{\ell=1,\ell\neq i}^L \int_0^t \big(\nu_{I,\ell,i}\bar{I}_\ell(s)- \nu_{I,i,\ell}\bar{I}_i(s)\big) ds\, ,
\end{align}
and
\begin{align} \label{barR-SEIR}
\bar{R}_i(t) &\;=\;   \sum_{\ell=1}^L  \bar{I}_\ell(0) \int_0^t  q_{\ell,i}(s) d F_0(s) +  \sum_{\ell=1}^L \bar{E}_\ell(0)\Phi^0_{\ell,i}(t) +  \sum_{\ell=1}^L \lambda_\ell \int_0^t   \Phi_{\ell,i}(t-s)\bar\Upsilon_\ell(s)  ds \non\\
& \quad +\sum_{\ell=1,\ell\neq i}^L \int_0^t (\nu_{R,\ell,i}\bar{R}_\ell(s)- \nu_{R,i,\ell}\bar{R}_i(s)) ds\,,
\end{align}
with 
\begin{align} \label{eqn-Phi-SEIR}
\bar\Upsilon_i(t)\;:=\;\frac{\S_i(t)\sum_{j=1}^L\kappa_{ij}\I_j(t)}{(\S_i(t)+ \bar{E}_i(t)+\I_i(t)+\rR_i(t))^\gamma}\,,
\end{align}
\begin{equation} \label{eqn-H0}
\Phi^0_{\ell,i}(t)\;:=\;   \int_0^{t}  \sum_{\ell'=1}^L p_{\ell, \ell'}(u) \int_0^{t-u} q_{\ell',i}(v)  H_0(du,dv) \,, 
\end{equation}
and
\begin{equation} \label{eqn-H}
\Phi_{\ell,i}(t)\;:= \;  \int_0^{t} \sum_{\ell'=1}^L p_{\ell, \ell'}(u) \int_0^{t-u} q_{\ell',i}(v) H(du,dv)\,. 
\end{equation}

\end{theorem}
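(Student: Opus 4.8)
The plan is to follow the PRM--martingale method of \cite{PP-2020}, pushing it through the extra bookkeeping created by the migration chains and the infection-from-distance term. The overall scheme has four stages: (i) decompose each scaled process into an explicit compensator plus a martingale; (ii) show the martingales are negligible and prove tightness; (iii) identify the limits of the compensators and of the i.i.d.\ initial-data sums; and (iv) close the nonlinear system and invoke uniqueness of its solution.

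First I would rewrite the nonstandard terms of \eqref{eqn-E-rep-2-SEIR}, \eqref{I-rep-2-SEIR} and \eqref{R-rep-SEIR} as integrals against the PRMs $\check Q_\ell$ and $\breve Q_\ell$ via \eqref{PRM-rep-E-SEIR}--\eqref{PRM-rep-I-SEIR}, and each migration term as an integral against its unit-rate Poisson process. Compensating $\check Q_\ell$ by its mean measure $ds\,d\afrak\,G(du)\,\mu^X_\ell(u,d\theta)$ (and $\breve Q_\ell$ and the migration processes similarly) splits every term into a predictable compensator plus a martingale relative to a suitable filtration built from the PRMs. After dividing by $N$, Doob's inequality together with the explicit predictable quadratic variations --- each of order $N^{-1}$ times a bounded integral, since $\bar\Upsilon^N_i\le\bar\kappa\,(\bar S^N_i)^{1-\gamma}$ and $\bar S^N_i,\bar I^N_i\le 1$ --- shows each martingale is $O_{\P}(N^{-1/2})$ uniformly on $[0,T]$, hence $\RA 0$; note this needs no regularity of $F$, $G$, $H$.

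The two remaining ingredients are the limits of the compensators and of the initial-data sums. For the latter, terms such as $N^{-1}\sum_{k=1}^{E^N_\ell(0)}{\bf 1}_{\eta^0_{k,\ell}\le t}{\bf 1}_{X^{0,k}_\ell(\eta^0_{k,\ell})=i}$ converge by the strong law over the i.i.d.\ pairs $(\eta^0,X^0)$ to $\bar E_\ell(0)\int_0^t p_{\ell,i}(u)\,G_0(du)$, and continuity of $G_0$ (and $F_0$ for the initial infectives) upgrades this to convergence locally uniformly in $t$ by a Glivenko--Cantelli/Dini argument; this is exactly where the hypotheses ``$F_0,G_0$ continuous'' enter. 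For the compensators, integrating the mean measure of $\check Q_\ell$ over $\afrak\le\lambda_\ell\Upsilon^N_\ell(s^-)$ and over $\theta=i$ yields $\lambda_\ell\int_0^t\int_0^{t-s}p_{\ell,i}(u)\,G(du)\,\bar\Upsilon^N_\ell(s)\,ds$, and the $\breve Q_\ell$ mean measure produces the kernels $\Phi_{\ell,i}$, $\Phi^0_{\ell,i}$ of \eqref{eqn-H0}--\eqref{eqn-H}. Thus the whole system reduces to showing $\int_0^t\bar\Upsilon^N_\ell(s)\,ds\to\int_0^t\bar\Upsilon_\ell(s)\,ds$.

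This closure step is the crux, and I expect it to be the main obstacle. Because $\bar\Upsilon_i$ in \eqref{eqn-Phi-SEIR} is a ratio with the power $\gamma$ on the denominator $\bar B_i=\bar S_i+\bar E_i+\bar I_i+\bar R_i$, I must control that denominator from below before treating $\bar\Upsilon^N_i$ as a continuous functional of $(\bar S^N,\bar E^N,\bar I^N,\bar R^N)$. For $\gamma<1$ the bound $\bar\Upsilon^N_i\le(\bar S^N_i)^{1-\gamma}\sum_\ell\kappa_{i\ell}\bar I^N_\ell$ already tames the numerator, but for $\gamma=1$, and more generally to identify the limit, I would first establish a uniform lower bound $\inf_{t\le T}\bar B^N_i(t)\ge c>0$ (the role of the lower-bound lemma), so that $\bar\Upsilon^N_i$ remains in a region where the map is Lipschitz. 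Having C-tightness of $(\bar S^N,\bar E^N,\bar I^N,\bar R^N)$ in $D^{4L}$ with continuous limits --- from the compensators having a uniformly controlled modulus of continuity and the vanishing martingales --- a continuous-mapping argument then gives that every subsequential limit satisfies \eqref{barS-SEIR}--\eqref{barR-SEIR}. Finally I would prove uniqueness for this nonlinear Volterra system by a Gronwall/fixed-point estimate on $[0,T]$, using that $\Phi_{\ell,i},\Phi^0_{\ell,i}$ are bounded and that $\bar\Upsilon$ is Lipschitz on $\{\bar B_i\ge c\}$, which forces all subsequential limits to coincide and promotes the convergence in distribution to convergence in probability. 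The genuine difficulty is the interplay between the from-distance sum $\sum_\ell\kappa_{i\ell}\bar I^N_\ell$ and the denominator $\bar B^N_i$ at $\gamma=1$, precisely the point the text flags for the finer FCLT estimates.
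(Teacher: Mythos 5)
Your overall scheme does match the paper's: PRM representations, vanishing martingales for the infection and migration counting processes, LLN for the i.i.d.\ initial-data sums (with continuity of $F_0,G_0$ entering exactly where you say), subsequence extraction, identification of $\bar\Upsilon$ via a lower bound on the denominator, and Gronwall uniqueness. However, there is one genuine gap in your stages (i)--(iii). The terms generated by the newly infected individuals, e.g.\ $\bar{E}^N_{\ell,i}(t)=N^{-1}\sum_{j=1}^{A^N_\ell(t)}{\bf 1}_{\tau^N_{j,\ell}+\eta_{j,\ell}\le t}{\bf 1}_{X^j_\ell(\eta_{j,\ell})=i}$, are integrals against $\check{Q}_\ell$ of a kernel that depends on the \emph{running} time $t$ (through ${\bf 1}_{u\le t-s}$). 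As processes in $t$ these are not martingales after compensation with respect to any usable filtration (with the random intensity $\Upsilon^N_\ell(s^-)$ even the filtration generated by points with $s+u\le t$ does not make them martingales), so the claim that ``Doob's inequality shows each martingale is $O_{\P}(N^{-1/2})$'' does not cover them, and the asserted reduction of the whole system to $\int_0^t\bar\Upsilon^N_\ell(s)ds\to\int_0^t\bar\Upsilon_\ell(s)ds$ is not justified as stated. The paper closes precisely this gap with a conditional-expectation decomposition: it sets $\breve{E}^N_{\ell,i}(t):=\E\big[\bar{E}^N_{\ell,i}(t)\mid\F^N_{A,\ell}(t)\big]=\int_0^t\big(\int_0^{t-s}p_{\ell,i}(u)G(du)\big)d\bar{A}^N_\ell(s)$, proves $\bar{E}^N_{\ell,i}-\breve{E}^N_{\ell,i}\to0$ using that the summands are conditionally centered and independent given $\F^N_{A,\ell}(t)$ (pointwise variance of order $N^{-1}$) together with a monotone-increment tightness criterion (and the PRM representation for the increment bounds), and then passes to the limit in $\breve{E}^N_{\ell,i}$ via a lemma on integrals $\int_0^t f(t-s)\,dg_N(s)$ against converging nondecreasing integrators; the same is done for $\bar{I}^N_{\ell,i}$ with the kernel $\Phi_{\ell,i}$. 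Your plan needs this (or an equivalent) layer; without it the argument fails at its central step.

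A second, lesser point: you propose to prove a uniform-in-$N$ lower bound $\inf_{t\le T}\bar{B}^N_i(t)\ge c>0$ on the \emph{prelimit} total mass before identifying $\bar\Upsilon$. The paper only proves---and only needs---this bound for the \emph{limit}: once subsequential limits are known to satisfy the linear part of the system, $\bar{U}_i:=\bar{S}_i+\bar{E}_i+\bar{I}_i+\bar{R}_i$ obeys $\bar{U}_i(t)\ge\bar{U}_i(0)e^{-(\sum_\ell\bar\nu_{i,\ell})t}$ by a differential inequality, and continuity of $\psi$ \emph{at the limit paths} is all the continuous mapping theorem requires to get $\bar\Upsilon^N_i\Rightarrow\bar\Upsilon_i$ even when $\gamma=1$; the same bound, applied to convex combinations of two solutions, is what drives the Gronwall uniqueness argument. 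A prelimit bound is in fact provable (the per-patch total population changes only by migration, so it stochastically dominates a pure-death process with rate $\max$ of the outgoing migration rates), but it is unnecessary work and you would still owe that domination argument.
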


Note that if the exposed and infectious periods are independent for each individual,  we have
\begin{equation} \label{eqn-H0-ind}
\Phi^0_{\ell,i}(t)\;:=\;   \int_0^{t}\left(  \sum_{\ell'=1}^L p_{\ell, \ell'}(u) \int_0^{t-u} q_{\ell',i}(v)  F(dv)  \right) G_0(du)\,, 
\end{equation}
and
\begin{equation} \label{eqn-H-indp}
\Phi_{\ell,i}(t)\;:= \;  \int_0^{t}\left(  \sum_{\ell'=1}^L p_{\ell, \ell'}(u) \int_0^{t-u} q_{\ell',i}(v)  F(dv)  \right)  G(du)\,. 
\end{equation}

\begin{remark} \label{rem-FLLN-det-SEIR}
Suppose the exposed and infectious periods are deterministic, taking values of $t_e>0$ and $t_o>0$. Also, assume that the remaining exposed and infectious periods of the initially exposed and infectious are uniformly distributed over $(0,t_e)$ and $(0,t_o)$, respectively. These are the  corresponding equilibrium distributions of the deterministic ones. Recall that for any c.d.f. $F$ on $\R_+$, the equilibrium distribution $F_e(x) := \int_0^x (1-F(t)) dt/ \int_0^\infty (1-F(t)) dt$ for $x\ge 0$.
 Then the FLLN equations of $\bar{E}_i,\bar{I}_i, \bar{R}_i$ become
\begin{align} 
\bar{E}_i(t) &\;=\; \bar{E}_i(0)  -   \sum_{\ell=1}^L \bar{E}_\ell(0) \frac{1}{t_e}  \int_0^{t\wedge t_e}   p_{\ell,i}(u) du    +\lambda_i \int_0^t \bar\Upsilon_i(s) ds \non\\
& \quad
  - \sum_{\ell=1}^L \lambda_\ell  p_{\ell,i}(t_e) \int_0^{t-t_e}   
 \bar\Upsilon_\ell(s)ds    +\sum_{\ell=1,\ell\neq i}^L \int_0^t (\nu_{E,\ell,i}\bar{E}_\ell(s)- \nu_{E,i,\ell}\bar{E}_i(s)) ds\,, \non 
\end{align}
\begin{align}
\bar{I}_i(t) &\;=\;  \bar{I}_i(0) -  \sum_{\ell=1}^L \bar{I}_\ell(0) \frac{1}{t_o}  \int_0^{t\wedge t_o} q_{\ell,i}(s) d s +   \sum_{\ell=1}^L \bar{E}_\ell(0)  \frac{1}{t_e} \bigg( \int_0^{t\wedge t_e}  p_{\ell,i}(u) d u  -   \int_0^{(t-t_o)\wedge t_e}   \sum_{\ell'=1}^L p_{\ell, \ell'}(s) q_{\ell', i}(t_o)   d s \bigg) \non\\
& \quad  +  \sum_{\ell=1}^L \lambda_\ell \bigg( p_{\ell,i}(t_e)   \int_0^{t-t_e}     
 \bar\Upsilon_\ell(s)ds -  \sum_{\ell'=1}^L p_{\ell, \ell'}(t_e)   
 q_{\ell'i}(t_o)  \int_0^{t-t_e-t_o}  \bar\Upsilon_\ell(s)  ds \bigg)  \non\\
 & \quad  +\sum_{\ell=1,\ell\neq i}^L \int_0^t (\nu_{I,\ell,i}\bar{I}_\ell(s)- \nu_{I,i,\ell}\bar{I}_i(s)) ds \,, \non
\end{align}
and
\begin{align} 
\bar{R}_i(t) &\;=\;   \sum_{\ell=1}^L \bar{I}_\ell(0) \frac{1}{t_o}  \int_0^{t\wedge t_o}   q_{\ell,i}(s) d s + \sum_{\ell=1}^L  \bar{E}_\ell(0)   \frac{1}{t_e}  \int_0^{(t-t_o)\wedge t_e}   \sum_{\ell'=1}^L p_{\ell, \ell'}(s) q_{\ell', i}(t_o)   d s \non \\
& \quad +  \sum_{\ell=1}^L \lambda_\ell \sum_{\ell'=1}^L p_{\ell, \ell'}(t_e)   
 q_{\ell'i}(t_o)  \int_0^{t-t_e-t_o}  \bar\Upsilon_\ell(s)  ds   +\sum_{\ell=1,\ell\neq i}^L \int_0^t (\nu_{R,\ell,i}\bar{R}_\ell(s)- \nu_{R,i,\ell}\bar{R}_i(s)) ds\,.  \non
\end{align}
It is easy to see that we obtain a set of ODEs with delay after taking derivative. 
\end{remark}

\subsection{FCLT}

For a process $Z^N=S^N, E^N, I^N, R^N, \Upsilon^N$, let $\hat{Z}^N:=\sqrt{N} ( \bar{Z}^N - \bar{Z})$ be the diffusion-scaled process where $ \bar{Z}^N:=N^{-1}Z^N$ and $\bar{Z}$ is its limit in probability, see Theorem \ref{thm-FLLN-SEIR}.

\begin{assumption} \label{AS-FCLT-SEIR}
There exist constants $0<\S_i(0) \le 1, 0 \le \bar{E}_i(0)<1, 0 \le \I_i(0)<1$ with $\sum_{i=1}^L [\bar{E}_i(0)+\bar{I}_i(0)]>0$ such that  $\sum_{i=1}^L (\S_i(0)+ \bar{E}_i(0)+ \I_i(0)) =1$,   and random variables $\hat{S}_i(0)$,   $\hat{E}_i(0)$ and $\hat{I}_i(0)$, $i\in \mathcal{L}$, such that 
 $(\hat{S}^N_i(0),\hat{E}^N_i(0),  \hat{I}^N_i(0), \, i\in \mathcal{L}) \RA (\hat{S}_i(0), \hat{E}_i(0), \hat{I}_i(0),\, i\in \mathcal{L}) $ in $\R^{3L}$ as $N\to\infty$. In addition,  $\sup_N\E\big[(\hat{Z}^N(0))^2\big]<\infty$ for $\hat{Z}^N(0) = \hat{S}^N_i(0), \hat{E}^N_i(0), \hat{I}^N_i(0), \hat{R}^N_i(0)$, $i \in \LL$.
\end{assumption}

\begin{theorem} \label{thm-FCLT-SEIR} 
Under Assumption  \ref{AS-FCLT-SEIR}, if $F_0$ and $G_0$ are continuous,  in the two cases (i) $\gamma\in[0,1)$ or (ii)  $\gamma\in[0,1]$ and $\sum_{j\not=i}\kappa_{ij}=0$, 
 \begin{equation} \label{eqn-FCLT-conv-SEIR}
 (\hat{S}^N_i,\hat{E}^N_i,\hat{I}^N_i,\hat{R}^N_i,\, i\in \mathcal{L})\;\to\;(\hat{S}_i(t),\hat{E}_i,\hat{I}_i(t),\hat{R}_i(t),\, i\in \mathcal{L}) \qinq D^{4L} \qasq N \to \infty,
 \end{equation}
where the limit is the unique solution of the following system of stochastic Volterra  integral equations driven by continuous Gaussian processes: 
 \begin{align} \label{eqn-hatS-1-SEIR}
\hat{S}_i(t) &\;=\; \hat{S}_i(0) -  \lambda_i \int_0^t \hat{\Upsilon}_i(s) ds + \sum_{\ell=1, \ell\neq i}^L \int_0^t (\nu_{S,\ell,i}\hat{S}_\ell(s)- \nu_{S,i, \ell}\hat{S}_i(s)) ds  \non\\
& \qquad - \hat{M}_{A,i}(t)  +\sum_{\ell=1, \ell\neq i}^L \big( \hat{M}_{S,\ell,i}(t) - \hat{M}_{S,i,\ell}(t) \big) \,,
\end{align}
\begin{align} \label{eqn-hatE-1-SEIR}
\hat{E}_i(t) &\;=\; \hat{E}_i(0) - \sum_{\ell=1}^L \hat{E}_\ell(0)\int_0^t p_{\ell,i}(s) G_0(ds) + \lambda_i \int_0^t \hat{\Upsilon}_i(s) ds 
-  \sum_{\ell=1}^L \lambda_\ell \int_0^t \int_0^{t-s} p_{\ell,i}(u) G(du)  \hat\Upsilon_\ell(s) ds  \non\\
 & \qquad  + \sum_{\ell=1, \ell\neq i}^L \int_0^t (\nu_{I,\ell,i}\hat{E}_\ell(s)- \nu_{I,i, \ell}\hat{E}_i(s)) ds  - \sum_{\ell=1}^L\big( \hat{E}^{0}_{\ell,i}(t)+ \hat{E}_{\ell,i}(t) \big) \non\\
&\qquad + \hat{M}_{A,i}(t)   +   \sum_{\ell=1, \ell \neq i}^L \big( \hat{M}_{E,\ell,i}(t) - \hat{M}_{E,i,\ell}(t) \big)\,, 
\end{align}
\begin{align} \label{eqn-hatI-1-SEIR}
\hat{I}_i(t)&\;=\; \hat{I}_i(0) -  \sum_{\ell=1}^L\hat{I}_\ell(0) \int_0^t q_{\ell,i}(s) F_0(ds)   + \hat{E}_i(0) \sum_{\ell=1}^L  \left( \int_0^t p_{\ell,i}(s) G_0(ds) - \Phi^0_{\ell,i}(t) \right)   \non\\
& \qquad -  \sum_{\ell=1}^L \lambda_\ell \int_0^t \bigg( \int_0^{t-s} p_{\ell,i}(u) G(du) + \Phi_{\ell,i}(t-s) \bigg)  \hat\Upsilon_\ell(s) ds   \non\\
 & \qquad    +   \sum_{\ell \neq i} \int_0^t \left( \nu_{I,\ell,i}  \hat{I}_\ell(s)-  \nu_{I,i,\ell}\hat{I}_i(s)  \right) ds + \sum_{\ell=1}^L \left( \hat{M}_{I,\ell,i}(t) - \hat{M}_{I,i,\ell}(t) \right) \non\\
& \qquad + \sum_{\ell=1}^{L} \big( \hat{E}^{0}_{\ell,i}(t) +  \hat{E}_{\ell,i}(t)  \big)  - \sum_{\ell=1}^L \left(\hat{I}^{0,1}_{\ell,i}(t)  +\hat{I}^{0,2}_{\ell,i}(t) + \hat{I}_{\ell,i}(t) \right)\,,
\end{align}
\begin{align} \label{eqn-hatR-1-SEIR}
\hat{R}_i(t)&\;=\;   \sum_{\ell=1}^L\hat{I}_\ell(0) \int_0^t q_{\ell,i}(s) F_0(ds)   + \hat{E}^N_i(0) \sum_{\ell=1}^L  \Phi^0_{\ell,i}(t)   \non\\
& \quad+ \sum_{\ell=1}^L \lambda_\ell \int_0^t \bigg( \int_0^{t-s} p_{\ell,i}(u) G(du) + \Phi_{\ell,i}(t-s) \bigg)  \hat\Upsilon_\ell(s) ds  
 +   \sum_{\ell \neq i} \int_0^t \left( \nu_{R,\ell,i}  \hat{R}_\ell(s)-  \nu_{R,i,\ell}\hat{R}_i(s)  \right) ds \non\\
& \quad  + \sum_{\ell=1}^L \left( \hat{M}_{R,\ell,i}(t) - \hat{M}_{R,i,\ell}(t) \right) + \sum_{\ell=1}^L \left(\hat{I}^{0,1}_{\ell,i}(t)  +\hat{I}^{0,2}_{\ell,i}(t) + \hat{I}^N_{\ell,i}(t) \right)\,. 
\end{align}
Here,  with the notation $\bar{I}_{(i)}(t)=\sum_{\ell=1}^L \kappa_{i\ell}\bar{I}_\ell(t)$,
\begin{align} \label{hatPhi-SEIR}
 \hat{\Upsilon}_i(t)
 &\;=\; \frac{1}{( \S_i(t) +\bar{E}_i(t) +\I_i(t)+\rR_i(t))^{(1+\gamma)}} \Big([(1-\gamma) \S_i(t)+\bar{E}_i(t) +\I_i(t)+\rR_i(t)]  \bar{I}_{(i)}(t)\hat{S}_i(t)\non\\
 & \qquad + [\S_i(t)( \S_i(t) +\bar{E}_i(t) +\I_i(t)+\rR_i(t))-\gamma\S_i(t)\bar{I}_{(i)}(t)]\hat{I}_i(t)  -  \gamma \S_i(t)\bar{I}_{(i)}(t)[\hat{E}_i(t)+\hat{R}_i(t)] \Big) \non\\
 & \qquad +\frac{ \bar{S}_i(t)\sum_{j\not=i}\hat{I}_j(t)}{ (\bar{S}_i(t) +\bar{E}_i(t) + \bar{I}_i(t) +\bar{R}_i(t))^\gamma}  \,, 
\end{align}
\begin{align*}
&\hat{M}_{A,i}(t) \;=\; B_{A,i} \left(  \int_0^t  \lambda_i \bar\Upsilon_i(s) ds\right)\,,
\quad
\hat{M}_{S,i,\ell}(t) \;=\; B_{S,i,\ell}\left(\nu_{S,i,\ell} \int_0^t \bar{S}_i(s)ds\right)\,, \\
&
\hat{M}_{E,i,\ell}(t) \;=\; B_{E,i,\ell}\left(\nu_{I,i,\ell} \int_0^t \bar{E}_i(s)ds\right)\,, \quad 
\hat{M}_{I,i,\ell}(t) \;=\; B_{I,i,\ell}\left(\nu_{I,i,\ell} \int_0^t \bar{I}_i(s)ds\right)\,, \\
&\hat{M}_{R,i,\ell}(t) \;=\; B_{R,i,\ell}\left(\nu_{R,i,\ell} \int_0^t \bar{R}_i(s)ds\right)\,, \quad i \neq \ell \,,
\end{align*}
with $B_{A,i}$, $B_{S,i,\ell}$, $B_{E,i,\ell}$,  $B_{I,i,\ell}$, $B_{R,i,\ell}$ being mutually independent standard Brownian motions, and with the deterministic functions $\bar{S}_i, \bar{E}_i, \bar{I}_i, \bar{R}_i$ being the limits in Theorem \ref{thm-FLLN-SEIR}. 
The processes $\big(\hat{E}^{0}_{\ell,i}(t),  \hat{I}^{0,2}_{\ell,i}(t)\big)$,  $\big(\hat{I}^{0,1}_{\ell,i}(t)\big)$,   and $ \big(\hat{E}_{\ell,i}(t), \hat{I}_{\ell,i}(t)\big)$
are   
continuous Gaussian processes, independent of the above Brownian motions,  with mean zero and covariance functions: 
\begin{align*}
\Cov( \hat{E}^{0}_{\ell,i}(t),  \hat{E}^{0}_{\ell',i'}(t') ) & \;=\; \begin{cases}
\bar{E}_\ell(0)  \bigg( \int_0^{t\wedge t'}  p_{\ell,i}(s) G_0(ds) -   \int_0^t  p_{\ell,i}(s) G_0(ds)  \int_0^{t'}  p_{\ell,i}(s) G_0(ds)\bigg)\,,  \\
 \qquad\qquad \qquad \qquad \qquad \qquad \qquad \qquad  \quad   \qforq \ell=\ell',\, i\neq i', \\
- \bar{E}_\ell(0)  \int_0^t  p_{\ell,i}(s) G_0(ds)  \int_0^{t'}  p_{\ell,i}(s) G_0(ds)\,,  \qforq \ell=\ell',\, i\neq i',   \\
 0,\qforq  \ell\neq \ell', \,\\
\end{cases}
\end{align*}
\begin{align*}
\Cov( \hat{I}^{0,1}_{\ell,i}(t),  \hat{I}^{0,1}_{\ell',i'}(t') )  &\;=\;\begin{cases} 
  \I_\ell(0) \bigg( \int_0^{t\wedge t'}  q_{\ell,i}(s) F_0(ds) -  \int_0^t  q_{\ell,i}(s) F_0(ds)  \int_0^{t'}  q_{\ell,i}(s) F_0(ds) \bigg) \,, \\
  \qquad \qquad \qquad \qquad \qquad \qquad \qquad  \qquad \quad   \qforq \ell=\ell',\, i= i', \\
- \I_\ell(0) \int_0^t  q_{\ell,i}(s) F_0(ds)  \int_0^{t'}  q_{\ell,i}(s) F_0(ds)\,,  \qforq \ell=\ell',\, i\neq i', \\
0, \qforq  \ell\neq \ell', \,\\
\end{cases}
\end{align*}
\begin{align*}
& \Cov( \hat{I}^{0,2}_{\ell,i}(t),  \hat{I}^{0,2}_{\ell',i'}(t') )\; =\; \begin{cases}  \bar{E}_\ell(0) \left( \Phi^0_{\ell,i}(t \wedge t') -  \Phi^0_{\ell,i}(t) \Phi^0_{\ell,i}(t')\right)\,,   \qforq \ell=\ell',\, i=i', \\ -\bar{E}_\ell(0)  \Phi^0_{\ell,i}(t) \Phi^0_{\ell,i}(t')\,, \qforq \ell=\ell',\, i\neq i', \\
0,  \qforq \ell\neq \ell',
\end{cases}
\end{align*}
\begin{align*}
\Cov( \hat{E}_{\ell,i}(t),  \hat{E}_{\ell',i'}(t') )&\;=\; \begin{cases}  \lambda_\ell \int_0^{t\wedge t'} \int_0^{t\wedge t'-s} p_{\ell,i}(u) G(du)  \bar\Upsilon_\ell(s) ds,   \qforq  \ell=\ell',\, i= i'\,, \\
0,  \qforq  \ell\neq \ell',  \, \text{and for}\quad \ell=\ell',\, i\neq i',
\end{cases}
\end{align*}
\begin{align*}
\Cov( \hat{I}_{\ell,i}(t),  \hat{I}_{\ell',i'}(t') ) &\;=\; \begin{cases}  \lambda_\ell  \int_0^{t\wedge t'} \Phi_{\ell,i}(t \wedge t'-s)  \bar\Upsilon_\ell(s) ds,  \qforq  \ell=\ell',\, i= i'\,, \\0,   \qforq  \ell\neq \ell',  \, \text{and for}\quad \ell=\ell',\, i\neq i'.  
\end{cases}
\end{align*}
The processes  $\big(\hat{E}^{0}_{\ell,i}(t),  \hat{I}^{0,2}_{\ell,i}(t)\big)$,  $\big(\hat{I}^{0,1}_{\ell,i}(t)\big)$,   and $ \big(\hat{E}_{\ell,i}(t), \hat{I}_{\ell,i}(t)\big)$ are independent from each other, and 
\begin{align*}
\Cov(\hat{E}^{0}_{\ell,i}(t),  \hat{I}^{0,2}_{\ell',i'}(t')\big)&\;=\;\begin{cases} \bar{E}_\ell(0) \bigg( \int_0^t   p_{\ell,i}(u) \int_0^{t'-u} q_{i, i'}(v) H_0(du,dv) -  \int_0^t  p_{\ell,i}(s) G_0(ds)\Phi^0_{\ell,i'}(t')\bigg), \\
\qquad \qquad \qquad \qquad \qquad \qquad \qquad \qquad \qquad  \qquad \qquad  \qforq \ell=\ell', \\
 0,  \qforq  \ell\neq \ell', 
 \end{cases}
\end{align*}
and
\begin{align*}
  \Cov(\hat{E}_{\ell,i}(t), \hat{I}_{\ell',i'}(t')\big) &\;=\; \begin{cases} 
  \lambda_\ell \int_0^{t\wedge t'}  \int_0^{t\wedge t'-s}  p_{\ell,i}(u) \int_0^{t'-s-u} q_{i,i'}(v) H(du, dv)    \bar{\Upsilon}_\ell(s) ds\\
  \quad -  \lambda_\ell \int_0^{t\wedge t'} \int_0^{t\wedge t'-s} p_{\ell,i}(u) G(du) \Phi_{\ell,i}( t'-s)  \bar\Upsilon_\ell(s) ds, \qforq \ell=\ell', \\
   0,\qforq  \ell\neq \ell'. 
   \end{cases}
\end{align*}
\end{theorem}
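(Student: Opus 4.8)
The plan is to follow the martingale/compensator approach developed for the one-patch model in \cite{PP-2020}, adapted to the multi-patch structure and to the infection-at-distance term. I would begin from the diffusion-scaled versions of the balance equations \eqref{S-rep-SEIR} and \eqref{R-rep-SEIR} together with the representations \eqref{eqn-E-rep-2-SEIR}--\eqref{I-rep-2-SEIR} of Lemma \ref{lem-I1-2-rep}, in which the complicated exposed/infectious-plus-migration terms are rewritten as integrals against the PRMs $\check Q_\ell$ and $\breve Q_\ell$ via \eqref{PRM-rep-E-SEIR}--\eqref{PRM-rep-I-SEIR}. For each such term I would split the PRM integral into its predictable compensator (driven by the mean measure $ds\times d\afrak\times G(du)\times\mu^X_\ell(u,d\theta)$, respectively the $H$-measure) and the associated compensated-PRM martingale. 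Subtracting the FLLN limit of Theorem \ref{thm-FLLN-SEIR} and multiplying by $\sqrt N$ then produces, for each of $\hat S_i,\hat E_i,\hat I_i,\hat R_i$, a sum of three kinds of objects: scaled initial-condition terms; linear drift terms in the unknowns $\hat S,\hat E,\hat I,\hat R$ and in $\hat\Upsilon$, obtained by differencing the compensators against the fluid; and scaled martingales.

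Next I would identify the limits of these martingales. The migration and infection Poisson martingales, after scaling, are time-changed compensated Poisson processes whose time changes converge to the FLLN integrals, so by the martingale FCLT they converge jointly to the mutually independent time-changed Brownian motions $\hat M_{A,i},\hat M_{S,i,\ell},\hat M_{E,i,\ell},\hat M_{I,i,\ell},\hat M_{R,i,\ell}$ listed in the statement. The $\check Q_\ell$- and $\breve Q_\ell$-martingales encode the intrinsic randomness of the exposed/infectious periods together with the migration chains; computing their predictable quadratic variation along the fluid path yields precisely the covariance kernels stated for $\hat E_{\ell,i}$ and $\hat I_{\ell,i}$ and their cross-covariance, and the functional martingale CLT gives convergence to the corresponding continuous centered Gaussian processes. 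The scaled fluctuations of the finitely many initial exposed/infectious individuals converge, by a multivariate CLT for the i.i.d. period/migration outcomes, to the Gaussian processes $\hat E^0_{\ell,i},\hat I^{0,1}_{\ell,i},\hat I^{0,2}_{\ell,i}$ with the multinomial-type covariances displayed; here the continuity of $F_0$ and $G_0$ guarantees continuity of these limits and the absence of fixed jumps. Independence across the three families follows from the mutual independence of the PRMs, the migration chains, and the initial data.

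The key analytic step is the expansion of the infection rate. Writing $\Upsilon^N_i$ as in \eqref{eqn-Upsilon} and applying a first-order delta-method expansion of the map $(S,E,I,R)\mapsto S\sum_\ell\kappa_{i\ell}I_\ell/(S+E+I+R)^\gamma$ around the fluid limit, I would obtain the self-consistent expression \eqref{hatPhi-SEIR} for $\hat\Upsilon_i$ in terms of $\hat S_i,\hat E_i,\hat I_i,\hat R_i$, whose last term carries the cross-patch contributions $\sum_{j\neq i}\hat I_j$. Making this rigorous requires the uniform moment and oscillation control of $\hat\Upsilon^N_i$ provided by Lemma \ref{estimhatUps}, together with the remaining estimates in Lemmas \ref{estimhatUps}--\ref{lem-estimate-SIR}, which also furnish the tightness of $(\hat S^N_i,\hat E^N_i,\hat I^N_i,\hat R^N_i)$ in $D^{4L}$ (using, via Assumption \ref{AS-FCLT-SEIR}, the uniformly bounded second moments of the initial fluctuations). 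I expect this to be the main obstacle: the denominator $(B^N_i/N)^\gamma$ must be kept away from its degenerate behaviour while the at-distance coupling $\sum_\ell\kappa_{i\ell}I^N_\ell$ mixes all patches, and it is precisely here that the restriction to $\gamma\in[0,1)$, or to $\kappa_{i\ell}=0$ for $i\neq\ell$, enters, since for $\gamma=1$ with nonzero at-distance infectivity the estimate \eqref{eq:esimhatUps} fails.

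Finally, with tightness established and the driving processes identified, I would pass to the limit along a convergent subsequence and show that any limit point $(\hat S_i,\hat E_i,\hat I_i,\hat R_i)$ solves the stochastic Volterra system \eqref{eqn-hatS-1-SEIR}--\eqref{eqn-hatR-1-SEIR} with $\hat\Upsilon_i$ given by \eqref{hatPhi-SEIR}. Because that system is \emph{linear} in the unknowns once the Gaussian driving terms are fixed, uniqueness of its solution follows from a Gronwall argument applied to the vector-valued linear Volterra integral operator; uniqueness then upgrades subsequential convergence to full convergence in distribution and identifies the limit, completing the proof.
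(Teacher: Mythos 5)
Your overall architecture coincides with the paper's: diffusion-scaled representations built from Lemma \ref{lem-I1-2-rep} and the PRM representations \eqref{PRM-rep-E-SEIR}--\eqref{PRM-rep-I-SEIR}, the martingale FCLT for the Poisson migration/infection noise (Lemma \ref{lem-Mi-conv-SEIR}), empirical-CLT limits for the initial-condition terms (Lemma \ref{lem-I0-ij-conv-SEIR}), the linearization of $\Upsilon^N_i$ via the derivatives of $\psi$ controlled by Lemmas \ref{estimhatUps}--\ref{lem-estimate-SIR}, and a Gronwall argument at the end; you also correctly locate where the restriction $\gamma\in[0,1)$ (or purely local infection) enters. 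However, there is a genuine gap in your treatment of the PRM-driven fluctuation terms $\hat{E}^N_{\ell,i}$, $\hat{I}^N_{\ell,i}$ of \eqref{hatE-ij-def-SEIR} and \eqref{hatI-ij-def-SEIR}. These processes are \emph{not} martingales in $t$: the integration region $\{u\le t-s\}$ (resp.\ $\{u+v\le t-s\}$) moves with $t$, and the integrand involves the random intensity $\Upsilon^N_\ell(s^-)$. The paper must first freeze the intensity at the deterministic fluid value (the processes $\tilde{E}^N_{\ell,i}$, $\tilde{I}^N_{\ell,i}$ of \eqref{eqn-tildeE-1-Q}--\eqref{eqn-tildeI-1-Q}) and prove separately, using the moment bound of Lemma \ref{lem-estimate-SIR}, that the difference vanishes. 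Even then, $\tilde{E}^N_{\ell,i}$ and $\tilde{I}^N_{\ell,i}$ are martingales only with respect to two \emph{different} nonstandard filtrations ($\mathcal{F}^E_t$ generated by the points with $s+u\le t$, versus $\mathcal{F}^I_t$ with $s+u+v\le t$), so a joint martingale FCLT is not available; indeed the limiting cross-covariance $\Cov(\hat{E}_{\ell,i}(t),\hat{I}_{\ell,i'}(t'))$ is not a function of $t\wedge t'$, i.e.\ the pair is not jointly of independent-increments type. The paper obtains the joint finite-dimensional convergence, including these cross terms, by the exponential formula for PRM integrals (Lemma \ref{lem:expmoment}) applied after writing both processes against the single PRM $\breve{Q}_\ell$ via a projection; your "quadratic variation along the fluid path plus martingale FCLT" step would deliver each process marginally but cannot produce the stated joint covariance structure.

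A second, smaller weakness is your claim that the sup-moment estimates of Lemmas \ref{estimhatUps}--\ref{lem-estimate-SIR} "furnish tightness" of $(\hat{S}^N_i,\hat{E}^N_i,\hat{I}^N_i,\hat{R}^N_i)$ in $D^{4L}$: bounds on $\E[\sup_t|\cdot|^2]$ give stochastic boundedness, not control of oscillations, so tightness does not follow from them alone. The paper sidesteps proving tightness of these processes directly: it shows convergence of all driving terms, defines auxiliary processes $(\wt{S}^N_i,\wt{E}^N_i,\wt{I}^N_i,\wt{R}^N_i)$ solving the \emph{linearized} Volterra system driven by the prelimit noises, invokes the continuity of the linear Volterra solution map (Lemma \ref{lem-map-digamma-SEIR}) together with the continuous mapping theorem, and then closes the argument by showing $\hat{S}^N_i-\wt{S}^N_i\to0$, etc., via Gronwall and the moment estimates. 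Your subsequence-plus-uniqueness scheme can be made to work, but only after replacing the tightness claim by an argument of this type, so you would in effect be led back to the paper's construction.
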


\begin{remark}
The continuity of $F_0$ and $G_0$ will be important in our proofs of the FLLN and FCLT, in particular for the convergence of the processes associated with the initially exposed and infectious individuals in Lemmas \ref{lem-barEI-0-conv} and  \ref{lem-I0-ij-conv-SEIR}. Note that this assumption is not really restrictive, in the sense that even when $F$ or $G$ is a Dirac measure (deterministic duration), the time in the past when the initially exposed (or infectious) individuals have been infected (or have become infectious) would most naturally be assumed to follow a uniform distribution on some interval dictated by $F$ or $G$ as discussed in Remark \ref{rem-FLLN-det-SEIR}. 
\end{remark}

\begin{remark} \label{rem-FCLT-det-SEIR}
Suppose that the c.d.f.'s $F_0,G_0,F,G$ have the same conditions in Remark \ref{rem-FLLN-det-SEIR}.  Then the limits in Theorem \ref{thm-FCLT-SEIR}  become stochastic differential equations with linear drifts and delay. In particular, 
\begin{align*}
  \sum_{\ell=1}^L \lambda_\ell \int_0^t \int_0^{t-s} p_{\ell,i}(u) G(du)  
  \hat\Upsilon_\ell(s) ds  &\; =\;    \sum_{\ell=1}^L \lambda_\ell  p_{\ell,i}(t_e)  \int_0^{t-t_e}    
  \hat\Upsilon_\ell(s) ds\,, \\
    \sum_{\ell=1}^L \lambda_\ell \int_0^t \Phi_{\ell,i}(t-s)  \hat{\Upsilon}_\ell(s) ds &\;=\; 
     \sum_{\ell=1}^L \lambda_\ell \sum_{\ell'=1}^L p_{\ell, \ell'}(t_e)   
 q_{\ell'i}(t_o)  \int_0^{t-t_e-t_o} \hat{\Upsilon}_\ell(s)  ds\,. 
\end{align*}
\end{remark}

\subsection{On the multi--patch SIR, SIS and SIRS models}  \label{sec-special}
\subsubsection{Multi--patch SIR model }
The multi-patch SEIR model includes the multi-patch SIR model as a special case, without the exposed periods and the associated Markov chain $X$. The infectious process $I^N_i(t)$ becomes 
\begin{align}
I^N_i(t)&\;=\; \sum_{\ell=1}^L \sum_{k=1}^{I^N_\ell(0)} {\bf1}_{\zeta_{k,\ell}^0>t}{\bf1}_{X^{0,k}_\ell(t)=i}
+ \sum_{\ell=1}^L \sum_{j=1}^{A^N_\ell(t)} {\bf 1}_{\tau^N_{j,\ell} + \zeta_{j,\ell} >t} {\bf1}_{X^j_\ell(t-\tau^N_{j,\ell})=i}\,\,, \label{In-1-rep}
\end{align}
which can be also expressed as
\begin{align} \label{I1-rep-2}
I^N_i(t)&\;=\; I^N_i(0)  + A^N_i(t) -   \sum_{\ell=1}^L \sum_{k=1}^{I^N_\ell(0)} {\bf1}_{\zeta_{k,\ell}^0\le t}{\bf1}_{X^{0,k}_\ell(\zeta_{k,\ell}^0)=i}
- \sum_{\ell=1}^L \sum_{j=1}^{A^N_\ell(t)}  {\bf 1}_{\tau^N_{j,\ell} + \zeta_{j,\ell} \le t} {\bf1}_{X^j_\ell( \zeta_{j,\ell})=i} \non
\\& \quad - \sum_{\ell \neq i} P_{I,i,\ell}\left(\nu_{I,i,\ell}\int_0^t I^N_i (s)ds\right)
+\sum_{\ell\neq i} P_{I,\ell,i}\left( \nu_{I,\ell,i} \int_0^t I^N_\ell(s)ds\right) \,,
\end{align}
The process $\Upsilon^N_i(t)$ in \eqref{eqn-Upsilon} becomes
\begin{align*} 
\Upsilon^N_i(t)\;=\;\frac{S^N_i(t)\sum_{\ell=1}^L \kappa_{i\ell}I^N_\ell(t)}{N^{1-\gamma}(S^N_i(t)+I^N_i(t)+R^N_i(t))^\gamma}
\,,\quad i \in \mathcal{L}\,,
\end{align*}

In the FLLN, we obtain 
 \begin{align}
 \S_i(t) &\;=\; \S_i(0) - \lambda_i \int_0^t \bar\Upsilon_i(s) ds  +   \sum_{\ell=1,\ell \neq i}^L\int_0^t \left( \nu_{S,\ell,i}  \S_\ell(s)-  \nu_{S,i,\ell}\S_i(s)  \right) ds\,,  \label{eqn-barS}\\
\I_i(t) &\;=\;  \I_i(0) -  \int_0^t   \sum_{\ell=1}^L \I_\ell(0)  q_{\ell,i}(s) F_0(ds)  + \lambda_i  \int_0^t  \bar\Upsilon_i(s) ds  \non \\
 & \quad  -  \int_0^t  \sum_{\ell=1}^L  \left(  \int_0^{t-s} q_{\ell,i}(u) F(du)  \right)   \lambda_\ell \bar\Upsilon_\ell(s) ds +  \sum_{\ell \neq i} \int_0^t \left( \nu_{I,\ell,i}  \I_\ell(s)-  \nu_{I,i,\ell}\I_i(s)  \right) ds \,, \label{eqn-barI}\\
\rR_i(t) &\;=\; \int_0^t   \sum_{\ell=1}^L \I_\ell(0) q_{\ell,i}(s) F_0(ds)  +  \int_0^t  \sum_{\ell}  \left(  \int_0^{t-s} q_{\ell,i}(u) F(du)  \right)   \lambda_\ell \bar\Upsilon_\ell(s) ds \non\\ 
 & \quad +  \sum_{\ell=1,\ell\neq i}^L  \int_0^t \left( \nu_{R,\ell,i}  \rR_\ell(s)-  \nu_{R,i,\ell}\rR_i(s)  \right) ds\, ,  \label{eqn-barR}
 \end{align}
with $\bar\Upsilon_{i}$ defined by
\begin{align} \label{eqn-Phi}
\bar\Upsilon_i(t)\;=\;\frac{\S_i(t)\sum_{\ell=1}^L\kappa_{i\ell}\I_\ell(t)}{(\S_i(t)+\I_i(t)+\rR_i(t))^\gamma}\,. 
\end{align}

In the FCLT, we obtain 
 \begin{align} \label{eqn-hatS-1}
\hat{S}_i(t) &\;=\; \hat{S}_i(0) -  \lambda_i \int_0^t \hat{\Upsilon}_i(s) ds + \sum_{\ell=1, \ell\neq i}^L \int_0^t (\nu_{S,\ell,i}\hat{S}_\ell(s)- \nu_{S,i, \ell}\hat{S}_i(s)) ds \non\\
& \qquad - \hat{M}_{A,i}(t)  +\sum_{\ell=1, \ell\neq i}^L \big( \hat{M}_{S,\ell,i}(t) - \hat{M}_{S,i,\ell}(t) \big)\,,
\end{align}
\begin{align} \label{eqn-hatI-1}
\hat{I}_i(t) &\;=\;  \hat{I}_i(0) - \sum_{\ell=1}^L \hat{I}_\ell(0)  \int_0^t  q_{\ell,i}(s) F_0(ds) + \lambda_i \int_0^t \hat{\Upsilon}_i(s) ds 
 - \sum_{\ell=1}^L \lambda_\ell \int_0^t \int_0^{t-s} q_{\ell,i}(u) F(du)  \hat\Upsilon_\ell(s) ds \non\\
 & \quad  + \sum_{\ell=1, \ell\neq i}^L \int_0^t (\nu_{I,\ell,i}\hat{I}_\ell(s)- \nu_{I,i, \ell}\hat{I}_i(s)) ds  -\sum_{\ell=1}^L \big( \hat{I}^{0}_{\ell,i}(t) + \hat{I}_{\ell,i}(t) \big)    \non\\
& \quad + \hat{M}_{A,i}(t)+  \sum_{\ell=1, \ell \neq i}^L \big( \hat{M}_{I,\ell,i}(t) - \hat{M}_{I,i,\ell}(t) \big) \,, 
\end{align}
\begin{align} \label{eqn-hatR-1}
\hat{R}_i(t) &\;=\; \sum_{\ell=1}^L \hat{I}_\ell(0) \int_0^t  q_{\ell,i}(s) F_0(ds)   + 
\sum_{\ell=1}^L \lambda_\ell \int_0^t \int_0^{t-s} q_{\ell,i}(u) F(du)  \hat\Upsilon_\ell(s) ds  \non\\
& \quad  +  \sum_{\ell=1, \ell\neq i}^L \int_0^t (\nu_{R,\ell,i}\hat{R}_\ell(s)- \nu_{R,i, \ell}\hat{R}_i(s)) ds  \non\\
& \quad + \sum_{\ell=1}^L \big( \hat{I}^{0}_{\ell,i}(t) + \hat{I}_{\ell,i}(t) \big)  +   \sum_{\ell=1, \ell \neq i}^L \big( \hat{M}_{R,\ell,i}(t) - \hat{M}_{R,i,\ell}(t) \big)\,. 
\end{align}
Here, with the notation $\bar{I}_{(i)}(t)=\sum_{\ell=1}^L \kappa_{i\ell}\bar{I}_\ell(t)$,
\begin{align} \label{eqn-hatPhi-i}
\hat{\Upsilon}_i(t) &=\frac{1}{(\S_i(t)\!+\!\I_i(t)\!+\!\rR_i(t))^{(1+\gamma)}} \Big( [(1-\gamma)\bar{S}_i(t)\!+\!\bar{I}_i(t)\!+\! \bar{R}_i(t)] \bar{I}_{(i)}(t) \hat{S}_i(t)\!  \non\\
& \qquad \qquad +\!
\left[\bar{S}_i(t)(\S_i(t)\!+\!\I_i(t)\!+\!\rR_i(t))\!-\!\gamma\bar{S}_i(t)\bar{I}_{(i)}(t)\right]\hat{I}_i(t) 
\!-\! \gamma\bar{S}_i(t) \bar{I}_{(i)}(t) \hat{R}_i(t) \Big) \non\\
&\quad+\frac{\bar{S}_i(t)\sum_{\ell\not=i}\kappa_{i\ell}\hat{I}_\ell(t)}{(\S_i(t)+\I_i(t)+\rR_i(t))^\gamma}\,, 
\end{align}
\begin{align*}
&\hat{M}_{A,i}(t) \;=\; B_{A,i} \left(  \int_0^t  \lambda_i \bar\Upsilon_i(s) ds\right)\,,
\quad
\hat{M}_{S,i,\ell}(t) \;=\; B_{S,i,\ell}\left(\nu_{S,i,\ell} \int_0^t \bar{S}_i(s)ds\right)\,, \\
&
\hat{M}_{I,i,\ell}(t) \; =\; B_{I,i,\ell}\left(\nu_{I,i,\ell} \int_0^t \bar{I}_i(s)ds\right)\,, \quad 
\hat{M}_{R,i,\ell}(t)\; =\; B_{R,i,\ell}\left(\nu_{R,i,\ell} \int_0^t \bar{R}_i(s)ds\right)\,, \quad i \neq j\,,
\end{align*}
with $B_{A,i}$, $B_{S,i,\ell}$,  $B_{I,i,\ell}$, $B_{R,i,\ell}$ being mutually independent standard Brownian motions, and with the deterministic functions $\bar{S}_i, \bar{I}_i, \bar{R}_i$ given above.  
The processes $ \hat{I}^{0}_{\ell,i}$ and  $\hat{I}_{\ell,i}$ are continuous Gaussian processes with mean zero and covariance functions: 
\begin{align*}
Cov( \hat{I}^0_{\ell,i}(t),  \hat{I}^0_{\ell',i'}(t') ) &\;=\; \begin{cases}
\I_\ell(0)  \Big(\int_0^{t\wedge t'}  q_{\ell,i}(s) F_0(ds) -  \int_0^t  q_{\ell,i}(s) F_0(ds)  \int_0^{t'}  q_{\ell,i}(s) F_0(ds)\Big)\,,   &\text{if}\quad    \ell=\ell', i=i'\,,\\
 -\I_\ell(0)   \int_0^t  q_{\ell,i}(s) F_0(ds)  \int_0^{t'}  q_{\ell,i}(s) F_0(ds)\,,   &\text{if}\quad   \ell=\ell', i\neq i'\,, \\
 0\,, &\text{if}\quad  \ell\neq \ell' ,
\end{cases}
\end{align*}
\begin{align*}
Cov( \hat{I}_{\ell,i}(t),  \hat{I}_{\ell',i'}(t') ) &\;=\; \begin{cases}
\lambda_\ell \int_0^{t\wedge t'} \int_0^{t\wedge t'-s} q_{\ell,i}(u) F(du)  \bar\Upsilon_\ell(s) ds\,,   &\text{if}\quad   \ell=\ell'\,, \, i=i'\,,\\
 0\,,  & otherwise.
\end{cases}
\end{align*}
In addition, $ \hat{I}^{0}_{\ell,i}$ and  $\hat{I}_{\ell,i}$ are independent, and also independent of the Brownian terms.

\subsubsection{Multi--patch SIS model}

The analysis of the multi-patch SIR model can be easily extended to the multi-patch SIS model, where the population  in each patch has susceptible and infectious groups, and when infectious individuals recover, they become susceptible immediately. 
The epidemic evolution dynamics is described as
\begin{align}
S^N_i(t)&\;=\;S^N_i(0)-A^N_i(t)  + \sum_{\ell=1}^L \sum_{k=1}^{I^N_\ell(0)} {\bf1}_{\zeta_{k,\ell}^0\le t}{\bf1}_{X^{0,k}_\ell(\zeta_{k,\ell}^0)=i}
+\sum_{\ell=1}^L \sum_{j=1}^{A^N_\ell(t)}  {\bf 1}_{\tau^N_{j,\ell} + \zeta_{j,\ell} \le t} {\bf1}_{X^j_\ell( \zeta_{j,\ell})=i} \non\\
& \quad - \sum_{\ell=1,\ell \neq i}^L P_{S,i,\ell}\left(\nu_{S,i,\ell}\int_0^t S^N_i (s)ds\right)
+\sum_{\ell=1,\ell\neq i}^L P_{S,\ell,i}\left( \nu_{S,\ell,i} \int_0^t S^N_\ell(s) ds\right) \,,\label{Sn-1-rep-SIS}\\
I^N_i(t)&\;=\; \sum_{\ell=1}^L \sum_{k=1}^{I^N_\ell(0)} {\bf1}_{t<\zeta_{k,\ell}^0}{\bf1}_{X^{0,k}_\ell(t)=i}
+ \sum_{\ell=1}^L \sum_{j=1}^{A^N_\ell(t)} {\bf 1}_{\tau^N_{j,\ell} + \zeta_{j,\ell} >t} {\bf1}_{X^j_\ell(t-\tau^N_{j,\ell})=i} \,\,, \label{In-1-rep-SIS}
\end{align}
where $A^N_i$  is given as in \eqref{An-rep-1} with $
\Upsilon^N_i(t)=\frac{S^N_i(t)\sum_{\ell=1}^L\kappa_{i\ell}I^N_\ell(t)}{(S^N_i(t)+I^N_i(t))^\gamma}$, for $ i \in \mathcal{L}.$
Thus, in the FLLN, we obtain the same limit $\bar{I}_i$ in \eqref{eqn-barI} as in the multi-patch SIR model, and the limit $\bar{S}_i(t)$:
 \begin{align}
 \S_i(t) &\;=\; \S_i(0) - \lambda_i \int_0^t \bar\Upsilon_i(s) ds  \int_0^t   \sum_{\ell} q_{\ell,i}(s) F_0(ds)  +  \int_0^t  \sum_{\ell}  \left(  \int_0^{t-s} q_{\ell,i}(u) F(du)  \right)   \lambda_\ell \bar\Upsilon_\ell(s) ds \non\\  
 & \qquad +   \sum_{\ell=1,\ell \neq i}^L\int_0^t \left( \nu_{S,\ell,i}  \S_j(s)-  \nu_{S,i,\ell}\S_i(s)  \right) ds\,,  \non
 \end{align}
 where $
\bar\Upsilon_i(t):=\frac{\S_i(t)\sum_{\ell=1}\kappa_{i\ell}\I_\ell(t)}{(\S_i(t)+\I_i(t))^\gamma}. $ Similarly in the FCLT, we obtain  the same limit $\hat{I}_i$ as in \eqref{eqn-hatI-1} for the multi-patch SIR model, and the limit $\hat{S}_i(t)$: 
 \begin{align} 
\hat{S}_i(t) &\;=\; \hat{S}_i(0) -  \lambda_i \int_0^t \hat{\Upsilon}_i(s) ds   + \sum_{\ell=1}^L \lambda_\ell \int_0^t \int_0^{t-s} q_{\ell,i}(u) F(du)  \hat\Upsilon_\ell(s) ds  + \sum_{\ell=1}^L \big( \hat{I}^{0}_{\ell,i}(t) + \hat{I}_{\ell,i}(t) \big)  \non\\
& \qquad + \sum_{\ell=1, \ell\neq i}^L \int_0^t (\nu_{S,\ell,i}\hat{S}_\ell(s)- \nu_{S,i, \ell}\hat{S}_i(s)) ds - \hat{M}_{A,i}(t)  +\sum_{\ell=1, \ell\neq i}^L \big( \hat{M}_{S,\ell,i}(t) - \hat{M}_{S,i,\ell}(t) \big)\,, \non 
\end{align}
where 
\begin{align*}\hat{\Upsilon}_i(t) &=\frac{1}{(\bar{S}_i(t)+\bar{I}_i(t))^{(1+\gamma)}} 
\left\{[(1-\gamma)\bar{S}_i(t)+\bar{I}_i(t)]\bar{I}_{(i)}(t) \hat{S}_i(t) + [\bar{S}_i(t)(\bar{S}_i(t)+\bar{I}_i(t))-\bar{S}_i(t) \hat{I}_{(i)}(t)]\hat{I}_i(t)\right\} \\
&\qquad +\frac{\bar{S}_i(t)\sum_{\ell\not=i}\hat{I}_\ell(t) }{ (\S_i(t)+\I_i(t))^\gamma}.
\end{align*}

\subsubsection{Multi-patch SIRS model}

The analysis for the multi-patch SEIR model can be easily extended to multi-patch SIRS model,
where in each patch, the population is grouped into susceptible, infectious, and recovered  individuals
and individuals become susceptible after experiencing a recovery period. In this model, the infectious and recovered processes $I^N_i, R^N_i$ correspond to the exposed and infectious processes $E^N_i, I^N_i$  in the SEIR model. 
In the description of the epidemic dynamics, we need to change the dynamics of $S^N_i$ in \eqref{S-rep-SEIR} 
by adding the individuals that have become susceptible after recovery, i.e., the first three terms in $R^N_i$ in \eqref{R-rep-SEIR}. 
This is similar to the susceptible process $S^N_i$ in \eqref{Sn-1-rep-SIS} for the SIS model. 
Then it is straightforward to write down the limit processes in the FLLN and FCLT for the processes 
$(S^N_i, I^N_i, R^N_i, i \in \mathcal{L})$ (corresponding to $(S^N_i, E^N_i, I^N_i, i \in \mathcal{L})$ in the SEIR model).

\section{Proof of the FLLN} \label{sec-proof-LLN-SEIR}

In this section we prove Theorem \ref{thm-FLLN-SEIR}.
  Here we extend the approach in \cite{PP-2020}. Specifically, we first establish that the process $\{(\bar{A}^N_1, \dots, \bar{A}^N_L): N\ge 1\}$ is tight and then along its convergent subsequence with a given limit, we prove the convergence of $\{(\bar{S}^N_1, \dots, \bar{S}^N_L)\}$, $\{(\bar{E}^N_1, \dots, \bar{E}^N_L)\}$,  $\{(\bar{I}^N_1, \dots, \bar{I}^N_L)\}$ and  $\{(\bar{R}^N_1, \dots, \bar{R}^N_L)\}$. We then identify the limit of  $\{(\bar{A}^N_1, \dots, \bar{A}^N_L)\}$, which allows us to show that the above limits satisfy the system of equations \eqref{barS-SEIR}--\eqref{eqn-Phi-SEIR}. Finally we show that this system of equations  \eqref{barS-SEIR}--\eqref{eqn-Phi-SEIR} has at most one solution, which implies that the whole sequence converges.
    Due to the complications from the migration processes, the proofs of tightness for some key component processes become much more involved, see Lemmas \ref{lem-barEI-0-conv} and \ref{lem-barEI-conv}, and in addition, the formula of $\Upsilon^N_i(t)$ for the infection also brings some new challenges, see Lemma \ref{lem-3.11}. 
Since convergence on $[0,\infty)$ is equivalent to convergence on $[0,T]$ for any $T>0$, it is sufficient to prove convergence on $[0,T]$, with $T$ arbitrary. Hence we fix an arbitrary $T>0$, and study the convergence on $[0,T]$ throughout the section. 

Let us first rewrite the representation \eqref{An-rep-1} of  the processes $A^N_i(t)$ which uses the PRM $Q_{i}(ds, d\afrak)$. 
 Let  $\bar{Q}_{i}(ds, d\afrak)=Q_{i}(ds, d\afrak) - ds d\afrak$ be the compensated PRM. 
Then for each $i \in \mathcal{L}$, 
\begin{align} \label{Ai-rep}
A^N_i(t)
\;=\;\lambda_i\int_0^t\Upsilon^N_i(s)ds+ M^N_{A,i}(t)\,, \quad t \ge 0\,, 
\end{align}
where
\begin{align} \label{M-A-def}
M^N_{A,i}(t) \;:=\; \int_0^t\int_0^\infty{\bf1}_{\afrak\le \lambda_i \Upsilon^N_i(s^-)}\bar{Q}_{i}(ds,d\afrak)\,.
\end{align}
The process $\{M^N_{A,i}(t): t \ge 0\}$ is a square-integrable martingale with respect to the filtration $\{\mathcal{F}^N_{A,i}(t): t \ge 0\}$, defined by
\begin{align*}
\mathcal{F}^N_{A,i}(t) \;:=\; \sigma\big\{S^N_{i}(0), I^N_i(0), i\in \mathcal{L}\big\}\vee \sigma\big\{A^N_i(s):  0 \le s \le t\big\}\,, \quad t \ge 0\,. 
\end{align*}
It has the predictable quadratic variation: 
\begin{align*}
\langle M^N_{A,i} \rangle(t) \;=\; \lambda_i\int_0^t\Upsilon^N_i(s)ds\,, \quad t \ge 0\,. 
\end{align*}

\begin{lemma} \label{lem-barAn-tight}
The sequence $\big\{\big(\bar{A}^N_1,\dots, \bar{A}^N_L\big): N \ge 1\big\}$ is tight in $D^L$.  Each convergent  subsequence of $\big\{\big(\bar{A}^N_1, \dots, \bar{A}^N_L\big)\big\}$ converges in distribution to a limit, 
 denoted as $\big(\bar{A}_1, \dots, \bar{A}_L\big)$, which satisfies 
$$
\bar{A}_i \;=\; \lim_{N\to\infty} \bar{A}^N_i \;=\; \lim_{N\to \infty} N^{-1} \lambda_i\int_0^{\cdot}\Upsilon^N_i(s)ds\,,
$$
and
$$
0 \;\le\; \bar{A}_i(t) - \bar{A}_i(s) \;\le\; \lambda_i \bar\kappa_i (t-s)\,, \qforq 0 < s\le t\,,  \quad w.p.\,1\,.  
$$
\end{lemma}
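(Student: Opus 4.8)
The plan is to work from the decomposition \eqref{Ai-rep}, which writes $\bar{A}^N_i=N^{-1}\lambda_i\int_0^\cdot\Upsilon^N_i(s)\,ds+N^{-1}M^N_{A,i}$, and to argue that the drift part is $C$-tight while the martingale part vanishes. The key ingredient is the deterministic bound $N^{-1}\Upsilon^N_i(s)\le\bar\kappa_i$, which I would read off the inequality $\Upsilon^N_i(s)\le\big(S^N_i(s)/N\big)^{1-\gamma}\sum_{\ell}\kappa_{i\ell}I^N_\ell(s)$ recorded in Section \ref{sec-model}: since $S^N_i(s)\le N$ and $I^N_\ell(s)\le N$ for every $\ell$, one has $\big(S^N_i(s)/N\big)^{1-\gamma}\le 1$ and $\sum_\ell\kappa_{i\ell}\big(I^N_\ell(s)/N\big)\le\bar\kappa_i$. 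Consequently the drift $t\mapsto N^{-1}\lambda_i\int_0^t\Upsilon^N_i(s)\,ds$ is nondecreasing, vanishes at $t=0$, and satisfies, for every $N$ and all $0\le s\le t$,
\[
0\;\le\;N^{-1}\lambda_i\int_s^t\Upsilon^N_i(u)\,du\;\le\;\lambda_i\bar\kappa_i(t-s),
\]
so these drift processes are uniformly Lipschitz with constant $\lambda_i\bar\kappa_i$.

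For the martingale term I would combine the predictable quadratic variation $\langle M^N_{A,i}\rangle(t)=\lambda_i\int_0^t\Upsilon^N_i(s)\,ds\le\lambda_i\bar\kappa_i Nt$ with Doob's $L^2$ inequality to obtain
\[
\E\Big[\sup_{t\in[0,T]}\big(N^{-1}M^N_{A,i}(t)\big)^2\Big]\;\le\;4N^{-2}\,\E\big[\langle M^N_{A,i}\rangle(T)\big]\;\le\;4N^{-1}\lambda_i\bar\kappa_i T\;\longrightarrow\;0,
\]
so that $N^{-1}M^N_{A,i}\to 0$ uniformly on $[0,T]$ in $L^2$, hence in probability.

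Tightness then follows readily: the drift parts form a uniformly bounded, uniformly Lipschitz family of nondecreasing functions starting at $0$, which is relatively compact in $C([0,T],\R)$ by Arzel\`a--Ascoli, so the $L$-tuple of drifts is $C$-tight; since tightness in $D^L$ with the product topology is componentwise, adding the uniformly vanishing martingale terms preserves tightness and forces every subsequential limit $(\bar A_1,\dots,\bar A_L)$ to be continuous. Along any convergent subsequence the martingale contribution is asymptotically negligible, which yields $\bar A_i=\lim_{N}N^{-1}\lambda_i\int_0^\cdot\Upsilon^N_i(s)\,ds$ as claimed. Finally, the two-sided increment bound holds for each prelimit drift and passes to the continuous limit $\bar A_i$, because $J_1$ convergence to a continuous function upgrades to uniform convergence on compacts, under which the inequalities survive at every pair $s\le t$; this gives $0\le\bar A_i(t)-\bar A_i(s)\le\lambda_i\bar\kappa_i(t-s)$ almost surely.

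The main obstacle is concentrated entirely in the first step---establishing the uniform-in-$N$ estimate $N^{-1}\Upsilon^N_i\le\bar\kappa_i$, which is what tames the nonlinear and nonlocal form of the infection rate \eqref{eqn-Upsilon}---after which the tightness argument and the identification of the limit are routine. The only point requiring mild care is ensuring that subsequential limits are continuous, so that the increment estimates genuinely survive in the limit and the convergence is locally uniform rather than merely in $J_1$.
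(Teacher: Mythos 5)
Your proposal is correct and follows essentially the same route as the paper: the decomposition \eqref{Ai-rep} into drift plus martingale, the uniform bound $N^{-1}\Upsilon^N_i\le\bar\kappa_i$ giving \eqref{bar-int-Phi-bound}, Doob's inequality applied to the quadratic variation \eqref{bar-MA-qv} to kill the martingale part, and tightness from the uniformly Lipschitz drift. The paper states these steps more tersely, while you additionally spell out the derivation of the $\bar\kappa_i$ bound, the Arzel\`a--Ascoli argument, and the passage of the increment inequality to the (continuous) limit, all of which are sound.
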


\begin{proof}
First, since 
\begin{equation} \label{bar-int-Phi-bound}
0 \;\le\; \frac{\lambda_i}{N}\int_s^t\Upsilon^N_i(u)du \;\le\; \lambda_i \bar\kappa_{i}(t-s), \quad \text{w.p.1}\,.  \quad t \ge s \ge 0\,,
\end{equation}
and 
\begin{equation} \label{bar-MA-qv}
\langle \bar{M}^N_{A,i} \rangle(t)  = \frac{\lambda_i}{N^2} \int_0^t  \Upsilon^N_i(u)du,
\end{equation} 
it follows readily from Doob's inequality that, as $N\to\infty$,
\begin{equation} \label{barM-A-conv}
\bar{M}^N_{A,i} \;\to\; 0 \quad \text{in probability, locally uniformly in }t\,.
\end{equation} 
Tightness of $\{(\bar{A}^N_1,\dots,\bar{A}^N_L )\}$ in $D^L$ then follows from  the representation in \eqref{Ai-rep} and the two properties in \eqref{bar-int-Phi-bound} and \eqref{barM-A-conv}. 
\end{proof}

In the following we consider a convergent subsequence of $\{(\bar{A}^N_1,\dots,\bar{A}^N_L )\}$. 
Before we establish the convergence of the $S^N_i$'s, let us establish a simple technical Lemma, which will be useful below.
\begin{lemma}\label{le:EqLin}
Let $A$ be a $d\times d$ matrix. For any $F\in D(\R_+;\R^d)$, the equation
\begin{equation}\label{eq:linear}
U_t=F_t+\int_0^t A U_s ds,\ t\ge0
\end{equation}
has a unique solution  $U \in D(\R_+;\R^d)$, and the mapping
$\AA$ defined by $U:=\AA(F)$ is continuous from $D(\R_+;\R^d)$  equipped with the Skorohod $J_1$ topology, into itself. 
\end{lemma}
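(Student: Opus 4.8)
The plan is to solve \eqref{eq:linear} explicitly and then read continuity off the structure of the solution map. First I would settle uniqueness: if $U^1,U^2$ both solve \eqref{eq:linear}, then $W:=U^1-U^2$ satisfies $W_t=\int_0^t AW_s\,ds$, whence $|W_t|\le \|A\|\int_0^t|W_s|\,ds$ and Gr\"onwall's inequality forces $W\equiv0$. For existence I would exhibit the resolvent formula
\[
U_t \;=\; F_t+\int_0^t e^{(t-s)A}AF_s\,ds \;=:\; \AA(F)_t\,,
\]
and set $\Psi(F)_t:=\int_0^t e^{(t-s)A}AF_s\,ds=e^{tA}\int_0^t e^{-sA}AF_s\,ds$. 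Since $F$ is c\`adl\`ag, hence locally bounded and measurable, $\Psi(F)$ is absolutely continuous with $\tfrac{d}{dt}\Psi(F)_t=AF_t+A\Psi(F)_t$ for a.e.\ $t$; integrating gives $\Psi(F)_t=\int_0^t A(F_s+\Psi(F)_s)\,ds$, so $U=F+\Psi(F)$ indeed solves \eqref{eq:linear}. Moreover $\Psi(F)\in C(\R_+;\R^d)$, so $U\in D(\R_+;\R^d)$.

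For continuity I would fix $T$ and decompose $\AA(F)=F+\Psi(F)$ with $\Psi(F)=e^{\cdot A}J(\wt F)$, where $\wt F_s:=e^{-sA}AF_s$ and $J(G)_t:=\int_0^t G_s\,ds$ is the running-integral map. Everything then rests on two elementary $J_1$ facts. (i) Multiplication by a fixed continuous matrix-valued function $g$ is $J_1$-continuous: taking time changes $\lambda_n$ with $\|\lambda_n-\mathrm{id}\|_T\to0$ and $\|F^n\circ\lambda_n-F\|_T\to0$, one has $(gF^n)\circ\lambda_n=(g\circ\lambda_n)(F^n\circ\lambda_n)\to gF$ locally uniformly, because $g\circ\lambda_n\to g$ locally uniformly; hence $F\mapsto\wt F$ and $G\mapsto e^{\cdot A}G$ are continuous. (ii) $J$ maps $(D,J_1)$ continuously into $(C,\|\cdot\|_T)$: $J_1$-convergence gives $\sup_n\|F^n\|_T<\infty$ and $F^n_s\to F_s$ at every continuity point of $F$, hence for a.e.\ $s$, so dominated convergence yields $J(F^n)_t\to J(F)_t$ pointwise; as the $J(F^n)$ are uniformly Lipschitz (equicontinuous) with continuous limit, this upgrades to $\|J(F^n)-J(F)\|_T\to0$. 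Composing (i) and (ii), $\Psi(F^n)\to\Psi(F)$ locally uniformly, with continuous limit.

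It remains to add back $F$. The subtle point, and the step I expect to be the main obstacle, is that addition is \emph{not} jointly $J_1$-continuous; what saves us is that the smoothing part $\Psi(F)$ is continuous and converges uniformly. Indeed, with the same $\lambda_n$ as above,
\[
\big(F^n+\Psi(F^n)\big)\circ\lambda_n \;=\; F^n\circ\lambda_n+\Psi(F^n)\circ\lambda_n\,,
\]
where $F^n\circ\lambda_n\to F$ locally uniformly by definition of $J_1$-convergence, and
\[
\|\Psi(F^n)\circ\lambda_n-\Psi(F)\|_T \;\le\; \|\Psi(F^n)-\Psi(F)\|_T+\|\Psi(F)\circ\lambda_n-\Psi(F)\|_T \;\to\;0\,,
\]
using the local uniform convergence from the previous paragraph together with the uniform continuity of $\Psi(F)\in C$ and $\|\lambda_n-\mathrm{id}\|_T\to0$. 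Hence $\AA(F^n)\circ\lambda_n\to\AA(F)$ locally uniformly, i.e.\ $\AA(F^n)\to\AA(F)$ in $J_1$. Since a single sequence of time changes works on every $[0,T]$, this yields continuity of $\AA$ on $D(\R_+;\R^d)$.
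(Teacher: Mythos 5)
Your proposal is correct and follows essentially the same route as the paper: both rest on the explicit resolvent formula $\AA(F)_t=F_t+\int_0^t e^{(t-s)A}AF_s\,ds$ (the paper obtains it by solving the linear ODE for $V=U-F$, you verify it directly, and your Gr\"onwall uniqueness argument is equivalent to the paper's ODE uniqueness). The only difference is that the paper dismisses the $J_1$-continuity of $\AA$ as ``now clear,'' whereas you supply the full argument — in particular the key observation that, although addition is not jointly $J_1$-continuous, the integral part $\Psi(F)$ is continuous and converges locally uniformly, which is precisely what makes the paper's assertion true.
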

\begin{proof}
Let $V_t:=U_t-F_t$. It is easy to verify that $U$ solves \eqref{eq:linear} iff $V$ solves the linear ODE
\begin{equation*}
\frac{dV_t}{dt}=A V_t+A F_t,\ t\ge0;\ V_0=0\,.
\end{equation*}
This ODE has a unique solution, which is given by a
 well--known explicit formula, from which we deduce that
 \[ \AA(F)_t=F_t+\int_0^te^{A(t-s)}AF_s ds\,.\]
 The continuity of the mapping $\AA$ is now clear.
 \end{proof}
 
\begin{lemma} \label{lem-barS-conv}
With the limit $\big(\bar{A}_1, \dots,\bar{A}_L \big)$ of the convergent subsequence of  $\big\{\big(\bar{A}^N_1, \dots,\bar{A}^N_L \big)\big\}$,  under Assumption \ref{AS-FLLN-SEIR},
$$(\bar{S}^N_1,\dots,\bar{S}^N_L ) \;\RA\; (\bar{S}_1,\dots, \bar{S}_L) \qinq D^L \qasq N\to\infty
$$
where the limit $ (\bar{S}_1,\dots, \bar{S}_L)$ is the unique solution to the ODEs: for each $i\in \mathcal{L}$, 
\begin{align}\label{eq:barS}
\bar{S}_i (t)\;=\;\bar{S}_i(0)-\bar{A}_i(t)  +   \sum_{\ell\neq i} \int_0^t \left( \nu_{S,\ell,i}  \S_\ell(s)-  \nu_{S,i,\ell}\S_i(s)  \right) ds\,, \quad t \ge 0\,.
\end{align}
\end{lemma}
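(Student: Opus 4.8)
The plan is to start from the exact balance relation \eqref{S-rep-SEIR}, divide by $N$, and isolate a closed linear integral equation to which Lemma \ref{le:EqLin} applies. Writing each unit--rate migration Poisson process as its compensator plus a centered martingale, $P_{S,\ell,i}(x)=x+\widetilde P_{S,\ell,i}(x)$, the rescaled equation reads
\begin{equation*}
\bar S^N_i(t)=\bar S^N_i(0)-\bar A^N_i(t)+\sum_{\ell\neq i}\int_0^t\big(\nu_{S,\ell,i}\bar S^N_\ell(s)-\nu_{S,i,\ell}\bar S^N_i(s)\big)\,ds+\bar M^N_{S,i}(t),
\end{equation*}
where $\bar M^N_{S,i}(t):=\sum_{\ell\neq i}N^{-1}\big(\widetilde P_{S,\ell,i}(\nu_{S,\ell,i}\int_0^t S^N_\ell)-\widetilde P_{S,i,\ell}(\nu_{S,i,\ell}\int_0^t S^N_i)\big)$. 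In vector form this is $\bar S^N=\AA(\hat F^N)$, with $A$ the $L\times L$ matrix $A_{i\ell}=\nu_{S,\ell,i}$ for $\ell\neq i$ and $A_{ii}=-\sum_{\ell\neq i}\nu_{S,i,\ell}$, and with $\hat F^N_i(t):=\bar S^N_i(0)-\bar A^N_i(t)+\bar M^N_{S,i}(t)$.

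The first step is to show the martingale remainder is negligible: $\bar M^N_{S,i}\to0$ in probability, locally uniformly in $t$. The only subtlety is that the time change $\int_0^t S^N_\ell$ depends on the unknown $\bar S^N$ itself, so this must be established a priori, before any convergence of $\bar S^N$ is known. This is handled by the deterministic bound $0\le S^N_\ell(s)\le N$, which forces the predictable quadratic variation of each summand $N^{-1}\widetilde P_{S,\ell,i}(\nu_{S,\ell,i}\int_0^\cdot S^N_\ell)$ to be at most $N^{-2}\nu_{S,\ell,i}\int_0^t S^N_\ell(s)\,ds\le \nu_{S,\ell,i}\,t/N$; Doob's inequality then yields the claimed vanishing, exactly as in \eqref{bar-MA-qv}--\eqref{barM-A-conv}.

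Given this, the conclusion follows from the continuity of $\AA$ in Lemma \ref{le:EqLin}. Along the fixed convergent subsequence $\bar A^N_i\RA\bar A_i$, and by Lemma \ref{lem-barAn-tight} the limit $\bar A_i$ is Lipschitz, hence continuous; together with $\bar S^N_i(0)\to\bar S_i(0)$ from Assumption \ref{AS-FLLN-SEIR} and the vanishing of $\bar M^N_{S,i}$, a Slutsky argument gives $\hat F^N\RA F$ in $D^L$, where $F_i(t)=\bar S_i(0)-\bar A_i(t)$ is continuous. Since $\AA$ is continuous in the Skorohod $J_1$ topology, the continuous mapping theorem yields $\bar S^N=\AA(\hat F^N)\RA\AA(F)=\bar S$, and $\bar S$ solves \eqref{eq:barS}; uniqueness of the solution, hence of the limit, is precisely the uniqueness part of Lemma \ref{le:EqLin}. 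I expect the a priori control of the state--dependent martingale time change to be the only genuine obstacle, everything else being a direct application of the two lemmas already in hand.
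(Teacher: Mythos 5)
Your proof is correct and follows essentially the same route as the paper: decompose the rescaled dynamics into compensated-Poisson martingales plus drift, kill the martingales via the quadratic-variation bound $\nu_{S,\ell,i}\,t/N$ (using $S^N_\ell\le N$) and Doob's inequality, and then apply Lemma \ref{le:EqLin} with the same matrix $A$ together with the continuous mapping theorem, uniqueness coming from that same lemma. The only cosmetic differences are that you aggregate the migration martingales into a single remainder and make explicit the Slutsky/continuity-of-the-limit points that the paper leaves implicit.
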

\begin{proof}
Starting from \eqref{S-rep-SEIR},  
we can rewrite the processes $\bar{S}^N_i$ as 
\begin{align*}
\bar{S}^N_i(t)&\;=\;\bar{S}^N_i(0) -\bar{A}^N_i(t) + \sum_{\ell\neq i} \int_0^t \left( \nu_{S,\ell,i}  \S^N_\ell(s)-  \nu_{S,i,\ell}\S^N_i(s)  \right) ds +\sum_{\ell\neq i} \big(\bar{M}^N_{S,\ell,i}(t)  - \bar{M}^N_{S,i,\ell}(t) \big)   \,,     
\end{align*}
where 
\begin{align*}
\bar{M}^N_{S,\ell,i}(t) &\;:=\; \frac{1}{N} \left( P_{S,\ell,i}\left(\nu_{S,\ell,i}\int_0^tS^N_\ell(s)ds\right)- \nu_{S,\ell,i}\int_0^t S^N_\ell(s)ds \right)\,. 
\end{align*}
The processes $\bar{M}^N_{S,\ell,i}$  are square integrable martingales with respect to the filtration 
$\{\mathcal{F}^N_{S}(t): t \ge 0\}$, defined by
\begin{align*}
\mathcal{F}^N_{S}(t) &\; :=\; \bigvee_{i=1}^L  \mathcal{F}^N_{A,i}(t) \vee  \sigma \left\{P_{S,\ell,i}\left(\nu_{S, \ell,i}\int_0^s S^N_\ell(u)du\right):  0 \le s \le t, \ell, i\in \mathcal{L}, \, \ell \neq i\right\}\,, \quad t \ge 0\,. 
\end{align*}
They have the predictable quadratic variation: 
\begin{align*}
\langle \bar{M}^N_{S,\ell,i} \rangle(t) &\;=\; \frac{1}{N} \nu_{S,\ell,i}\int_0^t \bar{S}^N_\ell(s)  ds \to 0 \qasq  N \to \infty\,. 
\end{align*}
Thus, as $N\to\infty$, 
$$
\left(\bar{M}^N_{S,\ell,i}(t),\, \ell, i \in \mathcal{L}, \ell\neq i\right) \to 0 \ \text{ locally uniformly in }t\,, 
$$
and under Assumption \ref{AS-FLLN-SEIR}, for $1\le i\le L$, the processes 
$\bar{S}^N_i(0) -\bar{A}^N_i $
jointly converge in distribution
 to $\bar{S}_i(0) -\bar{A}_i$ in $D$. 
Now we exploit the result of Lemma \ref{le:EqLin} with $d=L$ and $A$ is given by 
 $A_{i,j} = \nu_{S,j,i}$ for $j\not=i$ and $A_{i,i}=-\sum_{j\not=i}\nu_{S,i,j}$. With the notations of 
 Lemma \ref{le:EqLin},  the vector 
$\bar{S}^N=\AA\Big(\Big(\bar{S}^N_i(0) -\bar{A}^N_i + \sum_{\ell\neq i}\big(\bar{M}^N_{S,\ell,i} - \bar{M}^N_{S,i,\ell}\big) \Big)_{i=1,\dots,L} \Big)$, while $\bar{S}=\AA\Big(\big(\bar{S}_i(0)-\bar{A}_i \big)_{i=1,\dots,L}\Big)$. 
Hence it follows from the continuous mapping theorem that 
\[ (\bar{S}^N_1,\ldots,\bar{S}^N_L)\Rightarrow(\bar{S}_1,\ldots,\bar{S}_L) \ \text{ in }D^L,\]
where $(\bar{S}_1,\ldots,\bar{S}_L)$ is the unique solution of the system of equations \eqref{eq:barS}.
\end{proof}

For $\ell,i\in \mathcal{L}$, let
\begin{equation} \label{eqn-EI-ij-def-SEIR}
\left.
\begin{aligned}
& E^{N,0}_{\ell,i}(t) \;:=\;  \sum_{k=1}^{E^N_\ell (0)} {\bf1}_{\eta_{k,\ell}^0 \le t}{\bf1}_{X^{0,k}_\ell(\eta_{k,\ell}^0)=i}\,,\\
& E^N_{\ell,i}(t)\;:=\;  \sum_{j=1}^{A^N_\ell(t)} {\bf 1}_{\tau^N_{j,\ell} + \eta_{j,\ell} \le t} {\bf1}_{X^j_\ell(\eta_{j,\ell})=i}\,\,,  \\
&  I^{N,0,1}_{\ell,i}(t) \;:=\; \sum_{k=1}^{I^N_\ell (0)} {\bf1}_{\zeta_{k,\ell}^0 \le t}{\bf1}_{Y^{0,k}_\ell(\zeta_{k,\ell}^0)=i}\,\,,  \\
& I^{N,0,2}_{\ell,i}(t) \;:=\;  \sum_{k=1}^{E^N_\ell (0)} {\bf1}_{\eta_{k,\ell}^0 \le t}\left( \sum_{\ell'=1}^L {\bf1}_{X^{0,k}_\ell(\eta_{k,\ell}^0)=\ell'}  {\bf1}_{\eta_{k,\ell}^0 + \zeta_{-k,\ell} \le t} {\bf1}_{Y^{-k,\ell}_{\ell'}( \zeta_{-k,\ell})=i}\right)\,, \\
& I^{N}_{\ell,i}(t) \;:=\;  \sum_{j=1}^{A^N_\ell(t)} {\bf 1}_{\tau^N_{j,\ell} + \eta_{j,\ell} \le t}  \left( \sum_{\ell'=1}^L {\bf1}_{X^j_\ell( \eta_{j,\ell})=\ell'}  {\bf 1}_{\tau^N_{j,\ell} + \eta_{j,\ell}  + \zeta_{j,\ell}  \le t} {\bf1}_{Y^{j,\ell}_{\ell'}( \zeta_{j,\ell} )=i}  \right)\,. 
\end{aligned}
\right\}
\end{equation}

We first treat the components associated with the initial quantities. 
\begin{lemma} \label{lem-barEI-0-conv}
If Assumption \ref{AS-FLLN-SEIR} holds and $F_0$ and $G_0$ are continuous, then
\begin{align*}
\big(\bar{E}^{N,0}_{\ell,i},\,\I^{N,0,1}_{\ell,i},\,\I^{N,0,2}_{\ell,i}, \, \ell,i\in \mathcal{L}\big) \to \big(\bar{E}^{0}_{\ell,i},\,\I^{0,1}_{\ell,i},\,\I^{0,2}_{\ell,i}, \, \ell,i \in \mathcal{L}\big) 
\quad \text{in} \quad D^{3L^2} \quad \text{as} \quad N\to\infty,
\end{align*}
in probability, uniformly in $t$, 
where for $\ell,i\in \mathcal{L}$ and $t\ge 0$, 
\begin{equation} \label{eqn-barI01-SEIR}
\bar{E}^{0}_{\ell,i}(t) \;:=\; \bar{E}_\ell (0) \int_0^t p_{\ell,i}(s) G_0(ds)\,, \quad
\I^{0,1}_{\ell,i}(t) \;:=\; \I_\ell (0) \int_0^t q_{\ell,i}(s) F_0(ds)\,, 
\end{equation}
and
\begin{equation} \label{eqn-barI02-SEIR}
\I^{0,2}_{\ell,i}(t) \;:=\; \bar{E}_\ell(0) \Phi^0_{\ell,i}(t)\,,
\end{equation}
with $\Phi^0_{\ell,i}(t)$ defined in \eqref{eqn-H0}. 
\end{lemma}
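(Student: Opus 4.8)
The plan is to treat the three families uniformly: for each fixed $N$ every one of $\bar{E}^{N,0}_{\ell,i}$, $\I^{N,0,1}_{\ell,i}$, $\I^{N,0,2}_{\ell,i}$ is $N^{-1}$ times a sum, over the initially exposed (resp.\ infectious) individuals of patch $\ell$, of a $\{0,1\}$-valued functional of that individual's sojourn times and migration chains, as displayed in \eqref{eqn-EI-ij-def-SEIR}. Since these per-individual data are i.i.d.\ across $k$ and independent of the initial counts $E^N_\ell(0),I^N_\ell(0)$, which satisfy $\bar{E}^N_\ell(0)\to\bar{E}_\ell(0)$ and $\I^N_\ell(0)\to\I_\ell(0)$ by Assumption~\ref{AS-FLLN-SEIR}, each process is a (marked) empirical process, and I would prove the convergence in two stages: first identify the deterministic mean, then run a uniform law of large numbers.

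For the means I would condition on the sojourn data and use the independence of the migration chains from the periods. For $\bar{E}^{N,0}_{\ell,i}$, conditioning on $\eta^0_{k,\ell}=u$ and using $\P(X^{0,k}_\ell(u)=i)=p_{\ell,i}(u)$ gives per-capita mean $\int_0^t p_{\ell,i}(u)\,G_0(du)$, hence the limit \eqref{eqn-barI01-SEIR}; the computation for $\I^{N,0,1}_{\ell,i}$ is identical with $q$ and $F_0$. For $\I^{N,0,2}_{\ell,i}$ I would first note the simplification $\{\eta^0_{k,\ell}\le t,\ \eta^0_{k,\ell}+\zeta_{-k,\ell}\le t\}=\{\eta^0_{k,\ell}+\zeta_{-k,\ell}\le t\}$, then condition on $(\eta^0_{k,\ell},\zeta_{-k,\ell})=(u,v)$ and on the intermediate patch $\ell'$; the mutual independence of $X^{0,k}_\ell$ and $Y^{-k,\ell}_{\ell'}$ factorises the conditional expectation into $p_{\ell,\ell'}(u)\,q_{\ell',i}(v)$, and integrating over $\{u+v\le t\}$ against $H_0$ reproduces exactly $\Phi^0_{\ell,i}(t)$ from \eqref{eqn-H0}, giving the limit \eqref{eqn-barI02-SEIR}.

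Each of these processes is non-decreasing in $t$ (becoming infectious and recovering are permanent events, and the $\{0,1\}$ mark attached to each individual does not depend on $t$). For the uniform LLN I would fix $t$, bound the conditional variance of the per-capita sum by $O(1/N)$ using the i.i.d.\ structure, and combine with $\bar{E}^N_\ell(0)\to\bar{E}_\ell(0)$ to obtain convergence in probability at each fixed $t$. To upgrade to convergence uniformly in $t$ I would invoke the standard fact that pointwise convergence of monotone functions to a \emph{continuous} monotone limit is automatically locally uniform; continuity of the limits comes from the continuity of $F_0$ and $G_0$, immediately for $\bar{E}^0_{\ell,i}$ and $\I^{0,1}_{\ell,i}$, and for $\I^{0,2}_{\ell,i}$ after checking that $\Phi^0_{\ell,i}$ is continuous. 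Alternatively one may appeal to a Glivenko--Cantelli argument for the half-line class $\{{\bf1}_{\{\cdot\le t\}}\}$, which yields the uniform estimate even without continuity of the limit. Since there are only finitely many indices $(\ell,i)$, the coordinatewise convergence in probability gives the joint convergence in $D^{3L^2}$.

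I expect the $\I^{N,0,2}_{\ell,i}$ term to be the main obstacle. It is the only one built from the composition of two migration chains, namely $X^{0,k}_\ell$ over the exposed period and then $Y^{-k,\ell}_{\ell'}$ over the infectious period, linked through the intermediate patch $\ell'$, together with the correlated pair $(\eta^0_{k,\ell},\zeta_{-k,\ell})\sim H_0$. Disentangling these dependencies cleanly to produce the double integral $\Phi^0_{\ell,i}$, and verifying that $\Phi^0_{\ell,i}$ is continuous (where continuity of $G_0$ enters), is the delicate point; for the uniform LLN it is cleanest to decompose $\I^{N,0,2}_{\ell,i}$ into the finitely many monotone sub-processes indexed by the intermediate patch $\ell'$ and the recovery patch $i$, and apply the monotone-to-continuous-limit upgrade to each.
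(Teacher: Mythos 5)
Your proposal is correct, and its first half (conditioning on the sojourn variables, factorising over the mutually independent chains $X^{0,k}_\ell$ and $Y^{-k,\ell}_{\ell'}$ to get $p_{\ell,\ell'}(u)q_{\ell',i}(v)$, and integrating against $G_0$, $F_0$, $H_0$ to identify the limits) coincides with the paper's computation. Where you genuinely diverge is in upgrading pointwise convergence to uniform convergence. The paper first replaces the random counts $E^N_\ell(0)$, $I^N_\ell(0)$ by the deterministic $[N\bar{E}_\ell(0)]$, $[N\bar{I}_\ell(0)]$ and shows the difference is negligible, and then proves tightness in $D$ via the Corollary of Theorem 7.4 in \cite{billingsley1999convergence}: for each of the three families it bounds $\sup_{t\le T}\P\big(\sup_{0\le u\le\delta}|\tilde{I}^{N}(t+u)-\tilde{I}^{N}(t)|>\eps\big)$ by a Chebyshev estimate on centred increments plus deterministic indicator terms, which vanish for small $\delta$ using the continuity of $F_0$ and $G_0$. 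You instead observe that each prelimit process is nondecreasing in $t$ (the marks are $t$-independent, and $\{\eta^0_{k,\ell}\le t\}\cap\{\eta^0_{k,\ell}+\zeta_{-k,\ell}\le t\}=\{\eta^0_{k,\ell}+\zeta_{-k,\ell}\le t\}$) and invoke the P\'olya-type upgrade: pointwise convergence in probability of monotone functions to a continuous monotone limit is automatically uniform, by sandwiching between finitely many grid points. This is more elementary and eliminates all the increment bookkeeping; what the paper's heavier route buys is that it is the same Billingsley machinery reused in the FCLT (Lemma \ref{lem-I0-ij-conv-SEIR}), where monotonicity of the centred, $\sqrt{N}$-scaled processes is no longer available.

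One caveat, which you share with the paper rather than create: both your grid argument and the paper's tightness bound need $\Phi^0_{\ell,i}$ in \eqref{eqn-H0} to be continuous, and continuity of $G_0$ (even together with continuity of the $v$-marginal) does not by itself rule out $H_0$ charging a line $\{u+v=c\}$ — e.g.\ $\zeta_{-k,\ell}=1-\eta^0_{k,\ell}$ with $\eta^0_{k,\ell}$ uniform on $(0,1)$ makes $\Phi^0_{\ell,i}$ jump at $t=1$ while $G_0$ and the law of $\zeta_{-k,\ell}$ are continuous. You correctly flag this verification as the delicate point; note that your Glivenko--Cantelli alternative (after absorbing the location marks into composite random times, exactly the device the paper uses in the proof of Lemma \ref{lem-estimate-SIR} via auxiliary uniforms $U_k$) delivers the uniform estimate with no continuity assumption on the limit at all, and is therefore the most robust of the three arguments on this point.
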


\begin{proof}
We define $\tilde{E}^{N,0}_{\ell,i}$, $\tilde{I}^{N,0,1}_{\ell,i}$ and 
 $\tilde{I}^{N,0,2}_{\ell,i}$ similarly as $\bar{E}^{N,0}_{\ell,i}$, $\I^{N,0,1}_{\ell,i}$ and $\I^{N,0,2}_{\ell,i}$, but with 
  $E^N_\ell(0)$ and $I^N_\ell(0)$ replaced by  $[N\bar{E}_\ell(0)]$ and $[N\bar{I}_\ell(0)]$ respectively. As a consequence of Assumption \ref{AS-FLLN-SEIR}, the differences $\bar{E}^{N,0}_{\ell,i}(t)-\tilde{E}^{N,0}_{\ell,i}(t)$, $\I^{N,0,1}_{\ell,i}(t)-\tilde{I}^{N,0,1}_{\ell,i}(t)$ and $\I^{N,0,2}_{\ell,i}(t)-\tilde{I}^{N,0,2}_{\ell,i}(t)$ are easily shown to tend to $0$ in probability, locally uniformly in $t$, as $N\to\infty$. 
  
 The convergence of $\{\tilde{E}^{N,0}_{\ell,i}\}$ follows the same argument as that of $\{\tilde{I}^{N,0,1}_{\ell,i}\}$, so we establish the convergence of $\{\tilde{I}^{N,0,1}_{\ell,i}\}$.

Note that, since  $\zeta_{k,\ell}^0$ and $Y^{0,k}_\ell$ are independent,
\begin{align*}
\E\left[ {\bf1}_{\zeta_{k,\ell}^0\le t}{\bf1}_{Y^{0,k}_\ell(\zeta_{k,\ell}^0)=i}\right] 
& =  \E\left[ \int_0^t {\bf1}_{Y^{0,k}_\ell(s)=i} d F_0(s) \right]  =  \int_0^t q_{\ell,i}(s)  F_0(ds), 
\end{align*}
where the expectation is taken under the condition that $Y^{0,k}_\ell(0)=\ell$. 
Note that the pairs $(\zeta^0_{k,\ell}, Y^{0,k}_\ell(\cdot))$ are independent over $k$, and have the same distributions. 
Thus, by the LLN of i.i.d. random variables, 
we obtain that for each $t\ge 0$, as $N\to\infty$,
$$
\tilde{I}^{N,0,1}_{\ell,i}(t) \to \bar{I}^{0,1}_{\ell,i}(t) \ \text{ in probability}. 
$$
In order to establish locally uniform convergence in $t$, is suffices to establish tightness in $D$,  which (see the Corollary of Theorem 7.4 in \cite{billingsley1999convergence})
 will follow from the fact that
\begin{equation}\label{crit_cor_bill}
\limsup_{N\to\infty}\sup_{0\le t\le T}\frac{1}{\delta}\P\left(\sup_{0\le u\le \delta}|\tilde{I}^{N,0,1}_{\ell,i}(t+u)-\tilde{I}^{N,0,1}_{\ell,i}(t)|>\eps\right)\to0,\ \text{ as }\delta\to0.
\end{equation}
 By the independence of the pairs
$\{(\zeta_{k,\ell}^0,Y^{0,k}_\ell(\zeta_{k,\ell}^0)),\ k\ge1\}$,
\begin{align*}
&\P\left( \sup_{t  \le s \le t+\delta } \big|\tilde{I}^{N,0,1}_{\ell,i}(s)-\tilde{I}^{N,0,1}_{\ell,i}(t) \big| > \ep \right)  
 = \P\left( N^{-1}\sum_{k=1}^{N \bar{I}_\ell(0)} {\bf1}_{t < \zeta_{k,\ell}^0\le t+\delta}{\bf1}_{Y^{0,k}_\ell(\zeta_{k,1}^0)=i} >\ep  \right)  \\
&\le \P\left( N^{-1}\sum_{k=1}^{N \bar{I}_\ell(0)}\left[ {\bf1}_{t < \zeta_{k,\ell}^0\le t+\delta}{\bf1}_{Y^{0,k}_\ell(\zeta_{k,\ell}^0)=i}-
 \int_t^{t+\delta} q_{\ell,i}(u)F_0(du)\right]>\ep/2  \right) \\
& \qquad \qquad +{\bf1}\bigg\{\int_t^{t+\delta} q_{\ell,i}(u)F_0(du)>\ep/2\bar{I}_1(0)\bigg\}\,. 
\end{align*}
The first term is bounded by
\begin{align*}
& \frac{4}{\ep^2} \E\left[ \left( N^{-1}\sum_{k=1}^{N \bar{I}_\ell(0)}\left[ {\bf1}_{ t < \zeta_{k,\ell}^0\le t+\delta }{\bf1}_{Y^{0,k}_\ell(\zeta_{k,\ell}^0)=i} -\int_t^{t+\delta} q_{\ell,i}(u)F_0(du)\right]\right)^2 \right] \\
&  = \frac{4 \bar{I}_1(0)}{\ep^2 N} \int_t^{t+\delta} q_{\ell,i}(s)  F_0(ds)\left[1-\int_t^{t+\delta} q_{\ell,i}(s) F_0(ds)\right]  \to 0
\end{align*}
as $N\to \infty$, 
so the right hand side converges  as $N\to \infty$ to  $ {\bf1}\big\{\int_t^{t+\delta} q_{\ell,i}(u)F_0(du)>\ep/2\bar{I}_\ell(0)\big\}$ which vanishes 
 for $\delta>0$ small enough, uniformly w.r.t. $t\in[0,T]$. 
 Hence, \eqref{crit_cor_bill} follows.

  We next sketch the proof for the convergence of $\tilde{I}^{N,0,2}_{\ell,i}$ since it follows similar steps as that of $\{\tilde{I}^{N,0,1}_{\ell,i}\}$.  
We have 
\begin{align*}
& \E\left[ {\bf1}_{\eta_{k,\ell}^0 \le t}\left( \sum_{\ell'=1}^L {\bf1}_{X^{0,k}_\ell(\eta_{k,\ell}^0)=\ell'}  {\bf1}_{\eta_{k,\ell}^0 + \zeta_{-k,\ell} \le t} {\bf1}_{Y^{-k,\ell}_{\ell'}( \zeta_{-k,\ell})=i}\right)\right]  \;=\;  \Phi^0_{\ell,i}(t)\,,
\end{align*}
which implies by the LLN of i.i.d. variables that for each $t\ge 0$, 
$
\bar{I}^{N,0,2}_{\ell,i}(t) \RA \bar{I}^{0,2}_{\ell,i}(t)$ as $N\to \infty. 
$
The convergence of finite dimensional distribution is a straightforward extension. 
For tightness we use the same approach as above. 
We start with
\begin{align*}
 \big|\tilde{I}^{N,0,2}_{\ell,i}(t+s)-\tilde{I}^{N,0,2}_{\ell,i}(t) \big|  &=  \frac{1}{N}\sum_{k=1}^{N\bar{E}_\ell (0)} {\bf1}_{t<\eta_{k,\ell}^0 \le t+s}  \left( \sum_{\ell'=1}^L {\bf1}_{X^{0,k}_\ell(\eta_{k,\ell}^0)=\ell'}  {\bf1}_{\eta_{k,\ell}^0 + \zeta_{-k,\ell} \le t+s} {\bf1}_{Y^{-k,\ell}_{\ell'}( \zeta_{-k,\ell})=i}\right) \\ 
& \quad + \frac{1}{N}  \sum_{k=1}^{N\bar{E}_\ell (0)} {\bf1}_{\eta_{k,\ell}^0 \le t}\left( \sum_{\ell'=1}^L {\bf1}_{X^{0,k}_\ell(\eta_{k,\ell}^0)=\ell'}  {\bf1}_{t< \eta_{k,\ell}^0 + \zeta_{-k,\ell} \le t+s} {\bf1}_{Y^{-k,\ell}_{\ell'}( \zeta_{-k,\ell})=i}\right). 
\end{align*}
We next note that each of the two terms on the right hand side is increasing in $s$, so that
\begin{align*}
&\P\left( \sup_{0 \le s \le \delta } \big|\tilde{I}^{N,0,2}_{\ell,i}(t+s)-\tilde{I}^{N,0,2}_{\ell,i}(t) \big| > \ep \right)  \\
& \le  \P \bigg( \frac{1}{N}\sum_{k=1}^{N\bar{E}_\ell (0)} {\bf1}_{t<\eta_{k,\ell}^0 \le t+\delta}  \bigg( \sum_{\ell'=1}^L {\bf1}_{X^{0,k}_\ell(\eta_{k,\ell}^0)=\ell'}  {\bf1}_{\eta_{k,\ell}^0 + \zeta_{-k,\ell} \le t+\delta} {\bf1}_{Y^{-k,\ell}_{\ell'}( \zeta_{-k,\ell})=i}\bigg) > \ep/2 \bigg) \\
& \quad +  \P \bigg(  \frac{1}{N}  \sum_{k=1}^{N\bar{E}_\ell (0)} {\bf1}_{\eta_{k,\ell}^0 \le t}\bigg( \sum_{\ell'=1}^L {\bf1}_{X^{0,k}_\ell(\eta_{k,\ell}^0)=\ell'}  {\bf1}_{t< \eta_{k,\ell}^0 + \zeta_{-k,\ell} \le t+\delta} {\bf1}_{Y^{-k,\ell}_{\ell'}( \zeta_{-k,\ell})=i}\bigg) > \ep/2\bigg) \\
& \le  \P \bigg( \frac{1}{N}\sum_{k=1}^{N\bar{E}_\ell (0)} \bigg[  {\bf1}_{t<\eta_{k,\ell}^0 \le t+\delta}  \bigg( \sum_{\ell'=1}^L {\bf1}_{X^{0,k}_\ell(\eta_{k,\ell}^0)=\ell'}  {\bf1}_{\eta_{k,\ell}^0 + \zeta_{-k,\ell} \le t+\delta} {\bf1}_{Y^{-k,\ell}_{\ell'}( \zeta_{-k,\ell})=i}\bigg) \\
& \qquad \qquad \qquad \qquad\qquad  - \int_t^{t+\delta}   \sum_{\ell'=1}^L p_{\ell, \ell'}(u) \int_0^{t+\delta-u} q_{\ell', i}(v)  H_0(du,dv) \bigg]  > \ep/4 \bigg) \\
& \quad + {\bf1}\bigg\{ \int_t^{t+\delta}   \sum_{\ell'=1}^L p_{\ell, \ell'}(u) \int_0^{t+\delta-u} q_{\ell', i}(v) H_0(du,dv) >\ep/4\bar{E}_\ell(0)\bigg\}\\
& \quad + \P \bigg(  \frac{1}{N}  \sum_{k=1}^{N\bar{E}_\ell (0)} \bigg[ {\bf1}_{\eta_{k,\ell}^0 \le t}\bigg( \sum_{\ell'=1}^L {\bf1}_{X^{0,k}_\ell(\eta_{k,\ell}^0)=\ell'}  {\bf1}_{t< \eta_{k,\ell}^0 + \zeta_{-k,\ell} \le t+\delta} {\bf1}_{Y^{-k,\ell}_{\ell'}( \zeta_{-k,\ell})=i}\bigg)   \\
& \qquad \qquad \qquad \qquad \qquad - \int_0^t  \sum_{\ell'=1}^L p_{\ell, \ell'}(u) \int_{t-u}^{t+\delta-u} q_{\ell', i}(v) H_0(du,dv) \bigg] > \ep/4\bigg)   \\
& \quad +  {\bf1}\bigg\{\int_0^t  \sum_{\ell'=1}^L p_{\ell, \ell'}(u) \int_{t-u}^{t+\delta-u} q_{\ell', i}(v) H_0(du,dv) >\ep/4\bar{E}_\ell(0)\bigg\}\,. 
\end{align*}
The two probability terms on the right hand side are bounded by
\begin{align*}
 \frac{16 \bar{E}_\ell(0)}{\ep^2 N} \int_t^{t+\delta}   \sum_{\ell'=1}^L p_{\ell, \ell'}(u) \int_0^{t+\delta-u} q_{\ell', i}(v) H_0(du,dv)  \bigg( 1-  \int_t^{t+\delta}   \sum_{\ell'=1}^L p_{\ell, \ell'}(u) \int_0^{t+\delta-u} q_{\ell', i}(v) H_0(du,dv) \bigg) 
\end{align*}
and 
\begin{align*}
\frac{16 \bar{E}_\ell(0)}{\ep^2 N}\int_0^t  \sum_{\ell'=1}^L p_{\ell, \ell'}(u) \int_{t-u}^{t+\delta-u} q_{\ell', i}(v) H_0(du,dv)  \bigg(1- \int_0^t  \sum_{\ell'=1}^L p_{\ell, \ell'}(u) \int_{t-u}^{t+\delta-u} q_{\ell', i}(v) H_0(du,dv)   \bigg),
\end{align*} 
respectively. Thus, as $N\to \infty$, the right hand side converges to the sum of the two indicator terms.
And the two terms in the limit both vanish for small enough $\delta$. 
Thus,  for any $\ep>0$, 
\begin{align*}
\lim_{\delta \to 0} \limsup_{N\to\infty} \frac{T}{\delta} \sup_{t \in [0,T]}\P\left( \sup_{t  \le s \le t+\delta } \big|\tilde{I}^{N,0,2}_{\ell,i}(s)-\tilde{I}^{N,0,2}_{\ell,i}(t) \big| > \ep \right) \;\to\; 0\,. 
\end{align*}
Therefore, we can conclude that $\tilde{I}^{N,0,2}_{\ell,i} \to \bar{I}^{0,2}_{\ell,i} $ in probability, uniformly in $t$, as $N\to \infty$.  
Then, since $G_0$ is continuous, we can verify the continuity of the covariance function, and thus the continuity of the limit processes $\bar{I}^{0,2}_{\ell,i}$.
This completes the proof. 
\end{proof}

In the next proof, we will make use of the following result, which is Lemma 4.4 in \cite{FPP2020b}.
 In the next statement, $D_\uparrow(\R_+)$ (resp. $C_\uparrow(\R_+)$) 
 denotes the set of real-valued nondecreasing
 function on $\R_+$, which belong to $D(\R_+)$ (resp. $C(\R_+)$).
\begin{lemma}\label{le:Portmanteau}
Let $f\in D(\R_+)$ and $\{g_N\}_{N\ge1}$ be a sequence of elements of $D_\uparrow(\R_+)$ which is such that
$g_N\to g$ locally uniformly as $N\to\infty$, where $g\in C_\uparrow(\R_+)$. Then, for any $t>0$, as $N\to\infty$,
\[ \int_{[0,t]} f(s) g_N(ds)\to \int_{[0,t]} f(s)g(ds)\,. \]
\end{lemma}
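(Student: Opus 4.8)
The plan is to approximate the c\`adl\`ag integrand $f$ uniformly by a step function on $[0,t]$, thereby reducing the integral against $g_N$ to a finite sum of increments of $g_N$, and then to pass to the limit using the continuity of $g$. The role of continuity is exactly to kill the atoms of the limiting measure, so that the step-function increments converge regardless of where the partition points fall.

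First I would extract from the hypothesis the facts I need. Local uniform convergence gives $\rho_N:=\sup_{0\le u\le t}|g_N(u)-g(u)|\to0$. Since $g$ is continuous, for every $s\in[0,t]$ one has $g_N(s)\to g(s)$, and moreover, because $|g_N(s^-)-g(s)|=|g_N(s^-)-g(s^-)|\le\rho_N$, the left limits also satisfy $g_N(s^-)\to g(s)$. By convergence of $g_N$ at the endpoints of $[0,t]$ the total masses $\int_{[0,t]}g_N(ds)$ converge to $\int_{[0,t]}g(ds)$ and are in particular uniformly bounded, say $\int_{[0,t]}g_N(ds)\le C$ and $\int_{[0,t]}g(ds)\le C$ for all $N$.

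Now fix $\eps>0$. Since $f\in D([0,t])$, there is a partition $0=t_0<t_1<\cdots<t_m=t$ such that the oscillation of $f$ on each $[t_{k-1},t_k)$ is at most $\eps$ (the standard control of the c\`adl\`ag modulus, Chapter 3 of \cite{billingsley1999convergence}); setting $f_\eps:=\sum_{k=1}^m f(t_{k-1})\mathbf{1}_{[t_{k-1},t_k)}+f(t)\mathbf{1}_{\{t\}}$ gives $\|f-f_\eps\|_t\le\eps$. I then split
\[ \int_{[0,t]}f\,g_N(ds)-\int_{[0,t]}f\,g(ds)=\int_{[0,t]}(f-f_\eps)\,g_N(ds)+\Big(\int_{[0,t]}f_\eps\,g_N(ds)-\int_{[0,t]}f_\eps\,g(ds)\Big)+\int_{[0,t]}(f_\eps-f)\,g(ds). \]
The first and third terms are each bounded in absolute value by $\eps C$, uniformly in $N$. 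The middle term is the finite sum
\[ \sum_{k=1}^m f(t_{k-1})\big[(g_N(t_k^-)-g_N(t_{k-1}^-))-(g(t_k)-g(t_{k-1}))\big]+f(t)\,(g_N(t)-g_N(t^-)), \]
and by the convergence of the one-sided limits established above each bracket tends to $0$ (using $g(t_k^-)=g(t_k)$) and $g_N(t)-g_N(t^-)\to0$; hence the middle term vanishes as $N\to\infty$ for fixed $\eps$. Therefore $\limsup_N\big|\int_{[0,t]}f\,g_N(ds)-\int_{[0,t]}f\,g(ds)\big|\le2\eps C$, and letting $\eps\downarrow0$ gives the claim.

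The only genuinely delicate point is the endpoint bookkeeping in the middle sum: because $g_N$ may still carry atoms, the increments over the half-open intervals $[t_{k-1},t_k)$ involve the left limits $g_N(t_k^-)$ rather than $g_N(t_k)$, and it is precisely the continuity of the limit $g$ that forces these left limits to converge to $g(t_k)$ and that renders the residual boundary atom $g_N(t)-g_N(t^-)$ negligible. Without continuity of $g$ the limiting measure could place mass on the partition points, the open/closed distinction would matter in the limit, and the step-function increments would no longer converge; this is why that hypothesis is essential.
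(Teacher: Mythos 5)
Your proof is correct. Note that the paper itself does not prove this lemma at all: it is imported verbatim as Lemma~4.4 of \cite{FPP2020b}, so there is no internal argument to compare against. Your step-function argument is a valid, self-contained replacement for that citation: the partition with small oscillation on half-open intervals exists by the standard c\`adl\`ag modulus bound, the identity $g_N([t_{k-1},t_k))=g_N(t_k^-)-g_N(t_{k-1}^-)$ is the right bookkeeping for possibly atomic $g_N$, the left limits converge because $|g_N(s^-)-g(s^-)|\le \sup_{0\le u\le t}|g_N(u)-g(u)|$, and continuity of $g$ is used exactly where it must be (to identify $g(t_k^-)=g(t_k)$ and kill the boundary atom $g_N(t)-g_N(t^-)$). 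The route suggested by the lemma's very name (and presumably taken in \cite{FPP2020b}) is slightly different: locally uniform convergence of $g_N$ to a continuous $g$ gives weak convergence of the finite Stieltjes measures $g_N(ds)$ to $g(ds)$ on $[0,t]$; since $f$ is bounded there and its discontinuity set is countable, hence null for the atomless measure $g(ds)$, the conclusion follows from the Portmanteau/continuous-mapping theorem. That argument is shorter but leans on the mapping theorem for weak convergence; yours is more elementary and makes the mechanism transparent. One cosmetic point: in the $k=1$ bracket you should write $g(t_0^-)$ rather than $g(t_0)$ so that the same Stieltjes convention (possible atom at $0$) is applied to $g_N$ and $g$ alike; by continuity of $g$ this changes nothing for $k\ge 2$, and at $k=1$ it is resolved by whichever consistent convention one adopts for $g_N(0^-)$ and $g(0^-)$.
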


We next prove the convergence of $\bar{E}^N_{\ell,i}$ and $\bar{I}^{N}_{\ell,i}$ for $ \ell,i\in \mathcal{L}$. Define the auxiliary processes: for $\ell,i\in \mathcal{L}$, 
\begin{align*}
\breve{E}^N_{\ell,i}(t) = \E\big[\bar{E}^N_{\ell,i}(t)| \mathcal{F}^N_{A,\ell}(t)\big], \quad \breve{I}^N_{\ell,i}(t) = \E\big[\bar{I}^N_{\ell,i}(t)| \mathcal{F}^N_{A,\ell}(t) \big], \quad t \ge 0.
\end{align*}
We first prove these processes converge to the desired limits in the following lemma, and then show that these processes are asymptotically equivalent to $\bar{E}^N_{\ell,i}$ and $\bar{I}^{N}_{\ell,i}$, $ \ell,i\in \mathcal{L}$. 

\begin{lemma} \label{lem-breve-EI-conv}
With the limit $\big(\bar{A}_1, \dots,\bar{A}_L \big)$ of the convergent subsequence of  $\big\{\big(\bar{A}^N_1, \dots,\bar{A}^N_L \big)\big\}$,  
 under Assumption \ref{AS-FLLN-SEIR}, 
 \begin{align} \label{eqn-barEI-conv}
\big(\breve{E}^N_{\ell,i},\breve{I}^{N}_{\ell,i},\, \ell,i\in \mathcal{L}\big) \to \big(\bar{E}_{\ell,i}, \I_{\ell,i}, \ell,i\in \mathcal{L}\big) \qinq D^{2L^2} \qasq N\to\infty,
\end{align}
in probability, 
where for $\ell,i\in \mathcal{L}$ and $t\ge 0$,
\begin{align} \label{eqn-barE-SEIR}
\bar{E}_{\ell,i}(t) := \int_0^t \left( \int_0^{t-s}p_{\ell,i}(u) G(du) \right) d \bar{A}_\ell(s),
\end{align}
and
\begin{align} \label{eqn-barI-SEIR}
\bar{I}_{\ell,i}(t):=\int_0^t \Phi_{\ell,i}(t-s) d \bar{A}_\ell(s),
\end{align}
with $\Phi_{\ell,i}$ defined in \eqref{eqn-H}. 

\end{lemma}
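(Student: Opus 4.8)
The plan is to first derive closed forms for the two conditional expectations and then transfer to them the convergence of $\bar A^N_\ell$ along the fixed subsequence. Conditioning on $\mathcal F^N_{A,\ell}(t)$ makes the count $A^N_\ell(t)$ and the infection epochs $\tau^N_{j,\ell}$ known, while the i.i.d.\ mark sequences $\{(\eta_{j,\ell},\zeta_{j,\ell})\}$, $\{X^j_\ell\}$ and $\{Y^{j,\ell}_{\ell'}\}$ are independent of the measure $Q_\ell$ driving the infections; so each summand reduces to its prior probability and I would obtain
\[
\breve E^N_{\ell,i}(t)=\int_0^t\Big(\int_0^{t-s}p_{\ell,i}(u)\,G(du)\Big)d\bar A^N_\ell(s),\qquad \breve I^N_{\ell,i}(t)=\int_0^t\Phi_{\ell,i}(t-s)\,d\bar A^N_\ell(s).
\]
For the infectious term this uses that, given $\eta_{j,\ell}=u$ and $\zeta_{j,\ell}=v$, the chains reach $\ell'$ and then $i$ with probability $p_{\ell,\ell'}(u)q_{\ell',i}(v)$, together with the identity $\int_{\{u+v\le r\}}\sum_{\ell'}p_{\ell,\ell'}(u)q_{\ell',i}(v)\,H(du,dv)=\Phi_{\ell,i}(r)$.

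Next I would apply Fubini, using $\bar A^N_\ell(0)=0$, to turn the kernels inside out:
\[
\breve E^N_{\ell,i}(t)=\int_{[0,t]}p_{\ell,i}(u)\,\bar A^N_\ell(t-u)\,G(du),\qquad \breve I^N_{\ell,i}(t)=\int_{\{u+v\le t\}}\sum_{\ell'=1}^L p_{\ell,\ell'}(u)q_{\ell',i}(v)\,\bar A^N_\ell(t-u-v)\,H(du,dv),
\]
and identically for the limits $\bar E_{\ell,i},\bar I_{\ell,i}$ with $\bar A_\ell$ in place of $\bar A^N_\ell$. Since $p_{\ell,i},q_{\ell',i}\le1$, $\sum_{\ell'}p_{\ell,\ell'}(u)=1$, and $G,H$ are probability measures, subtracting and bounding the integrands gives the uniform estimates $\sup_{t\in[0,T]}|\breve E^N_{\ell,i}(t)-\bar E_{\ell,i}(t)|\le\|\bar A^N_\ell-\bar A_\ell\|_T$ and $\sup_{t\in[0,T]}|\breve I^N_{\ell,i}(t)-\bar I_{\ell,i}(t)|\le\|\bar A^N_\ell-\bar A_\ell\|_T$. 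Because $\bar A_\ell$ is Lipschitz, hence continuous (Lemma \ref{lem-barAn-tight}), the convergence $\bar A^N_\ell\RA\bar A_\ell$ upgrades through the Skorohod representation theorem to $\|\bar A^N_\ell-\bar A_\ell\|_T\to0$ almost surely; the two bounds then give \eqref{eqn-barEI-conv} locally uniformly, and the limits are continuous.

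The main obstacle is the conditional-expectation identity in the first step: since $\Upsilon^N_\ell$ depends on $I^N$, hence on the marks of individuals infected before $s$, conditioning on $\mathcal F^N_{A,\ell}(t)$ tilts the law of those marks away from the prior. I would justify the closed form rigorously from the PRM representation \eqref{PRM-rep-E-SEIR}: decompose $E^N_{\ell,i}(t)$ into its compensator $\int_0^t\lambda_\ell\Upsilon^N_\ell(s)\big(\int_0^{t-s}p_{\ell,i}(u)G(du)\big)\,ds$ plus the compensated integral against $\overline Q_\ell$, then substitute $\lambda_\ell\Upsilon^N_\ell(s)\,ds=dA^N_\ell(s)-dM^N_{A,\ell}(s)$. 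Both $N^{-1}$ times the compensated PRM integral and $N^{-1}\int_0^t\big(\int_0^{t-s}p_{\ell,i}(u)G(du)\big)\,dM^N_{A,\ell}(s)$ have predictable quadratic variation of order $N^{-1}$, exactly as in \eqref{bar-MA-qv}--\eqref{barM-A-conv}, so their conditional expectations given $\mathcal F^N_{A,\ell}(t)$ vanish in $L^2$; this confirms that $\breve E^N_{\ell,i}$ equals the integral against $d\bar A^N_\ell$ up to an $o_P(1)$ error, consistent with the $O(N^{-1})$ tilting produced by a single infectious individual. The remaining care is to make these error terms vanish locally uniformly in $t$ rather than only pointwise, which I would obtain from the same $L^2$ bounds together with the tightness criterion \eqref{crit_cor_bill} used in Lemma \ref{lem-barEI-0-conv}.
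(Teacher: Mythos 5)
Your proposal is correct, and it departs from the paper's proof in both of its steps. The paper's argument is much shorter: it treats the identities $\breve{E}^N_{\ell,i}(t)=\int_0^t\big(\int_0^{t-s}p_{\ell,i}(u)G(du)\big)d\bar{A}^N_\ell(s)$ and $\breve{I}^N_{\ell,i}(t)=\int_0^t\Phi_{\ell,i}(t-s)\,d\bar{A}^N_\ell(s)$ as exact conditional-expectation computations (conditioning each summand on $\tau^N_{j,\ell}$ alone), and then concludes in one line from Lemma \ref{le:Portmanteau}. You instead pass to the Fubini form $\breve{E}^N_{\ell,i}(t)=\int_{[0,t]}p_{\ell,i}(u)\bar{A}^N_\ell(t-u)G(du)$ (and its analogue with $H$), which yields the explicit estimates $\sup_{t\in[0,T]}|\breve{E}^N_{\ell,i}(t)-\bar{E}_{\ell,i}(t)|\le\|\bar{A}^N_\ell-\bar{A}_\ell\|_T$ and likewise for $\breve{I}^N_{\ell,i}$; combined with the Skorokhod representation theorem and the Lipschitz property of $\bar{A}_\ell$ from Lemma \ref{lem-barAn-tight}, this gives locally uniform convergence directly. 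Your route is more elementary (no separate lemma on convergence of integrals against nondecreasing integrators is needed), and it actually buys a bit more: Lemma \ref{le:Portmanteau} is pointwise in $t$, so the paper's one-line conclusion still implicitly requires an upgrade to convergence in $D$, which your uniform bound supplies for free.

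Your caution about the conditional-expectation identity is also well placed: it is a genuine subtlety that the computation in \eqref{eqn-breveEn-integral-rep} passes over. Since $\Upsilon^N_\ell(s)$ depends on the marks of individuals infected before $s$, the observed path of $A^N_\ell$ is a thinning of $Q_\ell$ whose acceptance probabilities depend on those marks; hence the conditional law of $(\eta_{j,\ell},X^j_\ell(\eta_{j,\ell}))$ given $\mathcal{F}^N_{A,\ell}(t)$ is tilted away from the prior, and the first equality in \eqref{eqn-breveEn-integral-rep} is not exact. Your repair is sound: the difference $\bar{E}^N_{\ell,i}(t)-\int_0^t G_{\ell,i}(t-s)d\bar{A}^N_\ell(s)$ is $N^{-1}$ times a single integral against the compensated measure $\overline{Q}_{\ell}$ (your two error terms collapse to this, since $dM^N_{A,\ell}$ is itself a compensated integral), so its second moment is $O(N^{-1})$ for each fixed $t$; because conditional expectation is an $L^2$ contraction, the same bound survives conditioning on $\mathcal{F}^N_{A,\ell}(t)$, which legitimizes the closed form up to an error of order $N^{-1/2}$. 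The uniform-in-$t$ control of that error then follows by the monotone-increment argument already used in Lemmas \ref{lem-barEI-0-conv} and \ref{lem-barbreveEI-0}. On this point your proof is more careful than the paper's own.
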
 

\begin{proof}
Observe that for $\ell,i\in \mathcal{L}$, 
 \begin{align} \label{eqn-breveEn-integral-rep}
\breve{E}^{N}_{\ell,i}(t)&= \E\left[\bar{E}^{N}_{\ell,i}(t)|\mathcal{F}^N_{A,\ell}(t) \right] = \frac{1}{N}\sum_{j=1}^{A^N_\ell(t)}  \E\left[ {\bf 1}_{\tau^N_{j,\ell} + \eta_{j,\ell} \le t} {\bf1}_{X^j_\ell( \eta_{j,\ell})=i}| \tau^N_{j,\ell}\right] \non \\
&= \frac{1}{N} \sum_{j=1}^{A^N_\ell(t)} \E\left[  \int_0^{t-\tau^N_{j,\ell}} {\bf1}_{X^j_\ell(u)=i} F(du) \Big| \tau^N_{j,\ell} \right] 
=  \frac{1}{N}\sum_{j=1}^{A^N_\ell(t)}  \int_0^{t-\tau^N_{j,\ell}}p_{\ell,i}(u)  F(du) \non \\
&= \int_0^t \left(  \int_0^{t-s}p_{\ell,i}(u)  F(du) \right) d \bar{A}^N_\ell(s),
\end{align}
and
\begin{align} \label{breveI-rep-SEIR}
\breve{I}^N_{\ell,i}(t)
&=  N^{-1} \sum_{j=1}^{A^N_\ell(t)} \E \left[  {\bf 1}_{\tau^N_{j,\ell} + \eta_{j,\ell} \le t}  \left( \sum_{\ell'=1}^L {\bf1}_{X^j_\ell( \eta_{j,\ell})=\ell'}  {\bf 1}_{\tau^N_{j,\ell} + \eta_{j,\ell}  + \zeta_{j,\ell}  \le t} {\bf1}_{Y^{j,\ell}_{\ell'}( \zeta_{j,\ell} )=i}  \right) \Big| \tau^N_{j,\ell}  \right] \non\\
& = N^{-1} \sum_{j=1}^{A^N_\ell(t)} \Phi_{\ell,i}(t- \tau^N_{j,\ell}) =  \int_0^t \Phi_{\ell,i}(t-s) d \bar{A}^N_\ell(s)\,. 
\end{align}

Then the convergence follows by applying Lemma \ref{le:Portmanteau}.
\end{proof}

\begin{lemma} \label{lem-barbreveEI-0}
Under the assumptions of Lemma \ref{lem-breve-EI-conv}, for $\ell,i\in \mathcal{L}$, and for any $\ep>0$, 
\begin{align} \label{eqn-E-diff-conv-SEIR}
P \left( \sup_{t \in [0,T]} \big|\bar{E}^{N}_{\ell,i}(t) - \breve{E}^{N}_{\ell,i}(t) \big| > \ep\right) \to 0 \quad \text{as} \quad N\to\infty,
\end{align}
and
\begin{align} \label{eqn-I-diff-conv-SEIR}
P \left( \sup_{t \in [0,T]} \big|\I^{N}_{\ell,i}(t) - \breve{I}^{N}_{\ell,i}(t) \big| > \ep\right) \to 0 \quad \text{as} \quad N\to\infty.
\end{align}
\end{lemma}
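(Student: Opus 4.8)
The plan is to identify $N(\bar E^N_{\ell,i}-\breve E^N_{\ell,i})$ and $N(\bar I^N_{\ell,i}-\breve I^N_{\ell,i})$ as stochastic integrals against the \emph{compensated} PRMs $\overline Q_\ell$ and $\wt Q_\ell$, and to show that these, divided by $N$, vanish uniformly on $[0,T]$. Combining the representation \eqref{PRM-rep-E-SEIR} with \eqref{eqn-breveEn-integral-rep} (whose compensator is built from the mean measure of $\check Q_\ell$, i.e. with $G(du)$), one has
\begin{equation*}
N\big(\bar E^N_{\ell,i}(t)-\breve E^N_{\ell,i}(t)\big)=\int_0^t\int_0^\infty\int_0^{t-s}\int_{\{i\}}{\bf1}_{\afrak\le\lambda_\ell\Upsilon^N_\ell(s^-)}\,\overline Q_\ell(ds,d\afrak,du,d\theta)=:\Xi^N_{\ell,i}(t,t)\,.
\end{equation*}
The only real difficulty is that the inner domain $[0,t-s]$ depends on the same $t$ that bounds the outer integral, so the right-hand side is not a martingale in $t$ and Doob's inequality does not apply directly.

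I would decouple the two roles of $t$ through a two-parameter family: for $0\le t\le r\le T$ set $\Xi^N_{\ell,i}(t,r):=\int_0^t\int_0^\infty\int_0^{r-s}\int_{\{i\}}{\bf1}_{\afrak\le\lambda_\ell\Upsilon^N_\ell(s^-)}\,\overline Q_\ell(ds,d\afrak,du,d\theta)$, so that the object of interest is the diagonal $\Xi^N_{\ell,i}(t,t)$. For each fixed $r$ the integrand is predictable and free of $t$, so $t\mapsto\Xi^N_{\ell,i}(t,r)$ is a square-integrable martingale with predictable quadratic variation $\lambda_\ell\int_0^t\Upsilon^N_\ell(s)\int_0^{r-s}p_{\ell,i}(u)G(du)\,ds\le\lambda_\ell\int_0^t\Upsilon^N_\ell(s)\,ds\le\lambda_\ell\bar\kappa_\ell Nt$ by \eqref{bar-int-Phi-bound}. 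Doob's $L^2$ inequality then gives $\E\big[\sup_{t\le r}N^{-2}|\Xi^N_{\ell,i}(t,r)|^2\big]\le 4\lambda_\ell\bar\kappa_\ell T/N$, so $\sup_{t\le r}N^{-1}|\Xi^N_{\ell,i}(t,r)|\to0$ in probability for every fixed $r$.

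To reach the diagonal I would use monotonicity together with a grid $0=r_0<\cdots<r_m=T$ of mesh $\le\delta$. Writing $\Xi^N_{\ell,i}=\check\Xi^N_{\ell,i}-\hat\Xi^N_{\ell,i}$ as the point count minus its compensator $\hat\Xi^N_{\ell,i}(t,r)=\lambda_\ell\int_0^t\Upsilon^N_\ell(s)\int_0^{r-s}p_{\ell,i}(u)G(du)\,ds$, both pieces are nondecreasing in $r$; hence for $t\in[r_{k-1},r_k]$ the value $\Xi^N_{\ell,i}(t,t)$ is sandwiched between $\Xi^N_{\ell,i}(t,r_{k-1})$ and $\Xi^N_{\ell,i}(t,r_k)$ up to the compensator increment $\hat\Xi^N_{\ell,i}(t,r_k)-\hat\Xi^N_{\ell,i}(t,r_{k-1})$. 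Using the pointwise bound $\Upsilon^N_\ell(s)\le\bar\kappa_\ell N$ together with the shift estimate $\int_0^t\big(G(r_k-s)-G(r_{k-1}-s)\big)\,ds\le r_k-r_{k-1}\le\delta$, this increment is at most $\lambda_\ell\bar\kappa_\ell N\delta$. Consequently
\begin{equation*}
\sup_{t\le T}N^{-1}\big|\Xi^N_{\ell,i}(t,t)\big|\le\max_{0\le k\le m}\,\sup_{u\le r_k}N^{-1}\big|\Xi^N_{\ell,i}(u,r_k)\big|+\lambda_\ell\bar\kappa_\ell\delta\,,
\end{equation*}
and letting $N\to\infty$ (a finite max of terms, each vanishing by the previous paragraph) and then $\delta\to0$ yields \eqref{eqn-E-diff-conv-SEIR}.

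For \eqref{eqn-I-diff-conv-SEIR} the identical scheme applies to $N(\bar I^N_{\ell,i}-\breve I^N_{\ell,i})(t)$, now expressed through the PRM $\breve Q_\ell$ and its compensated version $\wt Q_\ell$ from \eqref{PRM-rep-I-SEIR}, with $\Phi_{\ell,i}(r-s)$ playing the role of $\int_0^{r-s}p_{\ell,i}(u)G(du)$. Since $\Phi_{\ell,i}$ is nondecreasing with $\Phi_{\ell,i}\le1$, the quadratic-variation bound $\le\lambda_\ell\int_0^t\Upsilon^N_\ell(s)\,ds\le\lambda_\ell\bar\kappa_\ell Nt$ and the compensator-increment bound $\le\lambda_\ell\bar\kappa_\ell N\int_0^t(\Phi_{\ell,i}(r_k-s)-\Phi_{\ell,i}(r_{k-1}-s))\,ds\le\lambda_\ell\bar\kappa_\ell N\delta$ carry over verbatim. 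The main obstacle throughout is exactly the coupling of the integration horizon to the running time; the two-parameter martingale isolates it, and the monotone sandwich controls the off-diagonal discrepancy, after which everything reduces to Doob's inequality and the elementary bound \eqref{bar-int-Phi-bound}.
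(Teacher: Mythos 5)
Your overall architecture --- realizing the difference as a compensated-PRM integral, decoupling the two roles of $t$ via the two-parameter family $\Xi^N_{\ell,i}(t,r)$, applying Doob's inequality at each fixed $r$, and closing the diagonal with the monotone sandwich over a $\delta$-grid --- is viable, and it is a genuinely different route from the paper's proof, which instead writes the difference as $N^{-1}\sum_{j\le A^N_\ell(t)}\chi^N_{j,\ell,i}(t)$ with conditionally centered and uncorrelated summands, gets pointwise convergence from a conditional variance bound, and obtains uniformity from the increment criterion (Corollary of Theorem 7.4 in \cite{billingsley1999convergence}). However, your starting identity is false, and this is a genuine gap. By \eqref{eqn-breveEn-integral-rep}, the conditional expectation is an integral of $G_{\ell,i}(t-\cdot)=\int_0^{t-\cdot}p_{\ell,i}(u)G(du)$ against the \emph{actual} infection process, $N\breve{E}^{N}_{\ell,i}(t)=\int_0^t G_{\ell,i}(t-s)\,dA^N_\ell(s)$, whereas the compensator of your PRM integral replaces $dA^N_\ell(s)$ by the intensity $\lambda_\ell\Upsilon^N_\ell(s)\,ds$. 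Since $dA^N_\ell(s)=\lambda_\ell\Upsilon^N_\ell(s)\,ds+dM^N_{A,\ell}(s)$ by \eqref{Ai-rep}, the correct identity is
\begin{align*}
N\big(\bar{E}^{N}_{\ell,i}(t)-\breve{E}^{N}_{\ell,i}(t)\big)\;=\;\Xi^N_{\ell,i}(t,t)\;-\;\int_0^t G_{\ell,i}(t-s)\,dM^N_{A,\ell}(s)\,,
\end{align*}
with $M^N_{A,\ell}$ the martingale of \eqref{M-A-def}. Your proposal silently drops the second term, which is of order $\sqrt{N}$, not zero; so as written you prove uniform smallness of the wrong quantity.

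The omission is reparable, which is why I would call this a fixable error rather than a failure of the approach. By Fubini (the integral is a pathwise Stieltjes integral against a finite-variation process), $\int_0^t G_{\ell,i}(t-s)\,dM^N_{A,\ell}(s)=\int_0^t p_{\ell,i}(u)\,M^N_{A,\ell}(t-u)\,G(du)$, whence its absolute value divided by $N$ is bounded by $\sup_{s\in[0,T]}|\bar{M}^N_{A,\ell}(s)|$, which tends to $0$ in probability by \eqref{barM-A-conv}. The analogous missing term $\int_0^t\Phi_{\ell,i}(t-s)\,dM^N_{A,\ell}(s)$ in the $I$ part is handled the same way, writing $\Phi_{\ell,i}(t-s)$ as the $H(du,dv)$-integral of ${\bf1}_{u+v\le t-s}\sum_{\ell'}p_{\ell,\ell'}(u)q_{\ell',i}(v)$. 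Once this term is added and controlled, the remainder of your argument is sound: for fixed $r$ the integrand is predictable with respect to the global filtration generated by all driving processes, so $t\mapsto\Xi^N_{\ell,i}(t,r)$ is indeed a square-integrable martingale; the bound $\Upsilon^N_\ell\le\bar\kappa_\ell N$ gives the stated quadratic-variation estimate; and the shift estimates $\int_0^t\big(G_{\ell,i}(r_k-s)-G_{\ell,i}(r_{k-1}-s)\big)ds\le\delta$ and its analogue for $\Phi_{\ell,i}$ (both nondecreasing and bounded by $1$) control the compensator increment in the sandwich, exactly as in \eqref{eqn-conv-without-F-condition} and \eqref{I-diff-inc-p1-2} of the paper. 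With these repairs your proof yields both \eqref{eqn-E-diff-conv-SEIR} and \eqref{eqn-I-diff-conv-SEIR}.
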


\begin{proof}
We first consider $\bar{E}^{N}_{\ell,i}$. 
We have
\begin{align*}
\bar{E}^{N}_{\ell,i}(t) - \breve{E}^{N}_{\ell,i}(t)&=  \frac{1}{N}\sum_{j=1}^{A^N_\ell(t)} \chi^N_{j,\ell,i}(t),
\end{align*}
where 
$$
\chi^N_{j,\ell,i}(t) :=  {\bf 1}_{\tau^N_{j,\ell} + \eta_{j,\ell} \le t} {\bf1}_{X^j_\ell( \eta_{j,\ell})=i}  - \int_0^{t-\tau^N_{j,\ell}}p_{\ell,i}(u) G(du). 
$$
Let 
\begin{equation} \label{G-ij-def}
G_{\ell,i}(t) :=  \int_0^{t}p_{\ell,i}(u) G(du), \quad \text{for}\quad \ell,i\in \mathcal{L}.
\end{equation}
Then it is clear that for each $j$, $\E\big[ \chi^N_{j,\ell,i}(t) | \tau^N_{j,\ell} \big]=0$, and 
$
\E\big[ \chi^N_{j,\ell,i}(t)^2 | \tau^N_{j,\ell} \big] = G_{\ell,i}(t-\tau^N_{j,\ell}) (1- G_{\ell,i}(t-\tau^N_{j,\ell})). 
$
And by the independence of the pairs $\big(\eta_{j,\ell}, X^j_\ell(\cdot)\big)$ and $\big(\eta_{j',\ell}, X^{j'}_\ell(\cdot)\big)$, we have  
$\E\big[ \chi^N_{j,\ell,i}(t)  \chi^N_{j',\ell,i}(t)  |  \mathcal{F}^N_{A,\ell}(t) \big]=0$, for $ j\neq j'.$  Thus, we obtain
\begin{align*}
\E\big[ \big(\bar{E}^{N}_{\ell,i}(t) - \breve{E}^{N}_{\ell,i}(t) \big)^2\big| \mathcal{F}^N_{A,\ell}(t) \big] & = \frac{1}{N^2}\sum_{j=1}^{A^N_\ell(t)}   \E\big[ \chi^N_{j,\ell,i}(t)^2 | \tau^N_{j,\ell} \big]  \\
&=  \frac{1}{N} \int_0^t G_{\ell,i}(t-u) (1-G_{\ell,i}(t-u)) d \bar{A}^N_\ell(u) \le \frac{ \bar{A}^N_\ell(t)}{N},\\
\E\big[ \big(\bar{E}^{N}_{\ell,i}(t) - \breve{E}^{N}_{\ell,i}(t) \big)^2\big]&\le \frac{1}{N}\lambda_\ell \bar\kappa_\ell  \, t, 
\end{align*}
which, together with the upper bound in \eqref{bar-int-Phi-bound} for  $\E\big[ \bar{A}^N_\ell(t) \big]$, implies that for any $t>0$ and $\ep>0$,
\begin{align*}
\P \left( \left|\bar{E}^{N}_{\ell,i}(t) - \breve{E}^{N}_{\ell,i}(t)\right|>\ep\right) \le  \frac{1}{N\ep^2} \lambda_\ell \bar\kappa_\ell  \, t   \to 0, \qasq N \to\infty. 
\end{align*}
Next, for $t,u>0$, 
\begin{align} \label{eqn-barIn-diff}
& \big|(\bar{E}_{\ell,i}^N(t+u)- \breve{E}^N_{\ell,i}(t+u)) - (\bar{E}^N_{\ell,i}(t)- \breve{E}^N_{\ell,i}(t)) \big| \non \\
&= \left| \frac{1}{N} \sum_{j=1}^{{A}^N_\ell(t+u)} \chi^N_{j,\ell,i}(t+u) - \frac{1}{N} \sum_{j=1}^{{A}^N_\ell(t)} \chi^N_{j,\ell,i}(t)\right| \non\\
&=  \left| \frac{1}{N} \sum_{j=1}^{{A}^N_\ell(t)} ( \chi^N_{j,\ell,i}(t+u) -  \chi^N_{j,\ell,i}(t)) + \frac{1}{N} \sum_{j= {A}_\ell^N(t)+1}^{{A}^N_\ell(t+u)} \chi^N_{j,\ell,i}(t+u)\right| \non\\
&\le  \frac{1}{N} \sum_{j=1}^{{A}^N_\ell(t)}  {\bf 1}_{t<\tau^N_{j,\ell} + \eta_{j,\ell} \le t+u} {\bf1}_{X^j_\ell( \eta_{j,\ell})=i}  + \int_0^{t} \int_{t-s}^{t+u-s} p_{\ell,i}(v) G(dv) d\bar{A}^N_\ell(s)  \non\\
& \quad + \big| \bar{A}^N_\ell(t+u)- \bar{A}^N_\ell(t) \big|. 
\end{align}
Observe that the three terms on the right hand side are nondecreasing in $u$. Thus we obtain 
\begin{align} \label{eqn-I-11-diff-p1}
&\P \left(\sup_{u \in [0,\delta]} \big|(\bar{E}_{\ell,i}^N(t+u)- \breve{E}^N_{\ell,i}(t+u)) - (\bar{E}^N_{\ell,i}(t)- \breve{E}^N_{\ell,i}(t)) \big| > \ep \right)\non\\
& \le \P \left(\frac{1}{N} \sum_{j=1}^{{A}^N_\ell(t)}  {\bf 1}_{t<\tau^N_{j,\ell} + \eta_{j,\ell} \le t+\delta} {\bf1}_{X^j_\ell( \eta_{j,\ell})=i} > \ep/3 \right)  \non\\
& \quad + \P \left(   \int_0^{t} \int_{t-s}^{t+\delta-s} p_{\ell,i}(v)G(dv) d\bar{A}^N_\ell(s) > \ep/3\right)  + \P \left(  \big| \bar{A}^N_\ell(t+\delta)- \bar{A}^N_\ell(t) \big|> \ep/3 \right). 
\end{align}
Using the PRM $\check{Q}_{\ell}(ds,d\afrak, du, d\theta)$ and its compensated PRM $\overline{Q}_{\ell}(ds,d\afrak, du, d\theta)$, we have 
\begin{align*}
& \E \left[ \left( \frac{1}{N} \sum_{j=1}^{{A}^N_\ell(t)}  {\bf 1}_{t<\tau^N_{j,\ell} + \eta_{j,\ell} \le t+\delta} {\bf1}_{X^j_\ell( \eta_{j,\ell})=i} \right)^2\right] \\
& =  \E \left[ \left( \frac{1}{N} \int_0^t \int_0^\infty \int_{t-s}^{t+\delta-s}\int_{\{i\}} {\bf 1}_{\afrak \le \lambda_\ell \Upsilon^N_\ell(s)} \check{Q}_{\ell}(ds,d\afrak, du, d\theta)  \right)^2\right] \\
& \le  2\E \left[ \left( \frac{1}{N} \int_0^t \int_0^\infty \int_{t-s}^{t+\delta-s}\int_{\{i\}} {\bf 1}_{\afrak \le \lambda_\ell \Upsilon^N_\ell(s)} \overline{Q}_{\ell}(ds,d\afrak, du, d\theta)   
\right)^2\right] \\
& \quad + 2 \E \left[ \left( \int_0^{t} \int_{t-s}^{t+\delta-s} p_{\ell,i}(u) G(du) \lambda_\ell \bar{\Upsilon}^N_\ell(s) ds   \right)^2 \right] \\
& = \frac{2}{N} \E \left[ \int_0^{t} \int_{t-s}^{t+\delta-s} p_{\ell,i}(u) G(du) \lambda_\ell \bar{\Upsilon}^N_\ell(s) ds  \right]  + 2 \E \left[ \left( \int_0^{t} \int_{t-s}^{t+\delta-s} p_{\ell,i}(u) G(du) \lambda_\ell \bar{\Upsilon}^N_\ell(s) ds  \right)^2 \right] \\
& \le \frac{2}{N} \lambda_\ell\bar\kappa_{\ell } \int_0^{t} \int_{t-s}^{t+\delta-s} p_{\ell,i}(u) G(du) ds  + 
2 \left( \lambda_\ell\bar\kappa_{\ell} \int_0^{t} \int_{t-s}^{t+\delta-s} p_{\ell,i}(u) G(du) ds  \right)^2.
\end{align*}
The first term converges to zero as $N\to\infty$, and the second term satisfies
\begin{align} \label{eqn-conv-without-F-condition}
 \frac{1}{\delta} \left( \lambda_\ell\bar\kappa_{\ell}  \int_0^{t} \int_{t-s}^{t+\delta-s} p_{\ell,i}(u) G(du) ds  \right)^2  
 & \le  \frac{1}{\delta} \left( \lambda_\ell  \bar\kappa_{\ell}  \int_0^{t}  (G(t+\delta-s) - G(t-s))  ds  \right)^2  \non \\
&   \le   \frac{1}{\delta} \big(\lambda_\ell\bar\kappa_{\ell} \big)^2 \left(  \int_t^{t+\delta} G(u) du - \int_0^\delta G(u)du  \right)^2 \non \\
&\le \delta  \big(\lambda_\ell \bar\kappa_{\ell}\big)^2 \to 0 \qasq \delta \to 0. 
\end{align}

The second term on the right hand side of \eqref{eqn-I-11-diff-p1} can be treated similarly as the second term right above. 
Now for the third term, using \eqref{Ai-rep},
\begin{align} \label{eqn-barAn-incr-bound}
&\E \left[  \big| \bar{A}^N_\ell(t+\delta)- \bar{A}^N_\ell(t) \big|^2 \right] \le 2\E \left[  \big| \bar{M}^N_{A,\ell}(t+\delta)- \bar{M}^N_{A,\ell}(t) \big|^2 \right]  +2 \E \left[  \Big| \lambda_\ell  N^{-1}\int_t^{t+\delta} \Upsilon^N_\ell (s)ds \Big|^2 \right]\,.
\end{align}
By \eqref{barM-A-conv}, the first term converges to zero as $N\to\infty$. The second term is bounded by  $2 \big(\lambda_\ell \bar\kappa_{\ell}  \delta \big)^2$ by \eqref{bar-int-Phi-bound}. 

Thus, combining the above, we obtain 
\begin{align*}
\lim_{\delta \to 0} \limsup_{N\to\infty} \left[\frac{T}{\delta} \right] \sup_{t \in [0,T]}\P \left(\sup_{u \in [0,\delta]} \big|(\bar{E}_{\ell,i}^N(t+u)- \breve{E}^N_{\ell,i}(t+u)) - (\bar{E}^N_{\ell,i}(t)- \breve{E}^N_{\ell,i}(t)) \big| > \ep \right)  = 0. 
\end{align*}
Therefore, we have proved  \eqref{eqn-E-diff-conv-SEIR} 

\smallskip

We next show \eqref{eqn-I-diff-conv-SEIR}, which 
follows from similar steps as above. We highlight the main differences below. 

For each $t\ge 0$, we have
\begin{align*}
\I^{N}_{\ell,i}(t) - \breve{I}^{N}_{\ell,i}(t) &=  \frac{1}{N}\sum_{j=1}^{A^N_\ell(t)} \chi^N_{j,\ell,i}(t),
\end{align*}
where
\begin{align*}
\chi^N_{j,\ell,i}(t) & :=   {\bf 1}_{\tau^N_{j,\ell} + \eta_{j,\ell} \le t}  \left( \sum_{\ell'=1}^L {\bf1}_{X^j_\ell( \eta_{j,\ell})=\ell'}  {\bf 1}_{\tau^N_{j,\ell} + \eta_{j,\ell}  + \zeta_{j,\ell}  \le t} {\bf1}_{Y^{j,\ell}_{\ell'}( \zeta_{j,\ell} )=i}  \right)  - \Phi_{\ell,i}(t-\tau^N_{j,\ell}).
\end{align*}
It is clear that $\E\big[\chi^N_{j,\ell,i}(t) |  \tau^N_{j,\ell} \big]=0$ and 
$\E\big[\chi^N_{j,\ell,i}(t)^2 |  \tau^N_{j,\ell} \big]= \Phi_{\ell,i}(t- \tau^N_{j,\ell} ) (1-\Phi_{\ell,i}(t- \tau^N_{j,\ell}))$ where $\Phi_{\ell,i}(t)$ is defined in \eqref{eqn-H}. Moreover  $\E\big[\chi^N_{j,\ell,i}(t) \chi^N_{j',\ell,i}(t) |  \mathcal{F}^N_{A,\ell}(t)  \big]=0$ due to the independence of the pairs $\big(\eta_{j,\ell}, \zeta_{j, \ell'}, X^j_\ell(\cdot), Y^{j,\ell}_{\ell'}(\cdot)\big)$ and $\big(\eta_{j',\ell}, \zeta_{j', \ell'}, X^{j'}_\ell(\cdot), Y^{j'}_{\ell'}(\cdot)\big)$.  Thus, we obtain
\begin{align*}
\E\big[ \big(\I^{N}_{\ell,i}(t) - \breve{I}^{N}_{\ell,i}(t) \big)^2\big| \mathcal{F}^N_{A,\ell}(t) \big] & = \frac{1}{N^2}\sum_{j=1}^{A^N_\ell(t)}   \E\big[ \chi^N_{j,\ell,i}(t)^2 | \tau^N_{j,\ell} \big]  \\
&=   \frac{1}{N}\int_0^t \Phi_{\ell,i}(t-u) (1-\Phi_{\ell,i}(t-u)) d \bar{A}^N_\ell(u) 
\le \frac{\lambda_\ell  \bar\kappa_{\ell} t}{N} , 
\end{align*}
which implies that for any $\ep>0$, 
\begin{align*}
\P \left( \big|\I^{N}_{\ell,i}(t) - \breve{I}^{N}_{\ell,i}(t)\big|>\ep\right) \le  \frac{1}{N\ep^2}\bigg( \lambda_\ell  \sum_{\ell'=1}^L \kappa_{\ell, \ell'} \bigg) \, t  \to 0, \qasq N \to\infty. 
\end{align*}
Next, for $t,s>0$, we have
\begin{align*}
& \big|\big(\bar{I}_{\ell,i}^N(t+s)- \breve{I}^N_{\ell,i}(t+s)\big) - \big(\bar{I}^N_{\ell,i}(t)- \breve{I}^N_{\ell,i}(t)\big) \big| \non \\
&=  \left| \frac{1}{N} \sum_{j=1}^{{A}^N_\ell(t)} ( \chi^N_{j,\ell,i}(t+s) -  \chi^N_{j,\ell,i}(t)) + \frac{1}{N} \sum_{j= {A}_\ell^N(t)+1}^{{A}^N_\ell(t+s)} \chi^N_{j,\ell,i}(t+s)\right| \non\\
& \le  \frac{1}{N} \sum_{j=1}^{{A}^N_\ell(t)} 
{\bf 1}_{t< \tau^N_{j,\ell} + \eta_{j,\ell} \le t+s}  \left( \sum_{\ell'=1}^L {\bf1}_{X^j_\ell( \eta_{j,\ell})=\ell'}  {\bf 1}_{\tau^N_{j,\ell} + \eta_{j,\ell}  + \zeta_{j,\ell}  \le t+s} {\bf1}_{Y^{j,\ell}_{\ell'}( \zeta_{j,\ell} )=i}  \right)   \\
& \quad +  \frac{1}{N} \sum_{j=1}^{{A}^N_\ell(t)} 
 {\bf 1}_{\tau^N_{j,\ell} + \eta_{j,\ell} \le t}  \left( \sum_{\ell'=1}^L {\bf1}_{X^j_\ell( \eta_{j,\ell})=\ell'}  {\bf 1}_{t<\tau^N_{j,\ell} + \eta_{j,\ell}  + \zeta_{j,\ell}  \le t+s} {\bf1}_{Y^{j,\ell}_{\ell'}( \zeta_{j,\ell} )=i}  \right)   \\
 & \quad +   \frac{1}{N} \sum_{j=1}^{{A}^N_\ell(t)}   \int_{t-\tau^N_{j,\ell}}^{t+s-\tau^N_{j,\ell}}  \sum_{\ell'=1}^Lp_{\ell, \ell'}(u) 
\int_0^{t+s-\tau^N_{j,\ell} - u}   q_{\ell', i}(v)  H(du, dv) \\
& \quad +  \frac{1}{N} \sum_{j=1}^{{A}^N_\ell(t)}  \int_0^{t-\tau^N_{j,\ell}}  \sum_{\ell'=1}^L p_{\ell, \ell'}(u) 
\int_{t-\tau^N_{j,\ell} - u}^{t+s-\tau^N_{j,\ell} - u}   q_{\ell', i}(v)  H(du,dv)  \\
& \quad + \big| \bar{A}^N_\ell(t+s)- \bar{A}^N_\ell(t) \big|. 
\end{align*}
Observe that each of the five terms on the right hand side is increasing in $s$. Thus, we have
\begin{align}\label{I-diff-inc-p1}
& \P \left(\sup_{s \in [0,\delta]} \big|\big(\bar{I}_{\ell,i}^N(t+s)- \breve{I}^N_{\ell,i}(t+s)\big) - \big(\bar{I}^N_{\ell,i}(t)- \breve{I}^N_{\ell,i}(t)\big) \big|> \ep \right) \non\\
& \le \P\left(  \frac{1}{N} \sum_{j=1}^{{A}^N_\ell(t)} 
{\bf 1}_{t< \tau^N_{j,\ell} + \eta_{j,\ell} \le t+\delta}  \left( \sum_{\ell'=1}^L {\bf1}_{X^j_\ell( \eta_{j,\ell})=\ell'}  {\bf 1}_{\tau^N_{j,\ell} + \eta_{j,\ell}  + \zeta_{j,\ell}  \le t+\delta} {\bf1}_{Y^{j,\ell}_{\ell'}( \zeta_{j,\ell} )=i}  \right)  > \ep/5  \right) \non\\
&\quad + \P\left(  \frac{1}{N} \sum_{j=1}^{{A}^N_\ell(t)} 
 {\bf 1}_{\tau^N_{j,\ell} + \eta_{j,\ell} \le t}  \left( \sum_{\ell'=1}^L {\bf1}_{X^j_\ell( \eta_{j,\ell})=\ell'}  {\bf 1}_{t<\tau^N_{j,\ell} + \eta_{j,\ell}  + \zeta_{j,\ell}  \le t+\delta} {\bf1}_{Y^{j,\ell}_{\ell'}( \zeta_{j,\ell} )=i}  \right) > \ep/5  \right) \non \\
 & \quad + \P\left(  \int_0^t   \int_{t-s}^{t+\delta-s}  \sum_{\ell'=1}^Lp_{\ell, \ell'}(u) 
\int_0^{t+\delta-s - u}   q_{\ell', i}(v) H(du,dv) d \bar{A}^N_\ell(s) > \ep/5\right)\non \\
& \quad + \P\left(   \int_0^t \int_0^{t-s} \sum_{\ell'=1}^L p_{\ell, \ell'}(u) 
\int_{t-s - u}^{t+\delta-s - u}   q_{\ell', i}(v)  H(du,dv)   d \bar{A}^N_\ell(s)> \ep/5\right) \non\\
& \quad + \P\left(    \big| \bar{A}^N_\ell(t+\delta)- \bar{A}^N_\ell(t) \big| > \ep/5\right). 
\end{align}
The last term is treated in the same way as the last term in \eqref{eqn-barIn-diff} using the bound in \eqref{eqn-barAn-incr-bound}. 
For the first two terms, we use the PRM representation in \eqref{PRM-rep-I-SEIR}.
We have 
\begin{align*}
&\E\left[ \left( \frac{1}{N} \sum_{j=1}^{{A}^N_\ell(t)} 
{\bf 1}_{t< \tau^N_{j,\ell} + \eta_{j,\ell} \le t+\delta} \sum_{\ell'=1}^L {\bf1}_{X^j_\ell( \eta_{j,\ell})=\ell'}  {\bf 1}_{\tau^N_{j,\ell} + \eta_{j,\ell}  + \zeta_{j,\ell}  \le t+\delta} {\bf1}_{Y^{j,\ell}_{\ell'}( \zeta_{j,\ell} )=i}  \right)^2 \right] \\
& \le 2 \E \left[ \left( \frac{1}{N}\int_0^t \int_0^\infty \int_{t-s}^{t+\delta-s} \int_0^{t+\delta-s-u} \int_{\LL}     \int_{\{i\}}  {\bf 1}_{\afrak \le \lambda_\ell \Upsilon^N_\ell(s)}  \wt{Q}_{i}(ds,d\afrak, du, dv, d\theta,  d \vartheta) \right)^2 \right] \\
& \quad + 2 \E \left[ \left( \int_0^t  \left( \int_{t-s}^{t+\delta-s}\int_0^{t+\delta-s - u}    \sum_{\ell'=1}^Lp_{\ell, \ell'}(u) 
  q_{\ell', i}(v) H(du,dv)  \right) \lambda_\ell \bar{\Upsilon}^N_\ell(s) ds   \right)^2 \right]\,. 
\end{align*}
The first term is equal to 
\begin{align*}
&  \frac{2}{N}\E \left[  \int_0^t   \left( \int_{t-s}^{t+\delta-s}\int_0^{t+\delta-s - u}    \sum_{\ell'=1}^Lp_{\ell, \ell'}(u) 
  q_{\ell', i}(v) H(du,dv)  \right)  \lambda_\ell \bar{\Upsilon}^N_\ell(s) ds    \right] \\
  & \le    \frac{2\lambda_\ell  \bar\kappa_{\ell}}{N}   \int_0^t   \left( \int_{t-s}^{t+\delta-s}\int_0^{t+\delta-s - u}    \sum_{\ell'=1}^Lp_{\ell, \ell'}(u) 
  q_{\ell', i}(v) H(du,dv)  \right) ds \to 0 \qasq N \to \infty.
\end{align*}
The second term is bounded by 
\begin{align*}
2 \left(\lambda_\ell  \bar \kappa_{\ell}    \int_0^t    \left( \int_{t-s}^{t+\delta-s}\int_0^{t+\delta-s - u}    \sum_{\ell'=1}^Lp_{\ell, \ell'}(u) 
  q_{\ell', i}(v) H(du,dv)  \right) ds   \right)^2.
\end{align*}
Without the constant $2 (\lambda_\ell \bar\kappa_\ell)^2$, it satisfies 
\begin{align*}
& \frac{1}{\delta}  \left(   \int_0^t    \left( \int_{t-s}^{t+\delta-s}\int_0^{t+\delta-s - u}    \sum_{\ell'=1}^Lp_{\ell, \ell'}(u) 
  q_{\ell', i}(v) H(du,dv)  \right)  ds   \right)^2  \\
  & \le  \frac{1}{\delta}  \left(  L  \int_0^t  \int_{t-s}^{t+\delta-s} 
F(t+\delta-s-u|u)  G(du) ds   \right)^2  \\
& \le   \frac{1}{\delta}  \left( L  \int_0^t  \left( G(t+\delta -s) - G(t-s)\right) ds   \right)^2    \to 0 \qasq \delta \to 0, 
\end{align*}
where the last step follows from the same argument as in \eqref{eqn-conv-without-F-condition}.

Similarly, for the second term on the right hand side of \eqref{I-diff-inc-p1}, we have
\begin{align*}
&  \E\left[   \left( \frac{1}{N} \sum_{j=1}^{{A}^N_\ell(t)} 
 {\bf 1}_{\tau^N_{j,\ell} + \eta_{j,\ell} \le t}  \sum_{\ell'=1}^L {\bf1}_{X^j_\ell( \eta_{j,\ell})=\ell'}  {\bf 1}_{t<\tau^N_{j,\ell} + \eta_{j,\ell}  + \zeta_{j,\ell}  \le t+\delta} {\bf1}_{Y^{j,\ell}_{\ell'}( \zeta_{j,\ell} )=i}  \right)^2\right] \\
 & \le  \frac{2\lambda_\ell  \bar\kappa_\ell}{N}   \int_0^t \left( \int_0^{t-s} \int_{t-s - u}^{t+\delta-s - u}   \sum_{\ell'=1}^L p_{\ell, \ell'}(u)   q_{\ell', i}(v) H(du,dv)   \right)    d s \\
& \quad + 2   \left(  \lambda_\ell \bar \kappa_{\ell}  \int_0^t \left( \int_0^{t-s} \int_{t-s - u}^{t+\delta-s - u}   \sum_{\ell'=1}^L p_{\ell, \ell'}(u)   q_{\ell', i}(v) H(du,dv)   \right)   d s\right)^2.
\end{align*}
Here it is clear that the first term converges to zero as $N\to \infty$, and the second term without the constant $2 (\lambda_\ell \bar\kappa_\ell)^2$ satisfies 
\begin{align} \label{I-diff-inc-p1-2}
& \frac{1}{\delta}  \left(    \int_0^t  \left( \int_0^{t-s} \int_{t-s - u}^{t+\delta-s - u}   \sum_{\ell'=1}^L p_{\ell, \ell'}(u)   q_{\ell', i}(v) H(du,dv)   \right)   d s\right)^2 \non \\
& \le  \frac{1}{\delta}    \left(   \int_0^t \int_0^{t-s} 
 (F(t+\delta-s-u|u)  - F(t-s-u|u))     G(du)  d s\right)^2 \non\\
 & = \frac{1}{\delta}    \left(   \int_0^t   \left( \int_\delta^{t+\delta-u} F(s|u)ds - \int_0^{t-u} F(s|u)ds \right) G(du)  \right)^2 \non  \\
 & \le \frac{1}{\delta}    \left(   \int_0^t \int_{t-u}^{t+\delta-u} F(s|u)ds  G(du) - \int_0^t \int_0^\delta F(s|u)ds G(du)   \right)^2 \non   \\
 &   \le   \delta  \to 0, \qasq \delta \to 0.
\end{align}

Now for the third and fourth terms on the right hand side of \eqref{I-diff-inc-p1},
by decomposing  the integral terms with respect to $\bar{A}^N_\ell(s)$  using $\bar{A}^N_\ell(s) = \bar{M}^N_{A,\ell}(s)  + \lambda_\ell\int_0^t \bar{\Upsilon}^N_\ell(s)ds$ and using the martingale property of  $ \bar{M}^N_{A,\ell}$ and  \eqref{bar-int-Phi-bound}--\eqref{bar-MA-qv},  we have 

\begin{align*}
& \E\left[ \left(  \int_0^t    \left( \int_{t-s}^{t+\delta-s}\int_0^{t+\delta-s - u}    \sum_{\ell'=1}^Lp_{\ell, \ell'}(u) 
  q_{\ell', i}(v) H(du,dv)  \right) d \bar{A}^N_\ell(s) \right)^2 \right] \\
& \le   \frac{2}{N}  \lambda_\ell  \bar \kappa_{\ell}  \int_0^t  \left( \int_{t-s}^{t+\delta-s}\int_0^{t+\delta-s - u}    \sum_{\ell'=1}^Lp_{\ell, \ell'}(u) 
  q_{\ell', i}(v) H(du,dv)  \right)^2 ds  \\
& \quad + 2 \left( \lambda_\ell  \bar \kappa_{\ell}  \int_0^t  \left( \int_{t-s}^{t+\delta-s}\int_0^{t+\delta-s - u}    \sum_{\ell'=1}^Lp_{\ell, \ell'}(u) 
  q_{\ell', i}(v) H(du,dv)  \right)  ds   \right)^2\,, 
\end{align*}
and
\begin{align*}
& \E \left[ \left(   \int_0^t  \left( \int_0^{t-s} \int_{t-s - u}^{t+\delta-s - u}   \sum_{\ell'=1}^L p_{\ell, \ell'}(u)   q_{\ell', i}(v) H(du,dv)   \right)  d \bar{A}^N_\ell(s) \right)^2 \right]   \\
& \le   \frac{2}{N}  \lambda_\ell  \bar \kappa_{\ell}\int_0^t  \left( \int_0^{t-s} \int_{t-s - u}^{t+\delta-s - u}   \sum_{\ell'=1}^L p_{\ell, \ell'}(u)   q_{\ell', i}(v) H(du,dv)   \right)^2  d s  \\
& \quad +  2 \left(  \lambda_\ell  \bar\kappa_{\ell} \int_0^t  \left( \int_0^{t-s} \int_{t-s - u}^{t+\delta-s - u}   \sum_{\ell'=1}^L p_{\ell, \ell'}(u)   q_{\ell', i}(v) H(du,dv)   \right)  d s\right)^2.
\end{align*}
The first terms on the right hand sides of both converge to zero as $N \to \infty$, while the second terms are bounded by a constant times $\delta^2$.  
Thus we have shown that 
\begin{align*}
\lim_{\delta \to 0} \limsup_{N\to\infty} \left[\frac{T}{\delta} \right] \sup_{t \in [0,T]}
\P \left(\sup_{s \in [0,\delta]} \big|(\bar{I}_{\ell,i}^N(t+s)- \breve{I}^N_{\ell,i}(t+s)) - (\bar{I}^N_{\ell,i}(t)- \breve{I}^N_{\ell,i}(t)) \big|> \ep \right)  = 0.
\end{align*}
Therefore, we have shown that \eqref{eqn-I-diff-conv-SEIR} holds. 
\end{proof}

By the above two lemmas we have shown the following result. 

\begin{lemma} \label{lem-barEI-conv}
With the limit $\big(\bar{A}_1, \dots,\bar{A}_L \big)$ of the convergent subsequence of  $\big\{\big(\bar{A}^N_1, \dots,\bar{A}^N_L \big)\big\}$,  
 under Assumption \ref{AS-FLLN-SEIR}, 
\begin{align} \label{eqn-barEI-conv}
\big(\bar{E}^N_{\ell,i},\bar{I}^{N}_{\ell,i},\, \ell,i\in \mathcal{L}\big) \Rightarrow \big(\bar{E}_{\ell,i}, \I_{\ell,i}, \ell,i\in \mathcal{L}\big) \qinq D^{2L^2} \qasq N\to\infty,
\end{align}
where $\bar{E}_{\ell,i}$ and $\bar{I}_{\ell,i}$ are given in \eqref{eqn-barE-SEIR} and \eqref{eqn-barI-SEIR}, respectively. 
\end{lemma}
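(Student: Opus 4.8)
The plan is to read off this convergence directly from the two preceding lemmas by a triangle-inequality argument, combining the identification of the limits of the conditional-expectation proxies $\breve{E}^N_{\ell,i},\breve{I}^N_{\ell,i}$ with the uniform closeness of these proxies to $\bar{E}^N_{\ell,i},\bar{I}^N_{\ell,i}$. The decomposition I would use is, for each $\ell,i\in\mathcal{L}$,
\[
\bar{E}^N_{\ell,i}=\breve{E}^N_{\ell,i}+\big(\bar{E}^N_{\ell,i}-\breve{E}^N_{\ell,i}\big),\qquad
\bar{I}^N_{\ell,i}=\breve{I}^N_{\ell,i}+\big(\bar{I}^N_{\ell,i}-\breve{I}^N_{\ell,i}\big).
\]

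First I would invoke Lemma \ref{lem-breve-EI-conv} to get that the joint vector of proxies $(\breve{E}^N_{\ell,i},\breve{I}^{N}_{\ell,i},\,\ell,i\in\mathcal{L})$ converges in probability in $D^{2L^2}$ to $(\bar{E}_{\ell,i},\bar{I}_{\ell,i},\,\ell,i\in\mathcal{L})$, noting that each coordinate of the limit is continuous, being an integral against the Lipschitz function $\bar{A}_\ell$ from Lemma \ref{lem-barAn-tight}. Next I would invoke Lemma \ref{lem-barbreveEI-0}, which gives that the remainders $\bar{E}^N_{\ell,i}-\breve{E}^N_{\ell,i}$ and $\bar{I}^N_{\ell,i}-\breve{I}^N_{\ell,i}$ tend to $0$ uniformly on $[0,T]$ in probability.

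The only point requiring care is that addition is not continuous for the $J_1$ topology; this I would settle with the elementary inequality $d_{J_1}(x+h,x)\le\|h\|_T$ (take the identity time change), valid for all $x,h\in D$. Applied coordinatewise it gives $d_{J_1}\big(\bar{E}^N_{\ell,i},\breve{E}^N_{\ell,i}\big)\le\|\bar{E}^N_{\ell,i}-\breve{E}^N_{\ell,i}\|_T\to0$ in probability, and similarly for $\bar{I}^N_{\ell,i}$; combined with the $J_1$-convergence of the proxies and the triangle inequality for $d_{J_1}$, this yields $\bar{E}^N_{\ell,i}\to\bar{E}_{\ell,i}$ and $\bar{I}^N_{\ell,i}\to\bar{I}_{\ell,i}$, jointly over all $\ell,i$, in $D^{2L^2}$. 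Since the limiting objects are built from the distributional limit $\bar{A}_\ell$ along the fixed convergent subsequence, the resulting convergence in probability is exactly the asserted convergence in distribution $\Rightarrow$.

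I do not expect a genuine obstacle here: the tightness estimates and the identification of both limits have already been carried out in Lemmas \ref{lem-breve-EI-conv} and \ref{lem-barbreveEI-0}, so the remaining work is only the bookkeeping of combining a $J_1$-convergent sequence with a uniformly vanishing perturbation, the $J_1$-versus-sup-norm remark above being the sole subtlety worth recording.
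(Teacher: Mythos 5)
Your proposal is correct and is exactly the paper's own argument: the paper derives Lemma \ref{lem-barEI-conv} immediately from Lemmas \ref{lem-breve-EI-conv} and \ref{lem-barbreveEI-0} ("By the above two lemmas we have shown the following result"), i.e., the proxy convergence plus the uniformly vanishing difference. Your extra remark that a uniformly vanishing perturbation cannot disturb $J_1$-convergence (via $d_{J_1}(x+h,x)\le\|h\|_T$) is a correct and worthwhile piece of bookkeeping that the paper leaves implicit.
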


We are now ready to prove the convergence of $\big(\bar{E}^N_i, \bar{I}^N_i, \bar{R}^N_i\big)$. 

\begin{lemma} \label{lem-barIn-conv-SEIR}
With the limit $\big(\bar{A}_1, \dots, \bar{A}_L\big)$ of the convergent subsequence of $\{(\bar{A}^N_1, \dots, \bar{A}^N_L)\}$,  under Assumption \ref{AS-FLLN-SEIR}, 
$$
\big(\bar{E}^N_i, \bar{I}^N_i, \bar{R}^N_i, \, i\in \mathcal{L}\big) 
\RA \big(\bar{E}_i, \bar{I}_i, \bar{R}_i, \, i\in \mathcal{L}\big) \qinq D^{3L} \qasq N\to\infty,
$$
where the limits are  the unique solution to the systems of ODEs: for $i\in \mathcal{L}$,
\begin{align} \label{barE-SEIR-p1}
\bar{E}_i(t) &\;=\; \bar{E}_i(0)  + \bar{A}_i(t) -    \sum_{\ell=1}^{L} \big( \bar{E}^{0}_{\ell,i}(t) +  \bar{E}_{\ell,i}(t)  \big)   +\sum_{\ell=1,\ell\neq i}^L \int_0^t (\nu_{E,\ell,i}\bar{E}_\ell(s)- \nu_{E,i,\ell}\bar{E}_i(s)) ds\,, 
\end{align}
\begin{align}\label{barI-SEIR-p1}
\bar{I}_i(t) &=  \bar{I}_i(0) +   \sum_{\ell=1}^{L} \big( \bar{E}^{0}_{\ell,i}(t) +  \bar{E}_{\ell,i}(t)  \big)  - \sum_{\ell=1}^L \left(\bar{I}^{0,1}_{\ell,i}(t)  +\bar{I}^{0,2}_{\ell,i}(t) + \bar{I}_{\ell,i}(t) \right)  
  \non\\
& \qquad +\sum_{\ell=1,\ell\neq i}^L \int_0^t (\nu_{I,\ell,i}\bar{I}_\ell(s)- \nu_{I,i,\ell}\bar{I}_i(s)) ds ,
\end{align}
\begin{align}\label{barR-SEIR-p1}
\bar{R}_i(t)&=
\sum_{\ell=1}^L \left(\bar{I}^{0,1}_{\ell,i}(t)  +\bar{I}^{0,2}_{\ell,i}(t) + \bar{I}_{\ell,i}(t) \right) + \sum_{\ell \neq i} \int_0^t \left( \nu_{R,\ell,i}  \rR_\ell(s)-  \nu_{R,i,\ell}\rR_i(s)  \right) ds ,
 \end{align}
 with $\bar{E}^{0}_{\ell,i}$, $\I^{0,1}_{\ell,i}$  and $\I^{0,2}_{\ell,i}$  being given in \eqref{eqn-barI01-SEIR} and \eqref{eqn-barI02-SEIR},  and with
$\bar{E}_{\ell,i}$ and $\I_{\ell,i}$ being defined in \eqref{eqn-barE-SEIR} and \eqref{eqn-barI-SEIR}, respectively. 
\end{lemma}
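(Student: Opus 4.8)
The plan is to mirror the proof of Lemma~\ref{lem-barS-conv}, applied now simultaneously to the three compartments $\bar{E}^N_i,\bar{I}^N_i,\bar{R}^N_i$. Starting from the representation \eqref{eqn-E-rep-2-SEIR} of Lemma~\ref{lem-I1-2-rep}, from \eqref{I-rep-2-SEIR}, and from \eqref{R-rep-SEIR}, and recalling the notation \eqref{eqn-EI-ij-def-SEIR}, I first divide by $N$ and isolate each Poisson migration term as its compensator plus a martingale. For instance,
\begin{align*}
\bar{E}^N_i(t) &= \bar{E}^N_i(0)+\bar{A}^N_i(t)-\sum_{\ell=1}^L\big(\bar{E}^{N,0}_{\ell,i}(t)+\bar{E}^N_{\ell,i}(t)\big) \\
&\quad +\sum_{\ell\neq i}\big(\bar{M}^N_{E,\ell,i}(t)-\bar{M}^N_{E,i,\ell}(t)\big)+\sum_{\ell\neq i}\int_0^t\big(\nu_{E,\ell,i}\bar{E}^N_\ell(s)-\nu_{E,i,\ell}\bar{E}^N_i(s)\big)\,ds,
\end{align*}
where $\bar{M}^N_{E,\ell,i}(t):=N^{-1}\big(P_{E,\ell,i}(\nu_{E,\ell,i}\int_0^t E^N_\ell(s)\,ds)-\nu_{E,\ell,i}\int_0^t E^N_\ell(s)\,ds\big)$, with the analogous rewritings for $\bar{I}^N_i$ and $\bar{R}^N_i$ in terms of $\bar{I}^{N,0,1}_{\ell,i},\bar{I}^{N,0,2}_{\ell,i},\bar{I}^N_{\ell,i}$ and the martingales $\bar{M}^N_{I,\ell,i},\bar{M}^N_{R,\ell,i}$.

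Exactly as in Lemma~\ref{lem-barS-conv}, each migration martingale is square integrable with predictable quadratic variation of order $N^{-1}$ (e.g.\ $\langle\bar{M}^N_{E,\ell,i}\rangle(t)=N^{-1}\nu_{E,\ell,i}\int_0^t\bar{E}^N_\ell(s)\,ds$, bounded using the balance equation \eqref{eq:balance}), so Doob's inequality gives convergence to $0$ in probability, locally uniformly in $t$. I then collect the explicit part of each equation into a forcing term, e.g.
\[ \bar{F}^{N,E}_i(t):=\bar{E}^N_i(0)+\bar{A}^N_i(t)-\sum_{\ell=1}^L\big(\bar{E}^{N,0}_{\ell,i}(t)+\bar{E}^N_{\ell,i}(t)\big)+\sum_{\ell\neq i}\big(\bar{M}^N_{E,\ell,i}(t)-\bar{M}^N_{E,i,\ell}(t)\big), \]
and likewise $\bar{F}^{N,I}_i,\bar{F}^{N,R}_i$. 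By Assumption~\ref{AS-FLLN-SEIR}, by $\bar{A}^N\RA\bar{A}$ along the chosen subsequence, by Lemma~\ref{lem-barEI-0-conv} (for $\bar{E}^{N,0}_{\ell,i},\bar{I}^{N,0,1}_{\ell,i},\bar{I}^{N,0,2}_{\ell,i}$) and Lemma~\ref{lem-barEI-conv} (for $\bar{E}^N_{\ell,i},\bar{I}^N_{\ell,i}$), together with the vanishing of the martingales, the vector $\bar{F}^N:=(\bar{F}^{N,E}_i,\bar{F}^{N,I}_i,\bar{F}^{N,R}_i,\,i\in\mathcal{L})$ converges in distribution in $D^{3L}$ to the forcing $\bar{F}$ whose components are the explicit (non-integral) terms on the right-hand sides of \eqref{barE-SEIR-p1}--\eqref{barR-SEIR-p1}.

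The key structural observation is that in each of the three equations the only implicit dependence is on the same compartment's own migration, so the system closes as a linear integral equation $\bar{U}^N_t=\bar{F}^N_t+\int_0^t A\,\bar{U}^N_s\,ds$ for $\bar{U}^N=(\bar{E}^N_i,\bar{I}^N_i,\bar{R}^N_i,\,i\in\mathcal{L})$, with $A$ block diagonal given by the three $L\times L$ migration generators $A^X_{i,\ell}=\nu_{X,\ell,i}$ ($\ell\neq i$), $A^X_{i,i}=-\sum_{\ell\neq i}\nu_{X,i,\ell}$ for $X=E,I,R$. Applying Lemma~\ref{le:EqLin} with $d=3L$ yields the continuous solution map $\AA$, so $\bar{U}^N=\AA(\bar{F}^N)$ and $\bar{U}:=\AA(\bar{F})$ is the unique solution of \eqref{barE-SEIR-p1}--\eqref{barR-SEIR-p1}; the continuous mapping theorem then delivers $\bar{U}^N\RA\bar{U}$, and uniqueness is precisely the uniqueness asserted in Lemma~\ref{le:EqLin}.

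The main obstacle is to secure \emph{joint} convergence of $\bar{F}^N$ in $D^{3L}$, not merely marginally. Since $\bar{E}^N_{\ell,i}$ and $\bar{I}^N_{\ell,i}$ are, up to asymptotically negligible errors, continuous functionals of the same $\bar{A}^N$ through \eqref{eqn-breveEn-integral-rep}--\eqref{breveI-rep-SEIR} and Lemma~\ref{le:Portmanteau}, one must track them jointly with $\bar{A}^N$ along the subsequence; the initial-data terms of Lemma~\ref{lem-barEI-0-conv}, converging in probability to constants, couple in automatically. One must also verify that no forcing vector depends implicitly on the unknowns of another compartment---the cross-compartment inputs $\bar{E}^{N,0}_{\ell,i},\bar{E}^N_{\ell,i},\bar{I}^{N,0,1}_{\ell,i},\bar{I}^{N,0,2}_{\ell,i},\bar{I}^N_{\ell,i}$ are all pre-converged auxiliary processes rather than the solutions $\bar{E}_i,\bar{I}_i,\bar{R}_i$---which is exactly what legitimizes the block-diagonal form of $A$.
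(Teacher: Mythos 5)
Your proposal is correct and follows essentially the same route as the paper: rewrite each compartment via Lemma \ref{lem-I1-2-rep} (and \eqref{R-rep-2-SEIR}), show the migration martingales vanish exactly as in Lemma \ref{lem-barS-conv}, feed the forcing terms through Assumption \ref{AS-FLLN-SEIR} and Lemmas \ref{lem-barEI-0-conv}--\ref{lem-barEI-conv}, and close with Lemma \ref{le:EqLin} plus the continuous mapping theorem. The only (cosmetic) difference is that you invoke Lemma \ref{le:EqLin} once with $d=3L$ and a block-diagonal generator, whereas the paper applies it compartment by compartment with $d=L$; since the solution map of a block-diagonal linear system factors into the maps of its blocks, the two are equivalent.
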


\begin{proof}
The proof for the process $E^N_i(t)$ is similar to that of $I^N_i(t)$, so we focus on $I^N_i(t)$. 
By the representations of $I^N_i(t)$ in \eqref{I-rep-2-SEIR}, we have 
\begin{align} \label{barIi-rep}
\I^N_i(t)&= \I^N_i(0)  + \sum_{\ell=1}^{L} \big( \bar{E}^{N,0}_{\ell,i}(t) +  \bar{E}^{N}_{\ell,i}(t)  \big)  - \sum_{\ell=1}^L \left(\bar{I}^{N,0,1}_{\ell,i}(t)  +\bar{I}^{N,0,2}_{\ell,i}(t) +  \bar{I}^N_{\ell,i}(t) \right) 
  \non\\
 & \qquad  + \sum_{\ell\neq i} \left( \bar{M}^N_{I,\ell,i}(t) - \bar{M}^N_{I,i,\ell}(t) \right)  +   \sum_{\ell \neq i} \int_0^t \left( \nu_{I,\ell,i}  \I^N_\ell(s)-  \nu_{I,i,\ell}\I^N_i(s)  \right) ds, 
\end{align}
where 
 for $\ell\neq i$,
\begin{align} \label{eqn-barM-I-def}
\bar{M}^N_{I,\ell,i} (t) &=  \frac{1}{N} \left(P_{I,\ell,i}\left(\nu_{I, \ell,i} \int_0^t I^N_\ell(s)ds\right) - \nu_{I,\ell,i}\int_0^t I^N_\ell (s)ds \right). 
\end{align}
As in the proof of Lemma \ref{lem-barS-conv} for the convergence of $\big(\bar{M}^N_{S,\ell,i},\, \ell, i\in \mathcal{L},\ell\neq i\big) $, we obtain that for any $\ell, i\in \mathcal{L},\ell\neq i$, as $N\to\infty$,
\begin{equation} \label{eqn-barM-I-conv}
\bar{M}^N_{I,\ell,i}(t) \to 0  \ \text{ in probability, locally uniformly in }t. 
\end{equation}
From Assumption \ref{AS-FLLN-SEIR} and Lemmas \ref{lem-barEI-0-conv} and \ref{lem-barEI-conv}, we obtain 
\begin{align*}
 &  \I^N_i(0)  + \sum_{\ell=1}^{L} \big( \bar{E}^{N,0}_{\ell,i}(t) +  \bar{E}^{N}_{\ell,i}(t)  \big)  - \sum_{\ell=1}^L \left(\bar{I}^{N,0,1}_{\ell,i}(t)  +\bar{I}^{N,0,2}_{\ell,i}(t) +  \bar{I}^N_{\ell,i}(t) \right) \\
  &\xrightarrow{N\to\infty}   \I_i(0)  + \sum_{\ell=1}^{L} \big( \bar{E}^{0}_{\ell,i}(t) +  \bar{E}_{\ell,i}(t)  \big)  - \sum_{\ell=1}^L \left(\bar{I}^{0,1}_{\ell,i}(t)  +\bar{I}^{0,2}_{\ell,i}(t) +  \bar{I}_{\ell,i}(t) \right) 
\end{align*}
in probability, locally uniformly in $t$. 
Hence, it follows from \eqref{barIi-rep},  Lemma \ref{lem-barEI-conv}, Lemma \ref{le:EqLin} again with $d=L$ and $A$ satisfying $A_{i,j} = \nu_{I,j,i}$ for $j\not=i$ and $A_{i,i}=-\sum_{j\not=i}\nu_{I,i,j}$ and the continuous mapping theorem that, along any subsequence along which  $(\bar{A}^N_1, \dots, \bar{A}^N_L)\Rightarrow(\bar{A}_1, \dots, \bar{A}_L)$ in $D^L$, $\big(\bar{I}^N_1, \dots, \bar{I}^N_L\big) 
\RA \big(\bar{I}_1,\dots, \bar{I}_L\big)$, where the limit is the unique solution of the limiting system of equations.

We next prove the convergence of $(\rR^N_1,\dots, \rR^N_L)$. Similar to \eqref{I-rep-2-SEIR},  we obtain the following representations for the process $R^N_i(t)$: 
\begin{align} \label{R-rep-2-SEIR}
R^N_i(t) &=   \sum_{\ell=1}^L \sum_{k=1}^{I^N_\ell (0)} {\bf1}_{\zeta_{k,\ell}^0 \le t}{\bf1}_{Y^{0,k}_\ell(\zeta_{k,\ell}^0)=i}  +\sum_{\ell=1}^L \sum_{k=1}^{E^N_\ell (0)} {\bf1}_{\eta_{k,\ell}^0 \le t}\left( \sum_{\ell'=1}^L {\bf1}_{X^{0,k}_\ell(\eta_{k,\ell}^0)=\ell'}  {\bf1}_{\eta_{k,\ell}^0 + \zeta_{-k,\ell} \le t} {\bf1}_{Y^{-k,\ell}_{\ell'}( \zeta_{-k,\ell})=i}\right)  \non \\
& \quad + \sum_{\ell=1}^L \sum_{j=1}^{A^N_\ell(t)} {\bf 1}_{\tau^N_{j,\ell} + \eta_{j,\ell} \le t}  \left( \sum_{\ell'=1}^L {\bf1}_{X^j_\ell( \eta_{j,\ell})=\ell'}  {\bf 1}_{\tau^N_{j,\ell} + \eta_{j,\ell}  + \zeta_{j,\ell}  \le t} {\bf1}_{Y^{j,\ell}_{\ell'}( \zeta_{j,\ell} )=i}  \right) \non  \\
& \quad + \sum_{\ell \neq i} P_{R,\ell,i}\left(\nu_{R,\ell,i} \int_0^t R^N_\ell (s)ds\right)  -\sum_{\ell\neq i}P_{R,i,\ell}\left(\nu_{R,i,\ell} \int_0^t R^N_i(s)ds\right).  
\end{align}

Thus,  we can represent the processes $\rR^N_i(t)$ by
\begin{align} \label{barR1-rep-SEIR}
\rR^N_i(t)&= \sum_{\ell=1}^L \left(\bar{I}^{N,0,1}_{\ell,i}(t)  +\bar{I}^{N,0,2}_{\ell,i}(t) + \bar{I}^N_{\ell,i}(t) \right)
  + \sum_{\ell\neq i} \left( \bar{M}^N_{R,\ell,i}(t) - \bar{M}^N_{R,i,\ell}(t) \right)  
    \non\\
 & \qquad +   \sum_{\ell \neq i} \int_0^t \left( \nu_{R,\ell,i}  \rR^N_\ell(s)-  \nu_{R,i,\ell}\rR^N_i(s)  \right) ds, 
\end{align}
where 
for $\ell\neq i$,
\begin{align} \label{eqn-barM-R-def}
\bar{M}^N_{R,\ell,i} (t) &=  \frac{1}{N} \left(P_{R,\ell,i}\left(\nu_{R, \ell,i} \int_0^t R^N_\ell(s)ds\right) - \nu_{R,\ell,i}\int_0^t R^N_\ell (s)ds \right). 
\end{align}
Arguments very similar to those in the above proof allow us to conclude.
\end{proof}

%



From the above arguments, since we have the joint convergence in  Lemmas \ref{lem-barEI-0-conv} and \ref{lem-barEI-conv}, we can conclude the joint convergence of $(\S^N_i, \bar{E}^N_i, \I^N_i, \rR^N_i, \, i\in \mathcal{L})$. However, we 
have not yet quite explicited the limiting equations, since we have not expressed $\bar{A}_i(t)$ in terms of 
$(\bar{S}_i(t), \bar{E}_i(t), \bar{I}_i(t),\bar{R}_i(t), \\ \bar{I}_\ell(t),\ell\not=i)$. It seems easy to do that since we know that
\[ \bar\Upsilon^N_i(t)=
\frac{\bar{S}^N_i(t)\sum_{\ell=1}^L\kappa_{i\ell}\bar{I}^N_\ell(t)}{(\bar{S}^N_i(t)+ \bar{E}^N_i(t) +\bar{I}^N_i(t)+\bar{R}^N_i(t))^\gamma}\,.\]
However, there is a difficulty in the case where both $\gamma=1$ and $\sum_{\ell\not=i}\kappa_{i\ell}\not=0$.
Define the function $\psi(s,e,i,r,u)=\frac{s(i+u)}{(s+e+i+r)^\gamma}$ on $[0,1]^4\times[0,\bar\kappa]$. If either $0\le\gamma<1$ or $\sup_i\sum_{\ell\not=i}\kappa_{i\ell}=0$, $\psi$ is continuous. However, if both $\gamma=1$ and 
$\sup_i\sum_{j\not=i}\kappa_{ij}>0$, then $\psi$ is not continuous at any point of the form $(0,0,0,0,u)$, with $u>0$. Hence, if we want to include that case in our model, we need to prove that for any $i \in \LL$ and $T>0$, 
$\inf_{0\le t\le T}(\bar{S}_i(t)+ \bar{E}_i(t) +\bar{I}_i(t)+\bar{R}_i(t))>0$. Fortunately, we can prove such an estimate, although we do not have yet established the exact system of equations of the $(\bar{S}_i(t), \bar{E}_i(t), \bar{I}_i(t),\bar{R}_i(t))$, $\, i\in \LL$. 

\begin{lemma}\label{le:lowbd}
Let $(\bar{S}_i(t),\bar{E}_i(t), \bar{I}_i(t),\bar{R}_i(t), \, i\in \LL)$, $0\le t\le T$,  be any weak limit as $N\to\infty$ of 
$(\bar{S}^N_i(t), \bar{E}^N_i(t), \bar{I}^N_i(t),\bar{R}^N_i(t), \, i\in \LL)$. For any $\, i\in \LL$ and $T>0$, there exists a constant $C_{i,T}>0$
which is such that for $0\le t\le T$,
\begin{equation}\label{eq:lowbd} 
\bar{S}_i(t)+\bar{E}_i(t) +\bar{I}_i(t)+\bar{R}_i(t)\ge C_{i,T}\,.
\end{equation}
\end{lemma}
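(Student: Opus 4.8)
The plan is to exploit the fact that the \emph{total} population in a patch evolves only through migration, so that summing the four compartment equations annihilates every infection and inter-compartment transition term and leaves a closed, linear system to which an integrating-factor (Gr\"onwall) argument applies. First I would add the representations \eqref{S-rep-SEIR}, \eqref{eqn-E-rep-2-SEIR}, \eqref{I-rep-2-SEIR} and \eqref{R-rep-2-SEIR} for a fixed patch $i$. In that sum the infection count $A^N_i(t)$ appears with opposite signs in the $S$ and $E$ equations; each indicator of ``end of the exposed period in patch $i$'' (the terms ${\bf1}_{X^{0,k}_\ell(\eta^0_{k,\ell})=i}$ and ${\bf1}_{X^j_\ell(\eta_{j,\ell})=i}$) appears with opposite signs in the $E$ and $I$ equations; and each indicator of ``recovery in patch $i$'' appears with opposite signs in the $I$ and $R$ equations. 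Hence all of these cancel, and with $B^N_i:=S^N_i+E^N_i+I^N_i+R^N_i$ (and $R^N_i(0)=0$) one is left with the pure migration balance
\begin{align*}
B^N_i(t) = B^N_i(0) + \sum_{\ell\neq i}\sum_{c\in\{S,E,I,R\}}\Big( P_{c,\ell,i}\big(\nu_{c,\ell,i}\textstyle\int_0^t c^N_\ell(s)ds\big) - P_{c,i,\ell}\big(\nu_{c,i,\ell}\int_0^t c^N_i(s)ds\big)\Big),
\end{align*}
where $c^N_\ell$ ranges over $S^N_\ell,E^N_\ell,I^N_\ell,R^N_\ell$. Dividing by $N$ and isolating the compensated Poisson parts exactly as in the proof of Lemma \ref{lem-barS-conv}, the resulting martingales have predictable quadratic variation of order $N^{-1}$ and therefore vanish locally uniformly in probability.

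Next, along the convergent subsequence for which $(\bar{S}^N_i,\bar{E}^N_i,\bar{I}^N_i,\bar{R}^N_i,\,i\in\LL)$ converges (Lemmas \ref{lem-barS-conv} and \ref{lem-barIn-conv-SEIR}), I would pass to the limit in this balance equation. Since all the limit proportions are continuous and nonnegative, the limit $\bar{B}_i:=\bar{S}_i+\bar{E}_i+\bar{I}_i+\bar{R}_i$ satisfies
\begin{align*}
\bar{B}_i(t) = \bar{B}_i(0) + \sum_{\ell\neq i}\sum_{c}\int_0^t\big(\nu_{c,\ell,i}\,\bar{c}_\ell(s) - \nu_{c,i,\ell}\,\bar{c}_i(s)\big)\,ds.
\end{align*}
The key point is that this equation is self-contained: it never involves $\bar\Upsilon_i$ nor the infection-at-distance coefficients $\kappa_{i\ell}$, so it is available \emph{even though} the full system for the individual $\bar{c}_i$ has not yet been pinned down, which is precisely the situation we are in.

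Finally I would discard the nonnegative inflow terms and bound the outflow. Setting $\bar\nu_i:=\sum_{\ell\neq i}\max_{c}\nu_{c,i,\ell}$ and using $\bar{c}_i\ge0$, $\sum_c\bar{c}_i=\bar{B}_i$, one gets $\sum_{\ell\neq i}\sum_c\nu_{c,i,\ell}\bar{c}_i(s)\le\bar\nu_i\bar{B}_i(s)$. Since the right-hand side of the displayed integral equation is continuous in $s$, $\bar{B}_i\in C^1$ and $\bar{B}_i'(t)\ge-\bar\nu_i\bar{B}_i(t)$; multiplying by the integrating factor $e^{\bar\nu_i t}$ shows $t\mapsto\bar{B}_i(t)e^{\bar\nu_i t}$ is nondecreasing, so $\bar{B}_i(t)\ge\bar{B}_i(0)e^{-\bar\nu_i t}\ge\bar{B}_i(0)e^{-\bar\nu_i T}$ for $0\le t\le T$. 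Because Assumption \ref{AS-FLLN-SEIR} gives $\bar{B}_i(0)\ge\bar{S}_i(0)>0$, the claim follows with $C_{i,T}:=\bar{S}_i(0)e^{-\bar\nu_i T}$.

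The only real subtlety I expect is the cancellation bookkeeping of the first paragraph: one must check that every infection and every inter-compartment transition conserves the patch label (these events happen ``in place,'' the location being moved only continuously by the Markov chains $X,Y$ and the migration Poisson processes), so that the sum genuinely telescopes to a migration-only equation. Once that is verified, the passage to the limit and the linear Gr\"onwall estimate are routine.
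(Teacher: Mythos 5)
Your proof is correct and takes essentially the same route as the paper: the paper likewise observes that summing the four compartment equations cancels all infection and inter-compartment transition terms, leaving a closed linear migration system for the total patch mass $\bar{U}_i=\bar{S}_i+\bar{E}_i+\bar{I}_i+\bar{R}_i$, and then applies the same integrating-factor/Gr\"onwall bound to obtain $\bar{U}_i(t)\ge \bar{U}_i(0)e^{-(\sum_{\ell}\bar{\nu}_{i,\ell})T}$ with $\bar{\nu}_{i,\ell}=\nu_{S,i,\ell}\vee\nu_{E,i,\ell}\vee\nu_{I,i,\ell}\vee\nu_{R,i,\ell}$ and $\bar{U}_i(0)\ge\bar{S}_i(0)>0$. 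The only cosmetic difference is that the paper performs the cancellation directly on the already-established limit equations \eqref{eq:barS}, \eqref{barE-SEIR-p1}, \eqref{barI-SEIR-p1} and \eqref{barR-SEIR-p1}, whereas you cancel at the prelimit $N$-level and then pass to the limit of the summed migration balance; both yield the same closed equation and the same constant $C_{i,T}$.
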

\begin{proof}
Let $\bar{U}_i(t):=\bar{S}_i(t)+ \bar{E}_i(t) +\bar{I}_i(t)+\bar{R}_i(t)$ for $ i\in \LL$ and $0\le t\le T$. For any $ i,\ell\in \LL$, let 
\[ \bar{\nu}_{i,\ell}=\begin{cases}\nu_{S,i,\ell}\vee\nu_{E,i,\ell} \vee\nu_{I,i,\ell}\vee\nu_{R,i,\ell},&\text{if $i\not=\ell$},\\
                                                  0,& \text{if $i=\ell$}\, .
                                                  \end{cases}
                                                  \]
We know that $(\bar{S}_i(t),\bar{E}_i(t), \bar{I}_i(t),\bar{R}_i(t))$
is a solution of  \eqref{eq:barS}, \eqref{barE-SEIR-p1}, \eqref{barI-SEIR-p1} and \eqref{barR-SEIR-p1}. Then we have
\begin{equation}\label{eq:Ubar}
 \bar{U}_i(t)=\bar{U}_i(0)- \int_0^t\Big(\sum_{\ell=1}^L\bar{\nu}_{i,\ell}\Big)\bar{U}_i(s)ds+\int_0^t \bar{V}_i(s)ds,
 \end{equation}
where 
\begin{align*}
\bar{V}_i(t)&=\sum_{\ell\not=i}\left(\nu_{S,\ell,i}\bar{S}_\ell(t)+\nu_{E,\ell,i}\bar{E}_\ell(t) +\nu_{I,\ell,i}\bar{I}_\ell(t)+\nu_{R,\ell,i}\bar{R}_\ell(t)\right)\\&\quad+
\sum_{\ell\not=i}\left([\bar{\nu}_{i,\ell}-\nu_{S,i,\ell}]\bar{S}_i(t)+[\bar{\nu}_{i,\ell}-\nu_{E,i,\ell}]\bar{E}_i(t) +[\bar{\nu}_{i,\ell}-\nu_{I,i,\ell}]\bar{I}_i(t)+[\bar{\nu}_{i,\ell}-\nu_{R,i,\ell}]\bar{R}_i(t)\right)\,.
\end{align*}
Differentiating equation \eqref{eq:Ubar} and exploiting the inequality $\bar{V}_i(t)\ge0$ for all $t\ge0$, we deduce that 
\[ \bar{U}_i(t)\ge \bar{U}_i(0)e^{-(\sum_{\ell=1}^L\bar{\nu}_{i,\ell})t}\ge\bar{U}_i(0)e^{-(\sum_{\ell=1}^L\bar{\nu}_{i,\ell})T}:=C_{i,T},\]
for any $0\le t\le T$.
\end{proof}
We can now explicit the processes $\bar{A}_i(t)$.
\begin{lemma} \label{lem-3.11}
Let $(\bar{A}_i(t),\bar{S}_i(t),\bar{E}_i(t), \bar{I}_i(t),\bar{R}_i(t),\, i\in \LL)$, $0\le t\le T$ be any weak limit as $N\to\infty$ of 
$(\bar{A}_i^N(t),\bar{S}^N_i(t), \bar{E}^N_i(t), \bar{I}^N_i(t),\bar{R}^N_i(t),\, i\in \LL)$. Then for any $ i\in \LL$ and $0 \le t \le T$,
\[\bar{A}_i(t)= \lambda_i \int_0^t\bar\Upsilon_i(s)ds,\]
where
\[\bar\Upsilon_i(t):=\frac{\S_i(t)\sum_{\ell=1}^L\kappa_{i\ell}\I_\ell(t)}{(\S_i(t)+\bar{E}(t) +\I_i(t)+\rR_i(t))^\gamma}\,.\]
\end{lemma}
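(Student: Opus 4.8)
The plan is to pass to the limit in the scaled version of the infection identity \eqref{Ai-rep}. Dividing \eqref{Ai-rep} by $N$ gives
\[\bar{A}^N_i(t)=\lambda_i\int_0^t N^{-1}\Upsilon^N_i(s)\,ds+\bar{M}^N_{A,i}(t).\]
Substituting $S^N_i=N\bar{S}^N_i$, $I^N_\ell=N\bar{I}^N_\ell$ and $S^N_i+E^N_i+I^N_i+R^N_i=N(\bar{S}^N_i+\bar{E}^N_i+\bar{I}^N_i+\bar{R}^N_i)$ into \eqref{eqn-Upsilon}, the numerator carries a factor $N^2$ while the denominator carries $N^{1-\gamma}\cdot N^{\gamma}=N$, so that $N^{-1}\Upsilon^N_i(s)=\bar\Upsilon^N_i(s)$ with
\[\bar\Upsilon^N_i(s):=\frac{\bar{S}^N_i(s)\sum_{\ell=1}^L\kappa_{i\ell}\bar{I}^N_\ell(s)}{(\bar{S}^N_i(s)+\bar{E}^N_i(s)+\bar{I}^N_i(s)+\bar{R}^N_i(s))^\gamma}.\]
By \eqref{barM-A-conv} the martingale term $\bar{M}^N_{A,i}$ tends to $0$ uniformly on compacts in probability, so it suffices to prove that $\int_0^\cdot\bar\Upsilon^N_i(s)\,ds\to\int_0^\cdot\bar\Upsilon_i(s)\,ds$ along the subsequence under consideration.

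To identify the limit I would first invoke the Skorohod representation theorem, realizing the joint weak convergence of $(\bar{A}^N_i,\bar{S}^N_i,\bar{E}^N_i,\bar{I}^N_i,\bar{R}^N_i,\,i\in\LL)$ established along this subsequence (Lemmas \ref{lem-barS-conv} and \ref{lem-barIn-conv-SEIR}) as almost sure convergence in $D^{5L}$ on a common probability space. Because each coordinate of the limit belongs to $C$, this a.s.\ $J_1$-convergence is in fact locally uniform. Writing $\bar\Upsilon^N_i(s)=\psi\big(\bar{S}^N_i(s),\bar{E}^N_i(s),\bar{I}^N_i(s),\bar{R}^N_i(s),\sum_{\ell\neq i}\kappa_{i\ell}\bar{I}^N_\ell(s)\big)$ with $\psi$ as defined just before Lemma \ref{le:lowbd} (the local term $\kappa_{ii}\bar{I}^N_i=\bar{I}^N_i$ being kept separate from the distance term $\sum_{\ell\neq i}\kappa_{i\ell}\bar{I}^N_\ell$), the task reduces to the continuity of $\psi$ along the limiting trajectory.

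The main obstacle is the discontinuity of $\psi$ at the points $(0,0,0,0,u)$ with $u>0$, which occurs exactly when $\gamma=1$ and $\sum_{\ell\neq i}\kappa_{i\ell}>0$. In all other cases $\psi$ is continuous on the whole cube $[0,1]^4\times[0,\bar\kappa]$: when $\gamma<1$ the bound $\psi(s,e,i,r,u)\le (s+e+i+r)^{1-\gamma}(s+e+i+r+u)$ forces $\psi\to0$ as $(s,e,i,r)\to 0$, consistent with the convention $\frac{0}{0}=0$, and when all off-diagonal $\kappa_{i\ell}$ vanish the variable $u$ is absent. To cover the remaining case I would invoke Lemma \ref{le:lowbd}: the lower bound $\bar{S}_i(t)+\bar{E}_i(t)+\bar{I}_i(t)+\bar{R}_i(t)\ge C_{i,T}>0$ keeps the limiting trajectory inside the compact set $\{s+e+i+r\ge C_{i,T}\}$, on which $\psi$ is continuous. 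In either situation the locally uniform convergence of the arguments yields $\bar\Upsilon^N_i(s)\to\bar\Upsilon_i(s)$ for every $s\in[0,T]$.

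Finally, the uniform bound $0\le\bar\Upsilon^N_i(s)\le\bar\kappa_i$, which follows from $\Upsilon^N_i(t)\le(S^N_i(t)/N)^{1-\gamma}\sum_{\ell}\kappa_{i\ell}I^N_\ell(t)$ (recalled after \eqref{eqn-Upsilon}) together with $\bar{S}^N_i\le1$, lets me apply bounded convergence: $\int_0^t\bar\Upsilon^N_i(s)\,ds\to\int_0^t\bar\Upsilon_i(s)\,ds$ for each $t\in[0,T]$, and uniformly in $t$ since these integrals are Lipschitz with constant $\bar\kappa_i$ uniformly in $N$. Combining this with $\bar{M}^N_{A,i}\to0$ and the convergence $\bar{A}^N_i\to\bar{A}_i$ from Lemma \ref{lem-barAn-tight} identifies $\bar{A}_i(t)=\lambda_i\int_0^t\bar\Upsilon_i(s)\,ds$, as claimed.
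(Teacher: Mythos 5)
Your proof is correct and takes essentially the same route as the paper: both arguments hinge on the continuity of $\psi$ away from a vanishing denominator, on the lower bound of Lemma \ref{le:lowbd} to handle the problematic case $\gamma=1$ with $\sum_{\ell\neq i}\kappa_{i\ell}>0$, and on Lemma \ref{lem-barAn-tight} to identify $\bar{A}_i$ once $\bar\Upsilon^N_i$ converges. Your Skorohod-representation-plus-bounded-convergence step is simply a more explicit rendering of the paper's appeal to the continuous mapping theorem on path space.
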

\begin{proof}
Let $\mathcal{D}^4$ denote the set of c\`adl\`ag functions from $\R_+$ into
$[0,1]^4\times\big[0,\bar\kappa\big]$.
 For any $\gamma \in [0,1]$, the function $\psi(s,e,i,r,u)=\frac{s(i+u)}{(s+e+i+r)^\gamma}$
is  continuous for the Skorohod topology, on the subset of $\mathcal{D}^4$ which is such that for any $T>0$,
$\inf_{0\le t\le T} \{s(t)+e(t)+i(t)+r(t)\}  >0$. 
Thus, we deduce from the joint convergence
\[ (\bar{A}_i^N,\bar{S}^N_i,\bar{E}^N_i, \bar{I}^N_i,\bar{R}^N_i,\, i\in \LL)\Rightarrow(\bar{A}_i,\bar{S}_i, \bar{E}_i, \bar{I}_i,\bar{R}_i, \, i\in \LL)\]
and Lemma \ref{le:lowbd} that
\begin{align*}
\bar\Upsilon_i^N&\Rightarrow\bar\Upsilon_i:=\frac{\S_i\sum_{\ell=1}^L\kappa_{i\ell}\I_\ell}{(\S_i+\bar{E}_i+\I_i+\rR_i)^\gamma} \qinq D \qasq N \to \infty. 
\end{align*}
Consequently, given Lemma \ref{lem-barAn-tight}, 
$$
(\bar{A}^N_1,\dots, \bar{A}^N_L) \RA  (\bar{A}_1, \dots,\bar{A}_L)= \left( \lambda_1\int_0^\cdot \bar\Upsilon_1(s)ds, \dots,\lambda_L\int_0^\cdot \bar\Upsilon_L(s)ds \right)\qinq D^L \qasq N \to\infty.
$$
Therefore, any limit satisfies the system of integral equations given in Theorem \ref{thm-FLLN-SEIR}. 
\end{proof}

The uniqueness of solutions to the set of integral equations in Theorem \ref{thm-FLLN-SEIR} follows from the 
next Lemma, from which we deduce that the whole sequence converges, and since the limit is deterministic, the convergence is in probability.
This completes the proof of Theorem \ref{thm-FLLN-SEIR}. 

\begin{lemma}\label{le:uniq}
The system of equations \eqref{barS-SEIR}--\eqref{eqn-Phi-SEIR}
 has at most one solution. 
\end{lemma}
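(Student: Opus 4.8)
The plan is to run a Gronwall argument on the difference of two solutions, with the only genuine work being the control of the nonlinear term $\bar\Upsilon_i$ from \eqref{eqn-Phi-SEIR}. First I would note where the difficulty sits: all the ingredients of \eqref{barS-SEIR}--\eqref{barR-SEIR} are either linear in the unknowns (the migration integrals $\int_0^t\nu_{\cdot,\ell,i}(\cdot)\,ds$) or are fixed data that cancel in any difference (the initial-condition terms, and the prescribed functionals $\bar{E}^0_{\ell,i}$, $\bar{I}^{0,1}_{\ell,i}$, $\bar{I}^{0,2}_{\ell,i}$ and the kernels $\int_0^\cdot p_{\ell,i}\,dG_0$, $\Phi^0_{\ell,i}$). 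Hence the whole coupling between two solutions runs through the maps $(\bar{S},\bar{E},\bar{I},\bar{R})\mapsto\bar\Upsilon_i$, and it suffices to make these Lipschitz.

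The second step is to record that the lower bound \eqref{eq:lowbd} holds for \emph{any} solution of \eqref{barS-SEIR}--\eqref{eqn-Phi-SEIR}, not only for weak limits. Indeed, setting $\bar{A}_i=\lambda_i\int_0^\cdot\bar\Upsilon_i(s)\,ds$ turns \eqref{barS-SEIR}--\eqref{barR-SEIR} into the equations \eqref{eq:barS}, \eqref{barE-SEIR-p1}--\eqref{barR-SEIR-p1} used in Lemma \ref{le:lowbd}, and the computation there shows that $\bar{U}_i(t):=\bar{S}_i(t)+\bar{E}_i(t)+\bar{I}_i(t)+\bar{R}_i(t)$ solves a linear ODE with nonnegative source, so $\bar{U}_i(t)\ge C_{i,T}>0$ for every solution. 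Conservation of mass $\sum_i\bar{U}_i\equiv1$ together with nonnegativity of the components places the solution in $[0,1]^{4L}\cap\{\bar{U}_i\ge C_{i,T}\}$.

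The third step is the Lipschitz estimate on that domain. Writing $\bar\Upsilon_i=\bar{S}_i\,\bar{I}_{(i)}\,\bar{U}_i^{-\gamma}$ with $\bar{I}_{(i)}=\sum_\ell\kappa_{i\ell}\bar{I}_\ell$, each of the $4L$ partial derivatives is a sum of terms of the form (bounded factor)$\times\bar{U}_i^{-\gamma}$ and (bounded factor)$\times\bar{U}_i^{-\gamma-1}$, hence bounded in absolute value by a constant depending only on $\gamma$, $\bar\kappa$ and $C_{i,T}^{-1}$. This is exactly the place where \eqref{eq:lowbd} is indispensable: in the critical case $\gamma=1$ the factor $\bar{U}_i^{-1}$ would otherwise blow up near the origin. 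Consequently there is a constant $K_T$ with $|\bar\Upsilon_i^{(1)}(s)-\bar\Upsilon_i^{(2)}(s)|\le K_T\Delta(s)$ for any two solutions, where $\Delta(s):=\sum_i\big(|\bar{S}_i^{(1)}(s)-\bar{S}_i^{(2)}(s)|+|\bar{E}_i^{(1)}(s)-\bar{E}_i^{(2)}(s)|+|\bar{I}_i^{(1)}(s)-\bar{I}_i^{(2)}(s)|+|\bar{R}_i^{(1)}(s)-\bar{R}_i^{(2)}(s)|\big)$.

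Finally I would subtract the two copies of \eqref{barS-SEIR}--\eqref{barR-SEIR}. The fixed-data terms drop out; the migration integrals contribute at most a constant times $\int_0^t\Delta(s)\,ds$; and every nonlinear term has the form $\int_0^t k(t-s)\big(\bar\Upsilon_\ell^{(1)}(s)-\bar\Upsilon_\ell^{(2)}(s)\big)\,ds$ with a kernel $k\in\big\{\int_0^{\,\cdot}p_{\ell,i}\,dG,\ \Phi_{\ell,i},\ \mathbf{1}\big\}$ bounded by $1$, hence bounded by $K_T\int_0^t\Delta(s)\,ds$. Summing over all components and taking the supremum over $[0,t]$ gives $\delta(t)\le C_T\int_0^t\delta(s)\,ds$ with $\delta(t):=\sup_{0\le s\le t}\Delta(s)$, and Gronwall's inequality forces $\delta\equiv0$ on $[0,T]$, which is the claimed uniqueness. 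The main obstacle is the Lipschitz step for $\gamma=1$; once the uniform lower bound \eqref{eq:lowbd} is in hand for arbitrary solutions, the remainder is a routine Volterra--Gronwall estimate.
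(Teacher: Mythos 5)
Your proposal is correct and follows essentially the same route as the paper: the lower bound of Lemma \ref{le:lowbd} makes the map $\psi$ defining $\bar\Upsilon_i$ Lipschitz on the relevant (convex) domain $[0,1]^{4L}\cap\{\bar{U}_i\ge C_{i,T}\}$, after which uniqueness follows from a standard Volterra--Gronwall estimate. The only difference is that you spell out details the paper compresses --- notably the observation that the computation of Lemma \ref{le:lowbd} applies verbatim to any (nonnegative) solution of the deterministic system, not just to weak limits, which the paper uses implicitly by citing that lemma.
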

\begin{proof}
In view of Lemma \ref{le:lowbd},
 if we take the difference between two solutions, any convex combination of those two solutions satisfies the  lower bound \eqref{eq:lowbd}. Since  
 \[\bar\Upsilon_i(t) =\psi\bigg(\bar{S}_i(t), \bar{E}_i(t), \bar{I}_i(t),\bar{R}_i(t),\sum_{\ell\not=i}\kappa_{i\ell}\bar{I}_\ell(t)\bigg)\,,\]
and at each time $t\in[0,T]$, the derivatives of $\psi$ with respect to each of its variables is bounded in absolute value by the supremum of $1$ and
$\bar\kappa_i \bar{U}_i^{-\gamma}(t)\le \bar\kappa_i C_{i,T}^{-\gamma}$, we can now apply a standard argument based upon Gronwall's Lemma in order to deduce uniqueness. 
\end{proof}

\section{Proof of the FCLT}
\label{sec-proof-FCLT-SEIR}

In this section, we prove Theorem \ref{thm-FCLT-SEIR}.  We thus generalize the approach in \cite{PP-2020} to the multi-patch model, by employing the standard technique of convergence of finite dimensional distributions and tightness as exposed in \cite{billingsley1999convergence}. The migration processes require subtle care in proving the finite dimensional distribution convergence as shown in Lemmas \ref{lem-I0-ij-conv-SEIR} and \ref{lem-I-ij-conv-SEIR} for the key components. The tightness proofs require a moment estimate for the supremum of the processes, which is challenging due to the formula $\Upsilon^N(t)$ for infection, see Lemmas \ref{estimhatUps} and \ref{lem-estimate-SIR}.

We first give the following representations of the diffusion-scaled processes. 
The process $\hat{A}^N_i(t)$ can be decomposed as: 
\begin{align} \label{hatA-rep}
\hat{A}^N_i(t) &= \lambda_i \int_0^t \hat{\Upsilon}_i^N(s) ds + \hat{M}^N_{A,i}(t), \quad t\ge 0, 
\end{align}
where
\begin{align} \label{hatPhi-rep}
 \hat{\Upsilon}_i^N(t) &= \sqrt{N} (\bar\Upsilon_i^N(t) -\bar{ \Upsilon}_i(t) ),
\end{align}
and 
\begin{align} \label{hatM-A-def}
\hat{M}^N_{A,i}(t) := \frac{1}{\sqrt{N}} \int_0^t\int_0^\infty{\bf1}_{\afrak\le \lambda_i \Upsilon^N_i(s^-)}\bar{Q}_{i}(ds,d\afrak). 
\end{align}


For the process $\hat{S}_i^N(t)$,  we have
\begin{align} \label{hatS-i-rep}
\hat{S}_i^N(t) &= \hat{S}_i^N(0) -\lambda_i \int_0^t \hat{\Upsilon}_i^N(s) ds  + \sum_{\ell=1, \ell\neq i}^L \int_0^t (\nu_{S,\ell,i}\hat{S}^N_\ell(s)- \nu_{S,i, \ell}\hat{S}^N_i(s)) ds   \non\\
&  \qquad - \hat{M}_{A,i}(t)  +\sum_{\ell=1, \ell\neq i}^L \big( \hat{M}_{S,\ell,i}(t) - \hat{M}_{S,i,\ell}(t) \big), 
\end{align}
where
\begin{equation*} 
\hat{M}^N_{S,\ell,i}(t) :=\frac{1}{\sqrt{N}} \left( P_{S,\ell,i}\left( \nu_{S,\ell,i} \int_0^tS^N_\ell(s)ds\right) - \nu_{S,\ell,i} \int_0^tS^N_\ell(s)ds \right). 
\end{equation*}

For the process $\hat{E}^N_i(t)$, by the representation in \eqref{eqn-E-rep-2-SEIR}, using the definitions of $E^{N,0}_{\ell,i}(t)$ and $E^N_{\ell,i}(t)$ in \eqref{eqn-EI-ij-def-SEIR}, we obtain
\begin{align} \label{hatE-rep-2-SEIR}
\hat{E}^N_i(t) &= \hat{E}^N_i(0) - \sum_{\ell=1}^L \hat{E}^N_\ell(0) \int_0^t p_{\ell,i}(s) G_0(ds)  + \lambda_i \int_0^t \hat{\Upsilon}^N_i(s) ds 
-  \sum_{\ell=1}^L \lambda_\ell \int_0^t \int_0^{t-s} p_{\ell,i}(u) G(du)  \hat\Upsilon^N_\ell(s) ds \non\\
 & \quad  + \sum_{\ell=1, \ell\neq i}^L \int_0^t (\nu_{E,\ell,i}\hat{E}^N_\ell(s)- \nu_{E,i, \ell}\hat{E}^N_i(s)) ds - \sum_{\ell=1}^L \big(  \hat{E}^{N,0}_{\ell,i}(t) + \hat{E}^{N}_{\ell,i}(t) \big)   \non\\
&\quad + \hat{M}^N_{A,i}(t)    +   \sum_{\ell=1, \ell \neq i}^L \big( \hat{M}^N_{E,\ell,i}(t) - \hat{M}^N_{E,i,\ell}(t) \big)\,,
\end{align}
where for $\ell,i\in \mathcal{L}$, 
\begin{align} 
\hat{E}^{N,0}_{\ell,i}(t) 
& := 
 \frac{1}{\sqrt{N}}  \sum_{k=1}^{E^N_\ell (0)}  \left(  {\bf1}_{\eta_{k,\ell}^0 \le t}{\bf1}_{X^{0,k}_\ell(\eta_{k,\ell}^0)=i} - \int_0^t p_{\ell,i}(s) G_0(ds) \right) \,, \label{hatE0-ij-def-SEIR} \\
 \hat{E}^{N}_{\ell,i}(t) 
 & :=  \frac{1}{\sqrt{N}}  \left(  \sum_{j=1}^{A^N_\ell(t)} {\bf 1}_{\tau^N_{j,\ell} + \eta_{j,\ell} \le t} {\bf1}_{X^j_\ell(\eta_{j,\ell})=i} - N \lambda_\ell \int_0^t \left( \int_0^{t-s}p_{\ell,i}(u) G(du) \right) \bar\Upsilon^N_\ell(s) ds   \right)\,,  \label{hatE-ij-def-SEIR}
\end{align}
and for $\ell\neq i$, 
$$
\hat{M}^N_{E,\ell,i}(t)  :=   \frac{1}{\sqrt{N}}  \left( P_{E,\ell,i}\left(\nu_{E,\ell,i}\int_0^t E^N_\ell(s)ds\right) - \nu_{E,\ell,i}\int_0^t E^N_\ell(s)ds \right)\,. 
$$

For the process $\hat{I}^N_i(t)$, 
we obtain
\begin{align} \label{hatI-rep-2-SEIR}
\hat{I}^N_i(t)&= \hat{I}^N_i(0)-  \sum_{\ell=1}^L \hat{I}^N_\ell(0) \int_0^t q_{\ell,i}(s) F_0(ds)  + \hat{E}^N_i(0) \sum_{\ell=1}^L  \left( \int_0^t p_{\ell,i}(s) G_0(ds) - \Phi^0_{\ell,i}(t) \right)   \non\\
& \quad +  \sum_{\ell=1}^L \lambda_\ell \int_0^t \int_0^{t-s} p_{\ell,i}(u) G(du)  \hat\Upsilon^N_\ell(s) ds    - \sum_{\ell=1}^L \lambda_\ell  \int_0^t \Phi_{\ell,i}(t-s)  \hat{\Upsilon}^N_\ell(s) ds \non\\
 & \quad    +   \sum_{\ell \neq i} \int_0^t \left( \nu_{I,\ell,i}  \hat{I}^N_\ell(s)-  \nu_{I,i,\ell}\hat{I}^N_i(s)  \right) ds + \sum_{\ell=1}^L \left( \hat{M}^N_{I,\ell,i}(t) - \hat{M}^N_{I,i,\ell}(t) \right) \non\\
& \quad + \sum_{\ell=1}^{L} \big( \hat{E}^{N,0}_{\ell,i}(t) +  \hat{E}^{N}_{\ell,i}(t)  \big)  - \sum_{\ell=1}^L \left(\hat{I}^{N,0,1}_{\ell,i}(t)  +\hat{I}^{N,0,2}_{\ell,i}(t) + \hat{I}^N_{\ell,i}(t) \right) ,
\end{align}
where  $ \hat{E}^{N,0}_{\ell,i}(t)$ and $\hat{E}^{N}_{\ell,i}(t)$ are defined in  \eqref{hatE0-ij-def-SEIR} and \eqref{hatE-ij-def-SEIR}, respectively, and 
\begin{align}
\hat{I}^{N,0,1}_{\ell,i}(t)&:= \frac{1}{\sqrt{N}}  \sum_{k=1}^{I^N_\ell (0)} \left( {\bf1}_{\zeta_{k,\ell}^0 \le t}{\bf1}_{Y^{0,k}_\ell(\zeta_{k,\ell}^0)=i} - \int_0^t q_{\ell,i}(s) F_0(ds) \right) \,, \label{eqn-hatI-01-def} \\
 \hat{I}^{N,0,2}_{\ell,i}(t) &:= \frac{1}{\sqrt{N}}   \sum_{k=1}^{E^N_\ell (0)} \Bigg( \sum_{\ell'=1}^L {\bf1}_{X^{0,k}_\ell(\eta_{k,\ell}^0)=\ell'}  {\bf1}_{\eta_{k,\ell}^0 + \zeta_{-k,\ell} \le t} {\bf1}_{Y^{-k,\ell}_{\ell'}( \zeta_{-k,\ell})=i}  -   \Phi^0_{\ell,i}(t) \Bigg)\,,  \label{eqn-hatI-02-def}\\
\hat{I}^N_{\ell,i}(t) &:=  \frac{1}{\sqrt{N}} \Bigg(  
\sum_{j=1}^{A^N_\ell(t)}   \sum_{\ell'=1}^L {\bf1}_{X^j_\ell( \eta_{j,\ell})=\ell'}  {\bf 1}_{\tau^N_{j,\ell} + \eta_{j,\ell}  + \zeta_{j,\ell}  \le t} {\bf1}_{Y^{j,\ell}_{\ell'}( \zeta_{j,\ell} )=i}  
-N \lambda_\ell \int_0^t \Phi_{\ell,i}(t-s)  \bar{\Upsilon}^N_\ell(s) ds \Bigg) \,,
\label{hatI-ij-def-SEIR} 
\end{align}
and 
for $\ell\neq i$, 
\begin{equation}\label{hatM-I-def}
\hat{M}^N_{I,\ell,i}(t)  :=   \frac{1}{\sqrt{N}}  \left( P_{I,\ell,i}\left(\nu_{I,\ell,i}\int_0^t I^N_\ell(s)ds\right) - \nu_{I,\ell,i}\int_0^t I^N_\ell(s)ds \right). 
\end{equation}


For the process $\hat{R}^N_i(t)$, we have
\begin{align} \label{hatR-rep-2-SEIR}
\hat{R}^N_i(t)&=   \sum_{\ell=1}^L \hat{I}^N_\ell(0) \int_0^t q_{\ell,i}(s) F_0(ds)  + \hat{E}^N_i(0) \sum_{\ell=1}^L  \Phi^0_{\ell,i}(t)    \non\\ 
& \quad +    \sum_{\ell=1}^L \lambda_\ell  \int_0^t \Phi_{\ell,i}(t-s)  \hat{\Upsilon}^N_\ell(s) ds   +   \sum_{\ell \neq i} \int_0^t \left( \nu_{R,\ell,i}  \hat{R}^N_\ell(s)-  \nu_{R,i,\ell}\hat{R}^N_i(s)  \right) ds  \non\\
 & \quad   + \sum_{\ell=1}^L \left( \hat{M}^N_{R,\ell,i}(t) - \hat{M}^N_{R,i,\ell}(t) \right)  + \sum_{\ell=1}^L \left(\hat{I}^{N,0,1}_{\ell,i}(t)  +\hat{I}^{N,0,2}_{\ell,i}(t) + \hat{I}^N_{\ell,i}(t) \right), 
\end{align}
where 
for $\ell,i\in \mathcal{L}$, and $\ell \neq i$, 
\begin{equation}\label{hatM-R-def}
\hat{M}^N_{R,\ell,i}(t)  :=   \frac{1}{\sqrt{N}}  \left( P_{R,\ell,i}\left(\nu_{R,\ell,i}\int_0^t R^N_\ell(s)ds\right) - \nu_{R,\ell,i}\int_0^t R^N_\ell(s)ds \right). 
\end{equation}

We establish the convergence of some key components in these representations  in  Lemmas \ref{lem-Mi-conv-SEIR}, \ref{lem-I0-ij-conv-SEIR} and \ref{lem-I-ij-conv-SEIR}.

\begin{lemma} \label{lem-Mi-conv-SEIR}
Under Assumption \ref{AS-FCLT-SEIR},
\begin{align*}
& \big( \hat{M}^N_{A,i}, \hat{M}^N_{E,\ell,i},    \hat{M}^N_{S,\ell,i},   \hat{M}^N_{I,\ell,i},  \hat{M}^N_{R,\ell,i},\, \ell, i\in \mathcal{L}, \ell\neq i \big)
 \\
 & \RA \big( \hat{M}_{A,i},  \hat{M}_{E,\ell,i},   \hat{M}_{S,\ell,i},   \hat{M}_{I,\ell,i},  \hat{M}_{R,\ell,i},\, \ell, i\in \mathcal{L}, \ell\neq i \big)
 \qinq D^{L+4L(L-1)} \qasq N \to \infty,
\end{align*}
where the limits are as given in Theorem \ref{thm-FCLT-SEIR}. 
\end{lemma}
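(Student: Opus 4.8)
The plan is to recognize every entry of the vector as a square--integrable martingale with respect to one common filtration and then to invoke the multidimensional martingale FCLT (as used in \cite{PP-2020}), for which three ingredients are needed: convergence of the predictable quadratic variations to deterministic continuous limits, identification of the cross--variations, and asymptotic negligibility of the jumps. First I would fix the filtration $\{\G^N(t)\}$ generated jointly by the initial data, the counting processes $A^N_i$, and the migration Poisson processes $P_{S,\ell,i},P_{E,\ell,i},P_{I,\ell,i},P_{R,\ell,i}$ evaluated along their (adapted) random time changes, up to time $t$. Each of $\hat{M}^N_{A,i}$, $\hat{M}^N_{S,\ell,i}$, $\hat{M}^N_{E,\ell,i}$, $\hat{M}^N_{I,\ell,i}$, $\hat{M}^N_{R,\ell,i}$ is then a $\G^N$--martingale, being $N^{-1/2}$ times a compensated integral against $Q_i$, respectively a compensated Poisson process; independence of the driving noises guarantees they are all martingales with respect to this single enlarged filtration.

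The key computation is the predictable quadratic variations. For the infection martingale, $\langle \hat{M}^N_{A,i}\rangle(t)=\tfrac{1}{N}\langle M^N_{A,i}\rangle(t)=\lambda_i\int_0^t\bar\Upsilon^N_i(s)ds=\bar{A}^N_i(t)-\bar{M}^N_{A,i}(t)$, which by Lemmas \ref{lem-barAn-tight} and \ref{lem-3.11} together with \eqref{barM-A-conv} converges in probability, locally uniformly in $t$, to $\lambda_i\int_0^t\bar\Upsilon_i(s)ds$. For the migration martingales, the quadratic variation of a compensated Poisson process $P(\Lambda(\cdot))-\Lambda(\cdot)$ equals its compensator, so for instance $\langle \hat{M}^N_{S,\ell,i}\rangle(t)=\nu_{S,\ell,i}\int_0^t\bar{S}^N_\ell(s)ds$; by the FLLN (Theorem \ref{thm-FLLN-SEIR}) $\bar{S}^N_\ell\to\bar{S}_\ell$ locally uniformly, whence this converges to $\nu_{S,\ell,i}\int_0^t\bar{S}_\ell(s)ds$, and analogously with $\bar{E}_\ell,\bar{I}_\ell,\bar{R}_\ell$ for the $E$--, $I$-- and $R$--martingales. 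Crucially, since $\{Q_i\}$ and all the $\{P_{\cdot,\ell,i}\}$ are mutually independent, no two distinct martingales in the family ever jump simultaneously, so every optional (hence predictable) cross--variation vanishes identically; the limiting covariance matrix is therefore diagonal with the deterministic entries just computed.

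It remains to verify the jump condition and to conclude. Each martingale is $N^{-1/2}$ times a pure--jump process with unit jumps, so its maximal jump over $[0,T]$ is bounded by $N^{-1/2}\to0$, which yields the Lindeberg condition in its strongest form. Applying the multidimensional martingale FCLT then gives joint convergence in $D^{L+4L(L-1)}$ to a continuous centered Gaussian martingale whose matrix of quadratic variations is the diagonal deterministic matrix above; such a process is precisely a vector of mutually independent time--changed Brownian motions $B_{A,i}(\lambda_i\int_0^\cdot\bar\Upsilon_i(s)ds)$, $B_{S,\ell,i}(\nu_{S,\ell,i}\int_0^\cdot\bar{S}_\ell(s)ds)$, and so on, matching the limits stated in Theorem \ref{thm-FCLT-SEIR} (and dispensing with any separate tightness argument, since the martingale FCLT delivers full process convergence). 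The only genuine point requiring care is the quadratic--variation convergence for $\hat{M}^N_{A,i}$: because the map $(\bar{S},\bar{E},\bar{I},\bar{R})\mapsto\bar\Upsilon$ is discontinuous when $\gamma=1$, one cannot invoke the continuous mapping theorem directly, and I would instead route that convergence through $\bar{A}^N_i\to\bar{A}_i$ as already established in the FLLN (Lemmas \ref{lem-barAn-tight} and \ref{lem-3.11}), which absorbs the difficulty via the lower bound of Lemma \ref{le:lowbd}.
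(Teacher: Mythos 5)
Your proposal is correct and is essentially the paper's proof: the paper likewise treats all components as square-integrable martingales and invokes the martingale FCLT (Theorem 1.4 in Chapter 7 of Ethier--Kurtz), with the main step being convergence of the predictable quadratic variations via the fluid-scaled limits, though the paper omits the details you supply. Your filled-in details — the identity $\langle\hat{M}^N_{A,i}\rangle=\bar{A}^N_i-\bar{M}^N_{A,i}$ to bypass any continuity issue with $\bar\Upsilon$ when $\gamma=1$, the vanishing cross-variations from independence of the driving Poisson noises, and the $N^{-1/2}$ jump bound — are all sound.
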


\begin{proof}
This follows from a standard martingale convergence argument, see, e.g.,  Theorem 1.4 in Chapter 7 of \cite{ethier-kurtz}. The main step consists in proving that the quadratic variations converge (involving the convergence of fluid-scaled processes). We omit the details for brevity. 
\end{proof}

\begin{lemma} \label{lem-I0-ij-conv-SEIR}
Under Assumption \ref{AS-FCLT-SEIR}, 
\begin{align*}
\big(\hat{E}^{N,0}_{\ell,i}, \,\hat{I}^{N,0,1}_{\ell,i},\, \hat{I}^{N,0,2}_{\ell,i},\,\ell,i\in \mathcal{L}\big) \RA\big(\hat{E}^{0}_{\ell,i}, \,\hat{I}^{0,1}_{\ell,i},\, \hat{I}^{0,2}_{\ell,i},\,  \ell,i\in \mathcal{L}\big) 
\qinq D^{3L^2} \qasq N \to \infty,
\end{align*}
where the limits are as given in Theorem \ref{thm-FCLT-SEIR}. 
\end{lemma}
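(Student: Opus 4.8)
The plan is to recognize each of the families $\hat{E}^{N,0}_{\ell,i}$, $\hat{I}^{N,0,1}_{\ell,i}$ and $\hat{I}^{N,0,2}_{\ell,i}$, defined in \eqref{hatE0-ij-def-SEIR}, \eqref{eqn-hatI-01-def} and \eqref{eqn-hatI-02-def}, as normalized sums of independent, identically distributed, centered, uniformly bounded $D$-valued summands indexed by the initial individuals, and to prove the joint convergence by the scheme announced at the start of the section: convergence of finite-dimensional distributions together with tightness in $D$. This is the FCLT analogue of Lemma \ref{lem-barEI-0-conv}. First I would replace the random numbers of summands $E^N_\ell(0)$ and $I^N_\ell(0)$ by the deterministic numbers $[N\bar{E}_\ell(0)]$ and $[N\bar{I}_\ell(0)]$. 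For example, writing $\hat{E}^{N,0}_{\ell,i}=N^{-1/2}\sum_{k=1}^{E^N_\ell(0)}\xi^{(k)}_{\ell,i}(\cdot)$ with the $\xi^{(k)}_{\ell,i}$ centered and bounded, the discrepancy between this and the version summed up to $[N\bar{E}_\ell(0)]$ is $N^{-1/2}$ times a sum over the surplus indices, whose number equals $|E^N_\ell(0)-[N\bar{E}_\ell(0)]|=O_P(\sqrt N)$ by Assumption \ref{AS-FCLT-SEIR} (since $\hat{E}^N_\ell(0)$ converges). A maximal inequality for the $k$-indexed martingale of monotone increments then bounds the supremum in $t$ of this surplus by $O_P(N^{-1/4})$. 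This isolates the empirical-process fluctuation, the count fluctuations $\hat{E}^N_\ell(0)$, $\hat{I}^N_\ell(0)$ entering the full equations separately.

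For the versions with a deterministic number of summands, the finite-dimensional convergence follows from the multivariate Lindeberg--Feller central limit theorem applied to the vector obtained by evaluating all three families at finitely many times. The covariances are computed directly from the joint law of $(\eta^0_{k,\ell},\zeta_{-k,\ell},X^{0,k}_\ell,Y^{-k,\ell}_{\ell'})$ and of $(\zeta^0_{k,\ell},Y^{0,k}_\ell)$. Here I would use that the summands for distinct source patches $\ell\neq\ell'$ are built from disjoint, independent families of individuals, hence asymptotically uncorrelated, and that the initially infectious population driving $\hat{I}^{N,0,1}$ is independent of the initially exposed population driving $(\hat{E}^{N,0},\hat{I}^{N,0,2})$. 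This produces exactly the block structure of Theorem \ref{thm-FCLT-SEIR} and the vanishing of $\Cov(\hat{E}^0_{\ell,i},\hat{I}^{0,2}_{\ell',i'})$ for $\ell\neq\ell'$, while the shared dependence on $(\eta^0_{k,\ell},X^{0,k}_\ell)$ for $\ell=\ell'$ yields the stated nonzero cross-covariance between $\hat{E}^0_{\ell,i}$ and $\hat{I}^{0,2}_{\ell,i'}$. Each single-time variance and covariance reduces to integrals against $G_0$, $F_0$ and $H_0$, matching \eqref{eqn-H0} and the covariance formulas in the theorem.

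The principal obstacle is tightness in $D$. Since each summand $t\mapsto \mathbf{1}_{\eta^0_{k,\ell}\le t}\mathbf{1}_{X^{0,k}_\ell(\eta^0_{k,\ell})=i}$, and its analogues for the other two families, is a nondecreasing one-jump function, the centered normalized sums are marked empirical processes, and I would establish tightness through a second-moment increment bound of Billingsley type: for $s\le t\le u$,
\[
\E\big[(\hat{E}^{N,0}_{\ell,i}(t)-\hat{E}^{N,0}_{\ell,i}(s))^2\,(\hat{E}^{N,0}_{\ell,i}(u)-\hat{E}^{N,0}_{\ell,i}(t))^2\big]\le C\big(\phi(u)-\phi(s)\big)^2,
\]
with $\phi$ a continuous nondecreasing function built from $G_0$ (respectively $F_0$ for $\hat{I}^{N,0,1}$, and the mixed distribution of $\eta^0+\zeta_{-k}$ for $\hat{I}^{N,0,2}$). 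The independence of the $\xi^{(k)}$ makes the cross terms vanish and reduces the estimate to products of one-dimensional variances, exactly as in the computations following \eqref{crit_cor_bill} in the proof of Lemma \ref{lem-barEI-0-conv}, but now at the $\sqrt N$ scale. The continuity of $F_0$ and $G_0$ guarantees that $\phi$ is continuous, which both delivers the moment criterion with no residual jump contribution and shows that the limiting Gaussian processes have continuous sample paths; hence the convergence holds in $D$ for the $J_1$ topology. Combining the reduction step, the finite-dimensional convergence, and tightness yields the claimed joint convergence with the covariance functions listed in Theorem \ref{thm-FCLT-SEIR}.
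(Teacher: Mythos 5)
Your proposal is correct and follows essentially the same route as the paper's proof: replace the random initial counts $E^N_\ell(0)$, $I^N_\ell(0)$ by deterministic ones, prove finite-dimensional convergence for the resulting normalized sums of i.i.d.\ bounded centered summands (the paper does this by explicit characteristic-function computations, which is equivalent to your Lindeberg--Feller invocation), establish tightness via Billingsley's fourth-moment increment criterion (Theorem 13.5, condition (13.14)) with a continuous nondecreasing $\phi$ built from $F_0$, $G_0$, $H_0$ (this is where the continuity assumption enters), exploit independence across source patches and between the initially infectious and initially exposed populations to get the block covariance structure including the $\Cov(\hat{E}^0_{\ell,i},\hat{I}^{0,2}_{\ell,i'})$ cross term, and finally show the difference between the hat and tilde versions is negligible uniformly in $t$. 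The only (immaterial) deviation is in this last step, where you use an $O_P(N^{-1/4})$ maximal-inequality bound on the surplus sum, while the paper combines a pointwise $L^2$ estimate of order $\E|\bar{I}^N_\ell(0)-\bar{I}_\ell(0)|$ with a monotone-increment tightness argument (Corollary of Theorem 7.4 in Billingsley); both close the gap.
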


\begin{proof}
Since the proofs for the convergence of $\big(\hat{E}^{N,0}_{\ell,i}, \ell,i\in \mathcal{L}\big)$ and $\big(\hat{I}^{N,0,1}_{\ell,i}, \ell,i\in \mathcal{L}\big)$ follow from the same argument, 
we only prove the convergence of  $\big(\hat{I}^{N,0,1}_{\ell,i}, \ell,i\in \mathcal{L}\big)\RA \big(\hat{I}^{0,1}_{\ell,i}, \ell,i\in \mathcal{L}\big)$ in $D^{L^2}$.
Define $\tilde{I}^{N,0,1}_{\ell,i}(t) $ by replacing  $I^N_\ell(0)$ with $N \bar{I}_\ell(0)$ in \eqref{eqn-hatI-01-def} for $ \ell, i\in \mathcal{L}$.

Let us consider the convergence of $\tilde{I}^{N,0,1}_{1,1}$. 
Observe that the pairs $\big(\zeta_{k,1}, Y^{0,k}_1(\cdot)\big)$ and $\big(\zeta_{k',1}, Y^{0,k'}_1(\cdot)\big)$ are independent and have the same law. 
Thus, its proof follows in a similar approach for empirical processes, see, e.g., Theorem 14.3 in  \cite{billingsley1999convergence}. 
There are some differences due to the Markov process $Y^{0,k}_1$, which we highlight below. 
So, we apply Theorem 13.5 in \cite{billingsley1999convergence}.

For each $t>0$ and $\alpha\in\R$, we have 
\begin{align*}
&\E\left[\exp \left( \hat\imath \alpha \tilde{I}^{N,0,1}_{1,1}(t) \right)\right] = \E\left[ \prod_{k=1}^{N\I_1(0)} \exp\left(  \hat\imath \alpha \frac{1}{\sqrt{N}}  \left(  {\bf1}_{\zeta_{k,1}^0\le t}{\bf1}_{Y^{0,k}_1(\zeta_{k,1}^0)=1} - \int_0^t  q_{1,1}(s) F_0(ds) \right) \right) \right] \\
&= \prod_{k=1}^{N\I_1(0)} \E\left[ \exp\left(  \hat\imath \alpha \frac{1}{\sqrt{N}}  \left(  {\bf1}_{\zeta_{k,1}^0\le t}{\bf1}_{Y^{0,k}_1(\zeta_{k,1}^0)=1} - \int_0^t  q_{1,1}(s) F_0(ds) \right) \right) \right] \\
&= \left( 1 -\frac{\alpha^2}{2N} \E\left[\ \left(  {\bf1}_{\zeta_{k,1}^0\le t}{\bf1}_{Y^{0,k}_1(\zeta_{k,1}^0)=1} -  \int_0^t  q_{1,1}(s) F_0(ds) \right)^2  \right] + o(N^{-1})\right)^{N\I_1(0)} \\
&= \left( 1 -\frac{\alpha^2}{2N}   \int_0^t  q_{1,1}(s) F_0(ds) \left( 1 -  \int_0^t  q_{1,1}(s) F_0(ds) \right)  + o(N^{-1})\right)^{N\I_1(0)} \\
&\xrightarrow{N\to\infty}  \exp\left( - \frac{\alpha^2}{2}  \I_1(0)  \int_0^t  q_{1,1}(s) F_0(ds) \left( 1 -  \int_0^t  q_{1,1}(s) F_0(ds) \right)\right) = \E\left[\exp \left( \hat\imath \alpha \hat{I}^{0,1}_{1,1}(t) \right)\right]. 
\end{align*}
Similarly, it can be also shown that 
 for any $0 <s <t<r$ and $\alpha_1,\alpha_2 \in \R$, 
\begin{align*}
& \lim_{N\to\infty} \E\left[\exp \left( \hat\imath \alpha_1 \left(\tilde{I}^{N,0,1}_{1,1}(t) - \tilde{I}^{N,0,1}_{1,1}(s) \right)+  
\hat\imath \alpha_2 \left(\tilde{I}^{N,0,1}_{1,1}(r) - \tilde{I}^{N,0,1}_{1,1}(t) \right) \right)\right]  \\
&= \E\left[\exp \left( \hat\imath\alpha_1 \left(\hat{I}^{0,1}_{1,1}(t) - \hat{I}^{0,1}_{1,1}(s)\right) + \hat\imath\alpha_2 \left(\hat{I}^{0,1}_{1,1}(r) - \hat{I}^{0,1}_{1,1}(t)\right) \right)\right]  \\
& = \exp\Bigg( - \frac{\alpha^2_1}{2}  \I_1(0)  \int_s^t  q_{1,1}(u) F_0(du) \left( 1 -  \int_s^t  q_{1,1}(u) F_0(du) \right)  \\
& \qquad \qquad -  \frac{\alpha^2_2}{2}  \I_1(0)  \int_t^r  q_{1,1}(u) F_0(du) \left( 1 -  \int_t^r  q_{1,1}(u) F_0(du) \right)\\
& \qquad \qquad - \alpha_1\alpha_2  \I_1(0)  \int_s^t  q_{1,1}(u) F_0(du)  \int_t^r  q_{1,1}(u) F_0(du) \Bigg). 
\end{align*}
Thus, for
the convergence of finite dimensional distributions, with $t_1<t_2<\cdots<t_k$ and $\alpha_\ell, \ell =1,\dots,k$, we can write $\sum_{\ell=1}^k \hat\imath \alpha_\ell \tilde{I}^{N,0,1}_{1,1}(t_\ell)$ using the increments $\tilde{I}^{N,0,1}_{1,1}(t_\ell) - \tilde{I}^{N,0,1}_{1,1}(t_{\ell-1})$, and carry out the calculations as shown above. 

Next, to prove tightness, we employ  Theorem 13.5 and verify condition (13.14) in 
\cite{billingsley1999convergence}. 
We show that  for $r \le s \le t$ and for $N\ge 1$,
$$
\E\left[ \big| \tilde{I}^{N,0,1}_{1,1}(s) - \tilde{I}^{N,0,1}_{1,1}(r)\big|^2 \big| \tilde{I}^{N,0,1}_{1,1}(t) - \tilde{I}^{N,0,1}_{1,1}(s)\big|^2\right] \le   C(\phi(s) - \phi(r))  (\phi(t) - \phi(s)) \le C  (\phi(t) - \phi(r))^2
$$
for some constant $C$ and $\phi(t) =  \int_0^t  q_{1,1}(u) F_0(du)$ which is a nonnegative, nondecreasing and continuous function. Recall that $F_0$ is assumed to be continuous. This will enforce condition (13.14) in 
\cite{billingsley1999convergence}, which according to Theorem 13.5 implies tightness in $D$.
 Let 
$$
\Delta I^k_{r,s} =  {\bf1}_{r <\zeta_{k,1}^0\le s}{\bf1}_{Y^{0,k}_1(\zeta_{k,1}^0)=1} - \int_r^s  q_{1,1}(u) F_0(du),
$$
and
$$
\Delta I^k_{s,t} =  {\bf1}_{s<\zeta_{k,1}^0\le t}{\bf1}_{Y^{0,k}_1(\zeta_{k,1}^0)=1} -  \int_s^t  q_{1,1}(u) F_0(du). 
$$
Note that $\E[\Delta I^k_{r,s}] =0$, $\E[\Delta I^k_{s,t}] =0$,
$$
\E[(\Delta I^k_{r,s})^2] = \int_r^s  q_{1,1}(u) F_0(du)  \left( 1- \int_r^s  q_{1,1}(u) F_0(du) \right), 
$$
and
$$
\E[(\Delta I^k_{s,t})^2] = \int_s^t  q_{1,1}(u) F_0(du)  \left( 1- \int_s^t  q_{1,1}(u) F_0(du) \right). 
$$
By direct calculations, following similar steps in the proof of (14.9) in \cite{billingsley1999convergence},  we obtain 
\begin{align*}
\E\left[ \left|\sum_{k=1}^{N\I_1(0)} \Delta I^k_{r,s}\right|^2  \left|\sum_{k=1}^{N\I_1(0)} \Delta I^k_{s,t}\right|^2  \right] 
&= N\I_1(0) \E[ (\Delta I^1_{r,s})^2 (\Delta I^1_{s,t})^2]  \\
& \quad + N\I_1(0) (N\I_1(0)-1) \E[(\Delta I^1_{r,s})^2] \E[(\Delta I^1_{s,t})^2] \\
& \quad   + 2 N\I_1(0) (N\I_1(0)-1)    (\E[ \Delta I^1_{r,s}\Delta I^1_{s,t}])^2\,, \\
N^{-2}\E\left[ \left|\sum_{k=1}^{N\I_1(0)} \Delta I^k_{r,s}\right|^2  \left|\sum_{k=1}^{N\I_1(0)} \Delta I^k_{s,t}\right|^2  \right]& \le C  \int_r^s  q_{1,1}(u) F_0(du)  \int_s^t  q_{1,1}(u) F_0(du)\,.
\end{align*}
Thus we have shown the convergence $ \tilde{I}^{N,0,1}_{1,1} \RA  \hat{I}^{0,1}_{1,1}$ in $D$.

\smallskip

For the joint convergence $\big(\tilde{I}^{N,0,1}_{\ell,i},\, \ell,i=1,\dots L\big) $, since the variables and processes associated with patch $\ell$ and patch $\ell'$ are independent, it suffices to show the joint convergences $\big(\tilde{I}^{N,0,1}_{\ell,i}, \,i\in \mathcal{L}\big) $ for different $\ell$'s  separately. 
For the joint convergence $\big(\tilde{I}^{N,0,1}_{\ell,i}, \,i\in \mathcal{L}\big) $, 
we obtain tightness from that of each process as established above, so it suffices to show the joint convergence of their finite dimensional distributions. Take $\ell=1, i=1,2$ as an example. 
For $0<t_1< t_2$ and $\alpha_1,\alpha_2\in\R$,
\begin{align*}
&\E\left[\exp \left( \hat\imath \alpha_1 \tilde{I}^{N,0,1}_{1,1}(t_1) + \hat\imath \alpha_2 \tilde{I}^{N,0,1}_{1,2}(t_2)  \right)\right] \\
&= \E\bigg[ \prod_{k=1}^{N\I_1(0)} \exp\bigg(  \hat\imath \alpha_1 \frac{1}{\sqrt{N}}  \Big(  {\bf1}_{\zeta_{k,1}^0\le t_1}{\bf1}_{Y^{0,k}_1(\zeta_{k,1}^0)=1} - \int_0^{t_1}  q_{1,1}(s) F_0(ds) \Big)\\
& \qquad \qquad \qquad \qquad  +   \hat\imath \alpha_2 \frac{1}{\sqrt{N}}  \Big(  {\bf1}_{\zeta_{k,1}^0\le t_2}{\bf1}_{Y^{0,k}_1(\zeta_{k,1}^0)=2} - \int_0^{t_2}  q_{1,2}(s) F_0(ds)  \Big) \bigg) \bigg] \\
&= \bigg( 1 -\frac{1}{2N} \E\bigg[\bigg(  \alpha_1 \Big(  {\bf1}_{\zeta_{k,1}^0\le t_1}{\bf1}_{Y^{0,k}_1(\zeta_{k,1}^0)=1} -  \int_0^{t_1}  q_{1,1}(s) F_0(ds) \Big)  \\
& \qquad \qquad \qquad \qquad + \alpha_2 \Big( {\bf1}_{\zeta_{k,1}^0\le t_2}{\bf1}_{Y^{0,k}_1(\zeta_{k,1}^0)=2} -  \int_0^{t_2}  q_{1,2}(s) F_0(ds) 
\Big) \bigg)^2 \bigg] + o(N^{-1})\bigg)^{N\I_1(0)} \\
&= \bigg( 1 -\frac{\alpha_1^2}{2N}   \int_0^{t_1}  q_{1,1}(s) F_0(ds) \left( 1 -  \int_0^{t_1}  q_{1,1}(s) F_0(ds) \right)   \\
& \qquad - \frac{\alpha_2^2}{2N}   \int_0^{t_2}  q_{1,2}(s) F_0(ds) \left( 1 -  \int_0^{t_2}  q_{1,2}(s) F_0(ds) \right)  \\
& \qquad  + \frac{\alpha_1\alpha_2}{N}\int_0^{t_1}  q_{1,1}(s) F_0(ds) \int_0^{t_2}  q_{1,2}(s) F_0(ds)  + o(N^{-1})  \bigg)^{N\I_1(0)}  \\
&\xrightarrow{N\to\infty}  
\exp\bigg( - \frac{\alpha_1^2}{2} \I_1(0)  \int_0^{t_1}  q_{1,1}(s) F_0(ds) \left( 1 -  \int_0^{t_1}  q_{1,1}(s) F_0(ds) \right) \\
& \qquad \qquad \qquad- \frac{\alpha_2^2}{2}  \I_1(0)  \int_0^{t_2}  q_{1,2}(s) F_0(ds) \left( 1 -  \int_0^{t_2}  q_{1,2}(s) F_0(ds) \right)   \\
& \qquad \qquad \qquad   + \alpha_1\alpha_2 \I_1(0) \int_0^{t_1}  q_{1,1}(s) F_0(ds) \int_0^{t_2}  q_{1,2}(s) F_0(ds)   \bigg) \\ 
&\qquad \qquad = \E\left[\exp \left( \hat\imath\alpha_1 \hat{I}^{0,1}_{1,1}(t_1) + \hat\imath \alpha_2 \hat{I}^{0,1}_{1,2}(t_2)  \right)\right]. 
\end{align*}
This calculation can be extended to the computation of finite dimensional distributions of $\big(\tilde{I}^{N,0,1}_{1,1}, \tilde{I}^{N,0,1}_{1,2}\big)$, and then that of $\big(\tilde{I}^{N,0,1}_{\ell,i}, \,i\in \mathcal{L}\big)$. Therefore, we can conclude the joint convergence of $\big(\tilde{I}^{N,0,1}_{\ell,i}, \,i\in \mathcal{L}\big)$.

\smallskip 

Next, to prove the convergence of $ \hat{I}^{N,0,1}_{\ell,i}$, 
 it suffices to show that 
$ \hat{I}^{N,0,1}_{\ell,i} -  \tilde{I}^{N,0,1}_{\ell,i} \RA 0$ in $D$ for each $\ell,i\in \mathcal{L}$. 
We consider  the convergence  $ \hat{I}^{N,0,1}_{1,1} -  \tilde{I}^{N,0,1}_{1,1} \RA 0$ in $D$. 
We have
\begin{align} \label{eqn-hat-tilde-I0-diff}
& \text{sign} (I^N_1(0)- N \bar{I}_1(0))  \Big(\hat{I}^{N,0,1}_{1,1} (t) -  \tilde{I}^{N,0,1}_{1,1}(t)\Big) \non  \\
& =  \frac{1}{\sqrt{N}}  \sum_{k= I^N_1(0) \wedge N \bar{I}_1(0) }^{I^N_1(0) \vee N\bar{I}_1(0)} \left(    {\bf1}_{\zeta_{k,1}^0\le t}{\bf1}_{Y^{0,k}_1(\zeta_{k,\ell}^0)=1} -  \int_0^t  q_{1,1}(s) F_0(ds) \right)\non \\
&=  \frac{1}{\sqrt{N}}  \sum_{k= I^N_1(0) \wedge N \bar{I}_1(0) }^{I^N_1(0) \vee N\bar{I}_1(0)}     {\bf1}_{\zeta_{k,1}^0\le t}{\bf1}_{Y^{0,k}_1(\zeta_{k,\ell}^0)=1} -   |\hat{I}^N_1(0)| \int_0^t  q_{1,1}(s) F_0(ds)\,. 
\end{align}
It is clear that by Assumption \ref{AS-FCLT-SEIR}, 
$$\E\Big[ \big( \hat{I}^{N,0,1}_{1,1} (t) -  \tilde{I}^{N,0,1}_{1,1}(t)\big)^2 \Big]
= \int_0^t  q_{1,1}(s) F_0(ds) \left( 1- \int_0^t  q_{1,1}(s) F_0(ds)\right) \E \big[|\bar{I}^N_1(0) - \bar{I}_1(0)|\big]\to 0
$$
as $N\to \infty$. To show that  $\big\{ \hat{I}^{N,0,1}_{1,1} -  \hat{I}^{N,0,1}_{1,1}\big\}_N$ is tight, by Assumption \ref{AS-FCLT-SEIR} and \eqref{eqn-hat-tilde-I0-diff}, it suffices to prove the tightness of the first term on the right hand side of \eqref{eqn-hat-tilde-I0-diff}, which we denote as $\Delta^{N,0,1}_{1,1}(t)$. By  the Corollary of Theorem 7.4 in \cite{billingsley1999convergence},  it suffices to show that  for all $\ep>0$, 
\begin{align}\label{eqn-hat-tilde-I0-diff-p1}
\limsup_{N}\sup_{0\le t\le T}\frac{1}{\delta}\P\bigg(\sup_{0\le u\le \delta}\big|\Delta^{N,0,1}_{1,1}(t+u)-\Delta^{N,0,1}_{1,1}(t)\big|>\ep\bigg)\to0, \qasq \delta\to 0. 
\end{align}
Since $\Delta^{N,0,1}_{1,1}(t+u)$ is increasing in $u$, we only need to consider 
\begin{align*}
\E \big[ \big|\Delta^{N,0,1}_{1,1}(t+\delta)-\Delta^{N,0,1}_{1,1}(t)\big|^2 \big] = \E \big[|\bar{I}^N_1(0) - \bar{I}_1(0)|\big] \int_t^{t+\delta}   q_{1,1}(s) F_0(ds)   \le  \E \big[|\bar{I}^N_1(0) - \bar{I}_1(0)|\big] \delta 
\end{align*}
whose $\limsup_N$ is equal to zero under Assumption \ref{AS-FCLT-SEIR}, and thus
we conclude that \eqref{eqn-hat-tilde-I0-diff-p1} holds. Therefore we have shown  $ \hat{I}^{N,0,1}_{1,1} -  \tilde{I}^{N,0,1}_{1,1} \to 0$ in $D$ in probability, and conclude the joint convergence
 $\big(\hat{I}^{N,0,1}_{\ell,i}, \ell,i\in \mathcal{L}\big)\RA \big(\hat{I}^{0,1}_{\ell,i}, \ell,i\in \mathcal{L}\big)$ in $D^{L^2}$. 
To prove that the limit processes $ \big(\hat{I}^{0,1}_{\ell,i}, \ell,i\in \mathcal{L}\big)$ are continuous when $F_0$ is continuous, since they are Gaussian, it suffices to show continuity in the quadratic mean \cite{hahn1978central}, that is, for all $t>0$, $\lim_{s\to t} \E\big[\big|\hat{I}^{0,1}_{\ell,i}(t) - \hat{I}^{0,1}_{\ell,i}(s)\big|^2\big] =0$.  This is easily checked from the continuity of the covariance functions.

\smallskip

We next focus on the processes $\big(\hat{I}^{N,0,2}_{\ell,i}, \ell,i\in \mathcal{L} \big)$. 
Define $\tilde{I}^{N,0,2}_{\ell,i}(t)$ by replacing  $E^N_\ell(0)$ with $N \bar{E}_\ell(0)$ in the expression of $\tilde{I}^{N,0,2}_{\ell,i}(t)$ in  \eqref{eqn-hatI-02-def}. 
We first prove the joint convergence $\big(\tilde{I}^{N,0,2}_{\ell,i}, \ell,i\in \mathcal{L} \big)\RA \big(\hat{I}^{0,2}_{\ell,i}, \ell,i\in \mathcal{L} \big)$ in $D^{L^2}$ as $N\to \infty$. 
 We again apply   Theorem 13.5 in \cite{billingsley1999convergence}. 
By direct calculations, we obtain for $t\ge 0$,
\begin{align*}
\E\left[ \exp\left( \hat\imath \alpha\tilde{I}^{N,0,2}_{\ell,i}(t) \right)\right]
& \xrightarrow{N\to\infty}  \E\left[ \exp\left( \hat\imath \alpha\hat{I}^{0,2}_{\ell,i}(t) \right)\right] =  \exp\bigg( -\frac{\alpha^2}{2} \bar{E}_\ell(0) \Phi^0_{\ell,i}(t) (1-\Phi^0_{\ell,i}(t))   \bigg) 
\end{align*}
and 
 for $t' \le t \le t''$ and $\alpha_1, \alpha_2 \in \R$, 
\begin{align*}
& \E\left[ \exp\left( \hat\imath \alpha_1 \left( \tilde{I}^{N,0,2}_{\ell,i}(t) - \tilde{I}^{N,0,2}_{\ell,i}(t') \right) +  \hat\imath \alpha_2 \left( \tilde{I}^{N,0,2}_{\ell,i}(t'') - \tilde{I}^{N,0,2}_{\ell,i}(t) \right)\right)\right] \\
& \xrightarrow{N\to\infty}\E\left[ \exp\left( \hat\imath \alpha_1 \left( \tilde{I}^{0,2}_{\ell,i}(t) - \tilde{I}^{0,2}_{\ell,i}(t') \right) +  \hat\imath \alpha_2 \left( \tilde{I}^{0,2}_{\ell,i}(t'') - \tilde{I}^{0,2}_{\ell,i}(t) \right)\right)\right] \\
&\quad \quad =  \exp\Bigg( -\frac{\alpha_1^2}{2} \bar{E}_\ell(0) (\Phi^0_{\ell,i}(t)-\Phi^0_{\ell,i}(t') )\left[1-(\Phi^0_{\ell,i}(t)-\Phi^0_{\ell,i}(t') ) \right] \\
& \qquad \qquad \qquad  -\frac{\alpha_2^2}{2} \bar{E}_\ell(0) (\Phi^0_{\ell,i}(t'')-\Phi^0_{\ell,i}(t) )\left[1-(\Phi^0_{\ell,i}(t'')-\Phi^0_{\ell,i}(t) ) \right]  \\
& \qquad \qquad\qquad -\alpha_1\alpha_2 \bar{E}_\ell(0) (\Phi^0_{\ell,i}(t'')-\Phi^0_{\ell,i}(t) )(\Phi^0_{\ell,i}(t')-\Phi^0_{\ell,i}(t') ) \Bigg)\,. 
\end{align*} 
Hence, we can establish the convergence of finite dimensional distributions of $\hat{I}^{N,0,2}_{\ell,i}$ similarly as that of $\tilde{I}_{1,1}^{N,0,1}(t)$ above.  
For tightness, we obtain
for $t' \le t \le t''$ and for $N\ge 1$,
\begin{align*}
& \E\left[ \big| \hat{I}^{N,0,2}_{\ell,i}(t') - \hat{I}^{N,0,2}_{\ell,i}(t)\big|^2 \big| \hat{I}^{N,0,2}_{\ell,i}(t'') - \hat{I}^{N,0,2}_{\ell,i}(t)\big|^2\right] \le   C(\phi(t) - \phi(t'))  (\phi(t'') - \phi(t)) \le C  (\phi(t'') - \phi(t'))^2
\end{align*}
where $\phi(t) =\int_0^t  \sum_{\ell'=1}^L p_{\ell, \ell'}(u) \int_0^{t-u} q_{\ell', i}(v) F(dv|u)  G_0(du)$. Note that since $G_0$ is continuous, this function $\phi(t)$ is a nonnegative, nondecreasing and continuous function.
This proves the convergence of  $\tilde{I}^{N,0,2}_{\ell,i} \RA  \hat{I}^{0,2}_{\ell,i}$ in $D$ as $N\to\infty$.

For the joint convergence of  $\tilde{I}^{N,0,2}_{\ell,i}$ and  $\tilde{I}^{N,0,2}_{\ell',i'}$, we can follow a similar argument as the joint convergence $\big(\tilde{I}^{N,0,1}_{\ell,i},\, \ell,i=1,\dots L\big)$  above.   Thus we have shown the joint convergence  $\big(\tilde{I}^{N,0,2}_{\ell,i}, \ell,i\in \mathcal{L} \big)\RA \big(\hat{I}^{0,2}_{\ell,i}, \ell,i\in \mathcal{L} \big)$ in $D^{L^2}$. 
To conclude $\big(\hat{I}^{N,0,2}_{\ell,i}, \ell,i\in \mathcal{L} \big)\RA \big(\hat{I}^{0,2}_{\ell,i}, \ell,i\in \mathcal{L} \big)$ in $D^{L^2}$, it remains to show that $\hat{I}^{N,0,2}_{\ell,i} - \tilde{I}^{N,0,2}_{\ell,i}\to 0 $ in $D$ in probability for each $\ell, i\in \mathcal{L}$. 
We have 
\begin{align*}
\E \Big[ \big( \hat{I}^{N,0,2}_{\ell,i}(t) - \tilde{I}^{N,0,2}_{\ell,i}(t)\big)^2 \Big] = \Phi^0_{\ell,i}(t) (1- \Phi^0_{\ell,i}(t)) \E\big[\big|\bar{E}^N_\ell(0) - \bar{E}_\ell(0) \big| \big] \to 0 \qasq N \to \infty, 
\end{align*}
under Assumption \ref{AS-FCLT-SEIR}. To prove tightness of $\big\{\hat{I}^{N,0,2}_{\ell,i} - \tilde{I}^{N,0,2}_{\ell,i}\big\}_N$, observing that 
\begin{align*}
&\text{sign} (E^N_\ell(0) - N \bar{E}_\ell(0))  \big(\hat{I}^{N,0,2}_{\ell,i}(t) - \tilde{I}^{N,0,2}_{\ell,i}(t) \big) \\
& = \frac{1}{\sqrt{N}}   \sum_{k=E^N_\ell (0) \wedge N \bar{E}_\ell(0)}^{E^N_\ell (0) \vee N \bar{E}_\ell(0)}  \sum_{\ell'=1}^L {\bf1}_{X^{0,k}_\ell(\eta_{k,\ell}^0)=\ell'}  {\bf1}_{\eta_{k,\ell}^0 + \zeta_{-k,\ell} \le t} {\bf1}_{Y^{-k,\ell}_{\ell'}( \zeta_{-k,\ell})=i}  -   \big| \hat{E}^N_\ell(0)\big| \Phi^0_{\ell,i}(t)\,,  
\end{align*} 
it suffices, applying the Corollary of Theorem 7.4 in \cite{billingsley1999convergence} to the first term, denoted by $\Delta^{N,0,2}_{\ell,i}(t)$, to show that 
\begin{align}\label{eqn-hat-tilde-I02-diff-p1}
\limsup_{N}\sup_{0\le t\le T}\frac{1}{\delta}\P\bigg(\sup_{0\le u\le \delta}\big|\Delta^{N,0,2}_{\ell,i}(t+u)-\Delta^{N,0,2}_{\ell,i}(t)\big|>\ep\bigg)\to0, \qasq \delta\to 0. 
\end{align}
 Since it is increasing in $t$, we consider for $\delta>0$,
\begin{align*}
\E \big[ \big|\Delta^{N,0,2}_{\ell,i}(t+\delta) - \Delta^{N,0,2}_{\ell,i}(t) \big|^2 \big]  = \E\big[\big|\bar{E}^N_\ell(0) -  \bar{E}_\ell(0)\big| \big] \big( \Phi^0_{\ell,i}(t+\delta) - \Phi^0_{\ell,i}(t)\big).
\end{align*}
The $\limsup_N$ of the above is equal to zero 
by Assumption \ref{AS-FCLT-SEIR}, so it is clear that \eqref{eqn-hat-tilde-I02-diff-p1} holds. Thus we have shown  $\big(\hat{I}^{N,0,2}_{\ell,i}, \ell,i\in \mathcal{L} \big)\RA \big(\hat{I}^{0,2}_{\ell,i}, \ell,i\in \mathcal{L} \big)$ in $D^{L^2}$. 


For the joint convergence of $\big(\hat{E}^{N,0}_{\ell,i}, \,\hat{I}^{N,0,1}_{\ell,i},\, \hat{I}^{N,0,2}_{\ell,i},\,\ell,i\in \mathcal{L}\big) $, by the independence of the variables associated with $I^N_\ell(0)$ and $E^N_\ell(0)$, it suffices to show the joint convergence of $\big(\hat{E}^{N,0}_{\ell,i},  \hat{I}^{N,0,2}_{\ell,i},\,\ell,i\in \mathcal{L}\big)$. 
We define $\tilde{E}^{N,0}_{\ell,i}(t)$ by replacing $E^N(0)$ with $N \bar{E}(0)$ in the expression of $\tilde{E}^{N,0}_{\ell,i}(t)$ in \eqref{hatE0-ij-def-SEIR}. 
Similar to the proof above, we have  $\hat{E}^{N,0}_{\ell,i} - \tilde{E}^{N,0}_{\ell,i}\to 0$ in $D$ in probability. 
It then suffices to show the joint convergence of $\big(\tilde{E}^{N,0}_{\ell,i},  \tilde{I}^{N,0,2}_{\ell,i},\,\ell,i\in \mathcal{L}\big)$ and moreover, since they are tight individually, it suffices to show the convergence of their joint finite dimensional distributions. Note that $\tilde{E}^{N,0}_{\ell,i}(t)$ and $  \tilde{I}^{N,0,2}_{\ell',i}(t')$ are independent for $\ell \neq \ell'$. We calculate that for $\alpha, \alpha' \in \R$ and $t, t' >0$, 
\begin{align*}
& \lim_{N\to\infty}\E\Big[\exp\Big( \hat\imath \alpha \tilde{E}^{N,0}_{\ell,i}(t) + \hat\imath \alpha' \tilde{I}^{N,0,2}_{\ell,i'}(t')\Big) \Big] = \E\Big[\exp\Big( \hat\imath \alpha \tilde{E}^{0}_{\ell,i}(t) + \hat\imath \alpha' \tilde{I}^{0,2}_{\ell,i'}(t')\Big) \Big] \\
&= \exp\Bigg(-\frac{\alpha^2}{2} \bar{E}_\ell(0)\int_0^t p_{\ell,i}(s) d G_0(s) \bigg(1-\int_0^t p_{\ell,i}(s) d G_0(s)\bigg)   -\frac{(\alpha')^2}{2} \bar{E}_\ell(0) \Phi^0_{\ell,i'}(t') (1-\Phi^0_{\ell,i'}(t'))  \\
& \qquad\quad \quad  - \alpha \alpha' \bar{E}_\ell(0)  \bigg( \int_0^t   p_{\ell, i}(u) \int_0^{t'-u} q_{i, i'}(v) H_0(du,dv)  -  \int_0^t  p_{\ell,i}(s) G_0(ds)\Phi^0_{\ell,i'}(t') \bigg)   \Bigg). 
\end{align*}
This can be extended easily to finite dimensional distributions of $\big(\tilde{E}^{N,0}_{\ell,i}(t_1),\dots, \tilde{E}^{N,0}_{\ell,i}(t_k),   \tilde{I}^{N,0,2}_{\ell,i'}(t_1),\\  \dots,  \tilde{I}^{N,0,2}_{\ell,i'}(t_k),\,\ell,i\in \mathcal{L}\big)$ for $t_1< t_2 < \cdots < t_k$, $k\ge 1$. 
This requires calculations of the covariances of the cross terms for the increments of $\tilde{E}^{N,0}_{\ell,i}(t)$ and $\tilde{I}^{N,0,2}_{\ell,i'}(t)$. 
For instance, we have for $\alpha, \alpha' \in \R$ and $t_1<t_2$, 
\begin{align*}
 &\lim_{N\to\infty}\E\Big[\exp\Big( \hat\imath \alpha_1 \big(  \tilde{E}^{N,0}_{\ell,i}(t_2) - \tilde{E}^{N,0}_{\ell,i}(t_1) \big) + \hat\imath \alpha_2 \big( \tilde{I}^{N,0,2}_{\ell,i'}(t_2) -\tilde{I}^{N,0,2}_{\ell,i'}(t_1)\big) \Big) \Big] \\
 & =\E\Big[\exp\Big( \hat\imath \alpha_1 \big(  \tilde{E}^{0}_{\ell,i}(t_2) - \tilde{E}^{0}_{\ell,i}(t_1) \big) + \hat\imath \alpha_2 \big( \tilde{I}^{0,2}_{\ell,i'}(t_2) -\tilde{I}^{0,2}_{\ell,i'}(t_1)\big) \Big) \Big] \\
 &= \exp\Bigg(-\frac{\alpha_1^2}{2} \bar{E}_\ell(0)\int_{t_1}^{t_2} p_{\ell,i}(s) d G_0(s) \bigg(1-\int_{t_1}^{t_2} p_{\ell,i}(s) d G_0(s)\bigg)  \\
 & \qquad  -\frac{\alpha_2^2}{2} \bar{E}_\ell(0) \big(\Phi^0_{\ell,i'}(t_2) - \Phi^0_{\ell,i'}(t_1)\big)  \big[1-  \big(\Phi^0_{\ell,i'}(t_2) - \Phi^0_{\ell,i'}(t_1)\big)  \big]  \\
& \qquad  - \alpha_1 \alpha_2 \bar{E}_\ell(0)  \bigg( \int_{t_1}^{t_2}   p_{\ell, i}(u) \int_{t_1-u}^{t_2-u} q_{i, i'}(v) H_0(du,dv)  -  \int_{t_1}^{t_2}  p_{\ell,i}(u) G_0(du)\big( \Phi^0_{\ell,i'}(t_2)-  \Phi^0_{\ell,i'}(t_1) \big)  \bigg)   \Bigg).
\end{align*}
Therefore we have shown the joint convergence of $\big(\hat{E}^{N,0}_{\ell,i},  \hat{I}^{N,0,2}_{\ell,i},\,\ell,i\in \mathcal{L}\big)$. 
Finally for the continuity of the limit processes, it suffices to show the continuity in the quadratic mean  \cite{hahn1978central}, which follows from the continuity of the covariance functions. 
This completes the proof of the lemma. 
\end{proof}

For the next lemma on the moment estimates, we shall need the following technical result. 

\begin{lemma}\label{estimhatUps}
In the two cases $\gamma\in[0,1)$ and $\sum_{\ell\not=i} \kappa_{i\ell}=0$,
there exists a constant $C$ such that for any $0\le t\le T$, $1\le i\le L$,
\begin{equation}\label{eq:esimhatUps}
|\hat{\Upsilon}_i^N(t)|\le C\left(|\hat{S}_i^N(t)| + |\hat{E}_i^N(t)|+|\hat{I}_i^N(t)|+|\hat{R}_i^N(t)|\right)
+\sum_{\ell \neq i}\kappa_{i\ell}|\hat{I}_\ell^N(t)|\,.
\end{equation}
\end{lemma}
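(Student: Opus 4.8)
The plan is to read both $\bar\Upsilon_i^N$ and $\bar\Upsilon_i$ as values of the single function
\[
\psi(s,e,i,r,u)=\frac{s(i+u)}{(s+e+i+r)^\gamma}
\]
introduced before Lemma~\ref{le:lowbd}, evaluated respectively at $\mathbf{x}^N(t)=\big(\bar S^N_i(t),\bar E^N_i(t),\bar I^N_i(t),\bar R^N_i(t),\sum_{\ell\neq i}\kappa_{i\ell}\bar I^N_\ell(t)\big)$ and at its deterministic limit $\mathbf{x}(t)$, so that $\hat\Upsilon_i^N(t)=\sqrt N\,[\psi(\mathbf{x}^N(t))-\psi(\mathbf{x}(t))]$. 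The point is that $\sqrt N$ times the difference of the first four coordinates of $\mathbf{x}^N$ and $\mathbf{x}$ is exactly $(\hat S^N_i,\hat E^N_i,\hat I^N_i,\hat R^N_i)(t)$, while $\sqrt N$ times the difference of the fifth coordinate is $\sum_{\ell\neq i}\kappa_{i\ell}\hat I^N_\ell(t)$, of modulus at most $\sum_{\ell\neq i}\kappa_{i\ell}|\hat I^N_\ell(t)|$. Thus \eqref{eq:esimhatUps} would follow immediately from a Lipschitz estimate on $\psi$, with the coefficient on the last sum dictated by $|\partial_u\psi|$. The only obstruction is that $\psi$ is singular where the local population $s+e+i+r$ vanishes, and the key external input that tames this is Lemma~\ref{le:lowbd}, which provides $\bar U_i(t):=\bar S_i(t)+\bar E_i(t)+\bar I_i(t)+\bar R_i(t)\ge C_{i,T}>0$ on $[0,T]$ for the limit.

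In the case $\sum_{\ell\neq i}\kappa_{i\ell}=0$ the fifth coordinate vanishes identically and $\psi(s,e,i,r,0)=s\,i\,(s+e+i+r)^{-\gamma}$. First I would verify that this map is globally Lipschitz on $[0,1]^4$ for every $\gamma\in[0,1]$: using $s\le s+e+i+r\le1$ and $i\le s+e+i+r$, each partial derivative is bounded through the elementary inequality $s\,i\,(s+e+i+r)^{-\gamma-1}\le(s+e+i+r)^{1-\gamma}\le1$, and one checks the continuous vanishing at the origin. The mean value inequality then gives $|\hat\Upsilon^N_i(t)|\le C\sqrt N\,|\mathbf{x}^N(t)-\mathbf{x}(t)|\le C\big(|\hat S^N_i(t)|+|\hat E^N_i(t)|+|\hat I^N_i(t)|+|\hat R^N_i(t)|\big)$, which is \eqref{eq:esimhatUps} with an empty last sum.

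For $\gamma\in[0,1)$ with possibly positive cross-infectivity I would argue pathwise, splitting on the size of $\bar U^N_i(t):=\bar S^N_i(t)+\bar E^N_i(t)+\bar I^N_i(t)+\bar R^N_i(t)$. On $\{\bar U^N_i(t)\ge C_{i,T}/2\}$ both endpoints $\mathbf{x}^N(t)$ and $\mathbf{x}(t)$ have total-population coordinate at least $C_{i,T}/2$, hence so does every point of the segment joining them; on that region $\psi$ is $C^1$ with all partials bounded by a constant depending only on $C_{i,T}$, $\bar\kappa$ and $\gamma$, and in particular $|\partial_u\psi|=s(s+e+i+r)^{-\gamma}\le1$. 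A mean value expansion along the segment then produces the desired bound, with coefficient exactly $1$ on the term $\sum_{\ell\neq i}\kappa_{i\ell}|\hat I^N_\ell(t)|$ carried by the fifth coordinate. On the complementary event $\{\bar U^N_i(t)<C_{i,T}/2\}$ I would instead use that, since $\bar U_i(t)\ge C_{i,T}$,
\[
\tfrac{\sqrt N}{2}\,C_{i,T}\le \sqrt N\big(\bar U_i(t)-\bar U^N_i(t)\big)=\big|\hat S^N_i(t)+\hat E^N_i(t)+\hat I^N_i(t)+\hat R^N_i(t)\big|\le |\hat S^N_i(t)|+\cdots+|\hat R^N_i(t)|,
\]
and combine this with the crude bound $0\le\bar\Upsilon^N_i(t),\bar\Upsilon_i(t)\le\bar\kappa$ (which holds because $\bar\Upsilon^N_i\le(\bar S^N_i)^{1-\gamma}\sum_\ell\kappa_{i\ell}\bar I^N_\ell\le\bar\kappa$, and similarly for the limit) to get $|\hat\Upsilon^N_i(t)|\le2\bar\kappa\sqrt N\le(4\bar\kappa/C_{i,T})\big(|\hat S^N_i(t)|+|\hat E^N_i(t)|+|\hat I^N_i(t)|+|\hat R^N_i(t)|\big)$. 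Taking $C$ to be the largest of the constants produced in the two regions finishes this case.

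The hard part is the second region: one must trade the blow-up of the derivatives of $\psi$ near $s+e+i+r=0$ against the guaranteed largeness of the diffusion-scaled processes there, the mechanism being that $\bar U^N_i$ small forces $|\hat S^N_i|+\cdots+|\hat R^N_i|$ to be of order $\sqrt N$ precisely because the limit $\bar U_i$ stays bounded away from $0$. The singular direction responsible for this is the term $u\,(s+e+i+r)^{-\gamma}$ appearing in $\partial_s\psi$; it is exactly the quantity that cannot be controlled uniformly by a single global estimate when $\gamma=1$ and $\sum_{\ell\neq i}\kappa_{i\ell}>0$, which is why that borderline case is set aside in the statement.
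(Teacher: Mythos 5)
Your proof is correct, and in the only hard case---$\gamma\in[0,1)$ with $\sum_{\ell\neq i}\kappa_{i\ell}>0$---it takes a genuinely different route from the paper's. The common part: both proofs view $\hat\Upsilon^N_i(t)$ as $\sqrt N$ times an increment of $\psi$, bound $|\psi'_s|,|\psi'_e|,|\psi'_i|,|\psi'_r|$ by $(1+\bar\kappa)(s+e+i+r)^{-\gamma}$ and $|\psi'_u|$ by $1$, and rely on Lemma \ref{le:lowbd}; in the purely local case $\sum_{\ell\neq i}\kappa_{i\ell}=0$ your global Lipschitz bound on $[0,1]^4$ is exactly the paper's closing observation. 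Where you diverge: the paper makes no case distinction, but instead integrates the derivative bound along the segment joining the limit point to the prelimit point and exploits integrability of the singularity,
\[
\int_0^1\big((1-a)\bar{U}_i(t)+a\bar{U}^N_i(t)\big)^{-\gamma}\,da
\;\le\; \bar{U}_i(t)^{-\gamma}\int_0^1(1-a)^{-\gamma}\,da
\;=\;\frac{\bar{U}_i(t)^{-\gamma}}{1-\gamma}\;\le\;\frac{C_{i,T}^{-\gamma}}{1-\gamma},
\]
where $\bar{U}_i=\bar{S}_i+\bar{E}_i+\bar{I}_i+\bar{R}_i$ and $\bar{U}^N_i$ is its prelimit analogue; only the limiting endpoint needs to stay away from the singular set, and this integrability is exactly (and only) where $\gamma<1$ enters the paper's argument. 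You instead split pathwise on $\bar{U}^N_i(t)\ge C_{i,T}/2$ versus $\bar{U}^N_i(t)<C_{i,T}/2$: a uniform Lipschitz constant on the half-space $\{s+e+i+r\ge C_{i,T}/2\}$ in the first region, and in the second the crude bounds $\bar\Upsilon^N_i,\bar\Upsilon_i\le\bar\kappa$ combined with the forced largeness $|\hat{S}^N_i(t)|+|\hat{E}^N_i(t)|+|\hat{I}^N_i(t)|+|\hat{R}^N_i(t)|\ge \sqrt{N}\,C_{i,T}/2$, which the gap between $\bar{U}^N_i$ and $\bar{U}_i$ guarantees there. Both arguments are complete; the paper's is shorter, yours is more elementary. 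One notable consequence you do not exploit: neither of your two regions actually uses $\gamma<1$ (the Lipschitz constant $(1+\bar\kappa)(C_{i,T}/2)^{-\gamma}$ is finite at $\gamma=1$, and the bad-region estimate is $\gamma$-free), so your dichotomy appears to prove \eqref{eq:esimhatUps} in the excluded case $\gamma=1$, $\sum_{\ell\neq i}\kappa_{i\ell}>0$ as well---precisely the case the authors state they cannot handle, and the sole reason for restricting Theorem \ref{thm-FCLT-SEIR} to its cases (i) and (ii). Your closing remark that this borderline case ``cannot be controlled'' is accurate only for a single global estimate, not for your two-region argument; it would be worth checking this carefully, since it would remove the paper's restriction.
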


\begin{proof}
We consider again the map $\psi:[0,1]^4\times[0,\bar\kappa]\mapsto\R_+$:
\[ \psi(s,e,i,r,u)=\frac{s(i+u)}{(s+e+i+r)^\gamma}\,.\]
We have 
\begin{align*}
0\le \psi'_s(s,e,i,r,u)&=\frac{((1-\gamma)s+e+i+r)(i+u)}{(s+e+i+r)^{1+\gamma}}\le \frac{ 1+ \bar\kappa  }{(s+e+i+r)^\gamma},\\
0\ge \psi'_e(s,e,i,r,u)&=-\gamma\frac{s(i+u)}{(s+e+i+r)^{1+\gamma}}\ge-\frac{1+\bar\kappa}{(s+e+i+r)^\gamma},\\
\psi'_i(s,e,i,r,u)&=\frac{s(s+e+i+r)-\gamma s(i+u)}{(s+e+i+r)^{1+\gamma}},\quad |\psi'_i(s,e,i,r,u)|\le \frac{1+\bar\kappa}{(s+e+i+r)^\gamma},\\
0\ge \psi'_r(s,e,i,r,u)&=-\gamma\frac{s(i+u)}{(s+e+i+r)^{1+\gamma}}\ge-\frac{1+\bar\kappa}{(s+e+i+r)^\gamma},\\
0\le \psi'_u(s,e,i,r,u)&=\frac{s}{(s+e+i+r)^\gamma}\le 1\,.
\end{align*}
Moreover, if we define for $0\le a\le 1$, 
$g(a)=\psi(s+a(s'-s),e+a(e'-e),i+a(i'-i), r+a(r'-r), u+a(u'-u))$, we have 
\begin{align*}
& \psi(s',e',i',r',u')-\psi(s,e,i,r,u)=\int_0^1g'(a)da\\
&=\left(\int_0^1\psi'_s(s+a(s'-s),e+a(e'-e),i+a(i'-i), r+ a(r'-r), u+a(u'-u))da\right) [s'-s]\\&
\quad+\left(\int_0^1\psi'_e(s+a(s'-s),e+a(e'-e),i+a(i'-i), r+ a(r'-r), u+a(u'-u))da\right) [e'-e]\\ &
\quad +\left( \int_0^1\psi'_i(s+a(s'-s),e+a(e'-e),i+a(i'-i), r+ a(r'-r), u+a(u'-u))da\right) [i'-i]\\ &
\quad + \left( \int_0^1\psi'_r(s+a(s'-s),e+a(e'-e),i+a(i'-i), r+ a(r'-r), u+a(u'-u))da\right) [r'-r]\\ &
\quad +  \left(\int_0^1\psi'_u(s+a(s'-s),e+a(e'-e),i+a(i'-i), r+ a(r'-r), u+a(u'-u))da\right) [u'-u]\,. 
\end{align*}
We have 
\begin{align*}
\bar{\Upsilon}_i^N(t)&=\psi\bigg(\bar{S}_i^N(t),\bar{E}_i^N(t), \bar{I}_i^N(t),\bar{R}_i^N(t),\sum_{\ell\not=i}\kappa_{i\ell} \bar{I}_\ell^N(t) \bigg), \quad 
\bar\Upsilon_i(t) =\psi\bigg(\bar{S}_i(t),\bar{E}_i(t),\bar{I}_i(t),\bar{R}_i(t),\sum_{\ell\not=i}\kappa_{i\ell} \bar{I}_\ell(t) \bigg)\,.
\end{align*}
Suppose first that $\gamma<1$.
Clearly, the result will follow from the last formulas, if we prove that there exists $C>0$ such that for all $t\in[0,T]$, $N\ge1$,
\[\int_0^1((1-a)\bar{S}_i(t)+a\bar{S}^N_i(t)+(1-a)\bar{E}_i(t)+a\bar{E}^N_i(t)+(1-a)\bar{I}_i(t)+a\bar{I}^N_i(t)+(1-a)\bar{R}_i(t)+a\bar{R}^N_i(t))^{-\gamma}da\le C\,.\]
We have
\begin{align*}
\int_0^1&\Big((1-a)\bar{S}_i(t)+a\bar{S}^N_i(t)+(1-a)\bar{E}_i(t)+a\bar{E}^N_i(t)+(1-a)\bar{I}_i(t)+a\bar{I}^N_i(t) \\
& \qquad \qquad  +(1-a)\bar{R}_i(t)+a\bar{R}^N_i(t) \Big)^{-\gamma}da\\
&\le (\bar{S}_i(t)+\bar{E}_i(t)+\bar{I}_i(t)+\bar{R}_i(t))^{-\gamma}\int_0^1\frac{da}{a^\gamma}\\
&=\frac{ (\bar{S}_i(t)+\bar{E}_i(t)+\bar{I}_i(t)+\bar{R}_i(t))^{-\gamma}}{1-\gamma}\le \frac{C_{i,T}^{-\gamma}}{1-\gamma}\, ,
\end{align*}
for any $0\le t\le T$, where we have used Lemma \ref{le:lowbd} for the last inequality. 

It is easy to check that in the case where the variable $u$ disappear from the above formulas, the derivatives of $\psi$ are bounded on $[0,1]^3$, and the result holds in the case $\gamma=1$ as well.
\end{proof}

We will now prove the following estimate. 

\begin{lemma} \label{lem-estimate-SIR}
For each $i \in \mathcal{L}$, 
\begin{align}  \label{hatPhi-2-bound-sup}
\sup_N  \E\bigg[ \sup_{t \in [0,T]} \big| \hat\Upsilon^N_i(t)\big|^2\bigg] < \infty.
\end{align}
\end{lemma}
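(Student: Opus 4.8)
The plan is to deduce the bound from the pointwise estimate of Lemma~\ref{estimhatUps}, uniform-in-$N$ sup-moment bounds on the diffusion-scaled compartment processes, and a Gronwall argument that closes the self-referential dependence of $\hat\Upsilon^N$ on $(\hat S^N,\hat E^N,\hat I^N,\hat R^N)$. In either of the two admissible cases, \eqref{eq:esimhatUps} gives $|\hat{\Upsilon}^N_i(t)|\le C\sum_{\ell=1}^L(|\hat{S}^N_\ell(t)|+|\hat{E}^N_\ell(t)|+|\hat{I}^N_\ell(t)|+|\hat{R}^N_\ell(t)|)$ for all $t\le T$, with $C$ independent of $N$. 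Taking the supremum over $t$ and squaring, it suffices to prove $\sup_N\Theta^N(T)<\infty$, where
\[
\Theta^N(t):=\E\Big[\sup_{s\le t}\sum_{i=1}^L\big(|\hat{S}^N_i(s)|^2+|\hat{E}^N_i(s)|^2+|\hat{I}^N_i(s)|^2+|\hat{R}^N_i(s)|^2\big)\Big].
\]
For each fixed $N$ this quantity is finite, since $\bar S^N_i,\dots,\bar R^N_i\in[0,1]$ and the fluid limits are bounded, so $|\hat{Z}^N_i(t)|\le\sqrt N(1+\|\bar Z\|_T)$ almost surely; this a priori finiteness is what legitimizes the Gronwall step.

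I would start from the representations \eqref{hatS-i-rep}, \eqref{hatE-rep-2-SEIR}, \eqref{hatI-rep-2-SEIR} and \eqref{hatR-rep-2-SEIR}, in which each $\hat{Z}^N_i$ is a finite sum of five types of terms. The initial terms are controlled by $\sup_N\E[(\hat Z^N_i(0))^2]<\infty$ from Assumption~\ref{AS-FCLT-SEIR}. The martingale terms $\hat M^N_{A,i},\hat M^N_{S,\ell,i},\hat M^N_{E,\ell,i},\hat M^N_{I,\ell,i},\hat M^N_{R,\ell,i}$ are martingales in $t$ whose predictable quadratic variations are bounded uniformly in $N$, respectively by $\lambda_i\bar\kappa_i T$ (using \eqref{bar-int-Phi-bound}) and by a multiple of $T$ (using $\bar S^N,\dots,\bar R^N\le1$), so Doob's inequality bounds their sup-second-moments uniformly in $N$. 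The migration integrals $\int_0^t\nu(\hat Z^N_\ell-\hat Z^N_i)\,ds$ are pathwise $\le C\int_0^t\sup_{u\le s}|\hat Z^N(u)|\,ds$, and the convolution integrals against $\hat\Upsilon^N$ have kernels $\int_0^{t-s}p_{\ell,i}(u)G(du)$ and $\Phi_{\ell,i}(t-s)$ bounded by $1$, hence are $\le C\int_0^t|\hat\Upsilon^N_\ell(s)|\,ds$, to which \eqref{eq:esimhatUps} applies.

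The remaining and most delicate terms are $\hat E^{N,0}_{\ell,i},\hat I^{N,0,1}_{\ell,i},\hat I^{N,0,2}_{\ell,i}$ and $\hat E^N_{\ell,i},\hat I^N_{\ell,i}$. For the initial-data terms I would argue exactly as in the tightness proofs of Lemmas~\ref{lem-barEI-0-conv} and~\ref{lem-I0-ij-conv-SEIR}: each is a $\sqrt N$-scaled empirical process built from i.i.d. bounded summands monotone in $t$, and the second moments of its increments are dominated by increments of the bounded, nondecreasing, continuous functions $\int_0^\cdot p_{\ell,i}(u)G_0(du)$, $\int_0^\cdot q_{\ell,i}(u)F_0(du)$ and $\Phi^0_{\ell,i}$, which yields uniform sup-second-moment bounds via a maximal inequality. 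For $\hat E^N_{\ell,i}$ and $\hat I^N_{\ell,i}$ I would use the PRM representations \eqref{PRM-rep-E-SEIR}, \eqref{PRM-rep-I-SEIR} to write them as integrals against the compensated PRMs $\overline Q_\ell,\widetilde Q_\ell$, split off the genuine $t$-martingale obtained by extending the inner exposed/infectious-duration variables to the whole half-line (whose quadratic variation is again $\le\lambda_\ell\bar\kappa_\ell T$ by \eqref{bar-int-Phi-bound}, controlled by Doob), and estimate the convolution remainder through the second moments of its increments, made uniform in $N$ by the a priori bound $\frac{\lambda_\ell}{N}\int_0^t\Upsilon^N_\ell\le\lambda_\ell\bar\kappa_\ell t$. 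This produces a constant $C_0<\infty$, independent of $N$, dominating the sup-second-moments of all terms of the first three types.

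Collecting these bounds, squaring, and using $(\sum_{k=1}^m a_k)^2\le m\sum_k a_k^2$ together with Cauchy--Schwarz on the time integrals, namely $\E\big[(\int_0^t|\hat\Upsilon^N_\ell(s)|\,ds)^2\big]\le T\int_0^t\E[|\hat\Upsilon^N_\ell(s)|^2]\,ds\le CT\int_0^t\Theta^N(s)\,ds$, I obtain $\Theta^N(t)\le C_0+C_1\int_0^t\Theta^N(s)\,ds$ with $C_0,C_1$ independent of $N$. Gronwall's inequality then gives $\Theta^N(T)\le C_0e^{C_1 T}$ uniformly in $N$, which with the reduction step establishes \eqref{hatPhi-2-bound-sup}. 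The main obstacle is precisely the third step: the terms $\hat E^N_{\ell,i},\hat I^N_{\ell,i}$ and their initial analogues are not martingales in $t$ because of the convolution kernels, so their suprema must be controlled by isolating a true martingale part and handling the remainder through the monotonicity and increment estimates of the earlier lemmas, all kept uniform in $N$ by the infection-intensity bound \eqref{bar-int-Phi-bound}.
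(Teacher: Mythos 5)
Your overall architecture (reduction via \eqref{eq:esimhatUps}, a priori finiteness for fixed $N$, Doob for the Poisson-migration martingales, Cauchy--Schwarz on the migration and convolution integrals, and a closing Gronwall step) matches the paper's, and your treatment of the initial-data terms $\hat{E}^{N,0}_{\ell,i},\hat{I}^{N,0,1}_{\ell,i},\hat{I}^{N,0,2}_{\ell,i}$ — a maximal inequality from the product-form increment bounds, applied to the versions with deterministic initial numbers, plus the pathwise bound $|\hat{I}^N_\ell(0)|$ on the difference — is a workable alternative to the paper's device, which instead encodes each summand as an indicator $\mathbf{1}_{\tilde\zeta^0_{k,\ell}\le t}$ of an auxiliary random time and invokes the Dvoretsky--Kiefer--Wolfowitz inequality for empirical distribution functions.

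The genuine gap is in your treatment of $\hat{E}^{N}_{\ell,i}$ and $\hat{I}^{N}_{\ell,i}$. Your split is a true martingale $M^N$ (inner duration variable extended to all of $\R_+$, random intensity retained) plus a remainder $R^N(t)$, the compensated integral over $\{(s,u): s\le t,\ u>t-s\}$. Doob does control $M^N$, but $R^N$ is neither a martingale nor monotone in $t$, and ``second moments of its increments'' cannot control $\E\big[\sup_{t\le T}|R^N(t)|^2\big]$: the PRM isometry gives only $\E\big[|R^N(t+\delta)-R^N(t)|^2\big]\le C\delta$ uniformly in $N$, which falls short of any Kolmogorov/chaining criterion (one needs order $\delta^{1+\epsilon}$, or fourth moments), while decomposing $R^N$ into raw-count-minus-compensator pieces to exploit monotonicity destroys the $1/\sqrt{N}$ cancellation — the raw pieces have second moments of order $N\delta^2$ after scaling, so a grid argument is not uniform in $N$. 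This is precisely the difficulty the paper's proof is engineered around, in two stages that your proposal omits. Stage one proves the \emph{pointwise} bound $\sup_N\sup_{t\le T}\E\big[|\hat\Upsilon^N_i(t)|^2\big]<\infty$ (see \eqref{hatPhi-2-bound}) by a Gronwall argument that never puts a supremum inside the expectation. Stage two introduces $\tilde{E}^{N}_{\ell,i},\tilde{I}^{N}_{\ell,i}$ (see \eqref{eqn-tildeE-1-Q}, \eqref{eqn-tildeI-1-Q}), obtained by freezing the intensity at its deterministic limit $N\bar\Upsilon_\ell(s)$ while \emph{keeping} the convolution constraint $u\le t-s$; these are genuine martingales with respect to the non-standard filtrations generated by the PRMs restricted to $\{s+u\le t\}$ (resp.\ $\{s+u+v\le t\}$), so Doob applies to them, and the difference $\hat{E}^{N}_{\ell,i}-\tilde{E}^{N}_{\ell,i}$ is then bounded pathwise by its total variation, whose expectation is controlled by $\int_0^T\E\big[|\hat\Upsilon^N_\ell(t)|^2\big]dt$ — finite only thanks to stage one. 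Without the frozen-intensity martingales and the stage-one bound, your asserted constant $C_0$ for these terms is circular: the discrepancy between the random intensity $\Upsilon^N_\ell(s^-)$ and any deterministic surrogate is measured exactly by $\hat\Upsilon^N_\ell$, the quantity being bounded, so the a priori bound $\frac{\lambda_\ell}{N}\int_0^t\Upsilon^N_\ell(s)ds\le\lambda_\ell\bar\kappa_\ell t$ alone cannot make it uniform in $N$.
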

\begin{proof}
We first show that for each $i \in \mathcal{L}$, 
 \begin{equation} \label{hatPhi-2-bound}
 \sup_N \sup_{t \in [0,T]} \E\left[  \big| \hat\Upsilon^N_i(t)\big|^2\right] < \infty.
 \end{equation}
 We shall use \eqref{eq:esimhatUps}.
  In the representations of $\hat{S}^N_i(t)$,  $\hat{E}^N_i(t)$, $\hat{I}^N_i(t)$ and $\hat{R}^N_i(t)$
  in \eqref{hatS-i-rep}, \eqref{hatE-rep-2-SEIR}, \eqref{hatI-rep-2-SEIR} and \eqref{hatR-rep-2-SEIR}, respectively, the following hold: there exists a constant $C>0$ such that for each $i, \ell \in \LL$, 
    \begin{align} \label{hatPhi-2-bound-p2}
  & \sup_N \E\big[ \big|Z_0\big|^2\big]  \le  C, \qforq Z_0= \hat{S}_i^N(0), \hat{E}_i^N(0), \hat{I}_i^N(0), \hat{R}_i^N(0),  \non\\
  & \sup_N \sup_{t \in [0,T]}  \E\big[ (\hat{M}_{A,i}^N(t))^2 \big] \le \lambda_i \int_0^T \sup_N\bar\Upsilon^N_i(s) ds \le  \lambda_i  \bar\kappa_i T, \non\\
  &  \sup_N \sup_{t \in [0,T]}  \E\big[ (Z(t))^2 \big] \le C \nu_Z T, \qforq Z(t) = \hat{M}_{S,i,\ell}^N(t), \hat{M}_{E,i,\ell}^N(t), \hat{M}_{I,i,\ell}^N(t), \hat{M}_{R,i,\ell}^N(t),  
  \end{align}
  with the corresponding $\nu_Z = \nu_{S,i,\ell},  \nu_{E,i,\ell},  \nu_{I,i,\ell},  \nu_{R,i,\ell}$. 
  Moreover, it is easy to check that 
    \begin{align*}
 &    \sup_N  \sup_{t\in [0,T]}\E\big[\big(Z(t)\big)^2 \big] \le C, \qforq Z(t) = \hat{E}^{N,0}_{\ell,i}(t),   \hat{I}^{N,0,1}_{\ell,i}(t),  \hat{I}^{N,0,2}_{\ell,i}(t)\,,\\
   &  \sup_N  \sup_{t\in [0,T]} \E\big[\big(\hat{E}^{N}_{\ell,i}(t)\big)^2 \big] \le  \lambda_\ell  \bar\kappa_\ell T\, \quad 
  \sup_N  \sup_{t\in [0,T]} \E\big[\big( \hat{I}^{N}_{\ell,i}(t)\big)^2 \big]  \le \lambda_\ell  \bar\kappa_\ell T\,. 
  \end{align*}
  Thus, by taking squares of the processes $\hat{S}^N_i(t)$,  $\hat{E}^N_i(t)$,  $\hat{I}^N_i(t)$ and $\hat{R}^N_i(t)$
  in \eqref{hatS-i-rep}, \eqref{hatE-rep-2-SEIR}, \eqref{hatI-rep-2-SEIR} and \eqref{hatR-rep-2-SEIR}, we can apply Cauchy-Schwartz inequality and Gronwall's inequality to conclude that 
   \begin{align} \label{hatPhi-2-bound-p3}
 & \sup_N \sup_{t \in [0,T]} \E\left[  \big| Z(t)\big|^2\right]<\infty, \qforq Z(t) = \hat{S}^N_i(t), \hat{E}^N_i(t),  \hat{I}^N_i(t), \hat{R}^N_i(t),\,\,  i\in \mathcal{L}, 
  \end{align}
 and thus \eqref{hatPhi-2-bound}  follows from \eqref{eq:esimhatUps}.

  We next prove \eqref{hatPhi-2-bound-sup}. 
  By \eqref{hatPhi-2-bound-p2} and Doob's inequality, we obtain the martingale terms satisfy 
   \begin{align} \label{hatPhi-2-bound-p4}
   & \sup_N   \E\bigg[ \sup_{t \in [0,T]}(Z(t))^2 \bigg] <\infty, \qforq Z(t) = \hat{M}_{A,i}^N(t), \hat{M}_{S,i,\ell}^N(t), \hat{M}_{E,i,\ell}^N(t), \hat{M}_{I,i,\ell}^N(t), \hat{M}_{R,i,\ell}^N(t), \quad i,\ell \in \mathcal{L}. 
  \end{align}
  Then, by the expression of $\hat{S}^N_i(t)$ in \eqref{hatS-i-rep}, the bounds in \eqref{eq:esimhatUps} and \eqref{hatPhi-2-bound-p3}, we easily obtain that the property in \eqref{hatPhi-2-bound-sup} holds for $ \hat{S}^N_i(t)$.  Indeed, we note in particular that 
  \begin{align*} 
  \E&\left[\sup_{t\in[0,T]}\left|\lambda_i\int_0^t\hat{\Upsilon}^N_i(s)ds+\sum_{\ell\not=i}\int_0^t\{\nu_{S,\ell,i}\hat{S}^N_\ell(s)-\nu_{S,i,\ell}\hat{S}^N_i(s)\}ds\right|^2\right]\\
 &\qquad \le2\lambda_i^2T\int_0^T\E(|\hat{\Upsilon}^N_i(s)|^2)ds+4L\sum_{\ell\not=i}T\int_0^T\{\nu_{S,\ell,i}^2\E(|\hat{S}^N_\ell(s) |^2)+\nu_{S,i,\ell}^2\E(|\hat{S}^N_i(s) |^2\}ds,
  \end{align*}
  hence 
  \eqref{hatPhi-2-bound-p3} and \eqref{hatPhi-2-bound}, which we have already established, allow us to bound this expectation. 

For $\hat{E}^N_i(t)$,  given the bounds in  \eqref{eq:esimhatUps}, \eqref{hatPhi-2-bound-p3} and \eqref{hatPhi-2-bound-p4}, 
it suffices to show that the property in \eqref{hatPhi-2-bound-sup} holds for $ \hat{E}^{N,0}_{\ell,i}(t)$ and  $ \hat{E}^{N}_{\ell,i}(t)$. 
Similarly, 
 for $\hat{I}^N_i(t)$, 
it suffices to show that the property in \eqref{hatPhi-2-bound-sup} holds for $ \hat{I}^{N,0,1}_{\ell,i}(t)$, $ \hat{I}^{N,0,2}_{\ell,i}(t)$ and  $ \hat{I}^{N}_{\ell,i}(t)$. 
We will first treat the processes associated with the initial quantities $ \hat{E}^{N,0}_{\ell,i}(t)$,
$ \hat{I}^{N,0,1}_{\ell,i}(t)$ and $ \hat{I}^{N,0,2}_{\ell,i}(t)$. The processes $ \hat{E}^{N,0}_{\ell,i}(t)$,
$ \hat{I}^{N,0,1}_{\ell,i}(t)$ can be treated in the same way, so we will only prove $ \hat{I}^{N,0,1}_{\ell,i}(t)$.

Recall the expression of $ \hat{I}^{N,0,1}_{\ell,i}(t)$ in \eqref{eqn-hatI-01-def}. Also recall the process $ \tilde{I}^{N,0,1}_{\ell,i}(t)$ by defined in the proof of Lemma \ref{lem-I0-ij-conv-SEIR}, that is, replacing $I^N_\ell(0)$ in \eqref{eqn-hatI-01-def} with $N \bar{I}_\ell(0)$, so that $\hat{I}^{N,0,1}_{\ell,i}(t)= \tilde{I}^{N,0,1}_{\ell,i}(t) + (\hat{I}^{N,0,1}_{\ell,i}(t)-\tilde{I}^{N,0,1}_{\ell,i}(t))$. 
The process $\hat{I}^{N,0,1}_{\ell,i}(t)$ is driven by the sequence of two dimensional r.v.'s $(\zeta^0_{k,\ell}, Y^{0,k}_\ell(\zeta^0_{k,\ell}))$, $k\ge 1$. We add a sequence of i.i.d. r.v.'s, globally independent of the above sequence, $U_k$, $k\ge1$, which all have the uniform distribution on the interval $[T,T+1]$. 
 We define a sequence of r.v.'s $\tilde\zeta^0_{k,\ell}$, $k\ge 1$, as follows  (noting that $\tilde\zeta^0_{k,\ell}$ depends on $i$, which we omit for brevity) 
\[\tilde{\zeta}^0_{k,\ell}=\begin{cases} \zeta^0_{k,\ell},&\text{if \quad $Y^{0,k}_\ell(\zeta^0_{k,\ell})=i$, }\\
                                                         \zeta^0_{k,\ell}+U_k,&\text{if \quad $Y^{0,k}_\ell(\zeta^0_{k,\ell})\not=i$}\,.
                                                         \end{cases}
                                                         \]
 We have that for all $t\in[0,T]$, 
     \[ {\bf1}_{\zeta^0_{k,\ell}\le t}    {\bf1}_{ Y^{0,k}_\ell(\zeta^0_{k,\ell})=i} = {\bf1}_{\tilde{\zeta}^0_{k,\ell}\le t}\,,\]     
and  \[\E\big[{\bf1}_{\tilde{\zeta}^0_{k,\ell}\le t}\big] =  \int_0^t  q_{\ell,i}(s) F_0(ds). \]  
By writing 
\[
\frac{1}{\sqrt{\bar{I}_\ell(0)}} \tilde{I}^{N,0,1}_{\ell,i}(t) = \frac{1}{\sqrt{N\bar{I}_\ell(0)}} \sum_{k=1}^{N \bar{I}_\ell(0)} \big(  {\bf1}_{\tilde{\zeta}^0_{k,\ell}\le t} - \E\big[{\bf1}_{\tilde{\zeta}^0_{k,\ell}\le t}\big] \big),
\]
we apply the  Dvoretsky--Kiefer--Wolfowitz inequality (with Massart's optimal constant \cite{Massart90}) and obtain 
\[
\frac{1}{\bar{I}_\ell(0)} \E\bigg[\sup_{0 \le t \le T} \big( \tilde{I}^{N,0,1}_{\ell,i}(t)\big)^2   \bigg] \le  2 \int_0^\infty e^{-2x}dx = 1.
\]
On the other hand, 
\begin{align*}
\hat{I}^{N,0,1}_{\ell,i}(t)-\tilde{I}^{N,0,1}_{\ell,i}(t) &=  \frac{1}{\sqrt{N}} \sum_{k=N \bar{I}_\ell(0) \wedge I^N_\ell(0)+1}^{N \bar{I}_\ell(0) \vee I^N_\ell(0)} \big(  {\bf1}_{\tilde{\zeta}^0_{k,\ell}\le t} - \E\big[{\bf1}_{\tilde{\zeta}^0_{k,\ell}\le t}\big] \big),\\
\sup_{t\ge0}|\hat{I}^{N,0,1}_{\ell,i}(t)-\tilde{I}^{N,0,1}_{\ell,i}(t)|&\le \sqrt{N}|\bar{I}_\ell(0)-\bar{I}^N_\ell(0)|
=|\hat{I}^N_\ell(0)|,
\end{align*}
hence from Assumption  \ref{AS-FCLT-SEIR},
\[
\sup_N  \E\bigg[\sup_{ t \ge 0} \big( \hat{I}^{N,0,1}_{\ell,i}(t)-\tilde{I}^{N,0,1}_{\ell,i}(t) \big)^2   \bigg]  <\infty. 
\]
Combining the above, we have shown that the property in \eqref{hatPhi-2-bound-sup} holds for $ \hat{I}^{N,0,1}_{\ell,i}(t)$. 

For the process $ \hat{I}^{N,0,2}_{\ell,i}(t) $, we can extend the approach above as follows. Define $ \tilde{I}^{N,0,2}_{\ell,i}(t)$ by replacing $E^N_\ell(0)$ with $N \bar{E}_\ell(0)$ in the definition of $ \hat{I}^{N,0,2}_{\ell,i}(t)$ in \eqref{eqn-hatI-02-def}. Write $ \hat{I}^{N,0,2}_{\ell,i}(t) =  \tilde{I}^{N,0,2}_{\ell,i}(t) +  \hat{I}^{N,0,2}_{\ell,i}(t) -  \tilde{I}^{N,0,2}_{\ell,i}(t)$. 
For fixed $\ell, \ell'$, we have a sequence of  i.i.d. random vectors $\big(\eta_{k,\ell}^0, X^{0,k}_\ell(\eta_{k,\ell}^0),  \zeta_{-k,\ell}, Y^{-k,\ell}_{\ell'}( \zeta_{-k,\ell}) \big)_{k\ge 1}$.
We also add a sequence of i.i.d. r.v.'s globally independent of the previous sequence, $U_k$, $k\ge 1$, uniformly distributed on $[T,T+1]$. Define 
\[
\varsigma^0_{k,\ell, \ell'} = \begin{cases} \eta_{k,\ell}^0 + \zeta_{-k,\ell}, & \quad \text{if} \quad X^{0,k}_\ell(\eta_{k,\ell}^0)=\ell', \qandq Y^{-k,\ell}_{\ell'}( \zeta_{-k,\ell}) = i\,, \\
  \eta_{k,\ell}^0 + \zeta_{-k,\ell} +U_k, & \quad \text{if} \quad X^{0,k}_\ell(\eta_{k,\ell}^0)\neq \ell', \quad \text{or} \quad Y^{-k,\ell}_{\ell'}( \zeta_{-k,\ell}) \neq i \,. \\ \end{cases}
\]
 (Note that $\varsigma^0_{k,\ell, \ell'}$ depends on $i$, which we omit for brevity.) 
Then we have
\[
{\bf1}_{X^{0,k}_\ell(\eta_{k,\ell}^0)=\ell'}  {\bf1}_{\eta_{k,\ell}^0 + \zeta_{-k,\ell} \le t} {\bf1}_{Y^{-k,\ell}_{\ell'}( \zeta_{-k,\ell})=i}= {\bf1}_{\varsigma^0_{k,\ell, \ell'} \le t}\,,
\]
and
\[
\E\Big[ {\bf1}_{\varsigma^0_{k,\ell, \ell'} \le t}\Big] =  \int_0^{t} p_{\ell, \ell'}(u) \int_0^{t-u} q_{\ell'i}(v) H_0(du, dv)\,.
\]
Note that $\Phi_{\ell,i}^0(t)$ is the sum of the right hand side of the above equation over $\ell'\in \mathcal{L}$, as given in \eqref{eqn-H0}. Then we can write 
\[
  \tilde{I}^{N,0,2}_{\ell,i}(t) =  \frac{1}{\sqrt{N}}  \sum_{k=1}^{N \bar{E}_\ell (0)} \sum_{\ell'=1}^L  \Big({\bf1}_{\varsigma^0_{k,\ell, \ell'} \le t} - \E\Big[ {\bf1}_{\varsigma^0_{k,\ell, \ell'} \le t}\Big]  \Big) = \sum_{\ell'=1}^L \tilde{I}^{N,0,2}_{\ell,\ell',i}(t) \,,
\]
where 
\[
  \tilde{I}^{N,0,2}_{\ell,\ell',i}(t) :=  \frac{1}{\sqrt{N}} \sum_{k=1}^{N \bar{E}_\ell (0)}  \Big({\bf1}_{\varsigma^0_{k,\ell, \ell'} \le t} - \E\Big[ {\bf1}_{\varsigma^0_{k,\ell, \ell'} \le t}\Big]  \Big)\,. 
\]
We can apply the Dvoretsky--Kiefer--Wolfowitz inequality to obtain the desired estimate for $ \tilde{I}^{N,0,2}_{\ell,\ell',i}(t)$ for each fixed $\ell, \ell'$, that is, 
\[
 \E\bigg[\sup_{0 \le t \le T} \big( \tilde{I}^{N,0,2}_{\ell,\ell',i}(t)\big)^2   \bigg] \le   \bar{E}_\ell(0).
\]
Hence,
\[
 \E\bigg[\sup_{0 \le t \le T} \big( \tilde{I}^{N,0,2}_{\ell,i}(t)\big)^2   \bigg] \le   2L \bar{E}_\ell(0).
\]
The difference $ \hat{I}^{N,0,2}_{\ell,i}(t) -  \tilde{I}^{N,0,2}_{\ell,i}(t)$ is again easy to treat. So we obtain the estimate for $ \hat{I}^{N,0,2}_{\ell,i}(t)$.

\smallskip

We next consider the processes associated with the newly infected individuals  $\hat{E}^{N}_{\ell,i}(t)$ and  $\hat{I}^{N}_{\ell,i}(t)$.  Recall the expression of  $\hat{E}^{N}_{\ell,i}(t)$ and  $\hat{I}^{N}_{\ell,i}(t)$ in \eqref{hatE-ij-def-SEIR} and \eqref{hatI-ij-def-SEIR}, respectively. 
Recall the expressions in \eqref{PRM-rep-E-SEIR} using the PRM $\check{Q}_{\ell}(ds,du, dy, d\theta)$ and \eqref{PRM-rep-I-SEIR} using the PRM $\breve{Q}_{\ell}(ds,du,dy, dz, d \vartheta,  d\theta)$. Also recall that $\overline{Q}_{\ell}$ and  $\wt{Q}_{\ell}$ are the corresponding compensated PRMs.
Thus we can write 
\begin{align} \label{eqn-hatE-1-Q}
\hat{E}^{N}_{\ell,i}(t) 
& = \frac{1}{\sqrt{N}}  \int_0^t \int_0^\infty \int_0^{t-s}\int_{\{i\}} {\bf 1}_{\afrak \le \lambda_\ell \Upsilon^N_\ell(s^-)}\overline{Q}_{\ell} (ds,d\afrak, du, d\theta), 
\end{align}
and
\begin{align} \label{eqn-hatI-1-Q}
\hat{I}^{N}_{\ell,i}(t) 
& = \frac{1}{\sqrt{N}}  \int_0^t \int_0^\infty\int_0^{t-s}\int_0^{t-s-u}   \int_{\LL}\int_{\{i\}} {\bf 1}_{\afrak \le \lambda_\ell \Upsilon^N_\ell(s^-)} \wt{Q}_{\ell}(ds,d\afrak, du, dv, d \theta,  d\vartheta).
\end{align}
Define 
\begin{align} \label{eqn-tildeE-1-Q}
\tilde{E}^{N}_{\ell,i}(t) 
& = \frac{1}{\sqrt{N}}  \int_0^t \int_0^\infty \int_0^{t-s}\int_{\{i\}} {\bf 1}_{\afrak \le \lambda_\ell N\Upsilon_\ell(s)}\overline{Q}_{\ell} (ds,d\afrak, du, d\theta), 
\end{align}
and
\begin{align} \label{eqn-tildeI-1-Q}
\tilde{I}^{N}_{\ell,i}(t) 
& = \frac{1}{\sqrt{N}}  \int_0^t \int_0^\infty \int_0^{t-s}\int_0^{t-s-u} \int_{\LL}\int_{\{i\}} {\bf 1}_{\afrak \le \lambda_\ell  N\Upsilon_\ell(s)} \wt{Q}_{\ell}(ds,d\afrak, du, dv, d \theta, d\vartheta), 
\end{align}
where $\bar\Upsilon_\ell(t)$ is given in \eqref{eqn-Phi} and is deterministic. 
It is not hard to check that $\tilde{E}^{N}_{\ell,i}(t)$ (resp. $\tilde{I}^{N}_{\ell,i}(t)$) is a martingale w.r.t. the filtration 
$\mathcal{F}^{E}_t$ (resp. $\mathcal{F}^{I}_t$), where $\mathcal{F}^{E}_t$ is generated by the restriction of $\check{Q}$ to the set of 
$(s,\mathfrak{a},u,\theta)$ which are such that $s+u\le t$, and $\mathcal{F}^{I}_t$ is generated by the restriction of 
$\breve{Q}$ to the set of 
$(s,\mathfrak{a}, u,v,\theta,\vartheta)$ which are such that $s+u+v\le t$.
Those martingales have the quadratic variations 
\begin{align} \label{eqn-tilde-I-qv}
\langle \tilde{E}^{N}_{\ell,i}\rangle (t) &= \lambda_\ell \int_0^t \int_0^{t-s} q_{\ell,i}(u) F(du) \bar\Upsilon_\ell(s) ds, \quad 
\langle \tilde{I}^{N}_{\ell,i}\rangle (t) =  \lambda_\ell \int_0^t \Phi_{\ell,i}(t-s)  \bar{\Upsilon}_\ell(s) ds. 
\end{align}
Then by Doob's inequality, we obtain
\begin{align*}
& \sup_N \E\bigg[ \sup_{0 \le t \le T} \big( \tilde{E}^{N}_{\ell,i}(t)  \big)^2 \bigg] \le \sup_N 4\, \E\big[  \big( \tilde{E}^{N}_{\ell,i}(T)  \big)^2 \big]  = 4  \lambda_\ell \int_0^T \int_0^{T-s} q_{\ell,i}(u) F(du)  \bar\Upsilon_\ell(s) ds<\infty, \\
& \sup_N \E\bigg[ \sup_{0 \le t \le T} \big( \tilde{I}^{N}_{\ell,i}(t)  \big)^2 \bigg] \le \sup_N 4\, \E\big[  \big( \tilde{I}^{N}_{\ell,i}(T)  \big)^2 \big]  = 4  \lambda_\ell \int_0^T \Phi_{\ell,i}(T-s)  \bar{\Upsilon}_\ell(s) ds <\infty.  
\end{align*}

We next show  that
\begin{equation}\label{estimdiff}
\sup_N\E\left[\sup_{0\le t\le T}\left|\hat{E}^{N}_{\ell,i}(t) -\tilde{E}^{N}_{\ell,i}(t)\right|^2\right]<\infty,\quad
\sup_N\E\left[\sup_{0\le t\le T}\left|\hat{I}^{N}_{\ell,i}(t) -\tilde{I}^{N}_{\ell,i}(t)\right|^2\right]<\infty\,.
\end{equation}
Let us establish the first estimate in \eqref{estimdiff}. The second can be obtained by the exact same argument.
We will use below the identity
\[
\frac{1}{\sqrt{N}} \int_0^\infty \Big|{\bf 1}_{\afrak \le \lambda_\ell \Upsilon^N_\ell(s^-)} -{\bf 1}_{\afrak \le \lambda_\ell N \bar{\Upsilon}_\ell(s)} \Big|  d\afrak = \lambda_\ell  \big| \hat\Upsilon^N_\ell(s^-)\big|\,.
\]

We have
\begin{align*}
\hat{E}^{N}_{\ell,i}(t) -\tilde{E}^{N}_{\ell,i}(t)&=\frac{1}{\sqrt{N}}  \int_0^t \int_0^\infty \int_0^{t-s}\int_{\{i\}} 
\left({\bf 1}_{\afrak \le \lambda_\ell \Upsilon^N_\ell(s^-)}-{\bf 1}_{\afrak \le \lambda_\ell N \bar\Upsilon_\ell(s)}\right)\overline{Q}_{\ell} (ds,d\afrak, du, d\theta), 
\end{align*}
and
\begin{align*}
& \sup_{0\le t\le T}\left|\hat{E}^{N}_{\ell,i}(t) -\tilde{E}^{N}_{\ell,i}(t)\right| \\
&\le\frac{1}{\sqrt{N}}
\int_0^T \int_0^\infty \int_0^{T-s}\int_{\{i\}} 
\left|{\bf 1}_{\afrak \le \lambda_\ell \Upsilon^N_\ell(s^-)}-{\bf 1}_{\afrak \le \lambda_\ell N\bar\Upsilon_\ell(s)}\right|\check{Q}_{\ell} (ds,d\afrak, du, d\theta) +\int_0^T\lambda_\ell  \big| \hat\Upsilon^N_\ell(t)\big|dt\\
&=\frac{1}{\sqrt{N}}
\int_0^T \int_0^\infty \int_0^{T-s}\int_{\{i\}} 
\left|{\bf 1}_{\afrak \le \lambda_\ell \Upsilon^N_\ell(s^-)}-{\bf 1}_{\afrak \le \lambda_\ell N\bar\Upsilon_\ell(s)}\right|\overline{Q}_{\ell} (ds,d\afrak, du, d\theta)+2\int_0^T\lambda_\ell  \big| \hat\Upsilon^N_\ell(t)\big|dt\,.
\end{align*}
Thus we obtain
\begin{align*}
\E\left[\sup_{0\le t\le T}\left|\hat{E}^{N}_{\ell,i}(t) -\tilde{E}^{N}_{\ell,i}(t)\right|^2\right]&\le
2\lambda_\ell\int_0^T \E\big| \bar\Upsilon^N_\ell(t)-\bar\Upsilon_\ell(t)\big|dt+8\lambda_\ell^2 T\int_0^T\E\left[\big| \hat\Upsilon^N_\ell(t)\big|^2\right]dt,
\end{align*}
hence the first part of \eqref{estimdiff}, thanks to \eqref{hatPhi-2-bound}. Plugging the above estimates in \eqref{hatS-i-rep}, \eqref{hatE-rep-2-SEIR}, \eqref{hatI-rep-2-SEIR} and \eqref{hatR-rep-2-SEIR}, using 
\eqref{eq:esimhatUps} and Gronwall's Lemma we finally establish \eqref{hatPhi-2-bound-sup}.
\end{proof}

In the next Lemma, we shall need the following well--known result on integrals with respect to a PRM, which follows
rather easily from Theorem VI.2.9 in \cite{ccinlar2011probability}. 
\begin{lemma}\label{lem:expmoment}
Let $Q$ be a PRM on some measurable space $(E,\mathcal{E})$, with mean measure $\nu$, and $\wt{Q}$ the associated compensated measure.
Let $f:E\mapsto\mathbb{C}$ be measurable and such that $e^f-1-f$ is $\nu$ integrable. Then
\[
\E\left[\exp\left(\int_E f(x)\wt{Q}(dx)\right)\right]
= \exp\left(\int_E \left[e^{f(x)}-1-f(x)\right]\nu(dx)\right)\,.
\]
\end{lemma}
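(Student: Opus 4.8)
The plan is to obtain the identity from the classical transform (Laplace functional) formula for the \emph{uncompensated} measure and then peel off the compensator, with a truncation to legitimize the manipulation under the stated integrability hypothesis. Recall that Theorem~VI.2.9 in \cite{ccinlar2011probability} provides, for a PRM $Q$ with $\sigma$-finite mean measure $\nu$ and a measurable $g:E\to\C$ with $e^{g}-1$ $\nu$-integrable, the transform formula
\[
\E\Big[\exp\Big(\int_E g\,dQ\Big)\Big]=\exp\Big(\int_E(e^{g}-1)\,d\nu\Big).
\]
Since $\wt{Q}=Q-\nu$, formally $\int_E f\,d\wt{Q}=\int_E f\,dQ-\int_E f\,d\nu$, and multiplying the above by $e^{-\int_E f\,d\nu}$ would give the claim. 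The difficulty is that our hypothesis only yields $e^{f}-1-f\in L^1(\nu)$, which does not by itself force $f$ and $e^{f}-1$ to be \emph{separately} $\nu$-integrable, so this subtraction is not immediately licit in general.

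First I would prove the statement for the admissible class of bounded integrands supported on a set of finite $\nu$-measure. For such $f$ all of $\int_E f\,dQ$, $\int_E f\,d\nu$ and $\int_E(e^{f}-1)\,d\nu$ are finite, the transform formula applies with $g=f$, and $\int_E f\,d\wt{Q}=\int_E f\,dQ-\int_E f\,d\nu$ is a genuine identity of integrable random variables, whence
\[
\E\Big[\exp\Big(\int_E f\,d\wt{Q}\Big)\Big]=e^{-\int_E f\,d\nu}\exp\Big(\int_E(e^{f}-1)\,d\nu\Big)=\exp\Big(\int_E(e^{f}-1-f)\,d\nu\Big).
\]
Equivalently, this case can be checked from scratch by writing $f=\sum_k c_k\mathbf{1}_{A_k}$ over disjoint sets $A_k$ of finite measure, using that the $Q(A_k)$ are independent $\mathrm{Poisson}(\nu(A_k))$ variables together with the elementary identity $\E\big[e^{c(Q(A)-\nu(A))}\big]=\exp\big(\nu(A)(e^{c}-1-c)\big)$, and multiplying over $k$.

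The remaining, and genuinely delicate, step is to remove the boundedness and finite-support restrictions by approximation. I would fix sets $E_n\uparrow E$ with $\nu(E_n)<\infty$ and set $f_n=f\,\mathbf{1}_{E_n\cap\{|f|\le n\}}$, so each $f_n$ is admissible and $f_n\to f$ pointwise. On the right-hand side, $e^{f_n}-1-f_n\to e^{f}-1-f$ pointwise with $|e^{f_n}-1-f_n|\le|e^{f}-1-f|\in L^1(\nu)$ holding pointwise (since $f_n\in\{0,f\}$ at each point), so dominated convergence yields $\exp\big(\int_E(e^{f_n}-1-f_n)\,d\nu\big)\to$ the claimed right-hand side. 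On the left-hand side one takes $\int_E f\,d\wt{Q}$ to be the in-probability limit of $\int_E f_n\,d\wt{Q}$ — the natural definition under $e^{f}-1-f\in L^1(\nu)$ — and must pass this limit through the expectation of the exponential. This is the main obstacle: for complex integrands with unbounded real part one cannot simply invoke dominated convergence, so I would bound $\E\big|\exp(\int f_n\,d\wt{Q})-\exp(\int f\,d\wt{Q})\big|$ using $|e^{z}-e^{w}|\le(e^{\mathrm{Re}\,z}\vee e^{\mathrm{Re}\,w})|z-w|$, controlling the exponential factor via the uniform bound on the transform coming from integrability of $e^{\mathrm{Re}\,f}-1-\mathrm{Re}\,f$, and the increment via the convergence of the integrands. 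For the purely imaginary integrands $f=\hat\imath g$ arising in our applications this last passage is trivial, since then $\big|\exp(\int_E f_n\,d\wt{Q})\big|\equiv 1$ and bounded convergence applies directly.
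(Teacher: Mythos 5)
Your proposal is correct and takes essentially the same route as the paper: the paper offers no argument beyond noting that the lemma "follows rather easily" from Theorem VI.2.9 of \cite{ccinlar2011probability} (the transform formula for the uncompensated PRM), which is precisely your starting point, with the compensator then subtracted off. Your truncation argument merely supplies the details the paper leaves implicit, and you correctly observe that the delicate limit passage is trivial for the purely imaginary integrands to which the lemma is actually applied in the proof of the FCLT.
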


We are now ready to prove the convergence of the components associated with the newly exposed individuals. 

\begin{lemma} \label{lem-I-ij-conv-SEIR}
Under Assumption \ref{AS-FCLT-SEIR}, 
\begin{align}
\big(\hat{E}^{N}_{\ell,i}, \hat{I}^{N}_{\ell,i}, \, \ell,i\in \mathcal{L}\big) \RA\big(\hat{E}_{\ell,i}, \hat{I}_{\ell,i}, \, \ell,i\in \mathcal{L}  \big) \qinq D^{2L^2} \qasq N \to \infty,
\end{align}
where the limits are as given in Theorem \ref{thm-FCLT-SEIR}. 
\end{lemma}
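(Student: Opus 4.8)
The plan is to follow the now-standard two-step scheme: first replace the processes $\hat{E}^{N}_{\ell,i}$ and $\hat{I}^{N}_{\ell,i}$ by their \emph{frozen} counterparts $\tilde{E}^{N}_{\ell,i}$ and $\tilde{I}^{N}_{\ell,i}$ of \eqref{eqn-tildeE-1-Q}--\eqref{eqn-tildeI-1-Q}, in which the random intensity $\lambda_\ell\Upsilon^N_\ell(s^-)$ is replaced by the deterministic $\lambda_\ell N\bar\Upsilon_\ell(s)$; and second, establish $(\tilde{E}^{N}_{\ell,i},\tilde{I}^{N}_{\ell,i})\RA(\hat{E}_{\ell,i},\hat{I}_{\ell,i})$ together with the asymptotic negligibility of the replacement errors. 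Since the frozen processes are, with respect to the non-standard filtrations $\mathcal{F}^{E}_t$ and $\mathcal{F}^{I}_t$ (generated by the restrictions of $\check{Q}$ to $\{s+u\le t\}$ and of $\breve{Q}$ to $\{s+u+v\le t\}$), square-integrable martingales with the deterministic continuous quadratic variations recorded in \eqref{eqn-tilde-I-qv}, their tightness in $D$ will follow from the martingale tightness criterion (the jumps being of size $N^{-1/2}\to0$ and the predictable quadratic variations being deterministic and continuous in $t$), so the crux is the convergence of the finite-dimensional distributions and the identification of the full (cross-)covariance structure.

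For the finite-dimensional distributions I would compute joint characteristic functions directly by means of Lemma \ref{lem:expmoment}. Writing, for fixed times $t_1<\cdots<t_m$ and real coefficients, a linear combination $\Xi^N:=\sum \alpha_{k,\ell,i}\tilde{E}^{N}_{\ell,i}(t_k)+\sum\beta_{k,\ell,i}\tilde{I}^{N}_{\ell,i}(t_k)$ as a sum over $\ell\in\LL$ of single integrals $\int f_{N,\ell}\,\wt{Q}_{\ell}$ against the compensated measures $\wt{Q}_{\ell}$ (those for distinct $\ell$ being independent, and the integrand of the $\tilde{E}$-part simply not depending on $(v,\vartheta)$), with integrands of order $N^{-1/2}$, Lemma \ref{lem:expmoment} gives $\E[e^{\hat\imath\Xi^N}]=\exp(\int[e^{\hat\imath f_{N,\ell}}-1-\hat\imath f_{N,\ell}]\,d\nu_\ell)$. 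Expanding $e^{\hat\imath f_{N,\ell}}-1-\hat\imath f_{N,\ell}=-\tfrac12 f_{N,\ell}^2+o(N^{-1})$ and using the mean measures $ds\,d\afrak\,H(du,dv)\,\mu^X_\ell(u,d\theta)\,\mu^Y_\theta(v,d\vartheta)$ with the deterministic intensity $\lambda_\ell N\bar\Upsilon_\ell(s)$, the limit is $\exp(-\tfrac12\sigma^2)$ with $\sigma^2$ a quadratic form whose entries are exactly the variances and covariances of Theorem \ref{thm-FCLT-SEIR}. The key point is that the cross-covariances between $\tilde{E}^{N}_{\ell,i}$ and $\tilde{I}^{N}_{\ell',i'}$ (and among the $\tilde{I}$'s) are produced by the overlap of the domains of integration in $(s,u,v)$ together with the shared marks $X^j_\ell(\eta_{j,\ell})$ and $Y^{j,\ell}_{\ell'}(\zeta_{j,\ell})$; they vanish unless $\ell=\ell'$, and for $\ell=\ell'$ are governed by the joint distribution $H(du,dv)$ and the transition functions $p,q$, which is precisely how the formula for $\Cov(\hat{E}_{\ell,i},\hat{I}_{\ell,i})$ arises.

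It then remains to show that the replacement errors are negligible, i.e.\ $\hat{E}^{N}_{\ell,i}-\tilde{E}^{N}_{\ell,i}\to0$ and $\hat{I}^{N}_{\ell,i}-\tilde{I}^{N}_{\ell,i}\to0$ in $D$, in probability, locally uniformly on $[0,T]$. Each difference is itself an integral against the compensated PRM of an indicator difference, e.g.
\begin{align*}
\hat{E}^{N}_{\ell,i}(t)-\tilde{E}^{N}_{\ell,i}(t)=\frac{1}{\sqrt N}\int_0^t\int_0^\infty\int_0^{t-s}\int_{\{i\}}\big({\bf1}_{\afrak\le\lambda_\ell\Upsilon^N_\ell(s^-)}-{\bf1}_{\afrak\le\lambda_\ell N\bar\Upsilon_\ell(s)}\big)\overline{Q}_{\ell}(ds,d\afrak,du,d\theta),
\end{align*}
a mean-zero object whose predictable quadratic variation is bounded by $N^{-1/2}\lambda_\ell\int_0^T|\hat\Upsilon^N_\ell(s)|\,ds$; by the estimate $\sup_N\E[\sup_{t\le T}|\hat\Upsilon^N_\ell(t)|^2]<\infty$ of Lemma \ref{lem-estimate-SIR}, this tends to $0$ in $L^1$, giving pointwise $L^2$ convergence to $0$. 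The locally uniform control is then obtained exactly as in the proof of Lemma \ref{lem-barbreveEI-0}, by dominating the increments over $[t,t+\delta]$ by nondecreasing quantities and invoking the Corollary of Theorem 7.4 in \cite{billingsley1999convergence}. Combining the frozen-process convergence, the tightness, and the vanishing of the differences yields the claim.

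The main obstacle I anticipate is the bookkeeping of the joint (cross-)covariance structure in the characteristic-function computation: one must track carefully how the two successive Markov chains and the possibly correlated pair $(\eta_{j,\ell},\zeta_{j,\ell})$ enter the overlapping integration regions, so that each entry of the limiting covariance matrix matches the expressions of Theorem \ref{thm-FCLT-SEIR}. The sharp cancellation in the difference step, which relies on the bound \eqref{eq:esimhatUps} and the moment estimate of Lemma \ref{lem-estimate-SIR}, is the other delicate ingredient.
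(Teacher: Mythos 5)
Your proposal follows essentially the same route as the paper's proof: freezing the intensity to obtain the PRM martingales $\tilde{E}^{N}_{\ell,i},\tilde{I}^{N}_{\ell,i}$, proving joint finite-dimensional convergence by applying Lemma \ref{lem:expmoment} to a single compensated measure $\wt{Q}_{\ell}$ (your remark that the $\tilde{E}$-integrand does not depend on $(v,\vartheta)$ is exactly the paper's projection observation), and then removing the differences $\hat{E}^{N}_{\ell,i}-\tilde{E}^{N}_{\ell,i}$, $\hat{I}^{N}_{\ell,i}-\tilde{I}^{N}_{\ell,i}$ via the $L^2$ bound of order $N^{-1/2}\lambda_\ell\,\E\int_0^T|\hat{\Upsilon}^N_\ell(s)|\,ds$ together with a monotone-increment tightness argument based on the Corollary of Theorem 7.4 in \cite{billingsley1999convergence}. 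The paper's proof matches this in all essentials, including the reliance on Lemma \ref{lem-estimate-SIR} for the sup-moment control of $\hat{\Upsilon}^N_\ell$.
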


\begin{proof}

Recall the processes $\tilde{E}^{N}_{\ell,i}$ and $\tilde{I}^{N}_{\ell,i}$ defined in \eqref{eqn-tildeE-1-Q} and \eqref{eqn-tildeI-1-Q} using the compensated PRMs $\overline{Q}_{\ell}$  and $\wt{Q}_{\ell}$, respectively. Each of these processes being a martingale, they are easily shown to be tight. We now establish their joint final dimensional convergence. 
By their definitions of the two PRMs, we can regard  $\check{Q}_{\ell}$ (resp. $\overline{Q}_{\ell}$) as the image of $\breve{Q}_{\ell}$ (resp. $\wt{Q}_{\ell}$) by the projection $\Pi$ from $\R_+^4\times \LL^2$ onto $\R_+^3\times \LL$, defined by $\Pi(s,\afrak,u,v,\theta,\vartheta)=(s,\afrak,u,\theta)$. In other words, we can write, together with \eqref{eqn-tildeE-1-Q}, 
\begin{align*}
\tilde{E}^{N}_{\ell,i}(t) 
& = \frac{1}{\sqrt{N}}  \int_0^t \int_0^\infty \int_0^{t-s}\int_0^\infty \int_{\{i\}} \int_\LL{\bf 1}_{\afrak \le \lambda_\ell N\Upsilon_\ell(s)}\wt{Q}_{\ell} (ds,d\afrak, du, dv, d \theta,  d\vartheta). 
\end{align*}
Consequently, for any $\alpha_E, \alpha_I,\alpha'_E, \alpha'_I \in\R$,  and for each $\ell, i,i' \in \mathcal{L}$ and $t,t'>0$, 
\begin{align*}
&\alpha_E\tilde{E}^{N}_{\ell,i}(t)+\alpha_I\tilde{I}^{N}_{\ell,i'} (t)+\alpha'_E\tilde{E}^{N}_{\ell,i}(t')+\alpha'_I\tilde{I}^{N}_{\ell,i'} (t')\\
&=
\int_0^{t\vee t'} \int_0^\infty \int_0^{t\vee t'-s}\int_0^\infty \int_{\LL} \int_\LL  
\left[f_N(s,\afrak,u,v,\theta,\vartheta)+f'_N(s,\afrak,u,v,\theta, \vartheta)\right]
\wt{Q}_{\ell} (ds,d\afrak, du, dv, d \theta, d\vartheta), 
\end{align*}
where 
\begin{align*}
 f_N(s,\afrak,u,v,\theta, \vartheta)&=\frac{1}{\sqrt{N}}{\bf 1}_{\afrak \le \lambda_\ell N\Upsilon_\ell(s)}{\bf1}_{[0,t]}(s){\bf1}_{[0,t-s]}(u)\left(\alpha_E{\bf 1}_{\theta =i} +\alpha_I{\bf1}_{[0,t-s-u]}(v){\bf 1}_{\theta =i'} \right),\\
 f'_N(s,\afrak,u,v,\theta,  \vartheta)&=\frac{1}{\sqrt{N}}{\bf 1}_{\afrak \le \lambda_\ell N\Upsilon_\ell(s)}{\bf1}_{[0,t']}(s){\bf1}_{[0,t'-s]}(u)\left(\alpha'_E{\bf 1}_{\theta =i}+\alpha'_I{\bf1}_{[0,t'-s-u]}(v){\bf 1}_{\theta =i'}\right). 
 \end{align*}
By Lemma \ref{lem:expmoment},  we have
\begin{align*}
&\E\left[\exp\left\{ \hat\imath \alpha_E\tilde{E}^{N}_{\ell,i}(t)+ \hat\imath \alpha_I\tilde{I}^{N}_{\ell,i'}(t)
+ \hat\imath \alpha'_E\tilde{E}^{N}_{\ell,i}(t')+ \hat\imath\alpha'_I\tilde{I}^{N}_{\ell,i'} (t')\right\}\right]\\
&=\exp\Bigg(\int_0^{t\vee t'}  \int_0^\infty \int_0^{t\vee t'-s} \int_0^\infty \int_{\LL}\int_\LL \left(e^{\hat\imath[f_N+f'_N]}-1- \hat\imath [f_N+f'_N]\right)(s,\afrak,u,v,\theta,\vartheta) \\
& \qquad \qquad  \qquad \qquad 
dsd\afrak H(du,dv) \mu^X_\ell(u,d\theta)\mu^Y_\theta(v,d\vartheta)\Bigg) \non\\
& = \exp\Bigg(\int_0^{t \vee t'}  \int_0^\infty \int_0^{t\vee t'-s} \int_0^\infty \int_{\LL}\int_\LL \left(-\frac{1}{2}  [f_N+f'_N]^2 +o(N^{-1})\right)(s,\afrak,u,v,\theta,\vartheta) \\
& \qquad \qquad  \qquad \qquad 
dsd\afrak H(du,dv) \mu^X_\ell(u,d\theta)\mu^Y_\theta(v,d\vartheta)\Bigg)  \non\\
&\xrightarrow{N\to\infty} \exp\Bigg(-\frac{1}{2}\int_0^{t}  \lambda_\ell\Upsilon_\ell(s)\Bigg\{\alpha^2_E \int_0^{t -s}\!\!\!\!\!p_{\ell,i}(u)G(du)+\alpha^2_I \Phi_{\ell,i'}(t-s) \non\\
& \qquad \qquad \qquad \qquad \qquad \qquad    +2\alpha_E\alpha_I\int_0^{t-s}\!\!\int_0^{t-s-u}\!\!\!\!\!\!\!\! p_{\ell,i}(u)q_{i,i'}(v) H(du,dv) \Bigg\}ds \\
&\qquad \qquad -\frac{1}{2}\int_0^{t'}  \lambda_\ell\Upsilon_\ell(s)\Bigg\{(\alpha'_E)^2 \int_0^{t' -s}\!\!\!\!\!p_{\ell,i}(u)G(du)  +(\alpha'_I)^2\Phi_{\ell,i'}(t'-s)  \non \\
& \qquad \qquad \qquad \qquad \qquad \qquad \qquad +2\alpha'_E\alpha'_I\int_0^{t'-s}\!\!\int_0^{t'-s-u}\!\!\!\!\!\!\!\! p_{\ell,i}(u)q_{i,i'}(v)H(du,dv)\Bigg\}ds\\
&\qquad \qquad  -\int_0^{t\wedge t'}\lambda_\ell\Upsilon_\ell(s)\Bigg\{
\alpha_E\alpha'_E \int_0^{t\wedge t' -s}\!\!\!\!\!p_{\ell,i}(u)G(du)
+\alpha_I\alpha'_I \Phi_{\ell,i'}(t\wedge t'-s)\\
&\qquad \qquad \qquad \qquad  
+\alpha_E\alpha'_I\int_0^{t\wedge t'-s}\!\!\int_0^{t'-s-u}\!\!\!\!\!\!\!\! p_{\ell,i}(u)q_{i,i'}(v)H(du,dv) \non\\
& \qquad \qquad \qquad \qquad +\alpha_I\alpha'_E\int_0^{t\wedge t'-s}\!\!\int_0^{t-s-u}\!\!\!\!\!\!\!\! p_{\ell,i}(u)q_{i,i'}(v)H(du,dv)\Bigg\}ds\Bigg) \\
& = \E\left[\exp\left\{\hat\imath\alpha_E\hat{E}_{\ell,i}(t)+ \hat\imath\alpha_I\hat{I}_{\ell,i'}(t)+\hat\imath\alpha'_E\hat{E}_{\ell,i}(t')+\hat\imath\alpha'_I\hat{I}_{\ell,i'}(t') \right\}\right].
\end{align*}
Would we consider more distinct times, we would clearly deduce that the whole vector converges to a Gaussian 
random vector. The only point which requests a detailed computation is the determination of the covariances, which can be deduced from the above formula, and obvious similar formulas. In particular, it is easily seen for 
$\ell\not=\ell'$, that the covariance of $\hat{E}_{\ell,i}(t)$ and $(\hat{E}_{\ell',i'}(t'),\hat{I}_{\ell',i''}(t''))$ is zero. 
Similarly, for $i'\not=i$, the covariances of $\hat{E}_{\ell,i}(t)$ and $\hat{E}_{\ell,i'}(t')$, of $\hat{I}_{\ell,i}(t)$ and
$\hat{I}_{\ell,i'}(t')$  are zero.
(This is a difference with the covariances of $\hat{E}^{0}_{\ell,i}(t)$ and $\hat{E}^{0}_{\ell,i'}(t')$, of $\hat{I}^{0,1}_{\ell,i}(t)$ and $\hat{I}^{0,1}_{\ell,i'}(t')$, of $\hat{I}^{0,2}_{\ell,i}(t)$ and $\hat{I}^{0,2}_{\ell,i'}(t')$ for $i'\neq i$, see the calculations in Lemma~\ref{lem-I0-ij-conv-SEIR} and the formulas in the statement of Theorem~\ref{thm-FCLT-SEIR}. The zero covariance of  $\hat{E}_{\ell,i}(t)$ and $\hat{E}_{\ell,i'}(t')$, of $\hat{I}_{\ell,i}(t)$ and
$\hat{I}_{\ell,i'}(t')$, follows from the Poisson random measure construction of $\hat{E}^N_{\ell,i}(t)$ and $\hat{I}^N_{\ell,i}(t)$, while the processes  $\hat{E}^{N,0}_{\ell,i}(t)$, $\hat{I}^{N,0,1}_{\ell,i}(t)$ and  $\hat{I}^{N,0,2}_{\ell,i}(t)$ have a different structure.)
 The formulas for the covariances of the pair $(\hat{E},\hat{I})$ in the statement of Theorem \ref{thm-FCLT-SEIR} are easy to deduce from the above computation.

\medskip

It then remains to show that,  for each $\ell,i\in \mathcal{L}$, as $ N\to\infty$, 
 $\hat{E}^{N}_{\ell,i}(t) - \tilde{E}^{N}_{\ell,i}(t) \to 0 $ and  $\hat{I}^{N}_{\ell,i}(t) - \tilde{I}^{N}_{\ell,i}(t) \to 0 $
 in probability, locally uniformly in $t$.

We focus on the process $\hat{E}^{N}_{\ell,i} - \tilde{E}^{N}_{\ell,i}$. 
It is clear that 
\begin{align*}
\E\big[ \hat{E}^{N}_{\ell,i}(t) - \tilde{E}^{N}_{\ell,i} (t)  \big] &= 0,\\
\E\big[ \big( \hat{E}^{N}_{\ell,i}(t) - \tilde{E}^{N}_{\ell,i} (t) \big)^2 \big] 
&=  \lambda_\ell \E  \int_0^t \int_0^{t-s} q_{\ell,i}(u) F(du) \big| \bar\Upsilon^N_\ell(s) - \bar\Upsilon_\ell(s)\big| ds  \to 0 \qasq N \to \infty,
\end{align*}
where the convergence holds by  Lemma \ref{lem-3.11} 
 and the dominated convergence theorem. 
We next show that the sequence $\{\hat{E}^{N}_{\ell,i} - \tilde{E}^{N}_{\ell,i}\}_N$ is tight. 
Observe that 
\begin{align*}
\hat{E}^{N}_{\ell,i}(t) - \tilde{E}^{N}_{\ell,i}(t) 
&= \frac{1}{\sqrt{N}}  \int_0^t \int_{\lambda_\ell N (\bar\Upsilon_\ell^N(s)\wedge \bar\Upsilon_\ell(s))}^{\lambda_\ell N (\bar\Upsilon_\ell^N(s)\vee \bar\Upsilon_\ell(s))} \int_0^{t-s}\int_{\{i\}} \text{sign}(\bar\Upsilon_\ell^N(s) - \bar\Upsilon_\ell(s) ) \check{Q}_{\ell}(ds,d\afrak, du, d\theta) \\
& \quad - \lambda_\ell \int_0^t \int_0^{t-s} q_{\ell,i}(u) F(du)  \hat{\Upsilon}_\ell^N(s) ds. 
\end{align*}
We can decompose $\text{sign}(\bar\Upsilon_\ell^N(s) - \bar\Upsilon_\ell(s) ) = {\bf 1}_{\bar\Upsilon_\ell^N(s) - \bar\Upsilon_\ell(s)>0} - {\bf 1}_{ \bar\Upsilon_\ell^N(s) -\bar\Upsilon_\ell(s)<0}$, and write 
  $\hat\Upsilon^N_\ell(s) = (\hat\Upsilon^N_\ell(s)\vee0) - (- \hat\Upsilon^N_\ell(s))\vee 0 $,
hence each of the terms on the right of the last identity can be expressed as a difference of two functions which are nondecreasing in $t$. 
It is also clear that tightness of these processes will be implied by the tightness of the following processes:
\begin{align*}
\Xi^{N}_{1}(t) &= \frac{1}{\sqrt{N}}  \int_0^t \int_{\lambda_\ell N (\bar\Upsilon_\ell^N(s)\wedge \bar\Upsilon_\ell(s))}^{\lambda_\ell N (\bar\Upsilon_\ell^N(s)\vee \bar\Upsilon_\ell(s))} \int_0^{t-s}\int_{\{i\}} \check{Q}_{\ell}(ds,d\afrak, du, d\theta) ,\\
\Xi^{N}_{2}(t) &=  \lambda_\ell \int_0^t \int_0^{t-s} q_{\ell,i}(u) F(du)  |\hat\Upsilon_\ell^N(s)|ds.
\end{align*}
Since these two processes are nondecreasing in $t$, by the Corollary on page 83 in \cite{billingsley1999convergence},  it suffices to show  that for any $\ep>0$, and $\iota=1,2$,
\begin{align} \label{Xi-diff-conv}
\limsup_{N\to\infty} \frac{1}{\delta} \P\left( \left| \Xi^{N}_{\iota}(t+\delta) - \Xi^{N}_{\iota}(t)  \right| >\ep \right) \to 0 \qasq \delta \to 0. 
\end{align}
For the process $\Xi^{N}_1(t)$, we have 
\begin{align} \label{Xi-diff-conv-1-p1}
 & \E\left[ \big| \Xi^{N}_{1}(t+\delta) - \Xi^{N}_{1}(t)  \big|^2\right] \non \\
 &= \E\bigg[ \bigg(  \frac{1}{\sqrt{N}}  \int_t^{t+\delta} \int_{\lambda_\ell N (\bar\Upsilon_\ell^N(s)\wedge \bar\Upsilon_\ell(s))}^{\lambda_\ell N (\bar\Upsilon_\ell^N(s)\vee \bar\Upsilon_\ell(s))} \int_0^{t+\delta-s}\int_{\{i\}} \check{Q}_{\ell}(ds,d\afrak, du, d\theta)  \non\\
& \quad  +  \frac{1}{\sqrt{N}}  \int_0^t \int_{\lambda_\ell N (\bar\Upsilon_\ell^N(s)\wedge \bar\Upsilon_\ell(s))}^{\lambda_\ell N (\bar\Upsilon_\ell^N(s)\vee \bar\Upsilon_\ell(s))} \int_{t-s}^{t+\delta-s}\int_{\{i\}} \check{Q}_{\ell}(ds,d\afrak, du, d\theta)
   \bigg)^2\bigg] \non \\
   & \le  2 \E\bigg[ \bigg(  \frac{1}{\sqrt{N}}  \int_t^{t+\delta} \int_{\lambda_\ell N (\bar\Upsilon_\ell^N(s)\wedge \bar\Upsilon_\ell(s))}^{\lambda_\ell N (\bar\Upsilon_\ell^N(s)\vee \bar\Upsilon_\ell(s))} \int_0^{t+\delta-s}\int_{\{i\}} \check{Q}_{\ell}(ds,d\afrak, du, d\theta)  \bigg)^2\bigg] \non\\
& \quad  + 2 \E\bigg[ \bigg(  \frac{1}{\sqrt{N}}  \int_0^t \int_{\lambda_\ell N (\bar\Upsilon_\ell^N(s)\wedge \bar\Upsilon_\ell(s))}^{\lambda_\ell N (\bar\Upsilon_\ell^N(s)\vee \bar\Upsilon_\ell(s))} \int_{t-s}^{t+\delta-s}\int_{\{i\}} \check{Q}_{\ell}(ds,d\afrak, du, d\theta)
   \bigg)^2\bigg] \non \\
   &\le  4 \E\bigg[ \bigg(  \frac{1}{\sqrt{N}}  \int_t^{t+\delta} \int_{\lambda_\ell N (\bar\Upsilon_\ell^N(s)\wedge \bar\Upsilon_\ell(s))}^{\lambda_\ell N (\bar\Upsilon_\ell^N(s)\vee \bar\Upsilon_\ell(s))} \int_0^{t+\delta-s}\int_{\{i\}} \overline{Q}_{\ell}(ds,d\afrak, du, d\theta)  \bigg)^2\bigg] \non \\
   &\quad + 4 \E\left[ \left(\lambda_\ell \int_t^{t+\delta} \int_0^{t+\delta-s} q_{\ell,i}(u) F(du)   \big| \hat\Upsilon^N_\ell(s) \big| ds\right)^2 \right] \non \\
   & \quad + 4 \E\bigg[ \bigg(  \frac{1}{\sqrt{N}}  \int_0^{t} \int_{\lambda_\ell N (\bar\Upsilon_\ell^N(s)\wedge \bar\Upsilon_\ell(s))}^{\lambda_\ell N (\bar\Upsilon_\ell^N(s)\vee \bar\Upsilon_\ell(s))} \int_{t-s}^{t+\delta-s}\int_{\{i\}} \overline{Q}_{\ell}(ds,d\afrak, du, d\theta)  \bigg)^2\bigg]  \non \\
   &\quad + 4 \E\left[ \left(\lambda_\ell \int_0^{t} \int_{t-s}^{t+\delta-s} q_{\ell,i}(u) F(du)  \big| \hat\Upsilon^N_\ell(s) \big|  ds\right)^2 \right] \non \\
   & \le 4  \lambda_\ell \int_t^{t+\delta} \int_0^{t+\delta-s} q_{\ell,i}(u) F(du)
   \E\left[ \big| \bar\Upsilon^N_\ell(s) - \bar\Upsilon_\ell(s)\big|  \right]  ds + 4 \lambda_\ell^2 \delta^2  \E\left[  \sup_{s \in [0,T]}\big| \hat\Upsilon^N_\ell(s) \big|^2\right]   \non \\
   & \quad + 4  \lambda_\ell \int_0^{t} \int_{t-s}^{t+\delta-s} q_{\ell,i}(u) F(du)
   \E\left[ \big| \bar\Upsilon^N_\ell(s) - \bar\Upsilon_\ell(s)\big|  \right]  ds \non \\
   & \quad + 4  \E\left[ \left(\lambda_\ell \int_0^{t} \int_{t-s}^{t+\delta-s} q_{\ell,i}(u) F(du) \big| \hat\Upsilon^N_\ell(s) \big| ds\right)^2 \right] . 
\end{align}
It is clear that $\E\left[ \big| \bar\Upsilon^N_\ell(s) - \bar\Upsilon_\ell(s)\big|  \right]  \to 0 $ as $N\to \infty$ by the convergence $ \bar\Upsilon^N_\ell\RA \bar\Upsilon_\ell$ and the dominated convergence theorem. Thus, the first and third terms converge to zero as $N\to\infty$. 
  Thanks to \eqref{hatPhi-2-bound-sup}, $\delta^{-1}$ times the second term converges to zero as $\delta \to 0$, which is exactly what we wish.  
 Thus, in order to prove \eqref{Xi-diff-conv} for $\Xi^N_1(t)$, it suffices to show that 
\begin{align} \label{Xi-diff-conv-p4}
\lim_{\delta \to 0}\limsup_{N\to\infty} \frac{1}{\delta}\E\left[ \left(\lambda_\ell \int_0^{t} \int_{t-s}^{t+\delta-s} q_{\ell,i}(u) F(du) \big| \hat\Upsilon^N_\ell(s) \big| ds\right)^2 \right] = 0. 
\end{align}
The expectation is bounded by
\begin{align*}
& \E\bigg[ \sup_{0 \le s \le T} \big| \hat\Upsilon^N_\ell(s) \big|^2 \bigg]  \left(\lambda_\ell \int_0^{t} \int_{t-s}^{t+\delta-s} q_{\ell,i}(u) F(du) ds\right)^2.
\end{align*}
Hence we obtain \eqref{Xi-diff-conv-p4} by the same argument as in \eqref{eqn-conv-without-F-condition}, and the bound in Lemma \ref{lem-estimate-SIR}. 

For the process $\Xi^{N}_2(t)$, we have 
\begin{align*}
 & \E\left[ \big| \Xi^{N}_{2}(t+\delta) - \Xi^{N}_{2}(t)  \big|^2\right] 
  \le  2 \E\left[ \left(\lambda_\ell \int_t^{t+\delta} \int_0^{t+\delta-s} q_{\ell,i}(u) F(du) \big| \hat\Upsilon^N_\ell(s)\big| ds\right)^2 \right] \\
      &\qquad \qquad\qquad \qquad \qquad \qquad  + 2 \E\left[ \left(\lambda_\ell \int_0^{t} \int_{t-s}^{t+\delta-s} q_{\ell,i}(u) F(du) \big| \hat\Upsilon^N_\ell(s)\big| ds\right)^2 \right] \\
   & \le 4 \lambda_1^2 \delta^2 \sup_{s \in [0,T]} \E\left[  \big| \hat\Upsilon^N_\ell(s)\big|^2\right]  + 4  \E\left[ \left(\lambda_\ell \int_0^{t} \int_{t-s}^{t+\delta-s} q_{\ell,i}(u) F(du) \big| \hat\Upsilon^N_\ell(s)\big| ds\right)^2 \right] . 
\end{align*}
The argument for these two terms follow from that for the second and fourth terms above for $\Xi^N_1(t)$.

We next prove that as $N\to\infty$,
$
\hat{I}^{N}_{\ell,i}(t) - \tilde{I}^{N}_{\ell,i}(t) \to 0 $ in probability,  locally uniformly in $t$
for each $\ell,i\in \mathcal{L}$.  It follows a similar argument so we only highlight differences below. 
It is clear that 
\begin{align*}
\E\big[ \hat{I}^{N}_{\ell,i}(t) - \tilde{I}^{N}_{\ell,i} (t)  \big] &= 0,\\
\E\big[ \big( \hat{I}^{N}_{\ell,i}(t) - \tilde{I}^{N}_{\ell,i} (t) \big)^2 \big] 
&=  \lambda_\ell \int_0^t \Phi_{\ell,i}(t-s) \big| \bar\Upsilon^N_\ell(s) -\bar\Upsilon_\ell(s)\big| ds \to 0 \qasq N \to \infty,
\end{align*}
where the convergence holds by Theorem \ref{thm-FLLN-SEIR} and the dominated convergence theorem. 
To show that the sequence $\{\hat{I}^{N}_{\ell,i} - \tilde{I}^{N}_{\ell,i}\}$ is tight, 
we write 
\begin{align*}
& \hat{I}^{N}_{\ell,i}(t) - \tilde{I}^{N}_{\ell,i}(t)  \\
&= \frac{1}{\sqrt{N}} \int_0^t \int_0^\infty \int_0^{t-s}  \int_0^{t-s-u}  \int_\LL   \int_{\{i\}} \Big( {\bf 1}_{\afrak \le \lambda_\ell  \Upsilon^N_\ell(s^-)}-  {\bf 1}_{\afrak \le \lambda_\ell N \bar\Upsilon_\ell(s)}\Big)  \breve{Q}_{\ell}(ds,d\afrak, du, dv, d\theta, d \vartheta)\,, \\
& \quad - \lambda_\ell \int_0^t \Phi_{\ell,i}(t-s)     \hat{\Upsilon}_\ell^N(s) ds, 
\end{align*}
and observe that it suffices  to prove tightness of the following processes
\begin{align*}
\mathcal{I}^N_1(t) &=  \frac{1}{\sqrt{N}} \int_0^t  \int_{\lambda_\ell N (\bar\Upsilon_\ell^N(s)\wedge \bar\Upsilon_\ell(s))}^{\lambda_\ell N (\bar\Upsilon_\ell^N(s)\vee \bar\Upsilon_\ell(s))}  \int_0^{t-s}   \int_0^{t-s-u}\int_\LL   \int_{\{i\}}   \breve{Q}_{\ell} (ds,d\afrak, du, dv,d\theta,  d \vartheta)\\
\mathcal{I}^N_2(t) &= \lambda_\ell \int_0^t \Phi_{\ell,i}(t-s)  \big|   \hat{\Upsilon}_\ell^N(s) \big| ds.
\end{align*}
By the monotone property of these two processes in $t$, we then show that 
for any $\ep>0$, and $\iota=1,2$,
\begin{align} \label{Xi-diff-conv-SEIR}
\limsup_{N\to\infty} \frac{1}{\delta} \P\left( \left| \mathcal{I}^{N}_{\iota}(t+\delta) - \mathcal{I}^{N}_{\iota}(t)  \right| >\ep \right) \to 0 \qasq \delta \to 0. 
\end{align}
Similar to the derivation in \eqref{Xi-diff-conv-1-p1}, we obtain
\begin{align} \label{Xi-diff-conv-SEIR-p1}
 & \E\left[ \big| \mathcal{I}^{N}_{1}(t+\delta) - \mathcal{I}^{N}_{1}(t)  \big|^2\right]  \le 4  \lambda_\ell \int_t^{t+\delta} \Phi_{\ell,i}(t+\delta-s)
   \E\left[ \big| \bar\Upsilon^N_\ell(s) - \bar\Upsilon_\ell(s)\big|  \right]  ds \non\\
   &\quad  + 4 \lambda_\ell^2 \delta^2 \E\left[  \sup_{s \in [0,T]} \big| \hat\Upsilon^N_\ell(s) \big|^2\right]  + 4  \lambda_\ell \int_0^{t} \big(\Phi_{\ell,i}(t+\delta-s) - \Phi_{\ell,i}(t-s) \big)
   \E\left[ \big| \bar\Upsilon^N_\ell(s) - \bar\Upsilon_\ell(s)\big|  \right]  ds \non \\
   & \quad + 4  \E\left[ \left(\lambda_\ell \int_0^{t}  \big(\Phi_{\ell,i}(t+\delta-s) - \Phi_{\ell,i}(t-s) \big) \big| \hat\Upsilon^N_\ell(s) \big| ds\right)^2 \right] . 
\end{align}
The first and third terms converge to zero as $N\to \infty$ by the convergence 
$\E\left[ \big| \bar\Upsilon^N_\ell(s) - \bar\Upsilon_\ell(s)\big|  \right] \to 0$ and applying the dominated convergence theorem.
For the last term in \eqref{Xi-diff-conv-SEIR-p1}, we have
\begin{align}\label{Xi-diff-conv-SEIR-p2}
&  \E\left[ \left(\lambda_\ell \int_0^{t}  \big(\Phi_{\ell,i}(t+\delta-s) - \Phi_{\ell,i}(t-s) \big) \big| \hat\Upsilon^N_\ell(s) \big| ds\right)^2 \right] \non \\
& \le  2\E\left[ \left(\lambda_\ell \int_0^{t} \left(\int_0^{t+\delta-s}  \sum_{\ell'=1}^L p_{\ell, \ell'}(u) \int_{t-s-u}^{t+\delta-s-u} q_{\ell'i}(v) H(du,dv) \right)      \big| \hat\Upsilon^N_\ell(s) \big| ds\right)^2 \right]  \non \\
& \quad +  2 \E\left[ \left(\lambda_\ell \int_0^t \left(  \int_{t-s}^{t+\delta-s}  \int_0^{t-s-u} \sum_{\ell'=1}^L p_{\ell, \ell'}(u) q_{\ell'i}(v) H(du,dv)\right)      \big| \hat\Upsilon^N_\ell(s) \big| ds\right)^2 \right] \non\\
& \le  2\E\bigg[ \sup_{s \in [0,T]} \big| \hat\Upsilon^N_i(s)\big|^2\bigg] \left(\lambda_\ell \int_0^{t}  \left( \int_0^{t+\delta-s}  \sum_{\ell'=1}^L p_{\ell, \ell'}(u) \int_{t-s-u}^{t+\delta-s-u} q_{\ell'i}(v) H(du,dv) \right)    ds\right)^2 \non\\
& \quad + 2 \E\bigg[ \sup_{s \in [0,T]} \big| \hat\Upsilon^N_i(s)\big|^2\bigg]\left(\lambda_\ell   \int_0^t \left(  \int_{t-s}^{t+\delta-s}  \int_0^{t-s-u} \sum_{\ell'=1}^L p_{\ell, \ell'}(u) q_{\ell'i}(v) H(du,dv)\right)  ds\right)^2  \non\\
& \le 2\E\bigg[ \sup_{s \in [0,T]} \big| \hat\Upsilon^N_i(s)\big|^2\bigg] \left(\lambda_\ell  \int_0^{t} \int_0^{t+\delta-s}
 (F(t+\delta-s-u|u) - F(t-s-u|u) )  G(du) ds\right)^2 \non\\
& \quad +2 \E\bigg[ \sup_{s \in [0,T]} \big| \hat\Upsilon^N_i(s)\big|^2\bigg] \left(\lambda_\ell   \int_0^t ( G(t+\delta-s) - G(t-s) )
     ds\right)^2 .
\end{align}
Then the first term is treated with the same argument as in \eqref{I-diff-inc-p1-2} while the second as in \eqref{eqn-conv-without-F-condition}.

We then consider 
\begin{align*}
 \E\left[ \big| \mathcal{I}^{N}_{2}(t+\delta) - \mathcal{I}^{N}_{2}(t)  \big|^2\right] 
 & \le  2 \E\left[ \left(\lambda_\ell  \int_t^{t+\delta} \Phi_{\ell,i}(t+\delta-s)  \big| \hat{\Upsilon}_\ell^N(s) \big|ds\right)^2 \right] \\
      &\quad + 2 \E\left[ \left(\lambda_\ell \int_0^{t} (\Phi_{\ell,i}(t+\delta-s) - \Phi_{\ell,i}(t-s)) \big| \hat{\Upsilon}_\ell^N(s) \big| ds\right)^2 \right]. 
\end{align*}
Here the first term is bounded by 
$$2 \E\bigg[ \sup_{s \in [0,T]} \big| \hat\Upsilon^N_\ell(s)\big|^2\bigg]  \left(\lambda_\ell  \int_t^{t+\delta} \Phi_{\ell,i}(t+\delta-s) ds\right)^2 \le 2 \lambda_\ell^2 \delta^2   \E\left[   \sup_{s\in [0,T]}\big| \hat\Upsilon^N_\ell(s) \big|^2 \right]$$
 and the second term is treated as above in \eqref{Xi-diff-conv-SEIR-p2}. 
This completes the proof of \eqref{Xi-diff-conv-SEIR}, and thus $
\hat{I}^{N}_{\ell,i}(t) - \tilde{I}^{N}_{\ell,i}(t) \to 0 $ in probability, locally uniformly in $t$. 
 This completes the proof.
\end{proof}

\begin{proof}[\bf Completing the proof of Theorem \ref{thm-FCLT-SEIR}]


Let $\wt{S}^N_i(t), \wt{E}^N_i(t), \wt{I}^N_i(t), \wt{R}^N_i(t)$ be defined as in \eqref{hatS-i-rep}, \eqref{hatE-rep-2-SEIR}, \eqref{hatI-rep-2-SEIR} and \eqref{hatR-rep-2-SEIR}  correspondingly with 
$\wt{S}^N_i(0) =\hat{S}^N_i(0), \wt{E}^N_i(0) =\hat{E}^N_i(0), \wt{I}^N_i(0) = \hat{I}^N_i(0)$ and $\hat\Upsilon^N_i(t)$ being replaced by $\wt\Upsilon^N_i(t)$ defined by 
\begin{align*}
\wt{\Upsilon}^N_i(t)& =
\psi_s(\bar{S}_i(t),\bar{I}_i(t),\bar{R}_i(t),\sum_{\ell\not=i}\kappa_{i\ell}\bar{I}_\ell(t))\wt{S}^N_i(t) 
+ \psi_e(\bar{S}_i(t),\bar{I}_i(t),\bar{R}_i(t),\sum_{\ell\not=i}\kappa_{i\ell}\bar{I}_\ell(t))\wt{E}^N_i(t) \non\\
& \quad + \psi_i(\bar{S}_i(t),\bar{I}_i(t),\bar{R}_i(t),\sum_{\ell\not=i}\kappa_{i\ell}\bar{I}_\ell(t))\wt{I}^N_i(t) 
+ \psi_r(\bar{S}_i(t),\bar{I}_i(t),\bar{R}_i(t),\sum_{\ell\not=i}\kappa_{i\ell}\bar{I}_\ell(t))\wt{R}^N_i(t) \non\\
& \quad 
+\psi_u(\bar{S}_i(t),\bar{I}_i(t),\bar{R}_i(t),\sum_{\ell\not=i}\kappa_{i\ell}\bar{I}_\ell(t))
\sum_{\ell\not=i}\kappa_{i\ell}\wt{I}^N_\ell(t). 
\end{align*}
and the other components remain unchanged.  
Then by Lemma  \ref{lem-map-digamma-SEIR} below (with $m=L$), and by the convergence results in  Lemmas \ref{lem-Mi-conv-SEIR}, \ref{lem-I0-ij-conv-SEIR} and \ref{lem-I-ij-conv-SEIR}, we obtain that $(\wt{S}^N_i, \wt{E}^N_i, \wt{I}^N_i, \wt{R}^N_i, i \in \mathcal{L}) \RA (\hat{S}_i, \hat{E}^N_i, \hat{I}_i, \hat{R}_i, i \in \mathcal{L})$ in $D^{3L}$. 
It remains to show that 
$(\wt{S}^N_i -\hat{S}^N_i, \wt{E}^N_i -\hat{E}^N_i, \wt{I}^N_i -\hat{I}^N_i, \wt{R}^N_i - \hat{R}^N_i, i \in \mathcal{L}) \RA 0$. 
We have
\begin{align*}
\hat{S}_i^N(t) - \wt{S}^N_i(t) &=  -\lambda_i \int_0^t  (\hat{\Upsilon}_i^N(s) - \wt{\Upsilon}_i^N(s))  ds  \non \\
& \quad + \sum_{\ell=1, \ell\neq i}^L \int_0^t (\nu_{S,\ell,i} (  \hat{S}^N_\ell(s)-  \wt{S}^N_\ell(s))-  \nu_{S,i, \ell} (\hat{S}^N_i(s) -  \wt{S}^N_i(s)) ) ds  \,,
\end{align*}
\begin{align*}
\hat{E}^N_i(t) - \wt{E}^N_i(t)  &=  \lambda_i \int_0^t (\hat{\Upsilon}^N_i(s)- \wt{\Upsilon}_i^N(s)) ds 
-  \sum_{\ell=1}^L \lambda_\ell \int_0^t \int_0^{t-s} p_{\ell,i}(u) G(du) ( \hat\Upsilon^N_\ell(s)- \wt{\Upsilon}_i^N(s)) ds \non\\
 & \quad  + \sum_{\ell=1, \ell\neq i}^L \int_0^t (\nu_{E,\ell,i}(\hat{E}^N_\ell(s)-\wt{E}^N_\ell(s)) - \nu_{E,i, \ell} (\hat{E}^N_i(s)-\wt{E}^N_\ell(s)) ) ds\,,
\end{align*}
\begin{align*}
\hat{I}^N_i(t) -\wt{I}^N_i(t)&= 
  \sum_{\ell=1}^L \lambda_\ell \int_0^t \int_0^{t-s} p_{\ell,i}(u) G(du)  ( \hat\Upsilon^N_\ell(s)- \wt\Upsilon^N_\ell(s))  ds   \non\\
  &\quad    - \sum_{\ell=1}^L \lambda_\ell  \int_0^t \Phi_{\ell,i}(t-s) ( \hat{\Upsilon}^N_\ell(s) - \wt\Upsilon^N_\ell(s)) ds \non\\
 & \quad    +   \sum_{\ell \neq i} \int_0^t \left( \nu_{I,\ell,i} ( \hat{I}^N_\ell(s) - \wt{I}^N_\ell(s)) -  \nu_{I,i,\ell} (\hat{I}^N_i(s) - \wt{I}^N_\ell(s))  \right) ds ,
\end{align*}
\begin{align*}
\hat{R}^N_i(t) - \wt{R}^N_i(t)&=
   \sum_{\ell=1}^L \lambda_\ell  \int_0^t \Phi_{\ell,i}(t-s) ( \hat{\Upsilon}^N_\ell(s) - \wt{\Upsilon}^N_\ell(s)) ds     \non\\
 & \quad   + \sum_{\ell=1, \ell\neq i}^L \int_0^t (\nu_{R,\ell,i}(\hat{R}^N_\ell(s) - \wt{R}^N_\ell(s)) - \nu_{R,i, \ell} (\hat{R}^N_i(s) - \wt{R}^N_i(s)) ) ds. 
\end{align*}


Let
\begin{align*}
\psi^N_{s,a}(t) & := 
\psi'_s \bigg(\bar{S}_i(t)+a(\bar{S}^N_i(t)-\bar{S}_i(t)), \bar{E}_i(t)+a(\bar{E}^N_i(t)-\bar{E}_i(t)), \bar{I}_i(t)+a(\bar{I}^N_i(t)-\bar{I}_i(t)), \\
& \qquad  \bar{R}_i(t)+a(\bar{R}^N_i(t)-\bar{R}_i(t)),  
 \sum_{\ell\not=i}\kappa_{i\ell} \bar{I}_\ell(t)+a(\sum_{\ell\not=i}\kappa_{i\ell}\bar{I}_\ell^N(t)-\sum_{\ell\not=i}\kappa_{i\ell}\bar{I}_\ell(t)) \bigg). 
\end{align*}
Similarly for $\psi^N_{e,a}(t)$, $\psi^N_{i,a}(t)$, $\psi^N_{r,a}(t)$  and $\psi^N_{u,a}(t)$.  
Also write $\psi_{e,0}(t), \psi_{i,0}(t), \psi_{r,0}(t), \psi_{u,0}(t), \psi_{u,0}(t)$ when $a=0$ (noting that they no longer depend on $N$ in this case). 
We then have 
\begin{align*}
& \hat{\Upsilon}_i^N(t) - \wt{\Upsilon}_i^N(t)  \\
&= \sqrt{N}\Big( \psi(\bar{S}_i^N(t),\bar{E}_i^N(t), \bar{I}_i^N(t),\bar{R}_i^N(t),\sum_{\ell\not=i}\kappa_{i\ell} \bar{I}_\ell^N(t)) - \psi(\bar{S}_i(t),\bar{E}_i(t), \bar{I}_i(t),\bar{R}_i(t),\sum_{\ell\not=i}\kappa_{i\ell} \bar{I}_\ell(t)) \Big) -   \wt{\Upsilon}_i^N(t)\\
& = \int_0^1 \psi^N_{s,a}(t) da ( \hat{S}^N_i(t) - \wt{S}^N_i(t) )+  \int_0^1 \psi^N_{e,a}(t) da (\hat{E}^N_i(t) - \wt{E}^N_i(t)) + \int_0^1 \psi^N_{i,a}(t) da (\hat{I}^N_i(t) - \wt{I}^N_i(t))  \\
& \quad  + \int_0^1 \psi^N_{r,a}(t) da (\hat{R}^N_i(t) - \wt{R}^N_i(t))  + \int_0^1 \psi^N_{u,a}(t) da \sum_{\ell\neq i} ( \hat{I}^N_\ell(t) - \wt{I}^N_j(t)) \\
& \quad + \Big( \int_0^1 \psi^N_{s,a}(t) da - \psi_{s,0}(t) \Big)  \wt{S}^N_i(t) +  \Big( \int_0^1 \psi^N_{e,a}(t) da - \psi_{e,0}(t) \Big)  \wt{E}^N_i(t)
+  \Big( \int_0^1 \psi^N_{i,a}(t) da - \psi_{i,0}(t) \Big)  \wt{I}^N_i(t)  \\
& \quad + \Big( \int_0^1 \psi^N_{r,a}(t) da -    \psi_{r,0}(t) \Big)  \wt{R}^N_i(t) +  \Big( \int_0^1 \psi^N_{u,a}(t) da - \psi_{u,0}(t) \Big) \sum_{\ell\neq i}  \wt{I}^N_\ell(t). 
\end{align*}
We can use the bounds in the proof of Lemma \ref{estimhatUps} to show that $\limsup_N \sup_{t\in [0,T]} \int_0^1 \psi^N_{s,a}(t) da <\infty$ and the same holds for  $\psi^N_{e,a}(t)$, $\psi^N_{i,a}(t)$, $\psi^N_{r,a}(t)$ and $\psi^N_{u,a}(t)$. It is also clear that 
$\int_0^1 \psi^N_{s,a}(t) da - \psi_{s,0}(t) \to 0$ as $N \to \infty$, uniformly in $t$, and similarly for the others. 
In addition, similarly as Lemma \ref{estimhatUps}, we can also show that $ \sup_N \sup_{t \in [0,T]} \E\left[  \big| \wt{S}^N_i(t)\big|^2\right] < \infty$ and the same for $\wt{E}^N_i(t)$, $\wt{I}^N_i(t)$ and $ \wt{R}^N_i(t)$.
Then by Gronwall's inequality, we conclude that $(\wt{S}^N_i -\hat{S}^N_i, \wt{I}^N_i -\hat{I}^N_i, \wt{R}^N_i - \hat{R}^N_i, i \in \mathcal{L}) \RA 0$.  This completes the proof of the theorem. 
\end{proof}

\section{Concluding Remarks}

We have extended the approach in \cite{PP-2020} to study multi-patch SEIR models with Markov migrations among patches. In this generalization we have introduced a new formulation for the infection process, and further developed the methodology to tackle the challenges arising from that and the migration processes. However we have assumed a constant infectivity rate for each
individual. In \cite{FPP2020b,PP2020-FCLT-VI}, in a model with homogeneous population, each individual is associated with a random infectivity function, for which FLLNs and FCLTs have been established. It would be interesting to study multi-patch models with varying infectivity. 
In addition, with an infection age dependent infectivity, FLLNs and PDEs have been established for the one-patch models in \cite{PP-2021}. PDEs for the multi-patch models with an infection age dependent infectivity can be also derived. Control strategies such vaccination and isolation have been developed for epidemic models \cite{abakuks1973optimal,wickwire1975optimal,hansen2011optimal,bolzoni2019optimal}. For multi-patch models, one may also consider control strategies restricting migrations in different patches.

\bigskip 

\paragraph{\bf Acknowledgement}
The authors want to thank the anonymous reviewers for their helpful comments that have led to substantial improvements in the paper. 
G. Pang was supported in part by  the US National Science Foundation grants DMS-1715875 and DMS-2108683/2216765 and Army Research Office grant W911NF-17-1-0019.

\section{Appendix} \label{sec-appendix}

We define a $4m$-dimensional integral mapping $\digamma$: given $a_i, b_i, c_i,  d_i, \phi_i, \psi_i,\varphi_i,  \chi_i\in D$, some constants $\alpha_i, \beta_i, \gamma_i, \kappa_i>0$ and  functions $F_{\ell,i}$, $G_{\ell,i}$  for $\ell,i=1,\dots,m$, let $x_i,y_i,z_i,w_i$ be the solutions to the following integral mapping: 
\begin{align*}
x_i(t) &= x_i(0) + \phi_i(t) - \int_0^t (a_{i}(s) x_i(s) + b_{i}(s) y_i(s) + c_{i}(s) z_i(s) + d_i(s) w_i(s))ds \\
& \qquad + \sum_{\ell=1,\ell\neq i}^m \int_0^t (\alpha_{\ell,i} x_\ell(s) - \alpha_{i,\ell} x_i(s)) ds\,,\\
y_i(t) &= y_i(0) + \psi_i(t) + \int_0^t (a_{i}(s) x_i(s) + b_{i}(s) y_i(s) + c_{i}(s) z_i(s)+ d_i(s) w_i(s))ds \\
& \quad - \sum_{\ell=1}^m \int_0^t F_{\ell,i}(t-s) (a_{\ell}(s) x_\ell(s) + b_{\ell}(s) y_\ell(s) + c_{\ell}(s) z_\ell(s)+ d_\ell(s) w_\ell(s) )ds \\
& \quad + \sum_{\ell=1,\ell\neq i}^m  \int_0^t (\beta_{\ell,i} y_\ell(s) - \beta_{i,\ell} y_i(s)) ds \,,\\
z_i(t) &= z_i(0) + \varphi_i(t)-  \sum_{\ell=1}^m \int_0^t F_{\ell,i}(t-s) (a_{\ell}(s) x_\ell(s) + b_{\ell}(s) y_\ell(s) + c_{\ell}(s) z_\ell(s)+ d_\ell(s) w_\ell(s) )ds  \\
& \quad 
-  \sum_{\ell=1}^m \int_0^t G_{\ell,i}(t-s) (a_{\ell}(s) x_\ell(s) + b_{\ell}(s) y_\ell(s) + c_{\ell}(s) z_\ell(s) + d_\ell(s) w_\ell(s))ds \\
& \quad + \sum_{\ell=1,\ell\neq i}^m  \int_0^t (\gamma_{\ell,i} z_\ell(s) - \gamma_{i,\ell} z_i(s)) ds, \\
w_i(t) &= w_i(0) + \chi_i(t) 
+ \sum_{\ell=1}^m \int_0^t F_{\ell,i}(t-s) (a_{\ell}(s) x_\ell(s) + b_{\ell}(s) y_\ell(s) + c_{\ell}(s) z_\ell(s)+ d_\ell(s) w_\ell(s) )ds\\
& \quad + \sum_{\ell=1}^m \int_0^t G_{\ell,i}(t-s) (a_{\ell}(s) x_\ell(s) + b_{\ell}(s) y_\ell(s) + c_{\ell}(s) z_\ell(s) + d_\ell(s) w_\ell(s))ds \\
& \quad + \sum_{\ell=1,\ell\neq i}^m  \int_0^t (\kappa_{\ell,i} w_\ell(s) - \kappa_{i,\ell} w_i(s)) ds. 
\end{align*}

 The existence and uniqueness of its solution and the continuity property are stated in the following lemma.

\begin{lemma} \label{lem-map-digamma-SEIR}
Assume that  $F_{\ell,i}$ and $G_{\ell,i}$ , $\ell,i=1,\dots,m$ are measurable, bounded and continuous functions satisfying $F_{\ell,i}(0)=0$ and $G_{\ell,i}(0)=0$, and let the constants
$\alpha_i, \beta_i, \gamma_i, \kappa_i>0$ and the functions $\phi_i, \psi_i,\varphi_i, \chi_i$ be given. There exists a unique solution $(x_i,y_i, z_i, w_i, \, i=1,\dots,m) \in D^{4m}$ to the set of integrable equations defining the mapping $\tilde\digamma$. 
The mapping is continuous in the Skorohod $J_1$ topology, that is, 
if $(a^n_i, b^n_i,c^n_i, d^n_i, \phi^N_i, \psi^n_i, \varphi^n_i, \chi^n_i \, i=1,\dots,m) 
 \to (a_i, b_i,c_i, d_i,\phi_i, \psi_i, \varphi_i, \chi_i, \,i=1,\dots,m)$ in $D([0,T], \R^{8m})$ as $n\to\infty$ and 
 $(x^n_i(0), y^n_i(0), z^n_i(0),  w^n_i(0), \, i=1,\dots,m) \to (x_i(0),  y_i(0),\\ z_i(0), w_i(0), \, i=1,\dots,m)$, 
 then  $(x^n_i,y^n_i,z^n_i, w^n_i,\, i=1,\dots,m) \to (x_i, y_i, z_i, w_i,\,i=1,\dots,m)$ in $D([0,T], \R^{4m})$ as $n\to \infty$. In addition, if $\phi_i, \psi_i,\varphi_i, \chi_i$ are continuous, then 
$(x_i, y_i, z_i, w_i\,i=1,\dots,m)\in C^{4m}$ and the mapping $\digamma$ is continuous uniformly on compact sets in $[0,T]$. 
\end{lemma}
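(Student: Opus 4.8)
The plan is to recast the $4m$ coupled equations as a single linear Volterra integral equation and then to prove continuity of the solution map by a Gronwall argument that cleanly separates the càdlàg contribution of the forcing from the (smoothing) integral terms. Collect the unknowns into $U=(x_i,y_i,z_i,w_i,\,i=1,\dots,m)$ and the data into a forcing vector $\Phi\in D([0,T],\R^{4m})$ carrying the terms $x_i(0)+\phi_i$, $y_i(0)+\psi_i$, $z_i(0)+\varphi_i$, $w_i(0)+\chi_i$. Every remaining term is an integral, so the system reads $U=\Phi+\mathcal{K}U$, where $\mathcal{K}$ is a linear Volterra operator whose kernel combines the local coefficients $a_i,b_i,c_i,d_i$ (evaluated at $s$), the constant migration coefficients, and the convolution kernels $F_{\ell,i}(t-s),G_{\ell,i}(t-s)$. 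Since $a_i,b_i,c_i,d_i\in D$ are bounded on $[0,T]$ and the $F_{\ell,i},G_{\ell,i}$ are bounded, the kernel of $\mathcal{K}$ is bounded, and integration against Lebesgue measure forces $\mathcal{K}$ to map $D([0,T],\R^{4m})$ into $C([0,T],\R^{4m})$. The Volterra structure yields $\|\mathcal{K}^nU-\mathcal{K}^nV\|_T\le (CT)^n/n!\,\|U-V\|_T$ for some constant $C$, so a high enough power of $U\mapsto\Phi+\mathcal{K}U$ is a contraction and Banach's theorem gives a unique solution $U\in D([0,T],\R^{4m})$ (uniqueness alone also follows from Gronwall on the difference of two solutions). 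Crucially, the jumps of $U$ are exactly those of $\Phi$, because $\mathcal{K}U$ is continuous.

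For the $J_1$-continuity, I would write $U=\Phi+W$ with $W:=\mathcal{K}U\in C([0,T],\R^{4m})$, and establish separately that $W^n\to W$ uniformly and $\Phi^n\to\Phi$ in $J_1$. The elementary fact I rely on is that $J_1$-convergence implies pointwise convergence at every continuity point of the limit, hence Lebesgue-a.e.\ convergence, together with uniform boundedness of the sequence. Consequently, for inputs $a_i^n\to a_i,\dots,\phi_i^n\to\phi_i,\dots$ in $J_1$, each product $a_i^n(s)\Phi_i^n(s)$, $a_i^n(s)W_i^n(s)$ and each convolution integrand in $\mathcal{K}$ converges a.e.\ and is dominated, so the associated integrals converge uniformly in $t$ by bounded convergence. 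Applying this to the difference, $W^n-W$ solves
\[
W^n(t)-W(t)=\int_0^t \mathcal{B}^n(s)\,(W^n-W)(s)\,ds+e^n(t)+(\text{convolution remainders}),
\]
where $\mathcal{B}^n(s)$ is the bounded coefficient operator built from $a^n,b^n,c^n,d^n$ and the migration constants, and the error $e^n$ collects $\int_0^t[\mathcal{B}^n-\mathcal{B}](\Phi+W)\,ds$ and $\int_0^t\mathcal{B}^n(\Phi^n-\Phi)\,ds$. By the a.e.-plus-domination argument, $\|e^n\|_T\to0$ and the convolution remainders tend to $0$ uniformly; since the Gronwall feedback acts only on the continuous difference $W^n-W$, Gronwall's inequality delivers $\|W^n-W\|_T\to0$.

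It then remains to assemble the pieces: $\Phi^n\to\Phi$ in $J_1$ and $W^n\to W$ uniformly with $W\in C$ together imply $U^n=\Phi^n+W^n\to\Phi+W=U$ in $J_1$ (use the time change realizing $\Phi^n\to\Phi$ and the uniform continuity of $W$ to control $W^n\circ\lambda_n$). This proves $J_1$-continuity of $\digamma$. When the forcing functions $\phi_i,\psi_i,\varphi_i,\chi_i$ are continuous, $\Phi\in C^{4m}$, so $U=\Phi+W\in C^{4m}$; and since $J_1$-convergence to a limit lying in $C$ upgrades to locally uniform convergence, the map is continuous uniformly on compact subsets of $[0,T]$, as claimed in the final sentence of Lemma~\ref{lem-map-digamma-SEIR}.

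The main obstacle is the $J_1$ bookkeeping, not the (routine) Volterra and Gronwall estimates: one must verify with care that $J_1$-convergence of the several inputs produces the a.e.-plus-bounded convergence needed to send the error terms to zero uniformly, and that adding a $J_1$-convergent càdlàg forcing to a uniformly convergent continuous part preserves $J_1$-convergence. The constant-coefficient migration couplings are benign here and could, if preferred, be routed through the continuity of the map $\AA$ in Lemma~\ref{le:EqLin}; and the fixed bounded continuous kernels pose no difficulty, since $h\mapsto\int_0^\cdot F_{\ell,i}(\cdot-s)h(s)\,ds$ is continuous from $D$ into $C$.
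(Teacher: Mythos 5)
Your proof is correct, and it takes a genuinely different route from the paper's. The paper disposes of the lemma in two sentences of citation: existence and uniqueness via the Schauder--Tychonoff fixed point theorem, modifying Theorems 1.2 and 2.3 in Chapter II of Miller's book on Volterra equations (which treats continuous functions), and continuity "similarly as Lemma 9.1 in \cite{PP-2020}". You instead exploit the fact that the system is \emph{linear} in the unknowns, which makes Schauder--Tychonoff unnecessary: writing $U=\Phi+\mathcal{K}U$ and using the Volterra factorial bound $\|\mathcal{K}^n(U-V)\|_T\le (CT)^n/n!\,\|U-V\|_T$, a high power of the affine map is a contraction on the Banach space $(D([0,T],\R^{4m}),\|\cdot\|_T)$, so existence and uniqueness come simultaneously from Banach's theorem rather than from a compactness argument plus a separate uniqueness step. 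Your structural observation that $\mathcal{K}$ maps $D$ into $C$ (so the jumps of the solution are exactly the jumps of the forcing $\Phi$) is the right key for the $J_1$ statement, and it parallels the mechanism behind the paper's Lemma \ref{le:EqLin}, where $\AA(F)=F+\int_0^\cdot e^{A(\cdot-s)}AF_s\,ds$ is a càdlàg forcing plus a continuous correction. Your continuity argument (a.e.\ convergence plus uniform boundedness from $J_1$ convergence, dominated convergence for the error terms, Gronwall on the continuous difference $W^n-W$, and the time-change bookkeeping to recombine $\Phi^n+W^n$) is exactly the kind of detail the paper delegates to \cite{PP-2020}; one small simplification available to you is that Gronwall does not even require continuity of $W^n-W$, only boundedness and measurability. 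What your approach buys is a self-contained and more elementary appendix; what the paper's buys is brevity, at the cost of invoking a compactness fixed point theorem that is stronger than needed for a linear system and of leaning on external references whose setting (continuous data) must be adapted to càdlàg inputs.
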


\begin{proof}
For the existence and uniqueness of solutions, we can apply the Schauder-Tychonoff fixed point theorem, and modify the proofs in Theorems 1.2 and 2.3 in Chapter II of \cite{miller1971} (where these results are shown for Volterra integral equations with continuous functions). 
The continuity can be proved similarly as Lemma 9.1 in \cite{PP-2020}. 
\end{proof}

\bibliographystyle{plain}

\bibliography{SIR2patch}

\end{document}